\documentclass[11pt,reqno]{amsart}
\usepackage[left=1.3in, right=1.2in, top=1in, bottom=1in, includefoot]{geometry}

\usepackage{graphicx}
\usepackage{mathrsfs} 
\usepackage{amssymb}   
\usepackage{amsthm}  
\usepackage{bbm}
\usepackage[shortlabels]{enumitem}
\usepackage{xcolor}
\usepackage{soul}

\newtheorem{thm}{Theorem}[section]
\newtheorem{prop}[thm]{Proposition}
\newtheorem{cor}[thm]{Corollary}
\newtheorem{Def}[thm]{Definition}
\newtheorem{lemma}[thm]{Lemma}
\newtheorem{remark}[thm]{Remark}

\newcommand{\ve}{{\varepsilon}}
\newcommand{\N}{{\mathbb{N}}}
\newcommand{\R}{{\mathbb{R}}}
\numberwithin{equation}{section}
\setcounter{tocdepth}{2}

\includeonly
{
     MainFile/Paper,
     References/ref
}

\title{Long time well-posedness of  Whitham-Boussinesq  systems}

\author[M. Oen Paulsen]{Martin Oen Paulsen}
\address{Department of Mathematics\\ University of Bergen\\ Postbox 7800\\ 5020 Bergen\\ Norway}
\email{Martin.Paulsen@UiB.no}

\date{\today}
\keywords{Whitham-Boussinesq; Long time well-posedness; Symmetrizers}
\subjclass[2010]{Primary: 35Q35; Secondary: 76B15, 76B45}

\begin{document}

	\maketitle

\begin{abstract}
	
	\noindent 
	Consideration is given to three different full dispersion Boussinesq systems arising as asymptotic models in the bi-directional propagation of weakly nonlinear surface waves in shallow water. We prove that, under a non-cavitation condition on the initial data, these three systems are well-posed on a time scale of order $\mathcal{O}(\frac1\ve)$, where $\ve$  is a small parameter measuring the weak non-linearity of the waves. This result seems new for one of these systems, even for short time. The two other systems involve surface tension, and for one of them, the non-cavitation condition has to be sharpened when the surface tension is small. The proof relies on suitable symmetrizers and the classical theory of hyperbolic systems. However, we have to track the small parameters carefully in the commutator estimates to get the long time well-posedness.  
	
	Finally, combining our results with the recent ones of Emerald provide a full justification of these systems as water wave models in a larger range of regimes than the classical $(a,b,c,d)$-Boussinesq systems. 
	
\end{abstract}

\section{Introduction}

\subsection{Full dispersion models} 
The Korteweg-de Vries (KdV) equation is an asymptotic model for the unidirectional propagation of small
amplitude, long waves on the surface of an ideal fluid of constant depth. It was introduced in \cite{boussinesq1877,KdV1895} to model the propagation of solitary waves in shallow water with a wide range of applications both mathematically and physically.  However, its dispersion is too strong in high frequencies when compared to the full water wave system. In particular, the KdV equation does not feature wave breaking or peaking waves. To overcome these shortcomings, Whitham introduced in \cite{Whitham1967} an equation with an improved dispersion relation.  He replaced the KdV dispersion with the exact dispersion of the linearized water wave system obtaining the equation
\begin{equation}\label{1-D disp eq}
	\partial_t \zeta + \sqrt{\mathcal{K}_{\mu}}(D) \partial_x \zeta + \ve \zeta \partial_x \zeta = 0,
\end{equation}
for $(x,t) \in \R \times \R^+$, where the function $ \zeta(x,t) \in \R$ denotes the surface elevation and  the operator $\sqrt{\mathcal{K}_{\mu}}(D)$ is the square root of the Fourier multiplier $\mathcal{K}_{\mu}(D)$ defined in frequency by
\begin{equation} \label{mult K}
	K_{\mu}(\xi) = \frac{tanh(\sqrt{\mu}|\xi|)}{\sqrt{\mu}|\xi|}\left(1+\beta\mu\xi^2\right).
\end{equation}
Moreover, $\mu$ and $\ve$ are small parameters related to the level of dispersion and nonlinearity, and $\beta$ is a nonnegative parameter related to the surface tension\footnote{Actually, Whitham introduced the equation formally without the parameters $\mu$ and $\ve$.}$^{,}$\footnote{ He also did not include surface tension, \textit{i.e.} $\beta=0$.}.

Whitham conjectured in \cite{Whitham1967} that equation  \eqref{1-D disp eq} would allow, in addition to the KdV traveling-wave regime, the occurrence of waves of greatest height with a sharp crest as well as the formation of shocks. However, it was not until recently that these phenomena were rigorously proved. We mention among others the existence of periodic waves \cite{ehrnstrom2009traveling}, the existence and stability of traveling waves \cite{ehrnstrom2012existence,arnesen2015existence,stefanov2020small, johnson2020generalized},  the formation of shocks  \cite{Hur17WBreaking,saut2020wave}, Benjamin-Feir instabilities \cite{HenrikCarter,HurJohnson}, the existence of periodic waves of greatest height \cite{ehrnstrom2019whitham}  and solitary waves of greatest height \cite{Truong T 2020}. Note that in the case of surface tension ($\beta>0$), the dynamics appear to be rather different (see \text{e.g.} \cite{KleinLinaresPilodSaut2018} and the references therein).

These results illustrate some mathematical properties uniquely related to an improved dispersion relation, though there are some phenomena that the Whitham equation does not feature due to its unidirectionality. For instance, the Euler equations admit non-modulational instabilities of small-amplitude periodic traveling waves \cite{MacKay1986}, but the unidirectional nature of the Whitham equation is believed to prohibit such instabilities \cite{Deconinck2017}.

Regarding the  two-way propagation of waves at the surface of a fluid and in the long wave regime, Bona, Chen, and Saut derived a three-parameter family of Boussinesq systems \cite{BonaChenSaut2002}
\begin{equation}\label{Boussinesq}
	\begin{cases}
		(1-b\mu\partial_x^2)\partial_t \zeta +  (1+a\mu \partial_x^2)\partial_x v + \ve\partial_x(\zeta v)   = 0 \\
		(1-d\mu\partial_x^2 )\partial_t v + (1+c \mu \partial_x^2)\partial_x \zeta + \ve v \partial_x v    = 0,
	\end{cases}
\end{equation}
%
%
%
%These models approximately describes the two-way propagation of waves in a fluid  of constant depth where the dispersion and nonlinear effects are balanced. 
where $a$, $b$, $c$ and $d$ are real parameters satisfying $a+b+c+d=\frac13$, $\zeta$ is the deviation of the free surface with respect to its rest state, and $v$ approximates the fluid velocity at some height in the fluid domain. Like the KdV equation, the Boussinesq systems are celebrated models for surface waves in coastal oceanography. Analogously to the unidirectional case, one could replace the dispersion by the linearized dispersion of the water wave equations in \eqref{Boussinesq}. These improved dispersion versions are expected to lead to a more \lq\lq accurate\rq\rq \:  description of the full water wave system.  Those systems are commonly referred to as the Whitham-Boussinesq systems or full dispersion Boussinesq systems.

Actually, there are different possibilities of full dispersion Boussinesq systems. This paper will focus on three important ones, linking them to some specific cases of the Boussinesq systems without BBM terms ($b=d=0$). 
To be precise, we introduce the operator $ \mathcal{T}_{\mu}(D)$  corresponding to $\mathcal{K}_{\mu}(D)$ for $\beta=0$, and whose Fourier symbol is defined by
\begin{equation}\label{Op T}
	T_{\mu}(\xi) = \frac{tanh(\sqrt{\mu}|\xi|)}{\sqrt{\mu}|\xi|}.
\end{equation}

First, we consider the system
\begin{equation}\label{full dispersion}%\label{First Wh}
	\begin{cases}
		\partial_t \zeta + \mathcal{K}_{\mu}(D) \partial_x v + \ve\partial_x(\zeta v) = 0 \\
		\partial_t v + \partial_x \zeta + \ve v \partial_x v = 0, 
	\end{cases}
\end{equation}
introduced in \cite{Lannes_Book2013,aceves2013numerical,KalischMoldabayev} without surface tension and in \cite{KleinLinaresPilodSaut2018} with surface tension. Here, as above, $\zeta$ denotes the elevation of the surface around its equilibrium position, while $v$ approximates the fluid velocity at the free surface. In the case zero surface tension, it is proved that \eqref{full dispersion} models solitary waves \cite{nilsson2019solitary} and admit high-frequency (non-modulational) instabilities of small-amplitude periodic traveling waves  \cite{ehrnstrom2019existence}. We also observe that \eqref{full dispersion} is related to \eqref{Boussinesq} by expanding \eqref{mult K} in low frequencies. Indeed, since $K_{\mu}(\xi) \simeq 1+\mu(\beta-\frac13) \xi^2$ by a Taylor expansion we see that \eqref{full dispersion} reduce to \eqref{Boussinesq} with $(a,b,c,d) = (\frac13-\beta,0,0,0)$.

 A second system is obtained by applying the operator \eqref{Op T} to $\partial_{x}\zeta$, which gives
\begin{equation}\label{Whitham Boussinesq}%\label{Second Wh}%\label{Whitham Boussinesq}
	\begin{cases}
		\partial_t \zeta + \partial_x v + \ve\partial_x(\zeta v) = 0 \\
		\partial_t v +  \mathcal{T}_{\mu}(D)\partial_x \zeta + \ve v \partial_x v = 0.
	\end{cases}
\end{equation}
This system was first introduced in \cite{HurPandey2019}, where it is proved that \eqref{Whitham Boussinesq} features Benjamin-Feir (modulational) instabilities. Note that while $\zeta$ plays the same role as for system \eqref{full dispersion}, it is $\mathcal{T}_{\mu}^{-1}(D)v$ which approximates the velocity potential at the free surface in this case. We also observe that the system reduces in the formal limit $\sqrt{\mu}|\xi| \to 0$  to the Boussinesq system \eqref{Boussinesq} with $(a,b,c,d) = (0,0,\frac13,0)$ in low frequencies. 

Finally, we will also consider a full dispersion version of \eqref{Boussinesq} when $\mathcal{T}_{\mu}(D)$ is applied to the nonlinear terms, while $\mathcal{K}_{\mu}(D)$ is applied on the $\partial_x\zeta$. This system reads
\begin{equation}\label{2nd Whitham Boussinesq}
	\begin{cases}
		\partial_t \zeta + \partial_x v + \ve \mathcal{T}_{\mu}(D) \partial_x(\zeta v) = 0 \\
		\partial_t v +  \mathcal{K}_{\mu}(D)\partial_x \zeta + \ve \mathcal{T}_{\mu}(D) (v \partial_x v )= 0.
	\end{cases}
\end{equation}
Here $\zeta$ and $v$ play the same roles as for system \eqref{Whitham Boussinesq}.
It was introduced in \cite{dinvay2017whitham} and has the advantage of being Hamiltonian. Moreover, the existence of solitary waves is proved in  \cite{dinvay2021solitary}. 

\subsection{Full justification}% A fundamental question in the theory of surface water waves is on the strong convergence of a solution to an asymptotic model, Lannes: Ch. 6

%A fundamental issue in the theory of surface water waves is the rigorous derivation and justification of asymptotic models in various regimes. 

%separate point of interest is the rigorous justification of asymptotic models as an approximation of the Euler equations with a free surface.

A fundamental question in the derivation of an asymptotic model is whether its solution converges to the solution of the original physical system.  In particular,  we say that  an asymptotic model is a valid approximation of the Euler equations with a free surface if we can answer the following points in the affirmative \cite{Lannes_Book2013}:
\begin{enumerate}[1.]
	\item  The solutions of the water wave equations exist on the relevant scale $\mathcal{O}(\frac{1}{\ve})$.
	\item The solutions of the asymptotic model exist (at least) on the scale $\mathcal{O}(\frac{1}{\ve})$.

	\item Lastly, we must establish the \textit{consistency} between the asymptotic model and the water wave equations, and then show that the error is of order $\mathcal{O}( \mu\ve t)$ when comparing the two solutions.
	
\end{enumerate}

The first point was proved by Alvarez-Samaniego and Lannes \cite{ alvarez2008large} for surface gravity waves and Ming, Zhang and Zhang \cite{ming2012long} for gravity-capillary waves in the weakly transverse regime, while points $2.$ and $3.$ are specific to the asymptotic model under consideration. For instance, in the case of the Whitham equation, Klein \textit{et al.} \cite{KleinLinaresPilodSaut2018} compared its solution rigorously with those of the KdV equation. In particular, they proved that the difference of two solutions evolving from the same initial datum is bounded by $\mathcal{O}(\ve^2 t)$ for all $0\leq t \lesssim \ve^{-1}$ with $\ve, \mu$ in the KdV-regime:
$$\mathcal{R}_{KdV} =  \{(\ve,\mu ) \: : \: 0\leq \mu \leq 1, \quad \mu = \ve\},$$
which justified the Whitham equation as a water wave model  in this regime by relying on the justification of the KdV equation \cite{Craig1985,Lannes_Book2013}.

On the other hand, due to the improved dispersion relation of \eqref{1-D disp eq}, Emerald \cite{Emerald2021} was able to decouple the parameters $(\ve,\mu)$ and  prove an  error estimate between the Whitham equation and the water wave system with a precision $\mathcal{O}(\mu \ve t)$ for $0\leq t \lesssim \ve^{-1}$ in the shallow water regime:
\begin{equation}\label{Shalow water regime}
	\mathcal{R}_{SW} = \{(\ve,\mu) \: : \: 0\leq \mu \leq 1, \quad 0 \leq \ve \leq 1 \}.
\end{equation}
Consequently, the Witham equation is valid for a larger set of small parameters when compared to the KdV equation.% However, one should note that for $\ve << \mu$, which is the regime we would expect the Whitham equation to feature shocs% Specifically, the result extends the validity to include the case $\mu<<\ve $.  Rart --- litt motsatt...

% the Whitham equation, both are proved to  -.. do classical Klein --- Emerald - explain --- then do boussinesq , and we want the extension. same rythm.\\ 

%\textcolor{red}{put in the right order}
In the case of the Boussinesq systems \eqref{Boussinesq}, consistency was first proved  in \cite{BonaColinLannes2005} for $(\ve, \mu) \in \mathcal{R}_{KdV}$ by relying on intermediate symmetric systems for which the long time well-posedness follows by classical arguments. However the long time well-posedness for the $(a,b,c,d)$ Boussinesq is far from trivial. This result was proved\footnote{In the most dispersive case $(a,b,c,d)=(\frac16,\frac16,0,0)$, the relevant time scale $\mathcal{O}(\varepsilon^{-1})$ is still missing; the best results being on a time scale $\mathcal{O}(\varepsilon^{-\frac23})$ \cite{SautXu2020,SautXuCPDE2020},
(see also \cite{LinaresPilodSaut} on a time scale $\mathcal{O}(\varepsilon^{-\frac12})$ by using dispersive techniques).}
later by Saut, Xu and Wang \cite{SautXu2012,saut2017cauchy}. The proof relies on suitable symmetrizers and hyperbolic theory. 
%Then having the long time existence of the asymptotic model and the full system, one must provide uniform bounds to compare the two solutions. This was established in \cite{BonaColinLannes2005} for $(\ve, \mu) \in \mathcal{R}_{KdV}$.\textcolor{red}{(see also \cite{Butrea JMPA21})}
%
%
%

The natural next step is to consider the Whitham-Boussinesq systems for $(\ve, \mu ) \in \mathcal{R}_{SW}$. \textit{In particular, the goal of this paper is to establish the well-posedness of \eqref{full dispersion}, \eqref{Whitham Boussinesq} and \eqref{2nd Whitham Boussinesq}, with uniform bounds, on time intervals of size $\mathcal{O}(\frac{1}{\ve})$}. Since point $1.$ of the justification is already established, the long-time existence and consistency remain. Using the method of Emerald, one can prove the consistency of any Whitham-Boussinesq system with the water wave system (see also \cite{emerald2021rigorous} for other full dispersion shallow water models). Therefore, having the long time well-posedness theory for \eqref{full dispersion}, \eqref{Whitham Boussinesq} and \eqref{2nd Whitham Boussinesq} will provide the final step for the full justification of these systems. 
%Though, the long time existence of \eqref{First Wh}, \eqref{Whitham Boussinesq} and \eqref{Third Wh} is still an open problem.

\subsection{Former well-posedness results}\label{Former well-posedness results}

Regarding system \eqref{full dispersion}, we know from previous studies that surface tension plays a fundamental role in the well-posedness theory. In fact, when $\beta=0$ the initial value problem associated to system \eqref{full dispersion} is probably ill-posed unless $\zeta>0$ (see the formal argument in Section 4 in \cite{KleinLinaresPilodSaut2018}). We refer to \cite{PeiWang2019} for a well-posedness under the non-physical condition $\zeta \geq c_0>0$. When surface tension is taken into account, system \eqref{full dispersion} was proved to be locally well-posed by Kalisch and Pilod \cite{KalischPilod2019} for $(\zeta,v) \in H^s(\mathbb R) \times H^{s+\frac12}(\mathbb R)$, $s>\frac52$, by using a modified energy method. We also refer to the work by Wang \cite{Wang2020} for an alternative proof using a nonlocal symmetrizer. However, it is worth noting that all these well-posedness resutls were proved on a short time without considering the small parameters $\ve$ and $\mu$. Finally, in the formal limit $\sqrt{\mu} |\xi| \rightarrow 0$, one recovers the Boussinesq system corresponding to  $(a,b,c,d) = (\frac13-\beta,0,0,0)$. This system has been proved in \cite{saut2017cauchy} to be well-posed on large time for $\beta >\frac{1}{3}$, while it is known to be ill-posed for $\beta < \frac{1}{3}$ \cite{ambrose2019global}. This is a formal indication that the threshold $\beta=\frac13$ will play an important role for the long well-posedness of \eqref{full dispersion}. We will come back to this issue in the next section (see Figure 1).

As far as we know, there are no well-posedness results for system \eqref{Whitham Boussinesq} even on short time. In the formal limit $\sqrt{\mu} |\xi| \rightarrow 0$, it reduces to the Boussinesq system corresponding to  $(a,b,c,d) = (0,0,\frac13,0)$, which is believed to be ill-posed \cite{KleinLinaresPilodSaut2018}.

Lastly, attention is turned to \eqref{2nd Whitham Boussinesq}. There are several results when $\beta=0$. In this case, Dinvay  \cite{dinvay2019well} proved short time local well-posedness for $(\zeta,v) \in H^{s+\frac12}(\mathbb R)\times H^{s+1}(\mathbb R)$, $s \geq 0$.  The proof is based on standard hyperbolic theory that involves a modified energy similar to \cite{KalischPilod2019}. This result was then extended in \cite{Achenef2020well}  by exploiting the smoothing effect of the linear flow using dispersive techniques improving the regularity to $H^{s}(\mathbb R)\times H^{s+\frac12}(\mathbb R)$, $s >-\frac1{10}$. Furthermore, when considering small data, the system is globally well-posed due to the control of the Hamiltonian. The estimates derived in the aforementioned papers are not uniform\footnote{After the completion of this work, we learned from Vincent Duch{\^e}ne that Louis Emerald proved in his PhD thesis long time well-posedness results for a class of Whitham-Boussinesq systems containing \eqref{2nd Whitham Boussinesq} in the case $\beta=0$ \cite{Emerald thesis} (see also \cite{Duschene}). His results deal with different systems than the one in this paper and thus complete each other well.} in $\mu$. However, a recent study by Tesfahun \cite{tesfahun2022long} proved that the system corresponding to \eqref{2nd Whitham Boussinesq} in the 2-dimensional case and without surface tension is well-posed on a time interval of order $\mathcal{O}(\frac{1}{\sqrt{\ve}})$ in the KdV$-$regime. Indeed, dispersive techniques are tailored-made for short waves and therefore seem not to be well suited to capture the long wave regime (see for instance \cite{LinaresPilodSaut} for similar results for the Boussinesq system in the KdV-KdV case).  Finally, in the case of surface tension $\beta>0$, Dinvay proved in \cite{dinvay2020SurfaceTension} the short time local well-posedness of the system by using modified energy techniques. This result also implies the small data global well-posedness in this case.

%%%%%%%----------------%%%%%%%%%--------------

\subsection{Main results} 
In the current paper, we take into account the small parameters $(\ve,\mu)$ and prove the well-posedness of \eqref{full dispersion}, \eqref{Whitham Boussinesq} and \eqref{2nd Whitham Boussinesq} on a time scale $\mathcal{O}(\frac1\ve)$. 

In the case of systems \eqref{Whitham Boussinesq} and \eqref{2nd Whitham Boussinesq}, we will work under the standard non-cavitation condition. 
\begin{Def}[Non-cavitation condition]\label{nonCavitation} Let $s >\frac{1}{2}$ and $\ve \in (0,1)$. We say the initial surface elevation $\zeta_0 \in H^s(\mathbb{R})$ satisfies the \lq\lq non-cavitation condition\rq\rq\: if there exist $h_0\in(0,1)$ such that 
\begin{equation}\label{NonCav}
	1+\ve\zeta_0(x) \geq h_0, \quad \text{for all} \: \: \: x\in \mathbb{R} .
\end{equation}
\end{Def}

In the case of system \eqref{full dispersion}, we will distinguish between the cases $\beta \geq \frac{1}{3}$ and $0<\beta < \frac{1}{3}$. More precisely, for $\beta \geq \frac{1}{3}$, we will also assume the non-cavitation condition in Definition \ref{nonCavitation}, while for $0<\beta < \frac{1}{3}$, we have to impose the following $\beta-$dependent surface condition.

\begin{Def}[$\beta-$dependent surface condition]\label{nonCavitationSurfaceTension} Let $s >\frac{1}{2}$, $\ve \in (0,1)$ and $\beta\in (0, \frac{1}{3})$. We say the initial surface elevation $\zeta_0 \in H^s(\mathbb{R})$ satisfy the \lq\lq$\beta-$dependent surface condition\rq\rq\: if 
\begin{equation}\label{Beta NonCav}
	1+ \ve \zeta_0(x) \geq h_\beta, \quad \text{for all}\:\: \:  x\in \mathbb{R},
\end{equation}
where $h_{\beta} = 1 - \frac{\beta}{2}$.
\end{Def}

\begin{remark}
For $0<\beta<\frac{1}{3}$, $K_{\mu}(\xi)$ is not a monotone function for positive frequencies as we can be seen in the figure below. 
This is why we choose to impose condition \eqref{Beta NonCav} in this case.
\begin{figure}[h]
	\hspace{-0.8cm}
	\includegraphics[scale=0.3]{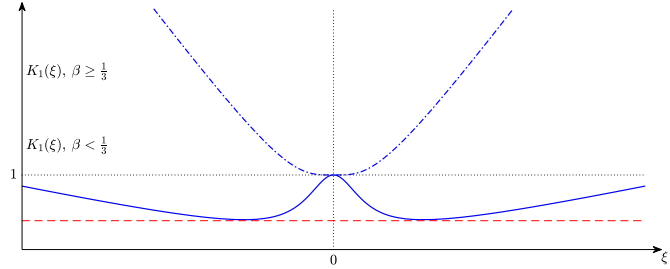}
	\caption{\small The multipiler $K_1(\xi)$ in the case when $\beta \geq \frac{1}{3}$ (dash-dot) and   $\beta < \frac{1}{3}$ (line). The horizontal line (dashed) specifies the minimum.}%, similar to Figure $1.1$.}
\end{figure}
\end{remark}
\begin{remark}
One can see the $\beta-$dependent surface condition as a constraint on the initial data that is related to the minimum of the function $K_{\mu}(\xi)$. For instance, if we consider the multiplier in Figure $1$, then an admissible initial datum must satisfy the constraint in the figure below. 
%
%
%

%MultiplierK
%
%
\begin{figure}[h!]
	\hspace{-1cm}
	\includegraphics[scale=0.3]{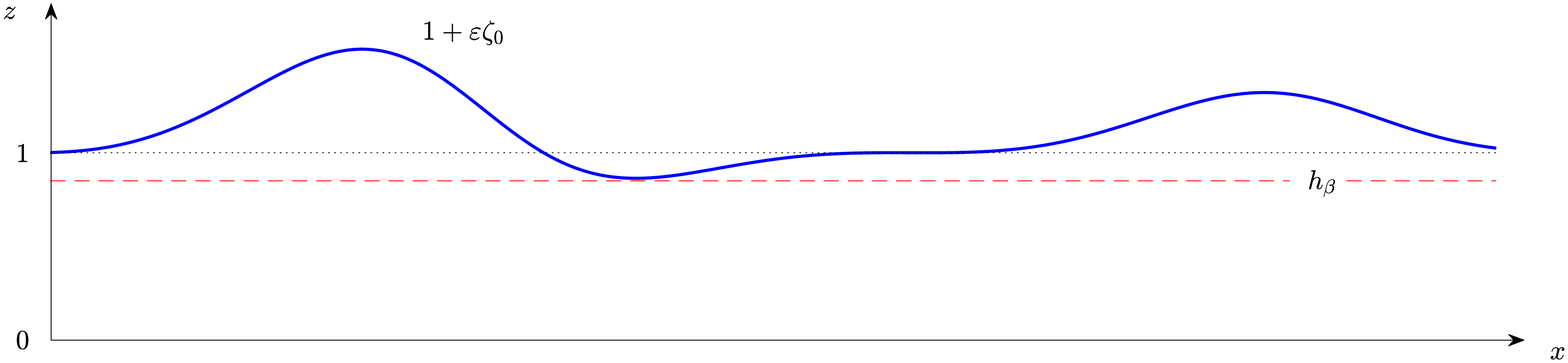}
	\caption{\small The blue line denotes the initial surface elevation $1 + \ve \zeta_0$, and is restricted by  $h_{\beta}$ when  $0<\beta<\frac{1}{3}$.}%, similar to Figure $1.1$.}
\end{figure}
\end{remark}

Before we state the main results, we define a natural solution space for systems \eqref{full dispersion} and \eqref{Whitham Boussinesq}. %associated to  \eqref{full dispersion}.%  by comparing the linear terms.

\begin{Def}\label{Function space}
	We define the norm on the function space $V^s_{\sqrt{\mu}}(\mathbb{R})$ to be 
	\begin{equation*}
		\|(\zeta, v)\|_{V^s_{\sqrt{\mu}}}^2: = \|\zeta\|_{H^s}^2 + \|v\|_{H^s}^2 + \sqrt{\mu} \|D^{\frac{1}{2}}v\|_{H^{s}}^2.
	\end{equation*}
\end{Def}	
%
%
%
%Then the  main result reads:
%
%
%
\begin{thm}\label{Well-posedness long time full dispersion} Let $s> 2$, $\beta>0$ and $\ve, \mu \in (0,1)$. Assume that $(\zeta_0,v_0 ) \in V^s_{\sqrt{\mu}}(\mathbb{R})$ satisfies either the non-cavitation condition \eqref{NonCav} in the case $\beta \geq 1/3$ or the $\beta-$dependent surface condition \eqref{Beta NonCav} in the case $0<\beta<\frac{1}{3}$.  Moreover,  we assume that
\begin{equation}\label{eps beta}
0< \ve \leq \frac{1}{k^2_{\beta} \|(\zeta_0,v_0 )\|_{V^s_{\sqrt{\mu}}}} \quad \text{for} \quad k^2_{\beta} 
=
\begin{cases}
	\frac{c}{\beta} \: \:  \quad \text{for} \:\: \: 0< \beta < \frac{1}{3} 
	\\
	c \beta \quad \text{for} \:  \: \: \beta\geq \frac{1}{3}
\end{cases}
\end{equation}
for some $c>0$. Then there exists a positive $T$ given by
\begin{equation}\label{Time T}
	T = \frac{1}{ k^{1}_{\beta}\|(\zeta_0,v_0 )\|_{V^s_{\sqrt{\mu}}}} \quad \text{with} \quad  k_{\beta}^1 = 
	\begin{cases}
		\frac{c}{\beta} \quad \quad 
		\text{for} \: \:  0<\beta<\frac{1}{3} 
		\\
		c\beta^2
		\: \:  \: \quad\text{for} \: \:   \beta\geq \frac{1}{3} 
	\end{cases}
\end{equation}
such that \eqref{full dispersion} admits a unique solution 
$$(\zeta, v) \in C([0,T/\ve] : V^s_{\sqrt{\mu}}(\mathbb{R})) \cap C^1([0,T/\ve] : V^{s-\frac{3}{2}}_{\sqrt{\mu}}(\mathbb{R})),$$
that satisfies 
\begin{equation}\label{bound in terms of initial data}
		\sup\limits_{t \in [0,T/\ve ]} \|(\zeta,v)\|_{V^s_{\sqrt{\mu}}} \lesssim \| (\zeta_0,v_0) \|_{V^s_{\sqrt{\mu}}}.
\end{equation}

Furthermore, there exist a neighborhood $\mathcal{U}$ of $(\zeta_0,v_0)$ such that the flow map
%
%
%\
\begin{equation*}
	F_{T,\ve, \mu }^s: V_{\sqrt{\mu}}^s(\R) \: \rightarrow \: C([0,\tfrac{T}{2\ve}]; V^s_{\sqrt{\mu}}(\R)), \quad 
	(\zeta_0,v_0) \: \: \mapsto \: \: (\zeta, v),
\end{equation*}
is continuous.

\end{thm}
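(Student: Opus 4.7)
The plan is to establish \emph{a priori} energy estimates on $[0,T/\ve]$ via a symmetrizer tailored to \eqref{full dispersion}, then construct solutions by a Friedrichs mollification scheme, and conclude uniqueness and continuous dependence by energy arguments at reduced regularity. First I would recast \eqref{full dispersion} in quasilinear form $\partial_t U + A(U,D)\partial_x U = 0$ with $U = (\zeta, v)^T$ and
\[
A(U,D) = \begin{pmatrix}\ve v & \mathcal{K}_\mu(D) + \ve\zeta \\ 1 & \ve v\end{pmatrix},
\]
and introduce the symmetrizer $S(U,D) = \mathrm{diag}(1,\,\mathcal{K}_\mu(D) + \ve\zeta)$, for which $SA$ is self-adjoint up to commutators of size $\mathcal{O}(\ve)$. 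The associated modified energy
\[
E^s(U) = \tfrac12\|\zeta\|_{H^s}^2 + \tfrac12\bigl\langle (\mathcal{K}_\mu(D) + \ve\zeta)\langle D\rangle^s v,\,\langle D\rangle^s v\bigr\rangle_{L^2}
\]
should be shown equivalent to $\|U\|_{V^s_{\sqrt\mu}}^2$: the high-frequency asymptotics $K_\mu(\xi) \sim \beta\sqrt\mu|\xi|$ produce the $\sqrt\mu\|D^{1/2}v\|_{H^s}^2$ term in Definition \ref{Function space}, while coercivity comes either from $K_\mu\geq 1$ plus \eqref{NonCav} when $\beta\geq\tfrac13$, or from combining the sharp pointwise lower bound of $K_\mu$ with \eqref{Beta NonCav} when $0<\beta<\tfrac13$. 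The $\beta$-dependence of the resulting equivalence constants is exactly what dictates the factors $k_\beta^1$ and $k_\beta^2$ in \eqref{eps beta}--\eqref{Time T}.

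Second, I would apply $\langle D\rangle^s$ to \eqref{full dispersion} and pair with $S(U,D)\langle D\rangle^s U$ in $L^2$. The principal contribution cancels by near self-adjointness of $SA$, leaving three families of remainders: commutators $[\langle D\rangle^s, A]\partial_x U$, which carry an explicit $\ve$ since $[\langle D\rangle^s,\mathcal{K}_\mu]=0$ and every non-trivial coefficient of $A$ comes with $\ve$; the antisymmetric part of $SA$, which is likewise $\mathcal{O}(\ve)$; and the contribution of $\partial_t S = \mathrm{diag}(0,\ve\partial_t\zeta)$, handled by substituting the $\zeta$-equation and absorbing the resulting $\mathcal{K}_\mu\partial_x v$ into the $\sqrt\mu\|D^{1/2}v\|_{H^s}^2$ piece of the $V^s_{\sqrt\mu}$-norm. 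Kato--Ponce and fractional commutator estimates applied to the Whitham symbols $\mathcal{K}_\mu^{\pm 1/2}$ via pseudodifferential calculus then give $\tfrac{d}{dt}E^s \lesssim \ve\, C(\|U\|_{V^s_{\sqrt\mu}})\,E^s$ with constants uniform in $\mu$, whence Gronwall produces the lifespan $T/\ve$ of \eqref{Time T} and the uniform bound \eqref{bound in terms of initial data}.

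For existence, I would regularize \eqref{full dispersion} by a Friedrichs mollifier $J_\delta$, solve the resulting ODE in $V^s_{\sqrt\mu}(\R)$ by Cauchy--Lipschitz, and invoke the above estimate (which is stable in $\delta$) to extend the lifespan to $T/\ve$ uniformly. Passing to the limit yields a solution in $C([0,T/\ve];V^s_{\sqrt\mu})\cap C^1([0,T/\ve];V^{s-3/2}_{\sqrt\mu})$, the $3/2$-derivative loss in time coming from the highest-order term $\mathcal{K}_\mu\partial_x v$ in the $\zeta$-equation. Uniqueness follows by running the energy method on the difference of two solutions one level of regularity below (using the persistence of the non-cavitation or $\beta$-dependent surface condition on a slightly shorter interval), and continuity of the flow map on $[0,T/(2\ve)]$ follows from a Bona--Smith argument: smooth the data as $(J_\delta\zeta_0,J_\delta v_0)$, combine the high-norm a priori bound with the low-norm stability estimate, and pass to the limit in $\delta$.

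The main obstacle I anticipate lies in the weakly capillary regime $0<\beta<\tfrac13$: there $K_\mu$ is non-monotone and dips below $1$, so coercivity of $S$ relies on a sharp pointwise lower bound for $K_\mu$ delicately coupled with \eqref{Beta NonCav}, and the commutator constants involving $\mathcal{K}_\mu^{-1/2}$ degenerate as $\beta\to 0^+$ -- this is precisely the origin of the $\beta^{-1}$ scaling of $k_\beta^2$ in \eqref{eps beta}. Making the $\beta$-dependence explicit at every commutator and every Kato--Ponce step, while retaining uniformity in $\mu$, is the most technically delicate point of the argument.
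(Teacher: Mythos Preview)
Your overall architecture coincides with the paper's: the same symmetrizer $Q=\mathrm{diag}(1,\mathcal{K}_\mu(D)+\ve\zeta)$ and modified energy, regularization by mollifiers, a bootstrap to push the lifespan to $T/\ve$ uniformly in the mollification parameter, and a Bona--Smith argument for continuity of the flow. The $H^s$-level a priori estimate is indeed routine once the symmetrizer is in hand.

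The gap is in the step you pass over most quickly: the $V^0_{\sqrt\mu}$-level estimate for the \emph{difference} of two solutions, which drives both uniqueness and the Bona--Smith limit. There one meets $(u_1\partial_x w,\,\mathcal{K}_\mu(D)w)_{L^2}$, and after commuting and integrating by parts one must control $\|[\sqrt{\mathcal{K}_\mu},u_1]\partial_x w\|_{L^2}$ by $\|u_1\|_{H^s}\bigl(\|w\|_{L^2}+\mu^{1/4}\|D^{1/2}w\|_{L^2}\bigr)$ \emph{with the correct power of $\mu$}. Generic pseudodifferential calculus on $\sqrt{K_\mu}\in\mathcal{S}^{1/2}$ yields only a bound by $\|w\|_{H^{1/2}}$ with no $\mu^{1/4}$, which fails to close in $V^0_{\sqrt\mu}$. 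The paper's solution is a frequency decomposition $\mathcal{K}_\mu=(\chi^{(1)}_\mu)^2\mathcal{K}_\mu+(\chi^{(2)}_\mu)^2\sigma_{\mu,1/2}^2-(\chi^{(2)}_\mu)^2\sigma_{\mu,0}^2$ together with hand-built $\mu$-uniform commutator bounds on each piece (Lemma~\ref{Commutator L2}); this is flagged there as the most technical point and is missing from your sketch. Two minor corrections: $\mathcal{K}_\mu^{-1/2}$ is never used, and the $\beta^{-1}$ in $k_\beta^2$ for small $\beta$ stems from the coercivity constant in $E_s\geq c_\beta^1\|U\|_{V^s_{\sqrt\mu}}^2$ (via $K_\mu(\xi)\geq\beta+c\beta\sqrt\mu|\xi|$), not from commutator degeneration; and for $\beta\geq\tfrac13$ one has $K_\mu(\xi)\geq(1-\tfrac{h_0}{2})+c\sqrt\mu|\xi|$ rather than $K_\mu\geq 1$, the margin $1-\tfrac{h_0}{2}$ being exactly what absorbs $\ve\zeta$ under \eqref{NonCav}.
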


\begin{remark} The proof of the continuous dependence on long time of order $\mathcal{O}(\frac1\ve)$ seems to be new for Boussinesq type systems. It relies on the Bona-Smith argument and could be easily adapted for the $(a,b,c,d)$-Boussinesq systems.

\end{remark}

\begin{remark}\label{physical time} 
	A heuristic argument can be made to argue  that the physical solutions appear when the initial data is of order one in terms of $\ve$ \cite{Personal comm}. 
	%The  main interest is for initial data of size $\| (\zeta_0,v_0) \|_{V^s_{\sqrt{\mu}}} = \mathcal{O}_{\ve}(1)$. 
	To illustrate this point, take the Burgers equation
	\begin{equation*}
		u_t - \ve u u_x = 0,
	\end{equation*}
	a simple model that can describe an inviscid fluid in shallow water theory. Then by the energy method, it is easy to deduce that the time of existence is of order $T \sim \frac{1}{\ve \|u_0\|_{H^s}}$ for $s> \frac{3}{2}$. As a consequence, we have that $T \sim  \frac{1}{\ve}$ if the initial data is of size $\mathcal{O}_{\ve}(1)$.
\end{remark}

\begin{remark}\label{Remark 1 on T - beta}
	%Even though $\ve$ and $T$ both depend on the surface tension, we have the long time existence when the initial data satisfy $\| (\zeta_0,v_0) \|_{V^s_{\sqrt{\mu}}} \sim 1$.  Indeed, 
	 If $\beta  \sim 1$ then $\ve \lesssim 1$ by \eqref{eps beta}, and so  \eqref{Time T} implies that $T/\ve \sim 1/\ve$. On the other hand, in the  case of having $\beta <<1$, \eqref{eps beta} would impose $\ve \lesssim \beta$ , and by  \eqref{Time T} we have the existence on the timescale $T/\ve \sim \beta/\ve$.

%	Formally, we could therefore expect \eqref{full dispersion} to be ill-posed when $\beta = 0$, but this is not included in the main result of the precent paper.  
\end{remark}

\begin{remark}
	Regarding the $\beta-$dependent surface condition, we demonstrate that the solution will persist for a long time and satisfy $\ve \zeta (x,t) \geq -c \beta $  for some constant  $c>0$. One should also note that this is coherent since $0<\ve \lesssim \beta$ as explained in the previous remark. For a related discussion on this physical condition see Subsection \ref{Former well-posedness results}.
\end{remark}

Next, we state a   well-posedness result for \eqref{Whitham Boussinesq}. The system does not feature any surface tension but is well-posed for a long time under the standard non-cavitation condition.

\begin{thm}\label{W-P Whitham Boussinesq} Let $s> \frac{3}{2}$ and $\mu \in (0,1)$. Assume that $(\zeta_0,v_0) \in V^s_{\sqrt{\mu}}(\mathbb{R})$ satisfies the non-cavitation condition \eqref{NonCav}. Also assume that for some $c>0$ that $0<\ve \leq c\big{(}\|(\zeta_0, v_0)\|_{V_{\sqrt{\mu}}^s}\big{)}^{-1}$.
Then there exists $T = c \big{(}\|(\zeta_0, v_0)\|_{V^s_{\sqrt{\mu}}}\big{)}^{-1}$ such that \eqref{Whitham Boussinesq} admits a unique solution
$$(\zeta, v) \in C([0,T/ \ve] : V^s_{\sqrt{\mu}}(\mathbb{R})) \cap C^1([0,T/\ve] : V^{s-1}_{\sqrt{\mu}}(\mathbb{R})),$$
that satisfies
\begin{equation*}
	\sup\limits_{t \in [0,T/\ve ]} \|(\zeta,v)\|_{V^s_{\sqrt{\mu}}} \lesssim \| (\zeta_0,v_0) \|_{V^s_{\sqrt{\mu}}}.
\end{equation*}

	In addition, the flow map is continuous with respect to the initial data. 	
\end{thm}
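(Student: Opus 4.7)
The plan is to apply a quasilinear energy method tailored to the non-local symbol $\mathcal{T}_\mu(D)$, carefully tracking the parameters $\ve$ and $\mu$. First I rewrite \eqref{Whitham Boussinesq} in quasilinear form as $\partial_t U + A(U)\partial_x U = 0$ with $U = (\zeta, v)^T$ and
\[
A(U) = \begin{pmatrix} \ve v & 1+\ve\zeta \\ \mathcal{T}_\mu(D) & \ve v \end{pmatrix}.
\]
A natural symmetrizer is $\Sigma(U) = \mathrm{diag}\bigl((1+\ve\zeta)^{-1},\, \mathcal{T}_\mu^{-1}(D)\bigr)$, which under the non-cavitation condition \eqref{NonCav} is positive definite and, combined with the equivalence $T_\mu^{-1}(\xi) \simeq 1+\sqrt{\mu}|\xi|$, makes the associated energy
\[
E^s(U) = \tfrac{1}{2}\bigl\langle (1+\ve\zeta)^{-1}\Lambda^s\zeta,\Lambda^s\zeta\bigr\rangle + \tfrac{1}{2}\bigl\langle \mathcal{T}_\mu^{-1}(D)\Lambda^s v,\Lambda^s v\bigr\rangle
\]
comparable to $\|U\|_{V^s_{\sqrt\mu}}^{2}$ uniformly in $\mu\in(0,1)$.

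The key step is to differentiate $E^s(U)$ in time. By the choice of $\Sigma$, the principal-order contributions $-\int \Lambda^s\zeta\,\partial_x\Lambda^s v$ and $-\int \Lambda^s v\,\partial_x\Lambda^s\zeta$ coming from the leading symbols cancel exactly after integration by parts. The remaining contributions are (i) the weight-derivative term $\tfrac{\ve}{2}\int (1+\ve\zeta)^{-2}\partial_t\zeta\,(\Lambda^s\zeta)^{2}$, tame after substituting the equation itself and using Sobolev embedding for $s>3/2$; (ii) transport-type terms $\ve v\partial_x\Lambda^s U\cdot\Lambda^s U$, absorbed by integration by parts at the cost of $\ve\|\partial_x v\|_{L^\infty}E^s(U)$; and (iii) commutators such as $[\Lambda^s,\zeta]\partial_x v$, $[\Lambda^s,v]\partial_x\zeta$ and $[\mathcal{T}_\mu^{-1}(D),v]\partial_x\Lambda^s v$, controlled by Kato--Ponce and by the Coifman--Meyer calculus applied to the smooth symbols $T_\mu^{\pm 1}$, all uniformly in $\mu$. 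Since each remainder carries an explicit factor of $\ve$ inherited from the nonlinear structure of \eqref{Whitham Boussinesq}, the outcome is
\[
\frac{d}{dt}E^s(U) \,\lesssim\, \ve\bigl(E^s(U)\bigr)^{3/2},
\]
which integrates to a lifespan of order $T/\ve$ with $T\sim 1/\|U_0\|_{V^s_{\sqrt\mu}}$, together with the uniform bound \eqref{bound in terms of initial data}.

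Actual solutions are then produced by a Friedrichs regularization: solve the truncated ODE $\partial_t U^\iota + J_\iota A(U^\iota)\partial_x J_\iota U^\iota = 0$ in $L^2$, verify that the above energy estimate holds uniformly in $\iota$ and that the non-cavitation condition \eqref{NonCav} persists, and then extract a convergent subsequence by weak-$\ast$ compactness combined with an Aubin--Lions argument. Uniqueness follows by running the analogous energy estimate at regularity $s-1>\tfrac12$ for the difference of two solutions, where Moser-type products in $V^{s-1}_{\sqrt\mu}$ are harmless. For the continuous dependence, I would implement a Bona--Smith argument: regularize the data as $U_0^\nu\to U_0$ with $\|U_0^\nu-U_0\|_{V^{s-1}_{\sqrt\mu}}\to 0$ and $\|U_0^\nu\|_{V^s_{\sqrt\mu}}$ bounded, compare the resulting solutions in the low-order norm, and interpolate against the uniform $V^s_{\sqrt\mu}$ bound to obtain convergence in $V^s_{\sqrt\mu}$.

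The main obstacle is in step two: obtaining commutator estimates uniform in $\mu\in(0,1)$ for the non-local operator $\mathcal{T}_\mu^{-1}(D)$, whose symbol behaves like $\sqrt\mu|\xi|$ at frequencies $|\xi|\gtrsim 1/\sqrt\mu$. The term $[\mathcal{T}_\mu^{-1}(D),v]\partial_x\Lambda^s v$ is the most delicate: naively it loses a full derivative on $v$, and the point is to exploit the smoothness of $T_\mu^{-1}$ together with the $\sqrt\mu$ factor available at high frequency so that this loss is exactly absorbed into the quantity $\sqrt\mu\|D^{1/2}\Lambda^s v\|_{L^2}^{2}$ contained in $\|U\|_{V^s_{\sqrt\mu}}^{2}$. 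This is precisely why the norm $V^s_{\sqrt\mu}$ is the natural functional setting for \eqref{Whitham Boussinesq}, and why the lifespan $T$ can be chosen independent of $\mu$.
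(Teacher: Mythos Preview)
Your overall scheme---symmetrize, derive $\frac{d}{dt}E^s\lesssim\ve(E^s)^{3/2}$, regularize, then close uniqueness and continuous dependence by a Bona--Smith argument---matches the paper. Your symmetrizer $\Sigma=\mathrm{diag}\bigl((1+\ve\zeta)^{-1},\,\mathcal{T}_\mu^{-1}(D)\bigr)$ is a legitimate but \emph{dual} choice to the one the paper actually uses, namely $\mathcal{Q}=\mathrm{diag}\bigl(\mathcal{T}_\mu(D),\,1+\eta\bigr)$ together with the elevated differentiation level $J^sJ^{1/2}_\mu$, where $J^{1/2}_\mu$ is the scaled Bessel potential with symbol $\langle\sqrt\mu\xi\rangle^{1/2}\simeq T_\mu(\xi)^{-1/2}$. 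Your choice makes the off-diagonal cancellation trivial (both entries of $\Sigma A$ are $\partial_x$), whereas the paper must handle commutators such as $\partial_x[\sqrt{\mathcal{T}_\mu},\eta]$; conversely, in the paper the weight on $v$ is multiplicative, so the transport term integrates by parts directly.

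The genuine gap is in your $(2,2)$ block. With the nonlocal weight $\mathcal{T}_\mu^{-1}$ on $v$, the term $\bigl\langle \mathcal{T}_\mu^{-1}(\ve v\,\partial_x\Lambda^s v),\,\Lambda^s v\bigr\rangle$ does \emph{not} integrate by parts as claimed in your item (ii): after any such manipulation one is left either with $\bigl\langle[\mathcal{T}_\mu^{-1},v]\partial_x\Lambda^s v,\,\Lambda^s v\bigr\rangle$ or with $\bigl\langle(\partial_x v)\mathcal{T}_\mu^{-1}\Lambda^s v,\,\Lambda^s v\bigr\rangle$, and both carry a factor $\sqrt\mu\,\|v\|_{H^{s+1}}$ (respectively $\mu\|v\|_{H^{s+1}}^2$ through $\|\mathcal{T}_\mu^{-1}\Lambda^s v\|_{L^2}$), while $V^s_{\sqrt\mu}$ only controls $\mu^{1/4}\|v\|_{H^{s+1/2}}$. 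Your proposed absorption into $\sqrt\mu\|D^{1/2}\Lambda^s v\|_{L^2}^2$ is therefore half a derivative short. The fix is to distribute $\mathcal{T}_\mu^{-1}=\mathcal{T}_\mu^{-1/2}\mathcal{T}_\mu^{-1/2}$ across the pairing and commute only one square root, reducing to $[\mathcal{T}_\mu^{-1/2}\Lambda^s,v]\partial_x v$. Since $\mathcal{T}_\mu^{-1/2}\simeq J^{1/2}_\mu$, this is precisely the paper's commutator $[J^sJ^{1/2}_\mu,f]\partial_x g$, for which a $\mu$-specific estimate with $\|J^{1/2}_\mu g\|_{L^2}$ (not $\|g\|_{H^{1/2}}$) on the right is proved by hand; an off-the-shelf Coifman--Meyer bound on $T_\mu^{\pm1}$ as $S^1$ symbols does not deliver this. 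After this correction the two approaches essentially coincide: they differ only in where one parks the operator $\mathcal{T}_\mu^{-1/2}\simeq J^{1/2}_\mu$.
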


\begin{remark}
As far as we know, Theorem \ref{W-P Whitham Boussinesq} is the first well-posedness result for system \eqref{Whitham Boussinesq}. 
\end{remark}

Similarly, we can combine the techniques used to prove Theorem \ref{Well-posedness long time full dispersion} and Theorem \ref{W-P Whitham Boussinesq} to establish the long time well-posedness of \eqref{2nd Whitham Boussinesq} in the space:

\begin{Def}\label{2nd Function space}
	Define the norm on $X^s_{\beta, \mu}(\R)$ to be
	\begin{equation*}
		\|(\zeta,v)\|_{X^s_{\beta,\mu}}^2 : = \|\zeta\|_{H^s}^2 + \beta\mu\|D^1\zeta\|_{H^s}^2 + \| v \|^2_{H^s}   + \sqrt{\mu}\| D^{\frac{1}{2}} v \|^2_{H^s}.
	\end{equation*}
\end{Def}
\begin{thm}\label{W-P 2nd Whitham Boussinesq} Let $s> \frac{3}{2}$, $\beta>0$ and $\mu \in (0,1)$. Assume that $(\zeta_0,v_0) \in X^s_{\beta,\mu}(\mathbb{R})$ satisfies the non-cavitation condition \eqref{NonCav}. Also assume that for some $c>0$ that $0<\ve \leq c\big{(}\|(\zeta_0, v_0)\|_{X^s_{\beta,\mu}}\big{)}^{-1}$.
	Then there exists $T = c \big{(}\|(\zeta_0, v_0)\|_{X^s_{\beta,\mu}}\big{)}^{-1}$ such that \eqref{2nd Whitham Boussinesq} admits a unique solution
	$$(\zeta, v) \in C([0,T/ \ve] : X^s_{\beta,\mu}(\mathbb{R}) \cap C^1([0,T/\ve] : X^{s-1}_{\beta,\mu}(\mathbb{R})),$$
	that satisfies
	\begin{equation*}
		\sup\limits_{t \in [0,T/\ve ]} \|(\zeta,v)\|_{ X^s_{\beta,\mu}} \lesssim \| (\zeta_0,v_0) \|_{ X^s_{\beta,\mu}}.
	\end{equation*}
	
	In addition, the flow map is continuous with respect to the initial data. \\
\end{thm}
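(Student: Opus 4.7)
The plan is to combine the two symmetrization techniques used to prove Theorem \ref{Well-posedness long time full dispersion} and Theorem \ref{W-P Whitham Boussinesq}, since system \eqref{2nd Whitham Boussinesq} carries the dispersive term $\mathcal{K}_{\mu}(D)\partial_x\zeta$ from \eqref{full dispersion} (which is responsible for the surface-tension piece $\beta\mu\|D\zeta\|_{H^s}^2$ in the norm of $X^s_{\beta,\mu}$) together with the operator $\mathcal{T}_{\mu}(D)$ in front of every nonlinearity, as in \eqref{Whitham Boussinesq} (which is responsible for the weight $\sqrt{\mu}\|D^{1/2}v\|_{H^s}^2$ on $v$). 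Accordingly, I would introduce the modified energy
\begin{equation*}
E^s(U) = \frac{1}{2}\bigl((1-\beta\mu\partial_x^2)\Lambda^s\zeta,\Lambda^s\zeta\bigr)_{L^2} + \frac{1}{2}\bigl(\mathcal{T}_{\mu}^{-1}(D)\Lambda^s v,\Lambda^s v\bigr)_{L^2},
\end{equation*}
with $\Lambda^s = (1-\partial_x^2)^{s/2}$. Inspection of the Fourier symbols $1+\beta\mu\xi^2$ and $\sqrt{\mu}|\xi|/\tanh(\sqrt{\mu}|\xi|)$ shows that $E^s(U)\sim \|(\zeta,v)\|_{X^s_{\beta,\mu}}^2$, with constants independent of $\mu$ and $\beta$.

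The key point is then to prove the differential inequality $\frac{d}{dt}E^s(U(t)) \leq \ve\, C\bigl(\|(\zeta,v)\|_{X^s_{\beta,\mu}}\bigr)\, E^s(U(t))$. The linear part of $\frac{d}{dt}E^s$ vanishes exactly thanks to the structural identity $\mathcal{T}_{\mu}^{-1}\mathcal{K}_{\mu} = 1-\beta\mu\partial_x^2$: after integration by parts the contribution of $\partial_x v$ weighted by $1-\beta\mu\partial_x^2$ cancels that of $\mathcal{K}_{\mu}\partial_x\zeta$ weighted by $\mathcal{T}_{\mu}^{-1}$. For the nonlinearities I would first use $\mathcal{T}_{\mu}^{-1}\mathcal{T}_{\mu}=I$ to eliminate the $\mathcal{T}_{\mu}$ factors from the pairings coming from the second equation, reducing them to the usual $(\Lambda^s v,\Lambda^s(v\partial_x v))$ that is handled by the Kato--Ponce commutator estimate (using $s>\frac{3}{2}$ so that $\partial_x v \in L^\infty$). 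For the first equation, the same trick combined with $(1-\beta\mu\partial_x^2)\mathcal{T}_{\mu}=\mathcal{K}_{\mu}$ turns the pairing into $\ve(\mathcal{K}_{\mu}\Lambda^s\zeta,\Lambda^s\partial_x(\zeta v))$; the dangerous $\beta\mu\partial_x^2$ piece then reads, after IBP, $\ve\beta\mu(\Lambda^s\partial_x\zeta,\partial_x\mathcal{T}_{\mu}\Lambda^s(\zeta v))$, which is tamed using the uniform bound $|\xi T_{\mu}(\xi)|\leq \min(|\xi|,1/\sqrt{\mu})$, the algebra property of $H^s$, and the fact that $\sqrt{\beta\mu}\,\Lambda^{s+1}\zeta$ is already controlled by $E^s$.

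Once the energy estimate is in hand, existence on $[0,T/\ve]$ with $T\sim \|(\zeta_0,v_0)\|_{X^s_{\beta,\mu}}^{-1}$ and the announced bound on the solution follow by a standard regularization scheme (Friedrichs mollifiers on $\partial_x$) combined with Gronwall. The non-cavitation condition \eqref{NonCav} propagates on the same time scale because $\partial_t\zeta = -\partial_x v - \ve\,\mathcal{T}_{\mu}\partial_x(\zeta v)$ is uniformly bounded in $L^\infty_x$, so $|\ve\zeta(t)-\ve\zeta_0|_{L^\infty}\leq h_0/2$ for $T$ chosen sufficiently small depending only on $\|(\zeta_0,v_0)\|_{X^s_{\beta,\mu}}$. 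Uniqueness is proved by repeating the energy estimate on the difference of two solutions at one regularity level below, i.e.\ in $X^{s-1}_{\beta,\mu}$, and continuity of the flow map on $[0,T/(2\ve)]$ is obtained by the Bona--Smith argument as in Theorem \ref{Well-posedness long time full dispersion}: regularize the initial data, exploit the uniform $X^s_{\beta,\mu}$ bound and a convergence estimate in $X^{s-1}_{\beta,\mu}$, and interpolate. The hardest part, I expect, will be the quasilinear piece of the first equation weighted by the symmetrizer $1-\beta\mu\partial_x^2$: it formally produces two extra derivatives on $\mathcal{T}_{\mu}\partial_x(\zeta v)$, and tracking $\ve$ and $\mu$ with enough care so that no inverse power of $\mu$ survives in the final bound relies crucially on the two structural identities $\mathcal{T}_{\mu}^{-1}\mathcal{K}_{\mu}=1-\beta\mu\partial_x^2$ and $\mathcal{T}_{\mu}^{-1}\mathcal{T}_{\mu}=I$ being applied \emph{before} any product or commutator estimate.
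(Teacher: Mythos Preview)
Your energy is missing the quasilinear correction that makes the symmetrization close. The paper's energy (Definition~\ref{2nd Energy Whitham Boussinesq}, symmetrizer \eqref{2nd Sym Q}) is
\[
\mathscr{E}_s(\eta,u)=\int(1+\beta\mu D^2)(J^s\eta)^2\,dx+\int\bigl(\mathcal{T}_\mu^{-1}(D)+\eta\bigr)(J^su)^2\,dx,
\]
with $\eta=\ve\zeta$; the $(2,2)$ entry carries an extra $+\eta$ that yours does not. This is precisely where the non-cavitation condition enters (it guarantees coercivity of $\mathcal{T}_\mu^{-1}+\eta$ via \eqref{Inverse T_mu in Hs}); in your energy the hypothesis \eqref{NonCav} plays no role, which is already a hint that something structural is missing. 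More importantly, that $+\eta$ is what absorbs the worst cross term. The piece $\ve\mathcal{T}_\mu(\zeta\partial_x v)$ of the first equation, after pairing with $(1+\beta\mu D^2)J^s\zeta$ and using $\mathcal{T}_\mu(1+\beta\mu D^2)=\mathcal{K}_\mu$, leaves (modulo a Kato--Ponce commutator) the term $\ve(\mathcal{K}_\mu J^s\zeta,\,\zeta J^s\partial_x v)$. After integration by parts this becomes $\ve(\mathcal{K}_\mu J^s\partial_x\zeta,\,\zeta J^s v)$, which in the paper is $-\mathscr{A}_{12}^{1,3}$ and cancels exactly against $(J^s\mathcal{K}_\mu\partial_x\zeta,\,\eta J^s v)$ coming from the $+\eta$ in the second slot of $\mathscr{Q}$ (see the treatment of $\mathscr{A}_{12}+\mathscr{A}_{21}$ in Proposition~\ref{2nd A priori Whitham Boussinesq}). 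Without a partner to cancel it, this term is not controllable uniformly in $\mu$: $\|\mathcal{K}_\mu J^s\partial_x\zeta\|_{L^2}\lesssim\|\zeta\|_{H^{s+1}}+\beta\sqrt{\mu}\|\zeta\|_{H^{s+2}}$, and neither norm is bounded by $\|(\zeta,v)\|_{X^s_{\beta,\mu}}$ with $\mu$-independent constants.

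Your own computation of the ``dangerous $\beta\mu\partial_x^2$ piece'' miscounts by one derivative: starting from $\ve\beta\mu(D^2\Lambda^s\zeta,\mathcal{T}_\mu\Lambda^s\partial_x(\zeta v))$ and integrating by parts once gives $\ve\beta\mu(\Lambda^s\partial_x\zeta,\partial_x\mathcal{T}_\mu\Lambda^s\partial_x(\zeta v))$, not $\ve\beta\mu(\Lambda^s\partial_x\zeta,\partial_x\mathcal{T}_\mu\Lambda^s(\zeta v))$. With the extra $\partial_x$ on $\zeta v$, the bound $|\xi|T_\mu(\xi)\le 1/\sqrt{\mu}$ only brings you down to $\mu^{-1/2}\|\zeta v\|_{H^{s+1}}$, which needs $\|v\|_{H^{s+1}}$ --- unavailable in $X^s_{\beta,\mu}$. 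Once you add $\tfrac{\ve}{2}\int\zeta(\Lambda^s v)^2\,dx$ to your energy, the rest of your outline (structural identities before commutators, Kato--Ponce for the transport pieces, mollified scheme, Bona--Smith for continuity) matches the paper's route; note also that the paper runs the difference estimates at the $X^0_{\beta,\mu}$ level (Proposition~\ref{2nd diff prop}) rather than $X^{s-1}_{\beta,\mu}$.
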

\begin{remark}\label{Remark beta thm 1.8}
	Including $\beta>0$ in the norm in the definition of $X^s_{\beta,\mu}(\R)$ will allow us to obtain a long time well-posedness result under the non-cavitation condition. Additionally,  when $0<\beta <\frac{1}{3}$ then $\ve$ is independent  from the surface tension parameter. 
\end{remark}

\begin{remark} For the sake of clarity, we have chosen to work in the one dimensional case. Theorems \ref{Well-posedness long time full dispersion}, \ref{W-P Whitham Boussinesq} and \ref{W-P 2nd Whitham Boussinesq} could possibly be extended to the $2$-dimensional case by following the same methods since the symbols $\mathcal{K}_{\mu}(D)$ and $\mathcal{T}_{\mu}(D)$ are radial.
\end{remark}

\subsection{Strategy and outline}

The proof of Theorem \ref{Well-posedness long time full dispersion}  relies mainly on  energy estimates similar to the ones provided in \cite{KalischPilod2019} on a fixed time. Though, we use the idea of Wang \cite{Wang2020}, who included the nonlocal operator $\mathcal{K}_{\mu}(D)$ in the definition of the energy\footnote{Wang actually used  this multiplier in the case $\mu=1$.}:
\begin{Def}\label{Def Energy}
Let $(\eta, u)  = \ve (\zeta, v)$ and $J^s$ be the bessel potential of order $-s$. Then we define the energy associated to \eqref{full dispersion}:
\begin{equation*}
	E_s(\eta, u) : = \int_{\R} (J^s \eta)^2 \: dx + \int_{\R} (\mathcal{K}_{\mu}(D) + \eta) (J^s u)^2\: dx.
\end{equation*}
\end{Def}
\noindent
This energy formulation will free us to cancel out specific nonlinear terms that appear naturally in the computations yielding the estimate
\begin{equation}\label{Energy rate}
	\frac{d}{dt} E_s(\eta, u ) \lesssim_{\beta} \big{(} E_s(\eta, u)\big{)}^{\frac{3}{2}}.
\end{equation} 
Combined with the coercivity of the energy, then by a standard  bootstrap argument, one deduces a solution with the lifespan of $T_0 = \mathcal{O}(\frac{1}{\ve})$. We refer the reader to Proposition \ref{Energy fully disp Boussinesq} and Lemma \ref{Lemma time of existence} for these results. The proof  of the energy estimate is similar to the one presented  in  \cite{Wang2020}, but we keep track of the small parameters. We should also note that estimate \eqref{Energy rate} is applied to a regularized version of \eqref{full dispersion}, where we recover the original system using a Bona-Smith argument. 

To run the Bona-Smith argument for $s > 2$, one  classically needs to estimate the difference between two solutions at the  $V^0_{\sqrt{\mu}}(\R)-$level. These estimates will be the most technical point of the paper and are specific to the dependence of the small parameters. In short, the technical difficulty is related to the  apparent need for 'generalized' Kato-Ponce type commutator estimates on $\mathcal{K}_{\mu}(D)$  (see Lemma  \ref{Kato ponce} and the generalization for $\mathcal{K}_{\mu}(D)$ in Lemma \ref{Commutator L2}). Whereas for the case $\mu = 1$, one can use Calder\'on type estimates to simplify $\mathcal{K}_{\mu}(D)$ directly (see \cite{KalischPilod2019} and the reformulated system $(2.1)$). The main idea will be to split $\mathcal{K}_{\mu}(D)$ in high and low frequencies, and then derive new commutator estimates that allow us to obtain the necessary order of $\mu$ in the estimates related to the energy.

For the proof of Theorem \ref{W-P Whitham Boussinesq}, we follow the same strategy, but in this case, the dispersion operator \eqref{Op T} is regularizing. The trick will be to introduce a scaled Bessel potential in the energy, allowing us to mimic the properties of \eqref{mult K}. The energy is given by:
\begin{Def}\label{Energy Whitham Boussinesq} Let $(\eta, u) = \ve (\zeta, v)$ and $J^{\frac{1}{2}}_{\mu} $ be the scaled Bessel potential defined by the symbol $\xi \mapsto (1+\mu \xi^2)^{\frac{1}{4}}$ in frequency. Then the energy associated to \eqref{Whitham Boussinesq} reads:
	\begin{equation*}
		\mathcal{E}_s(\eta, u) := \int_{\R} \mathcal{T}_{\mu}(D) (J^{\frac{1}{2}}_{\mu}J^s\eta)^2 \: dx +   \int_{\R}  (1+\eta)(J_{\mu}^{\frac{1}{2}}J^s u)^2\: dx.
	\end{equation*}
\end{Def}
\noindent
The energy formulated in Definition \ref{Energy Whitham Boussinesq} is new and will require commutator estimates specific to the equation. This will, in turn, allow us to decouple the parameters $\mu$ and $\ve$ in the estimates and, by extension, provide an estimate in the form of \eqref{Energy rate}. 

In the same spirit, we define a modified energy for system \eqref{2nd Whitham Boussinesq}:

\begin{Def}\label{2nd Energy Whitham Boussinesq}Let $(\eta, u) = \ve (\zeta, v)$ and $\beta >0$.  Then the energy associated to \eqref{2nd Whitham Boussinesq} reads:
	\begin{equation*}
		\mathscr{E}_s(\eta,u) := \int_{\R} (1 + \beta \mu D^2)(J^s \eta)^2 \: dx + \int_{\R} (\mathcal{T}^{-1}_{\mu}(D) + \eta) (J^s u)^2 \: dx.
	\end{equation*}
\end{Def}
\noindent
Note also that the energy includes the surface tension parameter $\beta$ and will allow us to deduce  an estimate on the form \eqref{Energy rate}, where the coercivity estimate will be uniform in $\beta$. In turn, this will provide the long time well-posedness for $\beta <<1$ and $T/\ve \sim 1/\ve$ as pointed out in Remark \ref{Remark beta thm 1.8}.

\bigskip

The paper is organized as follows. In Section \ref{PointW}, we introduce some important technical results whose proofs will be postponed to the appendix. In the same section, we also present new commutator estimates needed to treat the nonlinear terms when estimating the energy in Sections \ref{Energy est} and \ref{Energy est diff}. Lastly,  we conclude in Section \ref{Main proof of thm} by combining the results obtained in the former sections to prove Theorem \ref{Well-posedness long time full dispersion} in full detail, while the proof of Theorem \ref{W-P Whitham Boussinesq} and Theorem \ref{W-P 2nd Whitham Boussinesq} will follow by the same arguments.

\subsection{Notation} 
\begin{itemize}
	\item We  let $c$ denote a positive constant independent of $\mu, \ve$ that may change from line to line. Also, as a shorthand, we use the notation $a \lesssim b$ to mean $a \leq c\: b$. Similarly, if the constant depends on $\beta$, we write $a \lesssim_{\beta} b $.
	In particular, we define the constants depending on $\beta$,
	\begin{equation}\label{Constant beta}
		c_{\beta}^1 =
		\begin{cases}
			c\beta \quad \text{for} \: \: 0<\beta<\frac{1}{3}\\
			c \:\:\hspace{0.05cm}\quad \text{for} \: \: \beta\geq \frac{1}{3} 
		\end{cases}
		\quad \text{and} \quad 
		c_{\beta}^2 =
		\begin{cases}
			c \: \:   \hspace{0.05cm}\quad \text{for} \: \:  0<\beta<\frac{1}{3}\\
			c\beta \quad \text{for} \: \:  \beta\geq \frac{1}{3} 
		\end{cases}
	\end{equation}

	\item Let $(V,\|\cdot \|_{V})$ be a vector space. Then for $\alpha\geq 0$, $\lambda>0$ and $f_{\lambda}\in V$ be a function depending on $\lambda$, we define the \lq\lq big$-\mathcal{O}$\rq\rq\: notation  to be
	\begin{equation*}
		\|f_{\lambda}\|_{V} = \mathcal{O}(\lambda^{\alpha}) \iff \lim\limits_{\lambda\rightarrow 0}\lambda^{-\alpha}\|f_{\lambda}\|_{V} <\infty.
	\end{equation*}
	Similarly, we define the \lq\lq small$-o$\rq\rq\: notation to be
	\begin{equation*}
		\|f_{\lambda}\|_{V} = o(\lambda^{\alpha}) \iff \lim\limits_{\lambda\rightarrow 0}\lambda^{-\alpha}\|f_{\lambda}\|_{V} =0.
	\end{equation*}
	%
	%
	%
%	and in the case $\alpha=0$, the decay may only be logarithmic.
	
	\item Let $L^2(\R)$ be the usual space of square integrable functions with norm $\|f\|_2 = \sqrt{\int_{\R} |f(x)|^2 \: dx}$. Also, for any $f,g \in L^2(\R)$ we denote the scalar product by $\big{(} f,g \big{)}_{L^2} = \int_{\R} f(x) \overline{g(x)} \: dx$.

	\item For any tempered distribution $f$, the operator $\mathcal{F}$ denoting the Fourier transform, applied to $f$, will be written as $\hat{f}(\xi)$ or $\mathcal{F}f(\xi)$.

	\item Let $m:\R \rightarrow \mathbb{R}$ be a smooth function. Then we will use the notation $m(D)$ for a multiplier defined in frequency by $\widehat{m(D) f}(\xi) = m(\xi) \hat{f}(\xi)$.

	\item For any $s \in \mathbb{R}$ we call the multiplier $ \widehat{D^{s}f} (\xi)= |\xi|^s \hat{f}(\xi)$ the Riesz potential of order $-s$. One should note that $D^1 = \mathcal{H} \partial_x$, where $\widehat{\mathcal{H}f}(\xi) = -i\: \text{sgn}(\xi)\hat{f}(\xi)$  is the Hilbert transform.

	\item For any $s \in \mathbb{R}$ we call the multiplier $J^s = (1+D^2)^{\frac{s}{2}} = \langle D \rangle^s$ the Bessel potential of order $-s$. Moreover, the Sobolev space $H^s(\mathbb{R})$  is equivalent to the weighted $L^2-$space; $\|f\|_{H^s} = \|J^s f\|_{L^2}$. We also find it  convenient to define $J^{\frac{1}{2}}_{\mu}$ which is a multiplier assosiated to the symbol:
	\begin{equation}\label{scaled J_mu}
		\mathcal{F}(J^{\frac{1}{2}}_{\mu}f)(\xi) = (1+\mu\xi^2)^{\frac{1}{4}}\hat{f}(\xi).
	\end{equation}

	\item We say $f$ is a  Schwartz function $\mathscr{S}(\mathbb{R})$, if $f \in C^{\infty}(\mathbb{R})$ and satisfies for all $\alpha, \beta \in \mathbb{N}$,
	\begin{equation*}
		\sup \limits_{x} |x^{\alpha} \partial_x^{\beta} f | < \infty.
	\end{equation*}
	
	\item If  $A$ and $B$ are two operators, then we denote the commutator between them to be $[A,B] = AB - BA$.	
\end{itemize}

\section{Preliminary results}\label{PointW}

\subsection{Pointwise estimates}

The first result concerns the properties of the dispersive part of the equation. Namely, we deduce pointwise estimates for the multipliers \eqref{mult K} and \eqref{Op T} that are needed to obtain the coercivity of the energy (see, for instance, equation \eqref{equiv. full dispersion} below). Moreover, these estimates will prove essential when dealing with the nonlinear parts of the equation that appear in the energy estimates.

\begin{lemma}\label{Pointwise est.} Let $\mu\in(0,1)$. Then we have the following pointwise estimates on the kernel $K_{\mu}(\xi):$
	\begin{itemize}
		\item For $\beta\geq 0$, we have the upper bound
		\begin{equation}\label{sqrt K}
			K_{\mu}(\xi)\lesssim 1 + \beta (1+\beta \sqrt{\mu }| \xi|).
		\end{equation}
		
		\item If $\beta \geq \frac{1}{3}$, then for all $h_0 \in (0,1)$ we have the lower bound
		\begin{equation}\label{K pos}
			K_\mu(\xi)  \geq (1-\frac{h_0}{2}) + c  \sqrt{\mu} |\xi|,
		\end{equation}
		whereas, if $0<\beta < \frac{1}{3}$, we have the lower bound
		\begin{equation}\label{K pos beta}
			K_\mu(\xi) \geq \beta + c \beta \sqrt{\mu} |\xi |.
		\end{equation}

		\item The derivative of the symbol $K_{\mu}(\xi)$ satisfies
		\begin{equation}\label{derivative Symbol}
			\bigg{|}\frac{d}{d\xi}\sqrt{K_{\mu}(\xi)}\bigg{|} \lesssim \langle \xi \rangle^{-1}+ \sqrt{\beta} \mu^{\frac{1}{4}} \langle \xi \rangle^{-\frac{1}{2}}.
		\end{equation}
	\item We have the following comparison of $\sqrt{\mathcal{K}_{\mu}(\xi)}$ by
	\begin{equation}\label{Comparison}
		\big{|} 
		\sqrt{K_{\mu}(\xi)} - \sqrt{\beta} \mu^{\frac{1}{4}} |\xi|^{\frac{1}{2}}
		\big{|} \lesssim\sqrt{\beta} + \beta.
	\end{equation}
	\item There holds
	\begin{equation}\label{K point. for commutator}
		\sqrt{K_{\mu}(\xi)} \langle \xi \rangle^{s-1}  |\xi| \lesssim (\sqrt{\beta} + \beta) \langle \xi \rangle^{s} + \sqrt{\beta} \mu^{\frac{1}{4}}  \langle \xi \rangle^{s}   |\xi|^{\frac{1}{2}}.
	\end{equation}
	\end{itemize}
\end{lemma}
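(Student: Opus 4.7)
The plan is to prove each of the six pointwise bounds separately, working throughout in the scaled variable $y := \sqrt{\mu}|\xi|$ and systematically splitting the analysis into the low-frequency regime $y\leq 1$ and the high-frequency regime $y\geq 1$. The elementary facts about $\tanh$ that drive everything are: $T_\mu(\xi)=\tanh(y)/y\in(0,1]$ with $T_\mu(\xi)=1-y^2/3+O(y^4)$ near $0$; $\tanh(y)\leq \min(y,1)$; and $\tanh(y)=1+O(e^{-2y})$ as $y\to\infty$. It is convenient to write $K_\mu(\xi)=T_\mu(\xi)+\beta y\tanh(y)$ throughout.

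For the upper bound \eqref{sqrt K}, I dominate the first piece by $1$ and the second by $\beta y$, then enlarge by an irrelevant $\beta$. For the coercivity \eqref{K pos} with $\beta\geq 1/3$, I first establish the auxiliary fact $(\tanh(y)/y)(1+y^2/3)\geq 1$ for all $y\geq 0$; this reduces to $\tanh(y)(3+y^2)\geq 3y$, which can be verified either by Taylor series (all even coefficients of $(\tanh(y)/y)(1+y^2/3)-1$ are non-negative, with leading non-zero term $y^4/45$) or by an ODE-type comparison using two derivatives. Then $K_\mu\geq 1+(\beta-\tfrac13)y\tanh(y)\geq 1$, and the $\sqrt{\mu}|\xi|$ contribution comes from $\beta y\tanh(y)\geq c\beta y$ in the high-frequency regime, interpolated with the small-$y$ behavior to produce $(1-h_0/2)+c\sqrt{\mu}|\xi|$. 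For \eqref{K pos beta} with $0<\beta<\tfrac13$, I locate the minimizer of $K_\mu$ via the stationary equation: analysis of $\partial_y K_\mu=0$ gives $y^\ast\sim 1/\sqrt{\beta}$ with $K_\mu(y^\ast)\sim 2\sqrt{\beta}\geq\beta$, and for $y\geq y^\ast$ the term $\beta y\tanh(y)$ takes over to deliver $K_\mu\gtrsim\beta y$.

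The derivative estimate \eqref{derivative Symbol} follows by writing $\sqrt{K_\mu}=\sqrt{T_\mu}\sqrt{1+\beta y^2}$ and differentiating via the product rule; bounding each factor and its derivative using the asymptotics of $T_\mu$ produces the two terms $\langle\xi\rangle^{-1}$ (from $\sqrt{T_\mu}$ and its derivative) and $\sqrt{\beta}\mu^{1/4}\langle\xi\rangle^{-1/2}$ (from the derivative of $\sqrt{1+\beta y^2}$, weighted by $\sqrt{T_\mu}\sim y^{-1/2}$ in the large-$y$ regime). The comparison \eqref{Comparison} is obtained by splitting on $y$: for $y\leq 1$ both $\sqrt{K_\mu}$ and $\sqrt{\beta}\mu^{1/4}|\xi|^{1/2}=\sqrt{\beta}y^{1/2}$ are of order one, while for $y\geq 1$ the expansion $T_\mu(1+\beta y^2)=\beta y+O(1/y)$ combined with the Lipschitz property of $\sqrt{\cdot}$ yields $\sqrt{K_\mu}=\sqrt{\beta y}+O(1/(\sqrt{\beta}\,y^{3/2}))$. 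Finally \eqref{K point. for commutator} is obtained by multiplying \eqref{Comparison} by $\langle\xi\rangle^{s-1}|\xi|$ and distributing, using $|\xi|\leq\langle\xi\rangle$ to absorb the frequency factor into the two terms.

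The main obstacle I expect is the critical coercivity $K_\mu\geq 1$ at $\beta=1/3$, since the naive Taylor bound $\tanh(y)\geq y-y^3/3$ is too crude: the required inequality only shows up at order $y^4$ and beyond. Establishing the relevant positivity either via the explicit Taylor coefficients (verifying that every even coefficient of $(\tanh(y)/y)(1+y^2/3)-1$ is non-negative) or via a careful two-stage monotonicity argument for the auxiliary function $\tanh(y)(3+y^2)-3y$ takes some bookkeeping. Once this threshold estimate is in hand, the other five bullets reduce to routine applications of elementary inequalities on $\tanh$ and regime-splitting on $y$.
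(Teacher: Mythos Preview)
Your overall strategy is sound, and for \eqref{sqrt K}, \eqref{derivative Symbol} and \eqref{K point. for commutator} your approach coincides with the paper's. There are, however, two genuine gaps.

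First, for the auxiliary inequality $(\tanh(y)/y)(1+y^2/3)\geq 1$ underlying your approach to \eqref{K pos}, the Taylor-coefficient argument you propose is incorrect: expanding gives $1 + \tfrac{1}{45}y^4 - \tfrac{1}{105}y^6 + \cdots$, so the $y^6$ coefficient is negative and not all coefficients are non-negative. The inequality is true, but requires a different proof. The paper sidesteps this entirely: rather than establishing the sharp bound $K_1\geq 1$ at $\beta=\tfrac13$, it splits at $r=h_0/4$, uses only the crude $\tanh(r)\geq r-r^3/3$ for small $r$ (yielding $K_1\geq 1-r^4/9$, which already suffices for $(1-h_0/2)+cr$), and for $r\geq h_0/4$ rewrites the desired inequality as positivity of an explicit function $G(r)$, verified by showing $G'''>0$ and cascading down through $G'',G',G$. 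Your route via the sharp constant is conceptually cleaner but leaves the key inequality unproved; the paper's route is more pedestrian but self-contained. For \eqref{K pos beta} the paper likewise uses an explicit auxiliary function $H$ with $H'''>0$, whereas your minimizer sketch would additionally need a unimodality argument for $K_\mu$.

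Second, your split for \eqref{Comparison} at $y=1$ is not uniform in $\beta$: for $y$ just above $1$ and $\beta$ small, your error term $O\bigl(1/(\sqrt{\beta}\,y^{3/2})\bigr)$ blows up. The paper splits at $\beta\sqrt{\mu}\,|\xi|=1$ instead. For $\beta y\leq 1$ the triangle inequality gives only $\lesssim 1$ (and indeed the stated bound $\lesssim\sqrt\beta+\beta$ cannot hold at $\xi=0$ when $\beta$ is small; what the paper actually proves and uses there is $\lesssim 1$, which becomes $\lesssim c_\beta^2$ in the applications). For $\beta y>1$ one has $y>1/\beta$, and then the same Lipschitz estimate gives $1/(\sqrt\beta\,y^{3/2})<\beta$, recovering $\lesssim\sqrt\beta+\beta$ in that regime. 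You can repair your argument either by shifting the split to $\beta y=1$, or by bounding the denominator in the Lipschitz step by $\sqrt{K_\mu}\gtrsim y^{-1/2}$ rather than $\sqrt{\beta y}$, which yields a uniform $O(1)$ bound on all of $y\geq 1$.
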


\begin{remark}\label{Constant FIX}
	%One should note that we are not interested in tracing the dependence in $\beta$ for any of the upper bounds presented in the lemmas above. Though, 
	For inequality \eqref{K pos beta}, it is crucial to specify the  dependence in $\beta$ as it will provide the coercivity of the energy when $0<\beta<\frac{1}{3}$. The same is true for \eqref{K pos}, whose importance will be revealed  in the proof of Proposition \ref{Energy fully disp Boussinesq} below. Though, we note that \eqref{K pos beta} does not agree with \eqref{K pos} when $\beta = \frac{1}{3}$. This is because the lower bound in  \eqref{K pos beta} is not optimal, but it does not play a role for the overall result.

\end{remark}

\begin{remark}
		We also trace the dependence in $\beta$ for the first pointwise estimate \eqref{sqrt K}, and it will sometimes be replaced with $c_{\beta}^2$ given by \eqref{Constant beta}. This constant will again appear when we prove the energy estimates which will provide the size of the time of existence (see Lemma \ref{Lemma time of existence} in the proof Theorem \ref{Well-posedness long time full dispersion}). 
\end{remark}

	The proof of Lemma \ref{Pointwise est.} is technical and  postponed to the Appendix in Section \ref{A2}. A corollary of Proposition \ref{Pointwise est.} may now be stated.

\begin{cor}\label{Coericivity and continuity} Take $f \in \mathscr{S}(\mathbb{R})$, $\mu\in(0,1)$ and $s \in \mathbb{R}$. Then in the case $\beta\geq \frac{1}{3}$ and for all $h_0\in(0,1)$  we have
	\begin{equation}\label{estimate K in Hs}
	(1-\frac{h_0}{2})	\|f\|_{H^s}^2 + c \sqrt{\mu}  \| D^{\frac{1}{2}}f\|_{H^s}^2 
		\leq
		| |\sqrt{\mathcal{K}_{\mu}}(D)  f\|_{H^s}^2 \leq c_{\beta}^2 \|f\|_{H^s}^2 +c \beta\sqrt{\mu}   \|D^{\frac{1}{2}}f\|_{H^{s}}^2.
	\end{equation}
	Similarly, in the case $0<\beta < \frac{1}{3}$ there holds
	\begin{equation}\label{estimate K Low}
		\beta\|f\|_{H^s}^2 + c \beta\sqrt{\mu}  \| D^{\frac{1}{2}}f\|_{H^{s}}^2 \leq| |\sqrt{\mathcal{K}_{\mu}}(D)  f\|_{H^s}^2\leq c_{\beta}^2 \|f\|_{H^s}^2 + c\sqrt{\mu}   \|D^{\frac{1}{2}}f\|_{H^{s}}^2.
	\end{equation}

\end{cor}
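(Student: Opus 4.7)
\medskip

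\noindent\textbf{Proof plan for Corollary 2.2.} The plan is to translate everything to the Fourier side via Plancherel and then apply the pointwise bounds of Lemma 2.1 to the symbol of $\sqrt{\mathcal{K}_\mu}(D)$. Concretely, writing
\[
\|\sqrt{\mathcal{K}_{\mu}}(D)f\|_{H^s}^2 \;=\; \int_{\mathbb{R}} K_{\mu}(\xi)\,\langle\xi\rangle^{2s}\,|\hat f(\xi)|^{2}\,d\xi,
\]
and recalling that $\|D^{1/2}f\|_{H^s}^{2}=\int_{\mathbb{R}}|\xi|\langle\xi\rangle^{2s}|\hat f(\xi)|^{2}d\xi$, the task reduces to bounding $K_\mu(\xi)$ from above and below by affine functions of $|\xi|$ with the right dependence on $\beta$ and the right constant multiplying $\sqrt\mu$.

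For the \emph{upper bound}, I will use \eqref{sqrt K}, which yields $K_\mu(\xi)\lesssim 1+\beta+\beta^{2}\sqrt{\mu}|\xi|$. When $\beta\geq\tfrac13$ this gives $K_\mu(\xi)\lesssim\beta+\beta\cdot\beta\sqrt{\mu}|\xi|\leq c_\beta^{2}+c\beta\sqrt{\mu}|\xi|$ (absorbing the additional $\beta$ into the constant, since $c_\beta^2=c\beta$ in this regime), and when $0<\beta<\tfrac13$ it gives $K_\mu(\xi)\lesssim c+c\sqrt{\mu}|\xi|=c_\beta^{2}+c\sqrt{\mu}|\xi|$. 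Substituting these bounds into the Fourier integral above produces the two right-hand inequalities.

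For the \emph{lower bound}, I will use \eqref{K pos} in the regime $\beta\geq\tfrac13$, which gives $K_\mu(\xi)\geq (1-\tfrac{h_0}{2})+c\sqrt{\mu}|\xi|$, and \eqref{K pos beta} in the regime $0<\beta<\tfrac13$, which gives $K_\mu(\xi)\geq \beta+c\beta\sqrt{\mu}|\xi|$. Inserting either into the Fourier integral and splitting the integrand into the constant piece and the $\sqrt{\mu}|\xi|$ piece immediately yields the left-hand inequality of \eqref{estimate K in Hs} in the first case, and of \eqref{estimate K Low} in the second case.

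I do not foresee any real obstacle: the proof is a direct application of Plancherel combined with the pointwise symbol estimates already proved in Lemma \ref{Pointwise est.}. The only point that requires a little care is making sure the $\beta$-dependence of each constant matches the definitions of $c_\beta^{1},c_\beta^{2}$ in \eqref{Constant beta}, in particular the different behaviour of the upper and lower bounds across the threshold $\beta=\tfrac13$; this is handled by splitting into the two cases as above rather than by interpolation.
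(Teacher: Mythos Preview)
Your proposal is correct and follows exactly the paper's approach: write $\|\sqrt{\mathcal{K}_\mu}(D)f\|_{H^s}^2$ via Plancherel as an integral involving $K_\mu(\xi)$, then apply the pointwise bounds \eqref{sqrt K}, \eqref{K pos}, and \eqref{K pos beta} from Lemma~\ref{Pointwise est.} to obtain the upper and lower inequalities in each $\beta$-regime. The paper's proof is just as brief as yours, citing Plancherel and the relevant pointwise estimates without further detail.
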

\begin{proof}
	The upper bound in \eqref{estimate K in Hs} follows by Plancherel's identity and the pointwise estimate \eqref{sqrt K}, while the lower bound is a consequence  of \eqref{K pos}.
	
	 In the same way, for $0<\beta <\frac{1}{3}$, then \eqref{estimate K Low} is deduced from \eqref{K pos beta}.
 \end{proof}

Similarly, we state some useful pointwise estimates on $T_{\mu}(\xi)$ and the scaled Bessel potential $J^{\frac{1}{2}}_{\mu}$, where the proof is presented in Appendix \ref{A2}.

\begin{lemma}\label{Pointwise est. on T} Let $\mu\in(0,1)$. Then we have the following pointwise estimates on the kernel $T_{\mu}(\xi):$
	\begin{itemize}
		\item For all $h_0 \in (0,1)$ there holds
		\begin{equation}\label{Inverse T_mu}
			(1-\frac{h_0}{2})+ c \sqrt{\mu} |\xi| \leq (T_{\mu}(\xi))^{-1} \lesssim 1+\sqrt{\mu} |\xi|.
		\end{equation}
		\item There holds
		\begin{equation}\label{T_mu J_mu equiv}
			1	\lesssim T_{\mu} (\xi) \langle \sqrt{\mu} \xi \rangle \lesssim 1.
		\end{equation}
		\item For $s\in \R$ there holds
		\begin{equation}\label{Derivative J^sJ_mu}
			\bigg{|}\frac{d}{d\xi}
			\langle\xi\rangle^s\langle \sqrt{\mu} \xi \rangle^{\frac{1}{2}}
			\bigg{|} \lesssim \langle \xi \rangle^{s-1} \langle \sqrt{\mu} \xi \rangle^{\frac{1}{2}}.
		\end{equation}
		\item For $s\in \R$ there holds
		\begin{equation}\label{Derivative sqrtT Js J_mu}
			\bigg{|}\frac{d}{d\xi}
			\sqrt{T_{\mu}(\xi)}\langle\xi\rangle^s\langle \sqrt{\mu} \xi \rangle^{\frac{1}{2}}
			\bigg{|}\lesssim \langle \xi \rangle^{s-1}.
		\end{equation}
		\item There holds
			\begin{equation}\label{Comparison J_mu D_mu}
			\bigg{|}
			 \langle \sqrt{\mu}  \xi \rangle^{\frac{1}{2}}- \mu^{\frac{1}{4}}|\xi|^{\frac{1}{2}}
			\bigg{|}
			\lesssim 1.
		\end{equation}

	\end{itemize}
\end{lemma}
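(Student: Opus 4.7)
The plan is to introduce $y := \sqrt{\mu}|\xi|$ throughout and reduce each bound to an elementary one-variable inequality, using the asymptotics $\tanh(y) = y - y^3/3 + O(y^5)$ near $0$ and $\tanh(y) = 1 + O(e^{-2y})$ near $\infty$, together with the smoothness and monotonicity of $y\mapsto y/\tanh(y)$ on $[0,\infty)$. For \eqref{Inverse T_mu}, I rewrite $(T_\mu(\xi))^{-1} = y/\tanh(y)$; the upper bound is the elementary inequality $y/\tanh(y) \leq 1 + y$, while the lower bound $y/\tanh(y) \geq \max\{1,y\}$ follows from $\tanh(y) \leq \min\{y,1\}$, and since $h_0 \in (0,1)$ the prefactor $1 - h_0/2$ is absorbed for free. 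For \eqref{T_mu J_mu equiv}, the function $T_\mu(\xi)\langle\sqrt\mu\xi\rangle = \tanh(y)\langle y\rangle/y$ is positive and continuous on $[0,\infty)$ with limit $1$ at both endpoints, hence pinched between two positive constants. For \eqref{Comparison J_mu D_mu}, applying the mean value theorem to $x\mapsto x^{1/4}$ on $[\mu\xi^2, 1+\mu\xi^2]$ yields $|(1+\mu\xi^2)^{1/4} - (\mu\xi^2)^{1/4}| \leq \tfrac14 (\mu\xi^2)^{-3/4} \leq \tfrac14$ when $\mu\xi^2 \geq 1$, while both terms are bounded by $2^{1/4}$ when $\mu\xi^2 \leq 1$.

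For the derivative bound \eqref{Derivative J^sJ_mu}, the product rule applied to $\langle\xi\rangle^s\langle\sqrt\mu\xi\rangle^{1/2}$ produces the term $s\xi\langle\xi\rangle^{s-2}\langle\sqrt\mu\xi\rangle^{1/2}$, already of the claimed form, plus $\tfrac12 \mu\xi\langle\xi\rangle^s\langle\sqrt\mu\xi\rangle^{-3/2}$, so the estimate reduces to $\mu|\xi|\langle\xi\rangle/\langle\sqrt\mu\xi\rangle^2 \lesssim 1$. On the low-frequency regime $\{\sqrt\mu|\xi|\leq 1\}$ the denominator is $\geq 1$ and the numerator satisfies $\mu|\xi|\langle\xi\rangle \leq \mu|\xi| + \mu\xi^2 \leq \sqrt\mu + 1$; on $\{\sqrt\mu|\xi|\geq 1\}$ the denominator is comparable to $\mu\xi^2$, so the quotient reduces to $\lesssim 1/|\xi| \leq 1$.

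The main obstacle is \eqref{Derivative sqrtT Js J_mu}. Setting $g(\xi) := \sqrt{T_\mu(\xi)}\langle\sqrt\mu\xi\rangle^{1/2}$, the preceding estimate \eqref{T_mu J_mu equiv} already gives $g \sim 1$ uniformly in $\mu,\xi$, so the product rule reduces the claim to $|g'(\xi)| \lesssim \langle\xi\rangle^{-1}$. Since $g^2(\xi) = \phi(y)$ with $\phi(y) := \tanh(y)\langle y\rangle/y$, the chain rule yields $g'(\xi) = \frac{\sqrt\mu\,\mathrm{sgn}(\xi)}{2\,g(\xi)}\phi'(y)$. A Taylor expansion gives $\phi(y) = 1 + y^2/6 + O(y^4)$ near the origin so $\phi'(y) = O(y)$, while using $\tanh(y) = 1 + O(e^{-2y})$ and $\langle y\rangle/y = 1 + 1/(2y^2) + O(y^{-4})$ at infinity gives $\phi'(y) = O(y^{-3})$. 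Combined with $g \sim 1$, this produces $|g'(\xi)| \lesssim \sqrt\mu\,\min(y, y^{-3})$, which yields the claimed $\langle\xi\rangle^{-1}$ after splitting once more into $\sqrt\mu|\xi| \leq 1$ (using $\sqrt\mu\cdot y = \mu|\xi|$ together with $\mu|\xi|\langle\xi\rangle \lesssim 1$ as above) and $\sqrt\mu|\xi| \geq 1$ (using $\sqrt\mu/y^3 \leq \sqrt\mu/y = 1/|\xi|$). The delicate point is the Taylor cancellation at $y=0$: one needs $\phi$ to be smooth and flat at the origin, i.e.\ the two square roots $\sqrt{T_\mu}$ and $\langle\sqrt\mu\xi\rangle^{1/2}$ must conspire so that the linear term in $\phi$ disappears, which is exactly what the expansion confirms.
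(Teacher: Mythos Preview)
Your argument is correct throughout, and for \eqref{Inverse T_mu}, \eqref{T_mu J_mu equiv}, and \eqref{Comparison J_mu D_mu} it is essentially the same as the paper's (your mean-value-theorem version of \eqref{Comparison J_mu D_mu} is in fact a bit cleaner). The genuine differences are in \eqref{Derivative J^sJ_mu} and especially \eqref{Derivative sqrtT Js J_mu}.

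For \eqref{Derivative J^sJ_mu}, the paper avoids your case split by the single inequality $\sqrt{\mu}\,\langle\sqrt{\mu}\xi\rangle^{-1}\le\langle\xi\rangle^{-1}$, equivalent to $\mu(1+\xi^2)\le 1+\mu\xi^2$, which is just $\mu\le 1$. This absorbs the second product-rule term in one line. Your regime split is correct but slightly longer.

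For \eqref{Derivative sqrtT Js J_mu}, the paper takes a different route: it invokes a separately established derivative estimate $\big|\tfrac{d}{d\xi}\sqrt{T_\mu(\xi)}\big|\lesssim \sqrt{\mu}\,\langle\sqrt{\mu}\xi\rangle^{-3/2}$ (their Lemma~A.1) and applies the product rule to $\sqrt{T_\mu}\cdot\big(\langle\xi\rangle^s\langle\sqrt{\mu}\xi\rangle^{1/2}\big)$, combining with \eqref{Derivative J^sJ_mu} and the inequality $\sqrt{\mu}\,\langle\sqrt{\mu}\xi\rangle^{-1}\le\langle\xi\rangle^{-1}$ again. Your approach instead packages $g=\sqrt{T_\mu}\,\langle\sqrt{\mu}\xi\rangle^{1/2}$ as a single unit with $g\sim 1$, and controls $g'$ via the Taylor expansion of $\phi(y)=\tanh(y)\langle y\rangle/y$. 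This is self-contained and avoids the auxiliary lemma, at the cost of being specific to this exact combination; the paper's version is more modular (the $\sqrt{T_\mu}$ derivative bound is reused elsewhere for commutator estimates). A small remark: your ``delicate point'' about the vanishing linear term in $\phi$ is less delicate than you suggest, since $\phi$ is even in $y$ and smooth at $0$, so $\phi'(0)=0$ automatically; in fact even $\phi'(y)=O(1)$ near $0$ would already suffice here, because in the regime $\sqrt{\mu}|\xi|\le 1$ one has $\sqrt{\mu}\lesssim\langle\xi\rangle^{-1}$ directly.
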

A direct consequence of the above estimates can now be given. 
\begin{cor}
	Let $f \in \mathscr{S}(\R)$, $\mu\in(0,1)$, $s \in \R$  and $c>0$. Then for all $h_0\in(0,1)$ there holds
	\begin{equation}\label{boundedness of T}
		\|\sqrt{\mathcal{T}_{\mu}}(D)f\|_{L^2} \leq \|f\|_{L^2}.
	\end{equation}
	\begin{equation}\label{Inverse T_mu in Hs}
		(1-\frac{h_0}{2})\|f\|_{H^s}^2  + c\sqrt{\mu}\|D^{\frac{1}{2}}f\|_{H^{s}}^2 \leq \| \sqrt{\mathcal{T}_{\mu}}^{-1}(D) f\|^2_{H^s}\lesssim 	\|f\|_{H^s}^2  + c\sqrt{\mu}\|D^{\frac{1}{2}}f\|_{H^{s}}^2.
	\end{equation}
	\begin{equation}\label{Equiv sqrt T}
		\|f\|_{H^s} \lesssim\| \sqrt{\mathcal{T}_{\mu}}(D)J^{\frac{1}{2}}_{\mu} f\|_{H^s}\lesssim \|f\|_{H^s}.
	\end{equation}
	\begin{equation}\label{J_mu}
	\|f\|_{H^s}^2 + \sqrt{\mu} \|D^{\frac{1}{2}}f\|_{H^{s}}^2 \lesssim	\|J^{\frac{1}{2}}_{\mu} f \|_{H^s}^2 \lesssim \|f\|_{H^s}^2 + \sqrt{\mu} \|D^{\frac{1}{2}}f\|_{H^{s}}^2.
	\end{equation}
	
\end{cor}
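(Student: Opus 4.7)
The plan is to deduce all four inequalities directly from Plancherel's identity combined with the pointwise estimates on $T_\mu(\xi)$ and $\langle\sqrt{\mu}\xi\rangle$ collected in Lemma~\ref{Pointwise est. on T}. Since each multiplier in the statement is a Fourier multiplier with a positive radial symbol, the $H^s$-norms on both sides of each inequality can be written as $\int (\cdot)^2 \langle\xi\rangle^{2s}|\hat{f}(\xi)|^2\,d\xi$, and the whole task reduces to bounding these symbols pointwise above and below.

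For \eqref{boundedness of T}, I would simply note that $\tanh(x)/x \leq 1$ for $x \geq 0$, so $T_\mu(\xi) \leq 1$ and hence $\sqrt{T_\mu(\xi)} \leq 1$, yielding the claim by Plancherel. For \eqref{Inverse T_mu in Hs}, the key input is \eqref{Inverse T_mu}, which gives
\[
(1-\tfrac{h_0}{2}) + c\sqrt{\mu}|\xi| \,\leq\, T_\mu(\xi)^{-1} \,\lesssim\, 1 + \sqrt{\mu}|\xi|.
\]
Multiplying by $\langle\xi\rangle^{2s}|\hat{f}(\xi)|^2$, integrating, and recognizing $\int \sqrt{\mu}|\xi|\langle\xi\rangle^{2s}|\hat{f}|^2\,d\xi = \sqrt{\mu}\|D^{1/2}f\|_{H^s}^2$ produces both the lower and upper bounds.

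For \eqref{Equiv sqrt T}, I would use \eqref{T_mu J_mu equiv}, which says $T_\mu(\xi)\langle\sqrt{\mu}\xi\rangle \sim 1$. Since the Fourier symbol of $\sqrt{\mathcal{T}_\mu}(D)J^{1/2}_\mu$ is $\sqrt{T_\mu(\xi)}(1+\mu\xi^2)^{1/4} = \sqrt{T_\mu(\xi)\langle\sqrt{\mu}\xi\rangle}$, the product is bounded above and below by constants, and Plancherel delivers the two-sided equivalence with $\|f\|_{H^s}$. Finally, for \eqref{J_mu}, the symbol of $J^{1/2}_\mu$ squared is $(1+\mu\xi^2)^{1/2}$, and elementary inequalities give $(1+\mu\xi^2)^{1/2} \sim 1 + \sqrt{\mu}|\xi|$ (both $\leq$ and $\geq$ hold up to harmless constants via $\sqrt{a+b} \leq \sqrt{a}+\sqrt{b}$ and its converse). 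Multiplying by $\langle\xi\rangle^{2s}|\hat f|^2$ and integrating yields the two-sided bound.

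The proof is essentially bookkeeping — there is no genuine obstacle beyond tracking the $\sqrt{\mu}$ factors and verifying the equivalence $\langle\sqrt{\mu}\xi\rangle \sim 1 + \sqrt{\mu}|\xi|$ carefully so that the $\sqrt{\mu}\|D^{1/2}f\|_{H^s}^2$ terms arise with the correct constants independent of $\mu$. The only small subtlety is keeping the lower-bound constant $(1-h_0/2)$ in \eqref{Inverse T_mu in Hs} uniform; this is automatic since the lower bound in \eqref{Inverse T_mu} already supplies exactly this constant.
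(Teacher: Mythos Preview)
Your proposal is correct and follows essentially the same approach the paper intends: the corollary is stated immediately after Lemma~\ref{Pointwise est. on T} as ``a direct consequence of the above estimates,'' and your argument simply makes this explicit by applying Plancherel together with the pointwise bounds \eqref{Inverse T_mu}, \eqref{T_mu J_mu equiv}, and the elementary equivalence $\langle\sqrt{\mu}\xi\rangle\sim 1+\sqrt{\mu}|\xi|$.
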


\subsection{Commutator estimates}

To handle derivatives in the nonlinear parts of the equations, we  need commutator estimates on $\mathcal{K}_{\mu}(D)$ and $\mathcal{T}_{\mu}(D)$.  
\begin{lemma}\label{Commutator K at level s}
Let $f,g \in \mathscr{S}(\R)$, $\mu \in (0,1)$, $s\geq 1$,  and $t_0 > \frac{1}{2}$. Then we have the following commutator estimate
\begin{align}\label{Commutator low freq}
	\notag
	\| [  \sqrt{\mathcal{K}_{\mu}}(D) J^s , f]\partial_x g \|_{L^2} & \lesssim (c_{\beta}^2\|f\|_{H^s} + \sqrt{\beta}\mu^{\frac{1}{4}}\| D^{\frac{1}{2}}f \|_{H^{s}})\|\partial_x g\|_{H^{t_0}}
	\\
	 & \hspace{0.5cm}+ (c_{\beta}^2\|g\|_{H^s} + \sqrt{\beta}\mu^{\frac{1}{4}}\| D^{\frac{1}{2}}g \|_{H^{s}})\| \partial_x f \|_{H^{t_0}} .
\end{align} 
\end{lemma}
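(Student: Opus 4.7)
The approach is to decompose the multiplier $\sqrt{\mathcal{K}_\mu}(D)J^s$ into two pieces, isolating the asymptotically singular $\sqrt{\beta}\mu^{1/4}|\xi|^{1/2}$ behaviour of $\sqrt{K_\mu(\xi)}$ at high frequencies, and then apply a standard Kato-Ponce commutator estimate (Lemma \ref{Kato ponce}) to each piece separately. Fixing a smooth cutoff $\chi\in C^\infty(\R)$ that vanishes on $[-\tfrac12,\tfrac12]$ and equals $1$ on $\{|\xi|\geq 1\}$, I set
$$m_2(\xi) := \sqrt{\beta}\mu^{\frac14}|\xi|^{\frac12}\chi(\xi)\langle\xi\rangle^s, \qquad m_1(\xi) := \sqrt{K_\mu(\xi)}\langle\xi\rangle^s - m_2(\xi),$$
so that the commutator decomposes as $[\sqrt{\mathcal{K}_\mu}(D)J^s,f]\partial_x g = [m_1(D),f]\partial_x g + [m_2(D),f]\partial_x g$. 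The cutoff avoids the fake singularity of $|\xi|^{1/2}$ at the origin, so each piece is a well-defined Fourier multiplier.

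The first step is to verify that $m_1$ is a classical symbol of order $s$ with constant controlled by $c_\beta^2$. The pointwise bound on $m_1$ follows from \eqref{sqrt K} in the region $|\xi|\leq 1$ (where $m_2$ vanishes, so the subtraction is absent) and from \eqref{Comparison} in the region $|\xi|\geq 1$ (where $\chi \equiv 1$, so the singular asymptotic is precisely subtracted off). The derivative bound $|m_1'(\xi)|\lesssim c_\beta^2\langle\xi\rangle^{s-1}$ then follows by combining \eqref{derivative Symbol} with \eqref{sqrt K} and a direct computation of the derivative of $|\xi|^{1/2}\chi(\xi)\langle\xi\rangle^s$. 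Applying the classical Kato-Ponce commutator estimate (in the symmetric form of Lemma \ref{Kato ponce}) to the order-$s$ symbol $m_1/c_\beta^2$ yields
$$\|[m_1(D),f]\partial_x g\|_{L^2} \lesssim c_\beta^2\bigl(\|f\|_{H^s}\|\partial_x g\|_{H^{t_0}}+\|\partial_x f\|_{H^{t_0}}\|g\|_{H^s}\bigr),$$
which is the first contribution on the right-hand side of \eqref{Commutator low freq}.

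For the singular piece, since $m_2(D) = \sqrt{\beta}\mu^{1/4}D^{1/2}\chi(D)J^s$, I split
$$[m_2(D),f]\partial_x g = \sqrt{\beta}\mu^{\frac14}\Bigl(D^{\frac12}[\chi(D)J^s,f]\partial_x g + [D^{\frac12},f]\,\chi(D)J^s\partial_x g\Bigr).$$
The symbol $\chi(\xi)\langle\xi\rangle^s$ is an order-$s$ symbol with $\mu$- and $\beta$-independent constant, so Lemma \ref{Kato ponce} controls the inner commutator, and the outer $D^{1/2}$ can be absorbed into an $H^s$-norm involving $D^{1/2}f$ after rearrangement by duality (writing the $L^2$-pairing against a test function and moving $D^{1/2}$ onto either the test function or onto $f$). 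The second term is a Kenig-Ponce-Vega fractional-Leibniz commutator of the form $[D^{1/2},f]h$, for which the standard fractional commutator estimate produces bounds in terms of $\|D^{1/2}f\|$-type norms. Combining these two contributions symmetrically in $f$ and $g$ produces the second term on each side of \eqref{Commutator low freq}, and summing with the bound from Step 1 concludes the proof.

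The principal obstacle is the verification in the first step that the subtraction of $m_2$ actually regularises $\sqrt{K_\mu(\xi)}\langle\xi\rangle^s$ down to an order-$s$ symbol with the correct constant $c_\beta^2$, uniformly in $\mu\in(0,1)$ and for both regimes $\beta<\tfrac13$ and $\beta\geq\tfrac13$. The delicate point is the transition region $\sqrt{\mu}|\xi|\sim 1$, where the two terms of $K_\mu(\xi)$ are comparable and the pointwise estimates of Lemma \ref{Pointwise est.} are essential to control the cross-terms in $m_1'$. A secondary subtlety is the careful redistribution of the $D^{1/2}$ in Step 3 to avoid losing the factor $\mu^{1/4}$ when interchanging derivatives between $f$ and $\partial_x g$.
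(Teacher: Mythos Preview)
Your approach is structurally different from the paper's. Rather than decomposing the symbol, the paper writes the commutator directly in frequency, applies the mean value theorem to $\omega\mapsto\sqrt{K_\mu(\omega)}\langle\omega\rangle^s$, and bounds its derivative by $c_\beta^2\langle\omega\rangle^{s-1}+\sqrt\beta\,\mu^{1/4}\langle\omega\rangle^{s-1}|\omega|^{1/2}$ using \eqref{derivative Symbol} and \eqref{Comparison}. Because this bound is increasing for $s\ge1$, the supremum over the mean-value interval is attained at $\max(|\xi-\rho|,|\rho|)$; a case split, Minkowski's inequality and Cauchy--Schwarz then finish in a few lines. The two contributions in \eqref{Commutator low freq} arise directly from the two terms in this derivative bound --- there is no splitting of the multiplier and no black-box Kato--Ponce.

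Your Step~1 has a genuine gap. You claim $|m_1'(\xi)|\lesssim c_\beta^2\langle\xi\rangle^{s-1}$ follows from \eqref{derivative Symbol}, \eqref{sqrt K} and a direct computation of $m_2'$, but \eqref{derivative Symbol} and \eqref{sqrt K} are only \emph{upper bounds}. Subtracting the exact quantity $m_2'(\xi)\sim\tfrac12\sqrt\beta\,\mu^{1/4}|\xi|^{-1/2}\langle\xi\rangle^s$ from an \emph{upper bound} of the same size on $(\sqrt{K_\mu}\,\langle\cdot\rangle^s)'$ does not produce cancellation; for that you would need a genuine asymptotic expansion $(\sqrt{K_\mu})'(\xi)=\tfrac12\sqrt\beta\,\mu^{1/4}|\xi|^{-1/2}+O(c_\beta^2\langle\xi\rangle^{-1})$ uniform in $\mu$, which is not in Lemma~\ref{Pointwise est.} and would itself be the crux of the argument. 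In addition, Lemma~\ref{Kato ponce} is stated only for $J^s$; to handle the general symbol $m_1$ you would have to control the full seminorm $\mathcal N^s(m_1)$ of Definition~\ref{Symbol class} (hence all derivatives, not just the first) and invoke Lemma~\ref{SymbolLannes} instead. Finally, the handling of $m_2$ is too sketchy: the ``rearrangement by duality'' that moves the outer $D^{1/2}$ onto $f$ through the commutator $[\chi(D)J^s,f]$ is not a standard manoeuvre and needs a concrete identity or estimate to justify it without losing half a derivative on $g$.
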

In the high regularity setting, the proof will follow the  same lines as in \cite{Wang2020}, but we track the dependence in $\mu$ and $\beta$ using the pointwise estimates above.
\begin{proof}

First, write the commutator as a bilinear form:
\begin{align*}
	\big{\|}  [  \sqrt{ \mathcal{K}_{\mu}}(D)J^s , f]\partial_x g\big{\|}_{L^2}
	=
	\bigg{\|}
	\int_{\R} \Big( 
	\sqrt{K_{\mu}(\xi)} \langle \xi \rangle^s -\sqrt{K_{\mu}(\rho)} \langle \rho \rangle^s 
	\Big) \hat{f}(\xi-\rho)\widehat{\partial_x g}(\rho)\: d\rho \bigg{\|}_{L^2_{\xi}}.
\end{align*}
Then if $a = \min\{|\xi-\rho|, |\rho|\}$ and $b = \max\{|\xi-\rho|, |\rho|\}$, we can  use the mean value theorem, leaving us to estimate the following terms
\begin{align*}
	\Big|  \sqrt{K_{\mu}(\xi)} \langle \xi \rangle^s - \sqrt{K_{\mu}(\rho)} 	\langle \rho \rangle^s  \Big|
	\lesssim
	\sup\limits_{\omega \in (a,b)}|m(\omega)| \:  |\xi - \rho|,
\end{align*}
where
\begin{equation*}
	m(\omega) = m_1(\omega) + m_2(\omega) =\langle \xi \rangle^s \frac{d}{d \xi} \sqrt{K_{\mu}(\xi)} + \langle \xi \rangle^{s-1}\sqrt{K_{\mu}(\xi)}.
\end{equation*}
But using \eqref{Comparison} to estimate $m_1(\omega)$ and \eqref{derivative Symbol} to treat $m_2(\omega)$, we deduce
\begin{equation}\label{est on m}
	m(\omega) \lesssim c_{\beta}^2 \langle\omega\rangle^{s-1} + \sqrt{\beta}\mu^{\frac{1}{4}}\langle\omega \rangle^{s-1} |\omega|^{\frac{1}{2}},
\end{equation}
where the upper bound is increasing for $s\geq 1$. Therefore the supremum is attained at $|\xi-\rho|$ or $|\rho|$ by definition of $b$. In particular, if $b = |\xi- \rho|$ then we may   conclude by Minkowski integral inequality, the Cauchy-Schwarz inequality  and \eqref{est on m} that
\begin{align*}
	\big{\|}  [  \sqrt{ \mathcal{K}_{\mu}}(D)J^s , f]\partial_x g\big{\|}_{L^2}
	&
 	\lesssim c_{\beta}^2
	\bigg{\|}
	\int_{\R}  \langle\xi - \rho \rangle^{s-1} |\xi-\rho| \: |\hat{f}(\xi-\rho) | \: |\widehat{\partial_x g}(\rho)|\: d\rho
	\bigg{\|}_{L^2_{\xi}} 
	\\
	& 
	\hspace{0.5cm}
	+  \sqrt{\beta}\mu^{\frac{1}{4}}
	 \bigg{\|}
	 \int_{\R} \langle \xi - \rho \rangle^{s-1} |\xi - \rho|^{\frac{1}{2}} |\xi-\rho| \: |\hat{f}(\xi-\rho) | \: |\widehat{\partial_x g}(\rho)|\: d\rho
	 \bigg{\|}_{L^2_{\xi}} 
	\\
	& \lesssim (c_{\beta}^2\|f\|_{H^s} + \sqrt{\beta}\mu^{\frac{1}{4}}\| D^{\frac{1}{2}}f \|_{H^{s}})\|\partial_x g\|_{H^{t_0}},
\end{align*}
%\begin{align*}
%	\bigg{\|}
%	\int_{\R} |m_i(\xi-\rho)|\: |\xi-\rho| \: |\hat{f}(\xi-\rho) | \: |\widehat{\partial_x g}(\rho)|\: d\rho
%	\bigg{\|}_{L^2_{\xi}} 
%	& \leq \| m_i(\xi) |\xi|\hat{f} \|_{L^2_{\xi}}  \|\partial_x g \|_{H^{t_0}},
%\end{align*}
%
%
%
for $t_0>\frac{1}{2}$. On the other hand, if $b = |\rho|$, then we make a change of coordinates and argue similarly to deduce,
\begin{align*}
	\big{\|}  [  \sqrt{ \mathcal{K}_{\mu}}(D)J^s , f]\partial_x g\big{\|}_{L^2}
	&
	\lesssim c_{\beta}^2
	\bigg{\|}
	\int_{\R}    \langle \xi - \gamma \rangle^{s-1} |\widehat{\partial_x g}(\xi- \gamma)| \:  |\gamma| \: |\hat{f}(\gamma)| \: d\gamma
	\bigg{\|}_{L^2_{\xi}} 
	\\
	& 
	\hspace{0.5cm}
	+  \sqrt{\beta}\mu^{\frac{1}{4}}
	\bigg{\|}
	\int_{\R}\langle \gamma \rangle^{s-1} | \xi - \gamma|^{\frac{1}{2}}  |\widehat{\partial_x g}(\xi- \gamma)| \:    |\gamma| \: |\hat{f}(\gamma)| \: d\gamma
	\bigg{\|}_{L^2_{\xi}} 
	\\
	& \lesssim (c_{\beta}^2\|g\|_{H^s} + \sqrt{\beta}\mu^{\frac{1}{4}}\| D^{\frac{1}{2}}g \|_{H^{s}})\|\partial_x f\|_{H^{t_0}}.
\end{align*}
Adding the two scenarios, we may conclude  that \eqref{Commutator low freq} holds. 

\end{proof}

We will also need a commutator estimates on $\mathcal{T}_{\mu}(D)$ and $J^{\frac{1}{2}}_{\mu}$. 
\begin{lemma}
	Let $f,g \in \mathscr{S}(\R)$, $s\geq1$, $t_0 > \frac{1}{2}$, $\mu \in (0,1)$ and $J^{\frac{1}{2}}_{\mu}$ as defined in \eqref{scaled J_mu}. 
	\begin{itemize}
		\item 	Then we have a Kato-Ponce type estimate
		\begin{align}\notag
			\| [J^s J^{\frac{1}{2}}_{\mu},f]\partial_x g\|_{L^2} 
			&
			\lesssim
			(\|f\|_{H^s} + \mu^{\frac{1}{4}} \|D^{\frac{1}{2}}f\|_{H^s})\|\partial_x g \|_{H^{t_0}} 
			\\
			&
			\hspace{0.5cm}
			+ \label{Commutator J^sJ_mu}
			(\|g\|_{H^s} + \mu^{\frac{1}{4}} \|D^{\frac{1}{2}}g\|_{H^s})\|\partial_x f \|_{H^{t_0}}.
		\end{align}
		\item There holds
		\begin{equation}\label{Commutator Js T}
			\|[\sqrt{\mathcal{T}_{\mu}}(D)J^s J^{\frac{1}{2}}_{\mu},f]\partial_xg \|_{L^2} \lesssim \| f \|_{H^{s}} \| g\|_{H^{t_0+1}} + \| f \|_{H^{t_0+1}} \| g\|_{H^{s}}.
		\end{equation}

	\end{itemize}

\end{lemma}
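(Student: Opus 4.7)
The plan is to mimic exactly the bilinear strategy used in the proof of Lemma \ref{Commutator K at level s}, substituting the corresponding pointwise symbol estimates from Lemma \ref{Pointwise est. on T}. For either commutator, expressing it on the Fourier side gives
\begin{equation*}
[m(D),f]\partial_x g \;=\; \int_{\R}\bigl(m(\xi)-m(\rho)\bigr)\hat{f}(\xi-\rho)\widehat{\partial_x g}(\rho)\,d\rho ,
\end{equation*}
so by the mean value theorem $|m(\xi)-m(\rho)|\leq |\xi-\rho|\,\sup_{\omega\in(a,b)}|m'(\omega)|$, with $a=\min\{|\xi-\rho|,|\rho|\}$ and $b=\max\{|\xi-\rho|,|\rho|\}$. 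In both cases the dominant frequency is $b$, so one then splits according to whether $b=|\xi-\rho|$ or $b=|\rho|$ and concludes via Minkowski's integral inequality together with Cauchy--Schwarz in $\rho$ (or in $\gamma=\xi-\rho$ after a change of variables) with a weight $\langle \cdot\rangle^{-2t_0}$, $t_0>\tfrac12$.

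For the Kato--Ponce type bound \eqref{Commutator J^sJ_mu} the symbol is $m(\xi)=\langle\xi\rangle^s\langle\sqrt{\mu}\xi\rangle^{1/2}$. Estimate \eqref{Derivative J^sJ_mu} gives $|m'(\omega)|\lesssim \langle\omega\rangle^{s-1}\langle\sqrt{\mu}\omega\rangle^{1/2}$, and combining with the pointwise comparison \eqref{Comparison J_mu D_mu} (which implies $\langle\sqrt{\mu}\omega\rangle^{1/2}\lesssim 1+\mu^{1/4}|\omega|^{1/2}$) yields
\begin{equation*}
|m'(\omega)| \;\lesssim\; \langle\omega\rangle^{s-1} \;+\; \mu^{1/4}\,|\omega|^{1/2}\langle\omega\rangle^{s-1},
\end{equation*}
which is monotone increasing in $|\omega|$ once $s\geq 1$, so its sup on $(a,b)$ is attained at $b$. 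The case $b=|\xi-\rho|$ then absorbs the factor $\langle\xi-\rho\rangle^{s-1}|\xi-\rho|$ into an $H^s$ norm of $f$ and an $H^{s}$ norm of $D^{1/2}f$, paired with $\|\partial_x g\|_{H^{t_0}}$; the case $b=|\rho|$, after the change $\gamma=\rho$, produces the symmetric bound with the roles of $f$ and $g$ exchanged. Summing the two cases gives \eqref{Commutator J^sJ_mu}.

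For \eqref{Commutator Js T} the symbol becomes $m(\xi)=\sqrt{T_{\mu}(\xi)}\,\langle\xi\rangle^s\langle\sqrt{\mu}\xi\rangle^{1/2}$, and the key input is \eqref{Derivative sqrtT Js J_mu}, which delivers the clean estimate $|m'(\omega)|\lesssim \langle\omega\rangle^{s-1}$ without any $\mu$-loss. Geometrically, the decay of $\sqrt{T_\mu}$ in high frequencies exactly compensates the growth of $\langle\sqrt{\mu}\,\cdot\,\rangle^{1/2}$, so no $\mu^{1/4}|\omega|^{1/2}$ tail appears and the estimate is uniform in $\mu$. The price is that we cannot absorb the $\partial_x$ factor into the multiplier: the extra $|\rho|$ coming from $\widehat{\partial_x g}(\rho)$ must be carried along, yielding $\|f\|_{H^s}\|g\|_{H^{t_0+1}}$ in the case $b=|\xi-\rho|$ and, after the change of variable, $\|f\|_{H^{t_0+1}}\|g\|_{H^s}$ in the case $b=|\rho|$.

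The only genuinely delicate point — and the reason the authors bundle $\sqrt{\mathcal{T}_{\mu}}(D)$, $J^s$ and $J^{1/2}_{\mu}$ into a single operator \emph{before} commuting with $f$ — is the sharp derivative bound \eqref{Derivative sqrtT Js J_mu}: commuting these operators one at a time would spoil the $\mu$-uniformity. Everything else is bookkeeping identical to the proof of Lemma \ref{Commutator K at level s}.
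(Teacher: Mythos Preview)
Your proposal is correct and follows essentially the same approach as the paper: write the commutator as a bilinear form on the Fourier side, apply the mean value theorem together with the pointwise derivative bounds \eqref{Derivative J^sJ_mu} and \eqref{Derivative sqrtT Js J_mu}, split into the two cases $b=|\xi-\rho|$ and $b=|\rho|$, and conclude via Minkowski and Cauchy--Schwarz exactly as in Lemma~\ref{Commutator K at level s}. The only cosmetic difference is that the paper keeps the factor $\langle\sqrt{\mu}\,\cdot\,\rangle^{1/2}$ intact and invokes \eqref{J_mu} at the end to recover the $\|f\|_{H^s}+\mu^{1/4}\|D^{1/2}f\|_{H^s}$ form, whereas you convert earlier via \eqref{Comparison J_mu D_mu}; these are equivalent.
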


\begin{proof}
	The proof is similar to the one of Lemma \ref{Commutator K at level s} and relies on the pointwise estimates established in Lemma \ref{Pointwise est. on T}. Indeed, for \eqref{Commutator J^sJ_mu} we define $a_1(D) (f,g) := [J^s J^{\frac{1}{2}}_{\mu},f]\partial_x g$ and use the mean value theorem combined with \eqref{Derivative J^sJ_mu} to deduce
	\begin{align*}
		|\hat{a}_1(\xi) (f,g)|
		& \leq
		\int_{\R} 
		\Big{|}
		\langle \xi \rangle^s \langle \sqrt{\mu} \xi \rangle^{\frac{1}{2}} -	\langle \rho\rangle^s \langle \sqrt{\mu} \rho \rangle^{\frac{1}{2}} 
		\Big{|}
		|\hat{f}(\xi-\rho)| \: | \widehat{\partial_x g}(\rho)| \: d\rho
		\\ 
		& \lesssim
		\int_{\R} 
		\langle \xi -\rho\rangle^{s-1} \langle \sqrt{\mu} (\xi-\rho) \rangle^{\frac{1}{2}} 
		|\xi - \rho| \:
		|\hat{f}(\xi-\rho)| \: | \widehat{\partial_x g}(\eta)| \: d\rho
		\\
		&
		\hspace{0.5cm}
		+
		\int_{\R} 
		\langle \rho\rangle^{s-1} \langle \sqrt{\mu} \rho \rangle^{\frac{1}{2}}
		|\xi - \rho| \:
		|\hat{f}(\xi-\rho)| \: | \widehat{\partial_x g}(\rho)| \: d\rho.
	\end{align*}
	Then if we apply the $L^2(\R)-$norm with respect to $\xi$, we can argue as in Lemma \ref{Commutator K at level s} that
	\begin{align*}
			\| \hat{a}_1(\xi)(f,g) \|_{L^2_{\xi}} & \lesssim \|J^{\frac{1}{2}}_{\mu}f\|_{H^s} \int_{\R} | \widehat{\partial_x g}(\rho)| \: d\rho + \|J^{\frac{1}{2}}_{\mu}g\|_{H^s} \int_{\R} |\rho| \: |\hat{f}(\rho)| \: d\rho.
	\end{align*}
	Then use the definiton of $a_1(D)(f,g)$ and \eqref{J_mu} to conclude. 
	
	The proof of \eqref{Commutator Js T} is the same, with $a_2(D)(f,g) := [\sqrt{\mathcal{T}_{\mu}}(D)J^s J^{\frac{1}{2}}_{\mu},f]\partial_xg$. We use \eqref{Derivative sqrtT Js J_mu} to find that
	\begin{align*}
		|\hat{a}_2(\xi)(f,g)|
		& \leq
		\int_{\R} 
		\Big{|}
		\sqrt{T_{\mu}(\xi)} \langle \xi \rangle^s \langle \sqrt{\mu} \xi \rangle^{\frac{1}{2}} - \sqrt{T_{\mu}(\rho)} 	\langle \rho \rangle^s \langle \sqrt{\mu} \rho \rangle^{\frac{1}{2}} 
		\Big{|}
		|\hat{f}(\xi-\rho)| \: | \widehat{\partial_x g}(\rho)| \: d\rho
		\\ 
		& \lesssim
		\int_{\R} 
		\langle \xi -\rho\rangle^{s-1} 
		|\xi - \rho| \:
		|\hat{f}(\xi-\rho)| \: | \widehat{\partial_x g}(\rho)| \: d\rho
		\\
     	&
		\hspace{0.5cm}
		+
		\int_{\R} 
		\langle \rho\rangle^{s-1}
		|\xi - \rho| \:
		|\hat{f}(\xi-\rho)| \: | \widehat{\partial_x g}(\rho)| \: d\rho,
	\end{align*}
	and the result follows.

\end{proof}

Next, we state the classical Kato-Ponce commutator estimate. We will use it repeatedly to commute the Bessel potential with functions to obtain the desired energy estimates in the coming sections.

\begin{lemma}[Kato - Ponce commutator estimates \cite{KatoPonce1988}] \label{Kato ponce}Let $s\geq 0$, $p,p_2,p_3 \in (1,\infty)$ and $p_1, p_4 \in (1,\infty]$ such that $\frac{1}{p} = \frac{1}{p_1} + \frac{1}{p_2} = \frac{1}{p_3} + \frac{1}{p_4}$. Then
\begin{equation}\label{Prod. Kato-Ponce}
		\|J^s(fg)\|_{L^p} \lesssim \| f\|_{L^{p_1}} \|J^{s}g\|_{L^{p_2}} + \|J^s f \|_{L^{p_3}} \|g\|_{L^{p_4}}
\end{equation}
and
\begin{equation}\label{K-P}
	\|[J^s,f]g\|_{L^p} \lesssim \|\partial_x f\|_{L^{p_1}} \|J^{s-1}g\|_{L^{p_2}} + \|J^s f \|_{L^{p_3}} \|g\|_{L^{p_4}}.
\end{equation}
\end{lemma}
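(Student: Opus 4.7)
The plan is to prove the fractional Leibniz estimate \eqref{Prod. Kato-Ponce} first via a Littlewood--Paley / paradifferential decomposition, and then deduce the commutator estimate \eqref{K-P} by rerunning the same decomposition on $J^s(fg)-fJ^s g$ with a Taylor expansion of the symbol $\langle\xi\rangle^s$ to capture the one-derivative gain on $f$.

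First I would fix a standard dyadic partition of unity $\{\varphi_j\}_{j\geq -1}$, set $\Delta_j=\varphi_j(D)$, $S_j=\sum_{k\leq j-1}\Delta_k$, and write Bony's decomposition $fg=T_f g+T_g f+R(f,g)$, where $T_f g=\sum_j S_{j-1}f\cdot \Delta_j g$ is the low--high paraproduct and $R(f,g)$ is the high--high remainder. For the paraproduct $T_f g$, the spectral localization of $S_{j-1}f\cdot\Delta_j g$ in an annulus of size $2^j$ gives, via Littlewood--Paley square-function characterization and the Fefferman--Stein vector-valued maximal inequality, the bound $\|J^s T_f g\|_{L^p}\lesssim \|f\|_{L^{p_1}}\|J^s g\|_{L^{p_2}}$. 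The symmetric piece $T_g f$ is bounded by $\|J^s f\|_{L^{p_3}}\|g\|_{L^{p_4}}$. The remainder $R(f,g)$ outputs frequencies at most $2^j$ from two inputs at scale $\sim 2^j$, so after Bernstein and Cauchy--Schwarz in $j$ it contributes the same right-hand side (distributing $s$ freely between the two factors). This establishes \eqref{Prod. Kato-Ponce}.

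For the commutator, rewrite $[J^s,f]g=J^s(T_f g)-T_f(J^s g)+J^s(T_g f)+J^s R(f,g)-fJ^s g$, in which the dangerous term is the low--high paraproduct $J^s(T_f g)-T_f(J^s g)$, since the other pieces are of paraproduct/remainder type and are controlled directly by \eqref{Prod. Kato-Ponce} by $\|J^s f\|_{L^{p_3}}\|g\|_{L^{p_4}}$. On the frequency side, this low--high commutator has symbol
\begin{equation*}
\langle\xi\rangle^s-\langle\eta\rangle^s=(\xi-\eta)\int_0^1 \partial\langle\eta+t(\xi-\eta)\rangle^s\,dt,
\end{equation*}
and in the support of the paraproduct one has $|\xi-\eta|\lesssim |\eta|$, so the integrand is of order $\langle\eta\rangle^{s-1}$. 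The factor $(\xi-\eta)$ becomes a derivative on $f$, while $\langle\eta\rangle^{s-1}$ becomes $J^{s-1}$ on $g$. Reassembling via the same Littlewood--Paley square function then gives the first term $\|\partial_x f\|_{L^{p_1}}\|J^{s-1}g\|_{L^{p_2}}$ in \eqref{K-P}.

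The main technical obstacle is the endpoint handling when $p_1=\infty$ or $p_4=\infty$: the Fefferman--Stein vector-valued maximal inequality fails at $L^\infty$, so one must instead exploit that $S_{j-1}f\in L^\infty$ directly with a pointwise maximal-function bound, and invoke the equivalence $\|J^s u\|_{L^p}\sim \|(\sum_j 2^{2js}|\Delta_j u|^2)^{1/2}\|_{L^p}$ for $1<p<\infty$ only on the high-frequency factor. Beyond this bookkeeping, every remaining step is a routine application of Bernstein, Hölder, and the square-function characterization of $L^p$-based Sobolev norms.
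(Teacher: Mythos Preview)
The paper does not prove this lemma at all: it is stated as a classical result with a citation to Kato--Ponce \cite{KatoPonce1988} and used as a black box throughout. Your outline via Bony's paraproduct decomposition and Littlewood--Paley square functions is the standard modern route to these estimates and is essentially correct, so you are supplying a proof where the paper simply quotes the literature.
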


Similar commutator estimates also hold for more general multipliers. In fact, by splitting the frequency domain into two parts using smooth cut-off functions defined in frequency, we can obtain sharper commutator estimates specific to equation  \eqref{full dispersion}.

\begin{Def}\label{multiplier}
	We define the smooth cut-off functions $\chi^{(i)}\in\mathscr{S}(\mathbb{R})$ as Fourier multipliers
	\begin{equation*}
		\mathcal{F}(\chi^{(i)}(D) f)(\xi) = \chi^{(i)}(|\xi|)\hat{f}(\xi),
	\end{equation*}
	for any $f \in \mathscr{S}(\mathbb{R})$ with the following properties: 
	\begin{equation*}
		0\leq \chi^{(i)}(\xi) \leq 1, \qquad 	(\chi^{(1)}(\xi))^2 + (\chi^{(2)}(\xi))^2 = 1 \quad \text{on} \quad  \R,
	\end{equation*}
	and
	%	
	%
	% 
%	\begin{equation*}
%		\chi^{(1)} + \chi^{(2)} = 1
	%\end{equation*}
	%
	%
	%
	\begin{equation*}
	\text{supp} \:  \chi^{(1)} \subset 
	[-1,1]
	,
	\quad
	\text{supp} \: \chi^{(2)} \: \subset \R \backslash \big[-\frac{1}{2}, \frac{1}{2} \big].
\end{equation*}
		Moreover, we denote the scaled version in $\mu$ by $\chi^{(i)}_{\mu}(\xi) = \chi^{(i)}(\sqrt{\mu} \xi)$.
	
\end{Def}

We have the results:

\begin{lemma}\label{Commutator L2} Let $s>\frac{3}{2}$, $\mu \in (0,1)$ and $f,g \in \mathscr{S}(\mathbb{R})$. 
	\begin{itemize}
		\item Let $(\chi^{(1)}_{\mu} \sqrt{\mathcal{K}_{\mu}})(D)$ be the multiplier of the symbol $(\chi^{(1)}_{\mu} \sqrt{{K}_{\mu}})(\xi)$. Then
		\begin{equation}\label{Product chi K}
			\| (\chi^{(1)}_{\mu} \sqrt{\mathcal{K}_{\mu}})(D)f \|_{L^2} \lesssim_{\beta} \| f \|_{L^2},
		\end{equation}
		and
		\begin{equation}\label{Commutator chi K}
			\| [(\chi^{(1)}_{\mu} \sqrt{\mathcal{K}_{\mu}})(D),f ]\partial_x g \|_{L^2} \lesssim_{\beta} \|f\|_{H^{s}} \| g\|_{L^2}.
		\end{equation}

	\item We define the symbol 
	\begin{align}\label{Sigma 1/2}
		\sigma_{\mu, \frac{1}{2}}(D) 
		& : =  \bigg{(}  \frac{1}{\sqrt{\mu} |D|} + \beta \sqrt{\mu}  |D| \bigg{)}^{\frac{1}{2}}.
	\end{align}
	%
	%
	%
	%	and let $$\tilde{\sigma}_{\mu, \frac{1}{2}}(D) : = 	\mu^{-\frac{1}{4}}\sigma_{\mu, \frac{1}{2}}(D) .$$
	%
	%
	%
	Then  
	\begin{equation}\label{Product sigma 1/2}
		\|(\chi^{(2)}_{\mu}  \sigma_{\mu, \frac{1}{2}})(D)f \|_{L^2}  \lesssim_{\beta}
		\|f\|_{L^2} +\mu^{\frac{1}{4}} \|  D^{\frac{1}{2}}  f\|_{L^2} 
	\end{equation}
	and %there holds,
	\begin{equation}\label{Commutator D}
		\|[  (\chi^{(2)}_{\mu}  \sigma_{\mu, \frac{1}{2}})(D), f] \partial_x g\|_{L^2} \lesssim_{\beta}  \mu^{\frac{1}{4}} \|f\|_{H^{s}}  \| g\|_{H^{\frac{1}{2}}}.
	\end{equation}

	 \item Lastly, we define the symbol $\sigma_{\mu,0}(D)$ to be
	\begin{equation}\label{sigmaDiff}
		\sigma_{\mu,0}(D) : =  \bigg{(}  \frac{1}{\sqrt{\mu} |D|} +  \beta \sqrt{\mu} |D| - \mathcal{K}_{\mu}(D) 
		\bigg{)}^{\frac{1}{2}}.
	\end{equation}
    Then 
    \begin{equation}\label{Product sigma}
    	\| (\chi^{(2)}_{\mu}\sigma_{\mu,0})(D)f \|_{L^2} \lesssim_{\beta} \| f \|_{L^2}
    \end{equation}
    and 
	\begin{equation}\label{Commutator sigma}
		\|[  (\chi^{(2)}_{\mu}\sigma_{\mu,0})(D), f] \partial_x g\|_{L^2} \lesssim_{\beta}  \|f\|_{H^{s}} \| g\|_{L^2}.
	\end{equation}

\end{itemize}	
\end{lemma}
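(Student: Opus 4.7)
My plan is to prove all six bounds by a unified two-step procedure. For each multiplier $m_i$ I first establish pointwise control of $m_i$ and $m_i'$ on the support of the corresponding cutoff $\chi^{(j)}_\mu$, then obtain the product estimates immediately from Plancherel and the commutator estimates from the bilinear Fourier representation
\[
\mathcal{F}\bigl([m(D),f]\partial_x g\bigr)(\xi) = \int_{\R}\bigl(m(\xi)-m(\rho)\bigr)\hat{f}(\xi-\rho)\,i\rho\,\hat{g}(\rho)\,d\rho,
\]
combined with the Mean Value Theorem and Young's inequality, exactly as in the proof of Lemma \ref{Commutator K at level s}. The recurring observation is that once $|m_i'(\omega)| \lesssim_\beta M(\mu)\langle\omega\rangle^{-\alpha}$ on the support, a short case analysis on $|\rho| \gtrless |\xi-\rho|$ produces the uniform bound $|m_i(\xi) - m_i(\rho)|\,|\rho| \lesssim_\beta M(\mu)\,|\xi-\rho|\,\langle\rho\rangle^{1-\alpha}$ in both regions (one uses $\langle\rho\rangle^{-\alpha}|\rho|\leq\langle\rho\rangle^{1-\alpha}$ in one region and $|\rho|\leq|\xi-\rho|\leq\langle\xi-\rho\rangle$ in the other), so that Minkowski together with $\|\partial_x f\|_{L^1_\xi} \lesssim \|f\|_{H^s}$ (valid for $s > 3/2$) yields the commutator estimate with $\|g\|_{H^{1-\alpha}}$ on the right.

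For $m_1 = \chi^{(1)}_\mu \sqrt{\mathcal{K}_\mu}$, the support condition $\sqrt{\mu}|\xi| \leq 1$ and \eqref{sqrt K} give $\sqrt{K_\mu} \lesssim_\beta 1$, hence \eqref{Product chi K}. A product-rule computation using \eqref{derivative Symbol} for the interior term, together with the fact that $\sqrt{\mu} \lesssim \langle\xi\rangle^{-1}$ on the annulus $|\xi| \sim 1/\sqrt{\mu}$ where $(\chi^{(1)})'(\sqrt{\mu}\xi)$ lives, produces $|m_1'(\xi)| \lesssim_\beta \langle\xi\rangle^{-1}$ on $\mathrm{supp}\,\chi^{(1)}_\mu$; the general principle with $(\alpha,M(\mu))=(1,1)$ then delivers \eqref{Commutator chi K}. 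For $m_2 = \chi^{(2)}_\mu \sigma_{\mu,1/2}$, the support condition $\sqrt{\mu}|\xi| \geq 1/2$ gives $\sigma_{\mu,1/2}^2 \lesssim_\beta 1 + \sqrt{\mu}|\xi|$, whence \eqref{Product sigma 1/2} by Plancherel after writing $\sqrt{\mu}|\xi| = (\mu^{1/4})^2 |\xi|$. A direct calculation, using the AM--GM lower bound $\sigma_{\mu,1/2}^2 \geq 2\sqrt{\beta}$ to control $1/\sigma_{\mu,1/2}$, yields $|m_2'(\xi)| \lesssim_\beta \mu^{1/4}\langle\xi\rangle^{-1/2}$ on the support, and the general principle with $(\alpha,M(\mu))=(1/2,\mu^{1/4})$ gives \eqref{Commutator D}.

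For $m_3 = \chi^{(2)}_\mu \sigma_{\mu,0}$, the central algebraic identity is
\[
\sigma_{\mu,0}^2(\xi) = \bigl(1 - \tanh(\sqrt{\mu}|\xi|)\bigr)\,\sigma_{\mu,1/2}^2(\xi),
\]
obtained by substituting \eqref{mult K} into \eqref{sigmaDiff} and collecting terms. This shows $\sigma_{\mu,0}^2 \geq 0$, so the square root is well defined, and the exponential decay $1 - \tanh(y) \lesssim e^{-2y}$ on $\{y \geq 1/2\}$ together with the bound $\sigma_{\mu,1/2}^2 \lesssim_\beta 1 + \sqrt{\mu}|\xi|$ kills all polynomial growth and yields $\sigma_{\mu,0}^2 \lesssim_\beta 1$ uniformly on $\mathrm{supp}\,\chi^{(2)}_\mu$, proving \eqref{Product sigma} by Plancherel. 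Differentiating the identity and using the exponential decay again to dominate the polynomial factors produces $|m_3'(\xi)| \lesssim_\beta \langle\xi\rangle^{-1}$, so \eqref{Commutator sigma} follows from the general principle with $(\alpha,M(\mu))=(1,1)$. The main obstacle will be establishing the sharp derivative bounds for $m_2$ and $m_3$ uniformly across their supports: for $m_2$ one must control $1/\sigma_{\mu,1/2}$ via AM--GM without introducing a spurious negative power of $\beta$ beyond what the target $\lesssim_\beta$ absorbs, and for $m_3$ one must quantify the rate at which the exponential factor $1 - \tanh(\sqrt{\mu}|\xi|)$ dominates $\sigma_{\mu,1/2}^2$ both at the transition $\sqrt{\mu}|\xi| \sim 1/2$ and as $\sqrt{\mu}|\xi| \to \infty$, since any hidden $\mu^{-}$- or $\beta^{-}$-loss there would propagate into the long-time scale $T/\ve$.
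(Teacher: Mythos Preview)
Your approach is correct and genuinely different from the paper's. The paper does not run the bilinear Fourier argument directly; instead it invokes a black-box symbol-class commutator lemma (Lannes, Proposition~B.8) which requires verifying that each of the three symbols lies in the class $\mathcal{S}^r$ with seminorm $\mathcal{N}^r$ bounded uniformly in $\mu$, and this in turn demands control of \emph{all} derivatives up to order four via repeated Leibniz rules and the higher-order bounds of Lemma~\ref{der T}. Your route is more elementary: you only need the zeroth- and first-order pointwise bounds on each $m_i$, then feed these into the Mean Value/triangle-inequality dichotomy exactly as in Lemma~\ref{Commutator K at level s}. Both approaches rely on the same algebraic factorisation $\sigma_{\mu,0}^2=(1-\tanh(\sqrt{\mu}|\xi|))\,\sigma_{\mu,\frac12}^2$ for the third symbol (the paper writes it as $\chi^{(2)}_\mu a_\mu b_\mu c_\mu$ with $c_\mu=(2/(e^{2\sqrt{\mu}|\xi|}+1))^{1/2}$, which is the same thing). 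What your approach buys is self-containment and fewer derivative computations; what the paper's approach buys is a cleaner bookkeeping once the symbol-class lemma is taken for granted.

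One point to tighten: your stated ``uniform bound'' $|m_i(\xi)-m_i(\rho)|\,|\rho|\lesssim_\beta M(\mu)\,|\xi-\rho|\,\langle\rho\rangle^{1-\alpha}$ is not literally what your two hints produce. In the region $|\rho|\le|\xi-\rho|$ the MVT bound degenerates (the supremum of $|m_i'|$ is attained near $\omega=0$), so you must instead use the zeroth-order bound $|m_i|\lesssim_\beta M(\mu)\langle\cdot\rangle^{1-\alpha}$ and the splitting $|\rho|=|\rho|^{\alpha}|\rho|^{1-\alpha}\le|\xi-\rho|^{\alpha}\langle\rho\rangle^{1-\alpha}$; this yields $|m_i(\xi)-m_i(\rho)|\,|\rho|\lesssim_\beta M(\mu)\,\langle\xi-\rho\rangle\,\langle\rho\rangle^{1-\alpha}$, which is exactly enough for $\|\langle\cdot\rangle\hat f\|_{L^1}\lesssim\|f\|_{H^s}$ with $s>\tfrac32$. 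In the complementary region you need $|\rho|\ge 2|\xi-\rho|$ (not merely $\ge|\xi-\rho|$) so that the intermediate point $\omega$ satisfies $|\omega|\gtrsim|\rho|$ and the MVT step goes through. With these two adjustments your scheme closes for all three symbols at the stated regularity $s>\tfrac32$.
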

The proof is postponed to Appendix \ref{A3}, where we also will prove the following commutator estimates at the $L^2(\R)-$level:

\begin{lemma}\label{T_mu and J_mu Commutator L2} Let $s> \frac{3}{2}$, $\mu \in (0,1)$ and $f,g \in \mathscr{S}(\R)$.
	
	\begin{itemize}
		\item For the composition of $\sqrt{T_{\mu}}(D)$ and $J^{\frac{1}{2}}_{\mu}$ there holds,
		\begin{equation}\label{L2 Commutator T_mu J_mu}
			\|[\sqrt{T_{\mu}}(D)J^{\frac{1}{2}}_{\mu}, f]\partial_x g\|_{L^2} \lesssim \|f\|_{H^{s}}\|g\|_{L^2}.
		\end{equation}
		\item While for the usual Bessel potential there holds,
		\begin{equation}\label{Bessel Commutator T_mu J}
			\|[\sqrt{T_{\mu}}(D)J^s, f]\partial_x g\|_{L^2} \lesssim \|f\|_{H^{s}}\|J^sg\|_{L^2}.
	\end{equation}
	
	\item Similarly, when the operator $J^s$ is the identity we have
	\begin{equation}\label{Bessel Commutator T_mu J=1}
		\|[\sqrt{T_{\mu}}(D), f]\partial_x g\|_{L^2} \lesssim \|f\|_{H^{s}}\|g\|_{L^2}.
	\end{equation}

		\item The derivative of the following commutator satisfies
		\begin{equation}\label{Commutator dx T}
			\|\partial_x[\sqrt{\mathcal{T}_{\mu}}(D), f]g \|_{L^2}
			\lesssim
			\|f\|_{H^{s}}\|g\|_{L^2}.
		\end{equation}

		\item Lastly, we can commute $J^{\frac{1}{2}}_{\mu}$ by
		\begin{equation}\label{L2 commutator J_mu}
			\|[J^{\frac{1}{2}}_{\mu}, f] \partial_xg \|_{L^2} \lesssim \|f\|_{H^{s}} 	\|J^{\frac{1}{2}}_{\mu}g\|_{L^2}.
		\end{equation}
	\end{itemize}

\end{lemma}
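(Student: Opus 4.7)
The plan is to prove each of the five estimates by the bilinear Fourier--multiplier / mean value theorem scheme already used in the proof of Lemma \ref{Commutator K at level s}. For a generic symbol $m$, I write
\begin{equation*}
\mathcal{F}\bigl([m(D),f]\partial_x g\bigr)(\xi) \;=\; i\int_{\R}\rho\bigl(m(\xi)-m(\rho)\bigr)\hat{f}(\xi-\rho)\hat{g}(\rho)\,d\rho,
\end{equation*}
apply the mean value theorem to the symbol difference, substitute the appropriate pointwise derivative bound from Lemma \ref{Pointwise est. on T}, and close with Minkowski's integral inequality, Cauchy--Schwarz and the Sobolev embedding $H^{t_0}\hookrightarrow L^\infty$ for $t_0>\tfrac12$.

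The common device that absorbs the extra factor $\rho$ coming from $\partial_x g$ is the algebraic identity
\begin{equation*}
\rho\bigl(m(\xi)-m(\rho)\bigr) \;=\; \bigl(\xi\,m(\xi)-\rho\,m(\rho)\bigr) \;-\; (\xi-\rho)\,m(\xi),
\end{equation*}
which rewrites $[m(D),f]\partial_x g$ as the sum of a commutator with the ``derivative symbol'' $\omega\,m(\omega)$ and a product term of the form $m(D)(\partial_x f\cdot g)$ (up to a sign). The product term is controlled by the $L^2$-boundedness of $m(D)$ provided by \eqref{boundedness of T}, \eqref{T_mu J_mu equiv} or \eqref{J_mu}, together with $\|\partial_x f\|_{L^\infty}\lesssim\|f\|_{H^s}$ for $s>\tfrac32$. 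For the commutator with symbol $\omega\,m(\omega)$, the mean value theorem reduces matters to an integral essentially of the form $\int|\xi-\rho||\hat{f}(\xi-\rho)||\hat{g}(\rho)|\,d\rho$ (times a mild frequency weight depending on the case), which closes via Young's convolution inequality and the elementary Cauchy--Schwarz bound $\|\widehat{\partial_x f}\|_{L^1}\lesssim\|f\|_{H^s}$ valid for $s>\tfrac32$. The requisite pointwise bounds on $|\frac{d}{d\omega}[\omega\,m(\omega)]|$ follow from Lemma \ref{Pointwise est. on T} combined with the product rule.

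The derivative estimate \eqref{Commutator dx T} is handled by the same identity: since $\partial_x$ commutes with $\sqrt{\mathcal{T}_\mu}(D)$, one obtains the operator decomposition
\begin{equation*}
\partial_x[\sqrt{\mathcal{T}_\mu}(D),f]g \;=\; [\partial_x\sqrt{\mathcal{T}_\mu}(D),f]g \;-\; \partial_x f\cdot\sqrt{\mathcal{T}_\mu}(D)g,
\end{equation*}
so \eqref{Commutator dx T} reduces to the same two ingredients. For \eqref{L2 commutator J_mu}, with $m(\xi)=\langle\sqrt{\mu}\xi\rangle^{1/2}$, the factor $\rho\,m(\rho)\hat{g}(\rho)$ that appears in the ``derivative symbol'' piece is exactly $\rho\,\widehat{J^{1/2}_\mu g}(\rho)$, which accounts for why the right-hand side of \eqref{L2 commutator J_mu} involves $\|J^{1/2}_\mu g\|_{L^2}$ in place of $\|g\|_{L^2}$.

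The main technical obstacle is verifying the uniform-in-$\mu$ pointwise bound on $|\frac{d}{d\omega}[\omega\,m(\omega)]|$ in each case, in particular for $m=\sqrt{T_\mu}$. After the rescaling $y=\sqrt{\mu}|\omega|$ one finds $\omega\sqrt{T_\mu}(\omega)=\mu^{-1/2}\sqrt{y\tanh y}$, so the task reduces to checking that $\frac{d}{dy}\sqrt{y\tanh y}$ is uniformly bounded on $[0,\infty)$; this is elementary but requires separate analysis of the small-$y$ regime (where $\sqrt{y\tanh y}\sim y$) and the large-$y$ regime (where $\sqrt{y\tanh y}\sim\sqrt{y}$) to see that the competing powers of $\sqrt{\mu}$ cancel exactly and yield an $\mathcal{O}(1)$ bound.
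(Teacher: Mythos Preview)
Your algebraic identity and the decomposition $[m(D),f]\partial_xg = [\partial_x m(D),f]g - m(D)((\partial_xf)g)$ work cleanly for \eqref{L2 Commutator T_mu J_mu}, \eqref{Bessel Commutator T_mu J=1} and \eqref{Commutator dx T}, where $m(D)$ is uniformly $L^2$-bounded and $\omega\mapsto\omega m(\omega)$ has uniformly bounded derivative. For these three estimates this is a pleasant alternative to the paper's route, which verifies membership in the symbol class $\mathcal{S}^0$ and invokes the abstract commutator Lemma~\ref{SymbolLannes}.

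The scheme breaks down, however, for \eqref{Bessel Commutator T_mu J}. Here $m(D)=\sqrt{\mathcal{T}_\mu}(D)J^s$ is \emph{not} $L^2$-bounded uniformly in $\mu$ (its symbol grows like $\langle\xi\rangle^s$), so the product term $m(D)((\partial_xf)g)$ cannot be controlled by ``$L^2$-boundedness of $m(D)$'' as you claim; applying Kato--Ponce to $J^s((\partial_xf)g)$ produces an unavoidable $\|f\|_{H^{s+1}}$. Likewise $|\frac{d}{d\omega}[\omega m(\omega)]|\sim\langle\omega\rangle^s$ is not a ``mild frequency weight'' and forces extra regularity on $f$ in the case $|\xi-\rho|\geq|\rho|$. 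The paper instead verifies $\sqrt{\mathcal{T}_\mu}J^s\in\mathcal{S}^s$ uniformly and applies Lemma~\ref{SymbolLannes}, whose proof (from \cite{Lannes_Book2013}) uses more than the bare mean value theorem. An elementary repair within your framework would be to split $[\sqrt{\mathcal{T}_\mu}(D)J^s,f]=\sqrt{\mathcal{T}_\mu}(D)[J^s,f]+[\sqrt{\mathcal{T}_\mu}(D),f]J^s$ and combine classical Kato--Ponce \eqref{K-P} with your own \eqref{Bessel Commutator T_mu J=1}.

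A parallel issue affects \eqref{L2 commutator J_mu}: $J^{1/2}_\mu$ is not uniformly $L^2$-bounded, and $|\frac{d}{d\omega}[\omega\langle\sqrt{\mu}\omega\rangle^{1/2}]|\sim\langle\sqrt{\mu}\omega\rangle^{1/2}$ is unbounded, so neither half of your decomposition closes with only $\|f\|_{H^s}$, $s>\tfrac32$. Your remark about $\rho\,m(\rho)\hat g(\rho)=\rho\,\widehat{J^{1/2}_\mu g}(\rho)$ points in the right direction but does not by itself salvage the argument. The paper avoids the identity altogether here and proves directly, by case analysis rather than the mean value theorem, the pointwise bound $\bigl|\langle\sqrt{\mu}\xi\rangle^{1/2}-\langle\sqrt{\mu}\rho\rangle^{1/2}\bigr|\,|\rho|\,\langle\sqrt{\mu}\rho\rangle^{-1/2}\lesssim 1+|\xi-\rho|$, which places the weight on $\hat g(\rho)$ and yields the $\|J^{1/2}_\mu g\|_{L^2}$ factor.
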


\subsection{Classical estimates}

Before turning to the proof of the energy estimates, we state some necessary results that will also be used throughout the paper. First, recall the embeddings (see, for example \cite{LinaresPonce2014}).
\begin{lemma}[Sobolev embeddings]
	Let $f \in \mathscr{S}(\R)$ and $s\in (0,\frac{1}{2})$. Then $H^s(\R) \hookrightarrow L^p(\R)$ with $p = \frac{2}{1-2s}$, and there holds
	\begin{equation}\label{Sobolev embedding s small}
		\| f \|_{L^p} \lesssim \| D^sf\|_{L^2}.
	\end{equation}
	Moreover, In the case $s>\frac{1}{2}$, then $H^s(\R)$ is continuously embedded in $L^{\infty}(\R)$. 
	
\end{lemma}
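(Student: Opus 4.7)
The statement is a classical pair of Sobolev embeddings, so the task is organizational rather than novel. My plan is to prove the two assertions separately using standard harmonic analysis, following the references already cited by the paper. Both conclusions should be established first for $f \in \mathscr{S}(\R)$ and then extended to all of $H^s(\R)$ by density.

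For the homogeneous estimate $\|f\|_{L^p} \lesssim \|D^s f\|_{L^2}$ with $p = 2/(1-2s)$, I would begin with the Riesz potential representation $f = D^{-s}(D^s f) = I_s * (D^s f)$, where, in one dimension for $0<s<1/2$, the kernel $I_s$ is a multiple of $|x|^{s-1}$. Then I would invoke the Hardy--Littlewood--Sobolev inequality
\[
\|I_s * g\|_{L^p} \lesssim \|g\|_{L^q}, \qquad \frac{1}{q} - \frac{1}{p} = s,
\]
with $q = 2$, which forces precisely $p = 2/(1-2s)$. Applying this to $g = D^s f$ yields the desired inequality. A cleaner alternative is via Littlewood--Paley decomposition $f = \sum_{j} \Delta_j f$: Bernstein's inequality gives $\|\Delta_j f\|_{L^p} \lesssim 2^{j(1/2 - 1/p)} \|\Delta_j f\|_{L^2} = 2^{js} \|\Delta_j f\|_{L^2}$, and a Cauchy--Schwarz summation over dyadic scales, using $p \geq 2$ and Minkowski, reproduces the same bound. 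Either approach is standard.

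For the second embedding $H^s(\R) \hookrightarrow L^\infty(\R)$ when $s > 1/2$, the quickest route is Fourier inversion. For $f \in \mathscr{S}(\R)$ and $x\in\R$,
\[
|f(x)| \leq \int_{\R} |\hat f(\xi)|\, d\xi = \int_{\R} \langle\xi\rangle^{-s} \cdot \langle\xi\rangle^{s}|\hat f(\xi)|\, d\xi,
\]
and Cauchy--Schwarz gives $|f(x)| \leq \|\langle\xi\rangle^{-s}\|_{L^2_\xi}\,\|f\|_{H^s}$. The weight $\langle\xi\rangle^{-s}$ lies in $L^2(\R)$ exactly when $s > 1/2$, which is precisely the hypothesis. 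Taking the supremum in $x$ and extending by density finishes the argument. The main (and only) obstacle in the proof is invoking Hardy--Littlewood--Sobolev in the first part, which is a textbook result; given that the paper already cites \cite{LinaresPonce2014} for these embeddings, I expect the authors to either quote the lemma directly or include at most a short sketch along the lines above.
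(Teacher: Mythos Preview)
Your proposal is correct and, in fact, supplies more than the paper does: the paper states this lemma as a classical fact with the reference \cite{LinaresPonce2014} and gives no proof at all. Your anticipation in the last sentence is exactly what happens.
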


We also will  use the Leibniz rule for the Riesz potential on multiple  occasions.

\begin{lemma}[Fractional Leibniz rule \cite{KenigPonceVega1993}]  Let $\sigma = \sigma_1 + \sigma_2 \in (0,1)$ with $\sigma_i\in[0,\sigma]$ and $p,p_1,p_2\in(1,\infty)$ satisfy $\frac{1}{p} = \frac{1}{p_1} + \frac{1}{p_2}$. Then, for $f,g \in \mathscr{S}(\R)$
	\begin{equation}\label{FracLeibnizRule}
		\|D^{\sigma}(fg) - fD^{\sigma}g - g D^{\sigma}f \|_{L^p} \lesssim \| D^{\sigma_1} f \|_{L^{p_1}} \|D^{\sigma_2}g\|_{L^{p_2}}.
	\end{equation}
	Moreover, the case $\sigma_2 = 0$, $p_2 = \infty$ is also allowed. 
\end{lemma}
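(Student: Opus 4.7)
The plan is to rewrite $D^{\sigma}(fg) - f D^{\sigma}g - g D^{\sigma}f$ as a bilinear Fourier multiplier whose symbol vanishes on the coordinate axes, and then invoke the Coifman--Meyer multiplier theorem. The symbol of the bilinear operator $B(f,g) := D^{\sigma}(fg) - f D^{\sigma}g - g D^{\sigma}f$ is
\[
m(\xi,\eta)=|\xi+\eta|^{\sigma}-|\xi|^{\sigma}-|\eta|^{\sigma},
\]
which is $\sigma$-homogeneous, smooth away from the origin, and vanishes on $\{\xi=0\}\cup\{\eta=0\}$. I would factor $m(\xi,\eta)=|\xi|^{\sigma_1}|\eta|^{\sigma_2}\tilde m(\xi,\eta)$ with $\tilde m$ a $0$-homogeneous symbol satisfying the Coifman--Meyer derivative estimates $|\partial_{\xi}^{\alpha}\partial_{\eta}^{\beta}\tilde m(\xi,\eta)|\lesssim (|\xi|+|\eta|)^{-|\alpha|-|\beta|}$ off the origin; the fact that $\sigma_1,\sigma_2\in[0,\sigma]$ provides enough room for $|\xi|^{\sigma_1}|\eta|^{\sigma_2}$ to absorb all of the singularity of $m$ near the axes. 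The Coifman--Meyer theorem then gives $L^{p_1}\times L^{p_2}\to L^{p}$ boundedness of the operator with symbol $\tilde m$ whenever $1<p_1,p_2<\infty$ and $\tfrac1p=\tfrac1{p_1}+\tfrac1{p_2}$, and applying this to the pair $(D^{\sigma_1}f,D^{\sigma_2}g)$ yields \eqref{FracLeibnizRule} directly.

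A more hands-on alternative, which bypasses bilinear multiplier theory, uses the pointwise singular-integral representation valid for $\sigma\in(0,1)$,
\[
D^{\sigma}h(x)=c_{\sigma}\,\mathrm{p.v.}\!\int_{\mathbb R}\frac{h(x)-h(y)}{|x-y|^{1+\sigma}}\,dy,
\]
from which a short algebraic rearrangement yields the symmetric identity
\[
D^{\sigma}(fg)(x)-f(x)D^{\sigma}g(x)-g(x)D^{\sigma}f(x)=-c_{\sigma}\!\int_{\mathbb R}\frac{(f(x)-f(y))(g(x)-g(y))}{|x-y|^{1+\sigma}}\,dy.
\]
Splitting the kernel as $|x-y|^{-(1/r+\sigma_1)}|x-y|^{-(1/r'+\sigma_2)}$ for a suitable $r\in(1,\infty)$ adapted to $(p_1,p_2)$ and applying H\"older's inequality inside the $y$-integral reduces the estimate to the Strichartz--Stein square-function characterization $\|D^{\sigma_i}h\|_{L^{p_i}}\sim\bigl\|\bigl(\int |h(\cdot)-h(y)|^{r}|\cdot-y|^{-1-r\sigma_i}\,dy\bigr)^{1/r}\bigr\|_{L^{p_i}}$, whose proof in turn rests on Littlewood--Paley decomposition and the Fefferman--Stein vector-valued maximal inequality.

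The main obstacle is the endpoint case $\sigma_2=0$, $p_2=\infty$, which falls outside both the Coifman--Meyer framework and the square-function approach above. To handle it, I would use a Littlewood--Paley paraproduct decomposition $fg=T_fg+T_gf+R(f,g)$ and apply $D^{\sigma}$ to each piece separately. The two paraproducts $T_fg$ and $T_gf$ are essentially equal to $fD^\sigma g$ and $gD^\sigma f$ up to commutators of the form $[D^\sigma,T_f]g$, which are controlled via the Mikhlin multiplier theorem by $\|D^\sigma f\|_{L^{p_1}}\|g\|_{L^\infty}$. In the resonant part $R(f,g)$ the two inputs have comparable frequencies, so $D^{\sigma}$ can be moved entirely onto $f$ at the cost of a universal constant, and $\|g\|_{L^\infty}$ then controls the remaining factor. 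Assembling the three contributions produces the endpoint bound with $\sigma_1=\sigma$ and $p_2=\infty$.
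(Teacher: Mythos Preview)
The paper does not prove this lemma at all; it is simply quoted from Kenig--Ponce--Vega \cite{KenigPonceVega1993} as a known result, with no argument given. So there is no ``paper's own proof'' to compare against---your proposal is an attempt to supply a proof the paper deliberately omits.

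Your second approach (the singular-integral identity combined with the Strichartz square-function characterisation of $\|D^{\sigma_i}h\|_{L^{p_i}}$) is essentially the original argument of Kenig--Ponce--Vega and is correct.

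Your first approach, however, has a genuine gap. You claim that $\tilde m(\xi,\eta)=m(\xi,\eta)/(|\xi|^{\sigma_1}|\eta|^{\sigma_2})$ is a Coifman--Meyer symbol, but it is not. Near $\xi=0$ with $|\eta|\sim 1$ one has $|\xi+\eta|^{\sigma}-|\eta|^{\sigma}=O(|\xi|)$, so $m(\xi,\eta)\sim -|\xi|^{\sigma}$ (the term $-|\xi|^{\sigma}$ dominates since $\sigma<1$), and hence $\tilde m(\xi,\eta)\sim -(|\xi|/|\eta|)^{\sigma_2}$. This is bounded, but $\partial_{\xi}\tilde m\sim |\xi|^{\sigma_2-1}|\eta|^{-\sigma_2}$, which blows up as $\xi\to 0$ and violates the Coifman--Meyer estimate $|\partial_{\xi}\tilde m|\lesssim(|\xi|+|\eta|)^{-1}\sim|\eta|^{-1}$. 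The sentence ``the fact that $\sigma_1,\sigma_2\in[0,\sigma]$ provides enough room for $|\xi|^{\sigma_1}|\eta|^{\sigma_2}$ to absorb all of the singularity of $m$ near the axes'' addresses only the size of $\tilde m$, not its derivatives, and the latter is what fails. One can salvage the bilinear-multiplier route, but it requires a paraproduct decomposition and separate treatment of the high--low, low--high and high--high interactions rather than a single global Coifman--Meyer factorisation; in effect this merges with your endpoint argument in the last paragraph.
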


Finally, we recall the following results for the Bona-Smith argument (provided in the classical paper \cite{BonaSmith1975}) on the multiplier $\varphi_{\delta}(D)$ defined by:
\begin{Def}\label{regularisation}
Let $\varphi \in \mathscr{S}(\mathbb{R})$ such that $\int \varphi = 1$ and for $\delta>0$ define the
regularization operators $\varphi_{\delta}(D)$ in frequency by
\begin{align*}
    \forall f \in L^2(\mathbb{R}), \quad \forall \xi \in \mathbb{R}, \quad \widehat{\varphi_{\delta} f}(\xi) : = \varphi(\delta \xi) \hat{f}(\xi), 
\end{align*}
where $\varphi$ is a real valued and $\varphi (0)  = 1$. 
\end{Def}
\noindent
We give the version of the regularization estimates as presented in \cite{LinaresPonce2014} (Proposition $9.1$).
\begin{prop}\label{Rate of Decay in norm} Let $s>0$, $\delta>0$ and $f \in \mathscr{S}(\mathbb{R})$. Then
%\begin{itemize}
%	\item For all  $\alpha >0$ and as $\delta \searrow 0$, there holds
	%
	%
	%
	\begin{equation}\label{reg 3}
		\|\varphi_{\delta} (D) f\|_{H^{s+\alpha}} \lesssim \delta^{-\alpha}\| f\|_{H^s}, \quad \forall \alpha > 0,
	\end{equation}
	and
	\begin{equation}\label{reg 2}
		\|\varphi_{\delta} (D) f - f\|_{H^{s-\beta}} \lesssim \delta^{\beta} \|f\|_{H^s}, \quad \forall \beta \in [0,s].
	\end{equation}
	Moreover, there holds
%	\item For all $\beta \in [0,s]$  and as $\delta \searrow 0$, there holds
	%
	%
	%
	\begin{equation}\label{reg 4}
		\|\varphi_{\delta}(D)  f - f\|_{H^{s-\beta}} \underset{\delta \rightarrow 0}{=}o(\delta^{\beta} ), \: \: \: \quad \forall \beta \in [0,s].
	\end{equation}
%\end{itemize}
\end{prop}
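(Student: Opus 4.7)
The plan is to reduce all three estimates to pointwise bounds on the symbol $\varphi(\delta\xi)$ via Plancherel's identity. For \eqref{reg 3}, I would first write
\begin{equation*}
\|\varphi_\delta(D) f\|_{H^{s+\alpha}}^2 = \int_\R \langle \xi \rangle^{2(s+\alpha)} |\varphi(\delta \xi)|^2 |\hat f(\xi)|^2 \, d\xi,
\end{equation*}
so that it suffices to show $\langle \xi \rangle^{2\alpha} |\varphi(\delta\xi)|^2 \lesssim \delta^{-2\alpha}$. Rewriting this as $(\delta \langle \xi \rangle)^{\alpha} |\varphi(\delta\xi)| \lesssim 1$ and using $\delta \langle \xi \rangle \leq \sqrt{2} \langle \delta \xi \rangle$ for $\delta \leq 1$, the claim reduces to $\langle \delta\xi \rangle^\alpha |\varphi(\delta\xi)| \lesssim 1$, which is immediate since $\varphi \in \mathscr{S}(\R)$ decays faster than any polynomial.

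For \eqref{reg 2}, an identical use of Plancherel reduces the problem to the pointwise bound $|\varphi(\delta\xi) - 1| \lesssim |\delta\xi|^{\beta}$ uniformly in $\xi$ for $\beta \in [0,1]$. The way to obtain it is to split into two regimes. When $|\delta\xi| \leq 1$, a Taylor expansion at the origin combined with $\varphi(0) = 1$ and $\varphi' \in L^\infty(\R)$ (since $\varphi \in \mathscr{S}$) gives $|\varphi(\delta\xi) - 1| \lesssim |\delta\xi|$; when $|\delta\xi| \geq 1$, the trivial bound $|\varphi(\delta\xi) - 1| \lesssim 1$ is enough. For $\beta \in [0,1]$, both are dominated by $|\delta\xi|^{\beta}$. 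Inserting this in the Plancherel representation and using $|\xi|^{2\beta} \leq \langle \xi \rangle^{2\beta}$ yields the claim. To reach $\beta \in (1,s]$, I would combine \eqref{reg 3} (applied with a small $\alpha>0$) with the case $\beta=1$ via complex interpolation on the Bessel scale, or alternatively split the frequency space into $|\delta\xi| \lessgtr 1$ and bound each piece directly.

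For the $o(\delta^\beta)$ refinement \eqref{reg 4}, the plan is to exploit the extra regularity afforded by Schwartz data. Since $f\in\mathscr{S}(\R)$, we have $f\in H^{s+\epsilon}(\R)$ for every $\epsilon>0$. Applying \eqref{reg 2} with the shifted exponents $s'=s+\epsilon$ and $\beta'=\beta+\epsilon$ (with $\epsilon$ small enough that $\beta+\epsilon$ stays in the admissible range) produces
\begin{equation*}
\|\varphi_\delta(D)f-f\|_{H^{s-\beta}} \lesssim \delta^{\beta+\epsilon}\,\|f\|_{H^{s+\epsilon}},
\end{equation*}
and dividing by $\delta^{\beta}$ yields an $O(\delta^{\epsilon})$ bound, which sends the ratio to zero as $\delta\to 0$. (For general $f\in H^s$ one would couple this with a density/cutoff argument in $H^s$, but the statement only demands $f\in\mathscr{S}$.)

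The genuine technical step in this plan is the pointwise inequality $|\varphi(\delta\xi)-1|\lesssim |\delta\xi|^\beta$: everything else is bookkeeping on the Fourier side. The rest of the argument is essentially the observation that for a smooth, bounded, Schwartz symbol pinned at $\varphi(0)=1$, polynomial losses in frequency translate into the stated $\delta$-dependent norm estimates, and that trading a bit of $H^s$ regularity against $\delta$ upgrades the $O(\delta^\beta)$ bound to $o(\delta^\beta)$.
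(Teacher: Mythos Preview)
The paper does not actually prove this proposition; it merely quotes it from \cite{LinaresPonce2014}, Proposition~9.1, so there is no ``paper proof'' to compare against. Your Plancherel-based reduction is exactly the standard argument, and your treatment of \eqref{reg 3} and of \eqref{reg 2} for $\beta\in[0,1]$ is correct, as is your $o(\delta^\beta)$ upgrade via extra regularity of Schwartz data.

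There is, however, a genuine gap in your handling of \eqref{reg 2} for $\beta\in(1,s]$. With only the hypothesis $\varphi(0)=1$ (which is all the paper's Definition~\ref{regularisation} explicitly gives), the pointwise bound $|\varphi(\delta\xi)-1|\lesssim|\delta\xi|^\beta$ fails for $\beta>1$: a Taylor expansion at the origin yields at most first-order vanishing, so on low frequencies $|\delta\xi|\le 1$ you only recover $\delta$, not $\delta^\beta$. Neither of your two suggestions fixes this. Interpolation cannot manufacture a rate better than your best endpoint ($\beta=1$), and the direct high/low splitting gives $\delta^{2}$ on the low-frequency piece, which is weaker than $\delta^{2\beta}$ when $\beta>1$ and $\delta$ is small. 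The resolution is that in the Bona--Smith argument one takes $\varphi$ identically equal to $1$ in a neighbourhood of the origin (this is the choice in \cite{LinaresPonce2014}); then the low-frequency region $|\delta\xi|\le c$ contributes nothing, and on $|\delta\xi|\ge c$ one has $\langle\xi\rangle^{-\beta}\lesssim\delta^{\beta}$, which yields \eqref{reg 2} for all $\beta\in[0,s]$. You should state this additional assumption on $\varphi$ explicitly, since the paper uses \eqref{reg 2} with $\beta=s>2$ in Step~5.1 of the proof of Theorem~\ref{Well-posedness long time full dispersion}.
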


\section{A priori estimates}\label{Energy est}

In this section, we give  \textit{a priori} estimates for solutions of the three systems \eqref{full dispersion}, \eqref{Whitham Boussinesq}, and \eqref{2nd Whitham Boussinesq}.

\subsection{Estimates for  system \eqref{full dispersion}}

As noted in the introduction, we  revisit the energy estimate in \cite{Wang2020}  to keep track of the parameters $\beta, \ve$ and $\mu$. For simplicity, we adopt the notation $\bold{U} = (\eta, u)^T = \ve (\zeta, v)^T$, where we write  \eqref{full dispersion} on the compact form:
\begin{equation}\label{Eq 1 M}
	\partial_t \bold{U} + M(\bold{U},D) \bold{U} = \bold{0},
\end{equation}
with
\begin{equation}\label{M 1}
	M(\bold{U},D) = 
	\begin{pmatrix}
		u \partial_x  & (\mathcal{K}_{\mu}(D) + \eta)\partial_x \\
		\partial_x & u \partial_x
	\end{pmatrix}.
\end{equation}
Also, we simplify the notation for the energy given in Definition \ref{Def Energy} by introducing the symmetrizer
\begin{equation}\label{S: symmetrizer}
	Q(\bold{U},D) =  Q^{(1)}(\bold{U},D)+ Q^{(2)}(\bold{U},D)
	=
	\begin{pmatrix}
		1 & 0 \\
		0 &  \eta 
	\end{pmatrix}
	+ 
	\begin{pmatrix}
		0 & 0 \\
		0 & \mathcal{K}_{\mu}(D)
	\end{pmatrix}.
\end{equation}
Then the energy given in Definition \ref{Def Energy} can be rewritten as
\begin{equation*}
	E_s( \bold{U},D) = \big{(} J^s \bold{U}, Q(\bold{U}, D)J^s \bold{U} \big{)}_{L^2}.
\end{equation*}

\begin{prop}\label{Energy fully disp Boussinesq} Let $s> 2$, $\ve, \mu \in (0,1) $ and $ (\eta,u) =  \ve (\zeta, v) \in C([0,T_0]; V^s_{\sqrt{\mu}}(\mathbb{R}) )$ be a solution to \eqref{Eq 1 M} on a time interval $[0,T_0]$ for some $T_0 >0 $.  Moreover, assume there exist $h_0 \in (0,1)$ and $h_1>0$ such that 
	\begin{equation}\label{cond. sol.}
		 h_0 - 1 \leq \eta(x,t), \quad   \forall (x,t) \in \mathbb{R} \times [0,T_0] \quad  \text{and} \quad \sup\limits_{t\in [0,T_0]}\|(\eta, u)\|_{H^s\times H^s} \leq h_1,
	\end{equation}
	when $\beta  \geq  \frac{1}{3}$, and that
	\begin{equation}\label{cond. sol. beta}
			  -\frac{\beta}{2} \leq \eta(x,t), \quad   \forall (x,t) \in \mathbb{R} \times [0,T_0] \quad  \text{and} \quad \sup\limits_{t\in [0,T_0]} \|(\eta, u)\|_{H^s\times H^s} \leq h_1,
	\end{equation}
	when $0<  \beta < \frac{1}{3}$. 
	
	Then, for the energy given in Definition  \ref{Def Energy} and $c_{\beta}^{i}$ defined by \eqref{Constant beta},
	\begin{equation}\label{Energy full dispersion}
		\frac{d}{dt} E_s(\bold{U}) \leq c_{\beta}^2\:  \big{(}E_s(\bold{U})\big{)}^{\frac{3}{2}},
	\end{equation}
	for all $0<t<T_0$, and
	\begin{equation}\label{equiv. full dispersion}
	 	c_{\beta}^1\|(\eta, u)\|^2_{V^s_{\sqrt{\mu}}} \leq E_s(\bold{U}) \leq c_{\beta}^2   \|(\eta, u)\|^2_{V^s_{\sqrt{\mu}}},
	\end{equation}
	for all $0<t<T_0$.
%	Furthermore, for $0<  \beta < \frac{1}{3}$ the estimates above will still remain true if $\eta$ satisfies the $\beta-$dependent surface condition \eqref{nonCavitationSurfaceTension} for all time. \\
	%
	%
	%
\end{prop}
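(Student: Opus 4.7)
The plan is to establish the coercivity \eqref{equiv. full dispersion} and the differential inequality \eqref{Energy full dispersion} in parallel. Coercivity is essentially algebraic: writing $E_s(\bold{U}) = \|\eta\|_{H^s}^2 + \|\sqrt{\mathcal{K}_{\mu}}(D) J^s u\|_{L^2}^2 + \int_{\mathbb{R}} \eta (J^s u)^2\,dx$, in the case $\beta \geq \tfrac{1}{3}$ the lower bound \eqref{estimate K in Hs} combined with the non-cavitation condition $\eta \geq h_0 - 1$ from \eqref{cond. sol.} produces the cancellation $(1-\tfrac{h_0}{2}) + (h_0-1) = \tfrac{h_0}{2}$, yielding $E_s \gtrsim \|(\eta,u)\|^2_{V^s_{\sqrt{\mu}}}$ and the matching $c^1_\beta \sim 1$. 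For $0<\beta<\tfrac{1}{3}$, the analogous algebra with \eqref{estimate K Low} and $\eta \geq -\tfrac{\beta}{2}$ from \eqref{cond. sol. beta} produces $E_s \gtrsim \beta \|(\eta,u)\|^2_{V^s_{\sqrt{\mu}}}$, matching $c^1_\beta \sim \beta$. The upper bound is immediate from the other sides of \eqref{estimate K in Hs}--\eqref{estimate K Low} together with the crude estimate $|(\eta J^s u, J^s u)| \leq h_1\|u\|_{H^s}^2$.

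For the differential inequality, I differentiate $E_s$ and substitute the equations for the rescaled unknown $(\eta,u) = \ve(\zeta, v)$, obtaining
\begin{align*}
 \tfrac{1}{2}\tfrac{d}{dt} E_s &= -(J^s \mathcal{K}_{\mu}\partial_x u, J^s \eta) - (\mathcal{K}_{\mu} J^s \partial_x \eta, J^s u) - (J^s \partial_x(\eta u), J^s \eta) \\
 &\quad - (\eta J^s \partial_x \eta, J^s u) - ((\mathcal{K}_{\mu}+\eta)J^s (u\partial_x u), J^s u) + \tfrac{1}{2}(\partial_t \eta \, J^s u, J^s u).
\end{align*}
The first two principal terms cancel through integration by parts and self-adjointness of $\mathcal{K}_{\mu}$, which is the whole purpose of the symmetrizer $Q$. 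The transport remainder $-(J^s \partial_x(\eta u), J^s \eta) - (\eta J^s \partial_x \eta, J^s u)$ is handled by expanding $\partial_x(\eta u) = \eta\partial_x u + u\partial_x\eta$, integrating by parts to cancel the symmetric pieces, and applying Kato--Ponce \eqref{K-P} to the leftover commutators $([J^s, \eta]\partial_x u, J^s\eta)$ and $([J^s,u]\partial_x\eta, J^s\eta)$, which are bounded by $\|\eta\|_{H^s}^2 \|u\|_{H^s}$. The purely multiplicative piece $-(\eta J^s(u\partial_x u), J^s u)$ is routine after the decomposition $J^s(u\partial_x u) = uJ^s\partial_x u + [J^s, u]\partial_x u$.

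The genuinely delicate terms are the two $\mathcal{K}_\mu$-laden nonlinearities. For $-(\mathcal{K}_\mu J^s(u\partial_x u), J^s u) = -(\sqrt{\mathcal{K}_\mu}J^s(u\partial_x u), \sqrt{\mathcal{K}_\mu}J^s u)$, I split $\sqrt{\mathcal{K}_\mu}J^s(u\partial_x u) = u\sqrt{\mathcal{K}_\mu}J^s\partial_x u + [\sqrt{\mathcal{K}_\mu}J^s, u]\partial_x u$; the first piece becomes $\tfrac12(\partial_x u, (\sqrt{\mathcal{K}_\mu}J^s u)^2) \lesssim \|u\|_{H^s}E_s$ after integration by parts, while the commutator is precisely what Lemma \ref{Commutator K at level s} \eqref{Commutator low freq} controls, producing the constants $c_\beta^2$ together with the $\sqrt{\beta}\mu^{1/4}\|D^{1/2}u\|_{H^s}$ contribution that the $V^s_{\sqrt{\mu}}$-norm is built to absorb. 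For the $\partial_t\eta$ term, substituting $\partial_t\eta = -\mathcal{K}_\mu\partial_x u - \partial_x(\eta u)$ produces a dispersive piece $-\tfrac12(\mathcal{K}_\mu\partial_x u, (J^s u)^2)$ that a single $\sqrt{\mathcal{K}_\mu}$-commutator cannot close; I instead perform the frequency decomposition $\mathcal{K}_\mu = (\chi^{(1)}_\mu)^2\mathcal{K}_\mu + (\chi^{(2)}_\mu)^2\mathcal{K}_\mu$ from Definition \ref{multiplier} and apply the tailored estimates \eqref{Commutator chi K}, \eqref{Commutator D} and \eqref{Commutator sigma} of Lemma \ref{Commutator L2}. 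Summing all contributions and invoking $\|(\eta,u)\|^2_{V^s_{\sqrt{\mu}}} \leq E_s/c_\beta^1$ converts every bound into $\lesssim c_\beta^2 E_s^{3/2}$.

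The hardest point is tracking the sharp $\beta$-dependent constants in the $\mathcal{K}_\mu$-commutator terms. The symbol $\mathcal{K}_\mu(\xi)$ blends a bounded low-frequency component with a $\sqrt{\mu}|\xi|$-growing high-frequency tail whose size is governed by $\beta$, so a naive single Kato--Ponce would couple these two regimes through a $\sqrt{\beta}\mu^{1/4}$ factor at every frequency, which would spoil the expected time of existence for small $\beta$. The low/high frequency splitting of Lemma \ref{Commutator L2} is engineered precisely so that $\mu^{1/4}|\xi|^{1/2}$ losses only appear where the $\sqrt{\mu}\|D^{1/2}\cdot\|_{H^s}$ piece of $V^s_{\sqrt{\mu}}$ can absorb them.
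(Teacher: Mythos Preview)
Your treatment of coercivity and of the main nonlinear terms (the transport pieces, the $\eta$-multiplicative terms, and the $\mathcal{K}_\mu$-weighted Burgers term $B_{22}$ via Lemma~\ref{Commutator K at level s}) matches the paper's argument essentially line for line.

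The one place where you diverge, and where your proposal goes wrong, is the $\partial_t\eta$ contribution $\tfrac12\big((\mathcal{K}_\mu\partial_x u)\,J^s u,\,J^s u\big)$. You claim this term cannot be closed directly and invoke the frequency decomposition of Lemma~\ref{Commutator L2}, citing the commutator estimates \eqref{Commutator chi K}, \eqref{Commutator D}, \eqref{Commutator sigma}. But those estimates are built for expressions of the form $[\sigma(D),f]\partial_x g$, whereas here $\mathcal{K}_\mu$ acts on $\partial_x u$ as a \emph{coefficient}, not as an operator to be commuted through $J^s u$; there is no commutator structure to exploit. The paper handles this term by the elementary bound
\[
\big|\big((\mathcal{K}_\mu\partial_x u)\,J^s u,\,J^s u\big)\big|\le \|\mathcal{K}_\mu(D)\partial_x u\|_{L^\infty}\|u\|_{H^s}^2,
\]
followed by the pointwise estimate \eqref{sqrt K} and the Sobolev embedding $H^{s-\frac32}\hookrightarrow L^\infty$ (valid precisely because $s>2$) to obtain $\|\mathcal{K}_\mu\partial_x u\|_{L^\infty}\lesssim c_\beta^2\,\|(\eta,u)\|_{V^s_{\sqrt{\mu}}}$; see \eqref{Dependence in beta}--\eqref{time der}. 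This is in fact the \emph{only} place in the proof where the hypothesis $s>2$ (rather than $s>\tfrac32$) is used.

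The frequency splitting machinery you reach for is genuinely needed in the paper, but in Proposition~\ref{Compactness L2} for the $V^0_{\sqrt{\mu}}$-level difference estimate, where the analogous term has the right commutator structure $(u_1\partial_x w,\,\mathcal{K}_\mu w)$ and one cannot afford an $L^\infty$ bound on a factor containing $w$. Your instinct that these tools are relevant is correct; they are just deployed at the wrong stage.
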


\begin{remark}
	Note that we aim to prove \eqref{Energy full dispersion} with power $\frac{3}{2}$ on the right-hand side. This result will prove essential in getting the time of existence $T \sim \frac{1}{\ve}$ in the proof of Theorem \ref{Well-posedness long time full dispersion}. One should also note that if we have \eqref{equiv. full dispersion}, then it is enough to show 
	\begin{equation*}
		\frac{d}{dt} E_s(\bold{U}) \lesssim_{\beta} \|(\eta,u) \|^3_{V^s_{\sqrt{\mu}}},
	\end{equation*}
	to obtain \eqref{Energy full dispersion}. With this in mind, in the proof of the proposition, we will repeatedly use assumption \eqref{cond. sol.}$-$\eqref{cond. sol. beta}  to discard higher powers in the norm of the solution than $3$. Meaning the terms of form  $\|(\eta, u)\|_{V^s_{\sqrt{\mu}}}^{3+n}$ for $n \in \N$ will be bounded by $\|(\eta, u)\|_{V^s_{\sqrt{\mu}}}^{3}$ since this seems to be the best we can hope for when using the current method. 
\end{remark}

\begin{proof}[Proof of Proposition \ref{Energy fully disp Boussinesq}]
We first prove estimate \eqref{equiv. full dispersion} in the case $\beta \geq 1/3$.  By definition, we have  that
\begin{align*}
	E_s(\bold{U}) = \|J^s \eta \|_{L^2}^2 + \big( J^s u,( \mathcal{K}_{\mu}(D) + \eta)J^su \big)_{L^2}.
\end{align*}
Thus, as a result of the non-cavitation condition \eqref{cond. sol.} and the estimate \eqref{estimate K in Hs}, there holds
\begin{align*}
	\big( J^s u,( \mathcal{K}_{\mu}(D) + \eta) J^su \big)_{L^2}  
	& \geq 
	\frac{h_0}{2}\|u\|_{H^s}^2 +c\sqrt{\mu} \| D^{\frac{1}{2}}u \|_{H^{s}}^2.
\end{align*}
The reverse inequality holds for any $\beta>0$ and is a consequence of \eqref{estimate K in Hs}, Hölder's inequality, the Sobolev embedding with $s>\frac{3}{2}$, and conditions \eqref{cond. sol.}$-$\eqref{cond. sol. beta}. Indeed, we observe that%Hence, we have the equivalence, and from \eqref{energy with u},  we deduce the energy estimate \eqref{Energy full dispersion}.
\begin{equation*}
	E_s(\bold{U}) \leq \| \eta \|_{H^s}^2 + \|\sqrt{\mathcal{K}_{\mu}}(D) u\|_{H^s}^2 + \|\eta\|_{L^{\infty}}\|u\|_{H^s}^2\leq c_{\beta} \|(\eta,u)\|_{V^s_{\sqrt{\mu}}}^2.
\end{equation*}

In the case $0<\beta<\frac{1}{3}$, we impose the $\beta-$dependent surface condition \eqref{cond. sol. beta}, leaving less to be absorbed for the coercivity and in conjunction with \eqref{estimate K Low}. This implies
\begin{align*}
	\big( J^s u,( \mathcal{K}_{\mu}(D) + \eta) J^su \big)_{L^2}   \geq  \frac{\beta}{2} \|u\|^2_{H^s} + c\sqrt{\mu} \| D^{\frac{1}{2}}u \|_{H^{s}}^2.
\end{align*}	
As a consequence, we have that \eqref{equiv. full dispersion} is established for all $\beta>0$.

Next, we prove \eqref{Energy full dispersion}. By using \eqref{Eq 1 M} and the fact that $Q(\bold{U},D)$ is self-adjoint, we compute
\begin{align*}
	\frac{1}{2}\frac{d}{dt} E_s(\bold{U})
	& = \big(J^s \partial_t \bold{U}, Q(\bold{U},D)J^s \bold{U} \big)_{L^2} +\frac{1}{2}\big(J^s  \bold{U}, (\partial_t Q(\bold{U},D)) J^s \bold{U} \big)_{L^2} 
	\\
	& = 
	-
	\big(J^s M(\bold{U},D) \bold{U} , Q(\bold{U},D) J^s \bold{U}\big)_{L^2}
	+
	\frac{1}{2}\big( J^s \bold{U}, ( \partial_t Q(\bold{U},D)) J^s \bold{U}\big)_{L^2}
	\\
	& = : - I + II.
\end{align*}

\noindent
\underline{Control of $I$}. We may write
\begin{align*}
	I
	& = 
	\big([J^s , M(\bold{U},D)]\bold{U}, Q^{(1)}(\bold{U},D) J^s \bold{U}\big)_{L^2}
	+
	 \big( Q^{(1)}(\bold{U},D) M(\bold{U},D)J^s\bold{U},  J^s \bold{U}\big)_{L^2}\\
	& \hspace{0.4cm} 
	+
	 \big(J^s M(\bold{U},D) \bold{U} , Q^{(2)}(\bold{U},D)J^s \bold{U}\big)_{L^2}
	\\ 
	& =: I_1 + I_2 + I_3.
\end{align*}
\\ 
\textit{Control of $I_1$.} It follows from the Cauchy-Schwarz inequality that
\begin{equation*}
	| I_1 | \leq \|[J^s, M(\bold{U},D)]\bold{U}\|_{L^2} \| Q^{(1)}(\bold{U},D) J^s \bold{U}\|_{L^2}.
\end{equation*}
The second term is easily treated, %Sobolev embedding and for the last terms use the Leibniz rule:
\begin{align*}
	\| Q^{(1)}(\bold{U},D) J^s \bold{U} \|_{L^2}  
	\lesssim
	 \| J^s \eta \|_{L^2} + \|\eta\|_{L^\infty} \|J^s u\|_{L^2} 
	\lesssim 
	\|(\eta,u)\|_{V^s_{\sqrt{\mu}}},
\end{align*}
by Hölder's inequality, the Sobolev embedding with $s>\frac{1}{2}$, and assumption \eqref{cond. sol.}. Furthermore, using the Kato-Ponce commutator estimate  \eqref{K-P} yields
\begin{align*}
	 \|[J^s, M(\bold{U},D)]\bold{U}\|_{L^2} 
	 & \leq 
	  \|[J^s,u] \partial_x \eta\|_{L^2} 
	  +
	  \|[J^s,\eta] \partial_x u\|_{L^2} 
	  +
	   \|[J^s,u] \partial_x u\|_{L^2} \\
	   & \leq \| \eta\|_{H^s} \| u \|_{H^s} + \|u\|_{H^s}^2
	   \\
	   &\leq
	   \| (\eta,u)\|_{V^s_{\sqrt{\mu}}}^2
	   .
\end{align*}
The desired bound on $I_1$ follows:
\begin{equation*}
	|I_{1}|  \lesssim \|(\eta, u)\|_{V^s_{\sqrt{\mu}}}^3.
\end{equation*}
\noindent
\textit{Control of  $I_2 + I_3$.}  First note that $(a_{ij}) = Q^{(1)}(\bold{U},D) M(\bold{U},D)$ is given by,
\begin{align*}
	%A(\bold{U},D) 
	(a_{ij})
	= 
	\begin{pmatrix}
	u \partial_x & (\mathcal{K}_{\mu}(D) + \eta)\partial_x %+ u^2\partial_x 
	 \\
	 \eta \partial_x %+
	 %   u^2 \partial_x 
	 &  \eta u \partial_x  %+ u(\mathcal{K}_{\ve}(D) + \eta)\partial_x 
	\end{pmatrix}.
\end{align*}
We must estimate each piece below,
\begin{align*}
	& \big(Q^{(1)}(\bold{U},D)  M(\bold{U},D)J^s\bold{U}, J^s \bold{U}\big)_{L^2} 
	\\
	&
	= 
	\big( a_{11} J^s \eta, J^s \eta\big)_{L^2}
	+
	\big( a_{12} J^s u, J^s \eta\big)_{L^2}
	+
	\big( a_{21}J^s \eta, J^s u \big)_{L^2}
	+
	\big( a_{22} J^s u , J^s u \big)_{L^2}
	\\ 
	& =: A_{11} + A_{12} + A_{21} + A_{22}.
\end{align*}
As we will shortly see, $A_{12} + A_{21}$ needs to be compensated by $B_{21}$, that is defined  by the remaining part:
\begin{align*}
	 \big( J^s M(\bold{U},D)\bold{U}, Q^{(2)}(\bold{U},D)  J^s \bold{U}\big)_{L^2} 
	& = 
	\big( \partial_x J^s \eta, \mathcal{K}_{\mu}(D) J^s u \big)_{L^2}
	+
	\big( J^s (u \partial_x u) , \mathcal{K}_{\mu}(D)J^s u \big)_{L^2}
	\\
	& = :
	B_{21} + B_{22},
\end{align*}
while $B_{22}$ is the price we pay for symmetry. \\

\noindent
\textit{Control of  $A_{11}$.} Integration by part and the Sobolev embedding yields
\begin{align*}
	|A_{11}| & \leq  \frac{1}{2} \big |\big{(} \partial_x uJ^s\eta,J^s \eta \big{)}_{L^2}\big | \leq\frac{1}{2}\|\partial_x u\|_{L^\infty} \|\eta\|_{H^s}^2  \lesssim \|(\eta,u)\|_{V^s_{\sqrt{\mu}}}^3.
\end{align*}

\noindent
\textit{Control of  $A_{12} + A_{21} + B_{21}$.}   By definition, consideration is given to the expression
\begin{align*}
	A_{12} + A_{21} +  B_{21}
	& = 
	\big((\mathcal{K}_{\mu}(D) + \eta)\partial_xJ^s u , J^s \eta \big)_{L^2}
	+
	\big((\mathcal{K}_{\mu}(D) + \eta)  \partial_x J^s \eta , J^s u \big)_{L^2}.
\end{align*}
Observe, after integration by parts that
\begin{align*}
	A_{12}
	& =
	-
	\big( J^s u, (\mathcal{K}_{\mu}(D) + \eta) \partial_x J^s \eta \big)_{L^2}  
	-
	 \big( J^s u, \partial_x \eta J^s \eta \big)_{L^2}.
\end{align*}
The first term cancels with $(A_{21} + B_{21})$, while the Sobolev embedding easily controls the remaining part,
\begin{equation*}
	| \big( J^s u, \partial_x \eta J^s \eta \big)_{L^2}  | \leq \| \partial_x \eta \|_{L^{\infty}} \| \eta \|_{H^s} \| u \|_{H^s} \lesssim \| (\eta, u) \|_{V^s_{\sqrt{\mu}}}^3.
\end{equation*}

\noindent
\textit{Control of  $A_{22} $.} We simply use integration by parts as above together with \eqref{cond. sol.}$-$\eqref{cond. sol. beta} to deduce
\begin{equation*}
	|A_{22}| \leq |	\big(  \eta u  \partial_x J^s u , J^s u \big)_{L^2} | \leq c_{\beta}^2\| (\eta, u ) \|_{V^s_{\sqrt{\mu}}}^3.
\end{equation*}
\textit{Control of  $B_{22}$.} We observe, after integrating by parts that
\begin{align*}
	B_{22}
	& =
	\big( J^s( u \partial_x u) ,\mathcal{K}_{\mu}(D)J^s u \big)_{L^2}
	\\
	& = 
	\big( [\sqrt{\mathcal{K}_{\mu}}(D)J^s, u]\partial_x u,\sqrt{\mathcal{K}_{\mu}}(D)J^s u \big)_{L^2} 
	-\frac{1}{2}
	\big( (\partial_x u) \sqrt{\mathcal{K}_{\mu}} (D)J^s u, \sqrt{\mathcal{K}_{\mu}}(D)J^s u\big)_{L^2}.
\end{align*}
Thus, we deduce by using  Hölder's inequality, estimates  \eqref{Commutator low freq} and  \eqref{estimate K in Hs} that
\begin{align*}
	|B_{22} | \leq c_{\beta}^2 \|(\eta,u)\|_{V_{\sqrt{\mu}}^s}^3.
\end{align*}

\noindent
\underline{Control of  $II$.} First we claim that $\|\mathcal{K}_{\mu}(D)\partial_x u \|_{L^{\infty}} \lesssim_{\beta}\| (\eta, u)\|_{V_{\sqrt{\mu}}^s}$ for $s>2$. Indeed, it follows from \eqref{Pointwise est.} and the Sobolev embedding $H^{\frac{1}{2}^+}(\R)\hookrightarrow L^{\infty}(\R)$ that
\begin{align}\notag 
	\|\mathcal{K}_{\mu}(D)\partial_x u \|_{L^{\infty}} 
	&
	\leq
	\|\partial_x u\|_{H^{s-\frac{3}{2}}}  +\beta \sqrt{\mu}  \| D^1 \partial_x u \|_{H^{s-\frac{3}{2}}}
	\\ 
	& 
	\leq c_{\beta}^2 (\| u \|_{H^s} + \sqrt{\mu}  \| D^{\frac{1}{2}} u \|_{H^s}).  \label{Dependence in beta}
\end{align}
Then we observe by using equation  \eqref{Eq 1 M} yields,
\begin{align*}
	II
	& = 
	\big( J^s u,  (\partial_t \eta) J^s u \big)_{L^2} 
	=
	-\big( J^s u, (\mathcal{K}_{\mu}(D)\partial_x u) J^s u \big)_{L^2} - \big(  J^s u,  (\partial_x(\eta u))J^s u \big)_{L^2}.
\end{align*}
Consequently, the desired estimate follows from Hölder's inequality, the Sobolev embedding, and the above claim that,
\begin{align}\label{time der}\
	II
  \lesssim
	 \|\mathcal{K}_{\mu}(D)\partial_x u \|_{L^{\infty}} \|u\|_{H^s}^2 + \|\partial_x (\eta u ) \|_{L^{\infty}} \| u\|_{H^s}^2
\lesssim_{\beta} 
	 \| (\eta, u)\|_{V_{\sqrt{\mu}}^s}^3.
\end{align}

Adding together all the estimates, combined with \eqref{equiv. full dispersion} yields,
\begin{equation*}%\label{energy with u}
	\frac{d}{dt} E_{s}(\bold{U}) \leq   c_{\beta}^2\:  \big{(}E_s(\bold{U})\big{)}^{\frac{3}{2}},
\end{equation*} 
and completes the proof of Proposition \ref{Energy fully disp Boussinesq}.

\end{proof}

%
%
%
%Next, we will repeat the same type of estimates  for \eqref{Whitham Boussinesq}.

\subsection{Estimates for system \eqref{Whitham Boussinesq}}
As in the former subsection we define $\bold{U} = (\eta, u)^T = \ve (\zeta, v)^T$ and we write the system on a compact form:
\begin{equation}\label{Eq M for T_mu}
	\partial_t \bold{U} + \mathcal{M}(\bold{U},D) \bold{U} = \bold{0},
\end{equation}
with
\begin{equation}\label{Math M}
	\mathcal{M}(\bold{U},D) = 
	\begin{pmatrix}
		u \partial_x  & (1+ \eta)\partial_x \\
		\mathcal{T}_{\mu}(D)\partial_x & u \partial_x
	\end{pmatrix}.
\end{equation}
We define the symmetrizer associated to \eqref{Eq M for T_mu} to be
\begin{equation}\label{Sym Q}
	\mathcal{Q}(\bold{U}, D) =
	\begin{pmatrix}
		\mathcal{T}_{\mu}(D) & 0 \\
		0 & 1 + \eta	
	\end{pmatrix}.
\end{equation}
Then the energy given in Definition \ref{Energy Whitham Boussinesq} can be written as
\begin{equation}\label{Energy Whitham Boussinesq Q}
	\mathcal{E}_s(\bold{U}) = \big{(}J^sJ^{\frac{1}{2}}_{\mu} \bold{U}, \mathcal{Q}(\bold{U}, D) J^sJ^{\frac{1}{2}}_{\mu} \bold{U}\big{)}_{L^2},
\end{equation}
%In the proof of Proposition \ref{Compactness L2}, some additional terms will appear when looking at the difference between two solutions due to the nonlinearity. Though, the estimation is similar and is demonstrated below. 
%
%
%
and the \textit{a priori estimate} for \eqref{Whitham Boussinesq} is stated in the following proposition.

\begin{prop}\label{A priori Whitham Boussinesq} Let $s> \frac{3}{2}$, $\ve, \mu \in (0,1)$ and $(\eta,u) =  \ve (\zeta, v) \in C([0,T_0]; V^s_{\sqrt{\mu}}(\mathbb{R}) )$ be a solution to \eqref{Whitham Boussinesq} on a time interval $[0,T_0]$ for some $T_0 >0 $.  Moreover, assume there exist $h_0 \in (0,1)$ and $h_1>0$ such that 
	\begin{equation}\label{cond. sol. 2}
		h_0 - 1 \leq \eta(x,t), \quad   \forall (x,t) \in \mathbb{R} \times [0,T_0] \quad  \text{and} \quad \sup\limits_{t\in [0,T_0]} \|(\eta, u)\|_{H^s\times H^s} \leq h_1.
	\end{equation}

 	Then, for the energy given in Definition  \ref{Energy Whitham Boussinesq}, there holds
	\begin{equation}\label{Energy Est Whitham Boussinesq}
		\frac{d}{dt} \mathcal{E}_s(\bold{U}) \lesssim  \big{(}\mathcal{E}_s(\bold{U})\big{)}^{\frac{3}{2}},
	\end{equation}
	for all $0<t<T_0$, and
	\begin{equation}\label{Equivalence Whitham Boussinesq}
		\|(\eta, u)\|^2_{V^s_{\sqrt{\mu}}} \lesssim \mathcal{E}_s(\bold{U}) \lesssim  \|(\eta, u)\|^2_{V^s_{\sqrt{\mu}}},
	\end{equation}
	for all $0<t<T_0$.
	
	%	Furthermore, for $0<  \beta < \frac{1}{3}$ the estimates above will still remain true if $\eta$ satisfies the $\beta-$dependent surface condition \eqref{nonCavitationSurfaceTension} for all time. \\
	%
	%
	%
\end{prop}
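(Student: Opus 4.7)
The proof strategy mirrors that of Proposition \ref{Energy fully disp Boussinesq}, with three important modifications: the symmetrizer $\mathcal{Q}(\bold{U},D)$ from \eqref{Sym Q} places $\mathcal{T}_\mu(D)$ in the $\eta$-slot rather than the $u$-slot, the functional frame uses the scaled Bessel potential $J^{\frac{1}{2}}_\mu$, and the commutator estimates from Lemma \ref{T_mu and J_mu Commutator L2} are tailored to preserve the $\sqrt{\mu}$-weighted structure. I would first settle the coercivity \eqref{Equivalence Whitham Boussinesq} by rewriting
\begin{equation*}
	\mathcal{E}_s(\bold{U}) = \|\sqrt{\mathcal{T}_\mu}(D) J^{\frac{1}{2}}_\mu J^s \eta\|_{L^2}^2 + \int_{\R}(1+\eta)(J^{\frac{1}{2}}_\mu J^s u)^2 \,dx.
\end{equation*}
The first summand is equivalent to $\|\eta\|_{H^s}^2$ by \eqref{Equiv sqrt T}; for the second, the non-cavitation lower bound $1+\eta \geq h_0$ from \eqref{cond. sol. 2} combined with \eqref{J_mu} gives equivalence with $\|u\|_{H^s}^2 + \sqrt{\mu}\|D^{\frac{1}{2}}u\|_{H^s}^2$, while the upper bound follows from $\|\eta\|_{L^\infty} \lesssim \|\eta\|_{H^s}$ via Sobolev embedding.

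For the differential inequality \eqref{Energy Est Whitham Boussinesq}, I would differentiate $\mathcal{E}_s$ and use \eqref{Eq M for T_mu} together with the self-adjointness of $\mathcal{Q}(\bold{U},D)$ to obtain
\begin{equation*}
	\tfrac{1}{2}\tfrac{d}{dt}\mathcal{E}_s(\bold{U}) = -\bigl(J^s J^{\frac{1}{2}}_\mu \mathcal{M}(\bold{U},D)\bold{U},\, \mathcal{Q}(\bold{U},D) J^s J^{\frac{1}{2}}_\mu \bold{U}\bigr)_{L^2} + \tfrac{1}{2}\bigl(J^s J^{\frac{1}{2}}_\mu \bold{U},\, (\partial_t \mathcal{Q}) J^s J^{\frac{1}{2}}_\mu \bold{U}\bigr)_{L^2}.
\end{equation*}
Commuting $J^s J^{\frac{1}{2}}_\mu$ past the nonlinear coefficients $u$ and $1+\eta$ inside $\mathcal{M}$ generates the commutators $[J^s J^{\frac{1}{2}}_\mu, u]\partial_x \eta$, $[J^s J^{\frac{1}{2}}_\mu, \eta]\partial_x u$ and $[J^s J^{\frac{1}{2}}_\mu, u]\partial_x u$, each paired against $\mathcal{Q}(\bold{U},D) J^s J^{\frac{1}{2}}_\mu \bold{U}$ and directly bounded by the Kato-Ponce type estimate \eqref{Commutator J^sJ_mu} together with \eqref{J_mu}. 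Crucially, the dispersive term $\mathcal{T}_\mu(D)\partial_x \eta$ in $\mathcal{M}$ carries no nonlinear coefficient, so no commutator on $\mathcal{T}_\mu(D)$ appears here.

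Writing $\tilde{\eta} = J^s J^{\frac{1}{2}}_\mu \eta$ and $\tilde{u} = J^s J^{\frac{1}{2}}_\mu u$, the principal part then reduces to the four scalar integrals
\begin{equation*}
	-(u\partial_x \tilde{\eta}, \mathcal{T}_\mu \tilde{\eta})_{L^2} - ((1+\eta)\partial_x \tilde{u}, \mathcal{T}_\mu \tilde{\eta})_{L^2} - (\mathcal{T}_\mu \partial_x \tilde{\eta}, (1+\eta)\tilde{u})_{L^2} - (u\partial_x \tilde{u}, (1+\eta)\tilde{u})_{L^2}.
\end{equation*}
The self-adjointness of $\mathcal{T}_\mu(D)$ and a single integration by parts telescope the two off-diagonal terms to $((\partial_x \eta)\tilde{u}, \mathcal{T}_\mu \tilde{\eta})_{L^2}$, controlled by $\|\partial_x \eta\|_{L^\infty}$ through the $L^2$-contractivity $\|\mathcal{T}_\mu(D) f\|_{L^2} \leq \|f\|_{L^2}$ coming from \eqref{boundedness of T}. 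The diagonal dispersive term I would split as $(u\partial_x \sqrt{\mathcal{T}_\mu}\tilde{\eta}, \sqrt{\mathcal{T}_\mu}\tilde{\eta})_{L^2} + ([\sqrt{\mathcal{T}_\mu}, u]\partial_x \tilde{\eta}, \sqrt{\mathcal{T}_\mu}\tilde{\eta})_{L^2}$, handling the first by integration by parts and the second by \eqref{Bessel Commutator T_mu J=1}. The diagonal $u\partial_x \tilde{u}$ piece is routine. Finally, $\partial_t \mathcal{Q}$ has only a $(2,2)$-entry equal to $\partial_t \eta = -\partial_x u - \partial_x(\eta u)$, so $\|\partial_t \eta\|_{L^\infty} \lesssim \|(\eta,u)\|_{V^s_{\sqrt{\mu}}}$ for $s > \frac{3}{2}$, closing the estimate at the cubic level.

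The main obstacle I anticipate is making sure that every commutator transports the $J^{\frac{1}{2}}_\mu$-weight without losing a power of $\mu$: this is precisely why the bounds \eqref{Commutator J^sJ_mu} and \eqref{Bessel Commutator T_mu J=1} were formulated with the clean scaling $\mu^{\frac{1}{4}}\|D^{\frac{1}{2}} \cdot\|_{H^s}$ rather than $\|\cdot\|_{H^{s+\frac{1}{2}}}$. A naive use of the classical Kato-Ponce inequality \eqref{K-P} on $J^s J^{\frac{1}{2}}_\mu$ would produce $H^{s+\frac{1}{2}}$-norms that are incompatible with $V^s_{\sqrt{\mu}}(\R)$ and would destroy uniformity in $\mu$; the tailored commutator estimates are exactly what unlocks the uniform control.
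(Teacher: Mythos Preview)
Your coercivity argument and the treatment of the time-derivative term $\partial_t\mathcal{Q}$ are correct and match the paper. The gap is in the first-equation terms, and it is exactly the obstacle you flag in your last paragraph --- but your proposed remedy does not actually avoid it.

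Recall that the $V^s_{\sqrt{\mu}}$-norm carries the $\sqrt{\mu}\,\|D^{1/2}\cdot\|_{H^s}^2$ weight \emph{only on $u$}, not on $\eta$. When you commute $J^sJ^{1/2}_\mu$ past $u$ in $u\partial_x\eta$ (or past $\eta$ in $\eta\partial_x u$) and invoke \eqref{Commutator J^sJ_mu}, the right-hand side contains the term $\mu^{1/4}\|D^{1/2}\eta\|_{H^s}\|\partial_x u\|_{H^{t_0}}$, which is not controlled by $\|(\eta,u)\|_{V^s_{\sqrt{\mu}}}$. The same problem recurs in your diagonal dispersive piece: applying \eqref{Bessel Commutator T_mu J=1} to $[\sqrt{\mathcal{T}_\mu},u]\partial_x\tilde\eta$ yields $\|u\|_{H^s}\|\tilde\eta\|_{L^2}=\|u\|_{H^s}\|J^{1/2}_\mu\eta\|_{H^s}$, again uncontrolled. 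So the scheme ``commute $J^sJ^{1/2}_\mu$ first, handle $\sqrt{\mathcal{T}_\mu}$ afterwards'' loses uniformity in $\mu$ on the $\eta$-side.

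The paper's cure is to commute the \emph{composite} operator $\sqrt{\mathcal{T}_\mu}\,J^sJ^{1/2}_\mu$ in one shot, using \eqref{Commutator Js T}. Because $\sqrt{T_\mu(\xi)}\langle\sqrt{\mu}\xi\rangle^{1/2}$ is bounded (cf.\ \eqref{T_mu J_mu equiv}), that commutator estimate returns only unweighted $H^s$-norms, so the $\eta$-terms close without ever producing $\mu^{1/4}\|D^{1/2}\eta\|_{H^s}$. Concretely, for $\mathcal{A}_{11}$ one writes $(J^sJ^{1/2}_\mu(u\partial_x\eta),\mathcal{T}_\mu J^sJ^{1/2}_\mu\eta)=([\sqrt{\mathcal{T}_\mu}J^sJ^{1/2}_\mu,u]\partial_x\eta,\sqrt{\mathcal{T}_\mu}J^sJ^{1/2}_\mu\eta)+\text{symmetric part}$, and similarly for $\mathcal{A}_{12}$; the cancellation $\mathcal{A}_{12}^{2,2}+\mathcal{A}_{21}^{2}=0$ then requires one more commutator $[\sqrt{\mathcal{T}_\mu},\eta]$ handled via \eqref{Commutator dx T}. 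Your $\mathcal{A}_{22}$ treatment with \eqref{Commutator J^sJ_mu} is fine, since there both slots are $u$.
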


\begin{proof}[Proof of Proposition \ref{A priori Whitham Boussinesq}] We begin by proving \eqref{Equivalence Whitham Boussinesq}.  By Definition \eqref{Energy Whitham Boussinesq} of the energy, the non-cavitation condition \eqref{cond. sol. 2}, \eqref{J_mu}, and  \eqref{Equiv sqrt T}  we obtain the lower bound
	\begin{align*}
		\mathcal{E}_{s}(\bold{U}) 
		& = 
		\| \sqrt{\mathcal{T}_{\mu}}(D)J^{\frac{1}{2}}_{\mu} \eta  \|_{H^s}^2 
		+
		\big{(}
		J^{\frac{1}{2}}_{\mu} J^s u, (1+\eta)J^{\frac{1}{2}}_{\mu} J^s u 
		\big{)}_{L^2} 
		\\
		& 
		\geq  c\|  \eta  \|_{H^s}^2 
		+
		h_0\|J^{\frac{1}{2}}_{\mu}  u\|_{H^s}^2
		\\
		& 
		\geq  c\|  \eta  \|_{H^s}^2 
		+
		c \cdot	h_0 (\|  u\|_{H^s}^2 + \sqrt{\mu} \|D^{\frac{1}{2}} u\|_{H^s}^2),
	\end{align*}
	for some $c>0$. The reverse inequality follows by the estimates \eqref{Equiv sqrt T}, \eqref{J_mu}, Hölder's inequality, the Sobolev embedding, and  \eqref{cond. sol. 2}: 
	\begin{align*}
		\mathcal{E}_{s}(\bold{U}) \leq 	\| \sqrt{\mathcal{T}_{\mu}}(D)J^{\frac{1}{2}}_{\mu} \eta  \|_{H^s}^2 
		+
		\| J^{\frac{1}{2}}_{\mu} u\|_{H^s}^2 + \|\eta\|_{L^{\infty}} \|J^{\frac{1}{2}}_{\mu} u \|^2_{H^s}
		\lesssim \| (\eta,u)\|_{V^s_{\sqrt{\mu}}}^2.
	\end{align*}

	Next, we prove \eqref{Energy Est Whitham Boussinesq}. There follows by using \eqref{Eq M for T_mu} and the self-adjointness of $\mathcal{Q}(U,D)$ that
	\begin{align*}
		\frac{1}{2}\frac{d}{dt} \mathcal{E}_s(\bold{U})
		& =
		-
		\big(J^{s}  J_{\mu}^{\frac{1}{2}} \mathcal{M}(\bold{U},D)  \bold{U} ,  \mathcal{Q}(\bold{U},D)J^{s} J_{\mu}^{\frac{1}{2}} \bold{U}\big)_{L^2}
		\\
		& \hspace{1cm}+
		\frac{1}{2}\big( J^{s} J_{\mu}^{\frac{1}{2}} \bold{U},  (\partial_t \mathcal{Q}(\bold{U},D)) J^{s} J_{\mu}^{\frac{1}{2}} \bold{U}\big)_{L^2}
		\\
		& = :- \mathcal{I} + \mathcal{II}.
	\end{align*}
	\underline{Control of $\mathcal{I}$.} By definition of \eqref{Energy Whitham Boussinesq Q} we decompose $\mathcal{I}$ in four pieces,
	\begin{align*}
		\mathcal{I}
		& = 
		\big(J^s J^{\frac{1}{2}}_{\mu}(u \partial_x \eta) ,  J^s J^{\frac{1}{2}}_{\mu}\mathcal{T}_{\mu}(D) \eta\big)_{L^2}
		+
		\big(J^s J^{\frac{1}{2}}_{\mu}\big{(}(1+\eta) \partial_x u\big{)} ,  J^s J^{\frac{1}{2}}_{\mu}\mathcal{T}_{\mu}(D) \eta\big)_{L^2}
		\\
		& \hspace{0.4cm}
		+
		\big(J^s J^{\frac{1}{2}}_{\mu}\mathcal{T}_{\mu}(D) \partial_x \eta , (1 + \eta)J^s J^{\frac{1}{2}}_{\mu} u\big)_{L^2}
		+
		\big(J^s J^{\frac{1}{2}}_{\mu} u \partial_x u , (1 + \eta)J^s J^{\frac{1}{2}}_{\mu}u\big)_{L^2}
		\\
		& = : \mathcal{A}_{11} + \mathcal{A}_{12} + \mathcal{A}_{21} + \mathcal{A}_{22}.
	\end{align*}
\textit{Control of $\mathcal{A}_{11}$.} We aim to exploit symmetries, and we first write $\mathcal{A}_{11}$
as
\begin{align*}
	\mathcal{A}_{11} 
	& =
	\big([J^s J^{\frac{1}{2}}_{\mu}  \sqrt{\mathcal{T}_{\mu}} (D), u] \partial_x \eta , J^s J^{\frac{1}{2}}_{\mu} \sqrt{\mathcal{T}_{\mu}}(D)  \eta\big)_{L^2}
	\\
	& 
	\hspace{0.5cm}
	+
	\big(uJ^s J^{\frac{1}{2}}_{\mu} \sqrt{\mathcal{T}_{\mu}} (D)\partial_x \eta , J^s J^{\frac{1}{2}}_{\mu}  \sqrt{\mathcal{T}_{\mu}}(D)  \eta\big)_{L^2}
	\\
	& =: 	\mathcal{A}_{11}^1 + 	\mathcal{A}_{11}^2. 
\end{align*} 
The first term is treated by the commutator estimate \eqref{Commutator Js T} with $s>\frac{3}{2}$, the Cauchy-Schwarz inequality and \eqref{Equiv sqrt T}. Thus, there 
holds
\begin{equation*}
	|	\mathcal{A}_{11}^1 | \leq \|[J^s J^{\frac{1}{2}}_{\mu}  \sqrt{\mathcal{T}_{\mu}} (D), u] \partial_x \eta\|_{L^2} \| J^s J^{\frac{1}{2}}_{\mu} \sqrt{\mathcal{T}_{\mu}}(D)  \eta \|_{L^2} \lesssim \|u\|_{H^s}\|\eta\|_{H^s}^2.
\end{equation*}
Similar to previous estimates, we use integration by parts and exploit the symmetries of $\mathcal{A}_{11}^2$, then conclude by  \eqref{Equiv sqrt T}, and the Sobolev embedding $H^{\frac{1}{2}^+}(\R)\hookrightarrow L^{\infty}(\R)$ that
\begin{equation*}
	|\mathcal{A}_{11}| \leq \frac{1}{2} \big| \big((\partial_xu)J^s J^{\frac{1}{2}}_{\mu} \sqrt{\mathcal{T}_{\mu}} (D) \eta , J^s J^{\frac{1}{2}}_{\mu}  \sqrt{\mathcal{T}_{\mu}}(D)  \eta\big)_{L^2} \big{|}
	 \lesssim \|(\eta,u)\|_{V^s_{\sqrt{\mu}}}^3.
\end{equation*}
\textit{Control of $\mathcal{A}_{12} + \mathcal{A}_{21}$.} We first decompose $\mathcal{A}_{12}$ in two parts 
 \begin{align*}
 	\mathcal{A}_{12} & = 
 	\big{(}
 	[J^s J^{\frac{1}{2}}_{\mu}  \sqrt{\mathcal{T}_{\mu}} (D), \eta] \partial_x u, J^s J^{\frac{1}{2}}_{\mu}  \sqrt{\mathcal{T}_{\mu}} (D) \eta
 	\big{)}_{L^2}  
 	\\
 	& \hspace{0.5cm}+
 	\big{(}(1+\eta)J^s J^{\frac{1}{2}}_{\mu}  \sqrt{\mathcal{T}_{\mu}} (D)\partial_x u, J^s J^{\frac{1}{2}}_{\mu}  \sqrt{\mathcal{T}_{\mu}} (D)\eta \big{)}_{L^2}
 	\\ 
 	& = 
 	\mathcal{A}_{12}^1 + \mathcal{A}_{12}^2. 
 \end{align*}
We estimate $\mathcal{A}_{12}^1$ the same way we did for $\mathcal{A}_{11}^1$ and obtain
\begin{equation*}
	|\mathcal{A}_{12}^1| \lesssim \| u\|_{H^s}\| \eta\|_{H^s}^2.
\end{equation*}
For the second term, after integration by parts, we find
\begin{align*}
	\mathcal{A}_{12}^2 
	& = 
	-
	\big{(}(\partial_x\eta)J^s J^{\frac{1}{2}}_{\mu}  \sqrt{\mathcal{T}_{\mu}} (D)u, J^s J^{\frac{1}{2}}_{\mu}  \sqrt{\mathcal{T}_{\mu}} (D)\eta \big{)}_{L^2}
	\\
	& \hspace{0.5cm} 
	-
	 \big{(}(1+\eta)J^s J^{\frac{1}{2}}_{\mu}  \sqrt{\mathcal{T}_{\mu}} (D) u, J^s J^{\frac{1}{2}}_{\mu}  \sqrt{\mathcal{T}_{\mu}} (D)\partial_x\eta \big{)}_{L^2}
	\\ 
	& = 
	\mathcal{A}_{12}^{2,1} + \mathcal{A}_{12}^{2,2}.  
\end{align*}
By using the Sobolev embedding and \eqref{Equiv sqrt T}, we find that
\begin{equation*}
	|\mathcal{A}_{12}^{2,1}| \leq \|\partial_x \eta \|_{L^{\infty}}\| J^{\frac{1}{2}}_{\mu}  \sqrt{\mathcal{T}_{\mu}}u\|_{H^s} \| J^{\frac{1}{2}}_{\mu}  \sqrt{\mathcal{T}_{\mu}}\eta\|_{H^s}
	\lesssim
	\|u\|_{H^s}\|\eta\|_{H^s}^2.
\end{equation*}
On the other hand, we cannot estimate $\mathcal{A}_{12}^{2,2}$ on its own. We must therefore cancel it with $\mathcal{A}_{21}$. Observe
\begin{align*}
	\mathcal{A}_{21} 
	& = 
	\big{(} J^s J^{\frac{1}{2}}_{\mu}  \sqrt{\mathcal{T}_{\mu}} (D)\partial_x \eta, [\sqrt{\mathcal{T}_{\mu}}(D),\eta]J^s J^{\frac{1}{2}}_{\mu}  u\big{)}_{L^2}
	\\
	& \hspace{0.5cm}
	+
	\big{(}
	J^s J^{\frac{1}{2}}_{\mu}  \sqrt{\mathcal{T}_{\mu}} (D) \partial_x \eta, (1+\eta)J^s J^{\frac{1}{2}}_{\mu}  \sqrt{\mathcal{T}_{\mu}} (D) u
	\big{)}_{L^2}
	\\
	& = 
	\mathcal{A}_{21}^1 + \mathcal{A}_{21}^2. 
\end{align*}
First, by using integration by parts, the Cauchy-Schwarz inequality, \eqref{Equiv sqrt T}, \eqref{Commutator dx T} and \eqref{J_mu} we find that
\begin{align}\label{A_21^1}
	|\mathcal{A}_{21}^1|  = \| J^s J^{\frac{1}{2}}_{\mu}  \sqrt{\mathcal{T}_{\mu}} (D) \eta\|_{L^2}
	\| \partial_x[\sqrt{\mathcal{T}_{\mu}}(D),\eta]J^s J^{\frac{1}{2}}_{\mu}  u\|_{L^2}  
	\lesssim 
	\|(\eta, u) \|_{V^s_{\sqrt{\mu}}}^3.
\end{align}
On the other hand, we observe that $\mathcal{A}_{21}^2 = - \mathcal{A}_{21}^{2,2}$. We may therefore conclude that the sum satisfies:
\begin{equation*}
	|\mathcal{A}_{12} + \mathcal{A}_{21}|\lesssim \|(\eta, u )\|_{V^s_{\sqrt{\mu}}}^3.
\end{equation*}
\textit{Control of $\mathcal{A}_{22}$.} Similar to $\mathcal{A}_{11}$ we write the expression with the good commutator:
\begin{align*}
	\mathcal{A}_{22}
	& =
	\big([J^s J^{\frac{1}{2}}_{\mu}, u ] \partial_x u , (1 + \eta)J^s J^{\frac{1}{2}}_{\mu}u\big)_{L^2}
	+
	\big(uJ^s J^{\frac{1}{2}}_{\mu}  \partial_x u , (1 + \eta)J^s J^{\frac{1}{2}}_{\mu}u\big)_{L^2}
	\\
	& = 
	\mathcal{A}_{22}^1 + 	\mathcal{A}_{22}^2.
\end{align*}
Then use the  Cauchy-Schwarz inequality, \eqref{cond. sol. 2}, \eqref{Commutator J^sJ_mu} with $s > \frac{3}{2}$, and the Sobolev embedding  to get
\begin{equation*}
	| \mathcal{A}_{22}^1 | \lesssim \|[J^s J^{\frac{1}{2}}_{\mu}, u ] \partial_x u\|_{L^2}  (1+\|\eta \|_{L^{\infty}}) \|J^s J^{\frac{1}{2}}_{\mu}u\|_{L^2}\lesssim \|(\eta, u)\|_{V^s_{\sqrt{\mu}}}^3.
\end{equation*}
While for $\mathcal{A}_{22}^2$ we integrate by parts, apply the Sobolev embedding, and again bound each term by the $V^s_{\sqrt{\mu}}-$norm of $(\eta,u)$ to obtain that
\begin{equation*}
	|\mathcal{A}_{22}^2| \lesssim (\|\partial_x u\|_{L^{\infty}} + \| \partial_x \eta \|_{L^{\infty}} )\|J^{\frac{1}{2}}_{\mu}u\|_{H^s}^2 \lesssim \|(\eta,u)\|_{V^s_{\sqrt{\mu}}}^3.
\end{equation*}

Gathering all these estimates, we conclude that
\begin{equation}\label{Wh B one}
	|\mathcal{I}|\lesssim\|(\eta, u)\|_{V^s_{\sqrt{\mu}}}^3.
\end{equation}

\noindent 
\underline{Control of $\mathcal{II}$.} By defintion of \eqref{Sym Q} and  \eqref{Eq M for T_mu} we get that,
\begin{align*}
	\mathcal{II}
		& = 
		\big( J^{s}J^{\frac{1}{2}}_{\mu} u,  (\partial_t \eta) J^{s} J^{\frac{1}{2}}_{\mu} u \big)_{L^2} 
		\\
		&
		=
		-
		\big( J^{s}J^{\frac{1}{2}}_{\mu}  u, (\partial_x u) J^{s}J^{\frac{1}{2}}_{\mu} u \big)_{L^2} 
		-
		\big( J^{s}J^{\frac{1}{2}}_{\mu} u,  (\partial_x(\eta u) )J^{s}J^{\frac{1}{2}}_{\mu} u \big)_{L^2} 
\end{align*}
Then, by using Hölder's inequality, the Sobolev embedding, \eqref{cond. sol. 2} and \eqref{J_mu}, we deduce that
\begin{equation}\label{Wh B two}
	|\mathcal{II}|\lesssim (\|\partial_x u\|_{L^{\infty}} + \|\partial_x(\eta u )\|_{L^{\infty}})\|J^{\frac{1}{2}}_{\mu} u\|_{H^s}^2 \lesssim \|(\eta, u)\|_{V^s_{\sqrt{\mu}}}^3.
\end{equation}

Consequently, we may add \eqref{Wh B one} and  \eqref{Wh B two}, then apply \eqref{Equivalence Whitham Boussinesq} to conclude the proof of estimate \eqref{Energy Est Whitham Boussinesq}.

\end{proof}

\subsection{Estimates for  system \eqref{2nd Whitham Boussinesq}}

As in the former subsections we let $\bold{U} = (\eta, u)^T = \ve (\zeta, v)^T$ and write the system on the form
\begin{equation}\label{2nd Eq M for T_mu}
	\partial_t \bold{U} + \mathscr{M}(\bold{U},D) \bold{U} = \bold{0},
\end{equation}
with
\begin{equation}\label{2nd Math M}
	\mathscr{M}(\bold{U},D) = 
	\begin{pmatrix}
		\mathcal{T}_{\mu}(D)(u \partial_x \boldsymbol{\cdot} )& \partial_x+ \mathcal{T}_{\mu}(D)(\eta\partial_x \boldsymbol{\cdot} )\\
		\mathcal{K}_{\mu}(D)\partial_x & \mathcal{T}_{\mu}(D)(u \partial_x\boldsymbol{\cdot} )
	\end{pmatrix}.
\end{equation}
The  symmetrizer is defined by
\begin{equation}\label{2nd Sym Q}
	\mathscr{Q}(\bold{U}, D) =
	\begin{pmatrix}
		\mathcal{T}_{\mu}^{-1}(D)\mathcal{K}_{\mu}(D) & 0 \\
		0 & \mathcal{T}_{\mu}^{-1}(D)+ \eta	
	\end{pmatrix}.
\end{equation}
Then the energy given in Definition \ref{2nd Energy Whitham Boussinesq} can be written as
\begin{equation}\label{2nd Energy Whitham Boussinesq Q}
	\mathscr{E}_s(\bold{U}) = \big{(}J^s \bold{U}, \mathscr{Q}(\bold{U}, D) J^s \bold{U}\big{)}_{L^2},
\end{equation}
%In the proof of Proposition \ref{Compactness L2}, some additional terms will appear when looking at the difference between two solutions due to the nonlinearity. Though, the estimation is similar and is demonstrated below. 
%
%
%
and the \textit{a priori estimate} for \eqref{2nd Whitham Boussinesq} is stated in the following proposition.

\begin{prop}\label{2nd A priori Whitham Boussinesq} Let $s> \frac{3}{2}$, $\ve, \mu \in (0,1) $, $\beta>0$, and let $(\eta,u) =  \ve (\zeta, v) \in C([0,T_0]; X^s_{\beta,\mu}(\mathbb{R}) )$ be a solution to \eqref{2nd Whitham Boussinesq} on a time interval $[0,T_0]$ for some $T_0 >0 $.  Moreover, assume  that there exist $h_0 \in (0,1)$ and $h_1>0$ such that
	\begin{equation}\label{cond. sol. 3}
		h_0 - 1 \leq \eta(x,t), \quad   \forall (x,t) \in \mathbb{R} \times [0,T_0] \quad  \text{and} \quad \sup\limits_{t\in [0,T_0]} \|(\eta, 	u)\|_{H^s\times H^s} \leq h_1.
	\end{equation}

	Then, for the energy given in Definition  \ref{2nd Energy Whitham Boussinesq}, there holds,
	\begin{equation}\label{2nd Energy Est Whitham Boussinesq}
		\frac{d}{dt} \mathscr{E}_s(\bold{U}) \lesssim c_{\beta}^2 \big{(}\mathscr{E}_s(\bold{U})\big{)}^{\frac{3}{2}},
	\end{equation}
	and the energy is coercive:
	\begin{equation}\label{2nd Equivalence Whitham Boussinesq}
		\|(\eta, u)\|^2_{X^s_{\beta,\mu}} \lesssim \mathscr{E}_s(\bold{U}) \lesssim  \|(\eta, u)\|^2_{X^s_{\beta,\mu}}.
	\end{equation}
\end{prop}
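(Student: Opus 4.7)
The proof will follow the same template as Proposition~\ref{A priori Whitham Boussinesq}, substituting the symmetrizer $\mathscr{Q}$ of \eqref{2nd Sym Q} and the energy~\eqref{2nd Energy Whitham Boussinesq Q}. The algebraic identity
$$\mathcal{T}_\mu^{-1}(D)\mathcal{K}_\mu(D) = 1 + \beta\mu D^2,$$
which follows from $K_\mu(\xi) = T_\mu(\xi)(1+\beta\mu\xi^2)$, will be used throughout: it is precisely this identity that matches the top-left entry of $\mathscr{Q}$ with the $\beta\mu\|DJ^s\eta\|_{L^2}^2$ contribution in $\|(\eta,u)\|_{X^s_{\beta,\mu}}$.

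\textbf{Coercivity.} I will expand
$$\mathscr{E}_s(\bold{U}) = \|J^s\eta\|_{L^2}^2 + \beta\mu\|DJ^s\eta\|_{L^2}^2 + \bigl(\mathcal{T}_\mu^{-1}(D) J^s u,\, J^s u\bigr)_{L^2} + \int_{\R}\eta\,(J^s u)^2\,dx.$$
For the lower bound, the non-cavitation condition \eqref{cond. sol. 3} gives $\eta \geq h_0-1$, so the last term is at least $-(1-h_0)\|J^s u\|_{L^2}^2$; this is absorbed by the $(1-h_0/2)\|u\|_{H^s}^2$ extracted from the lower part of \eqref{Inverse T_mu in Hs}, producing a net contribution $(h_0/2)\|u\|_{H^s}^2 + c\sqrt{\mu}\|D^{1/2}u\|_{H^s}^2$ uniform in $\beta$. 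The $\eta$-part already matches the $H^s$ plus weighted $H^{s+1}$ pieces of $X^s_{\beta,\mu}$. The upper bound follows from the upper half of \eqref{Inverse T_mu in Hs}, Hölder, and the Sobolev embedding applied to $\|\eta\|_{L^\infty}\|J^s u\|_{L^2}^2$.

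\textbf{Time derivative.} Differentiating and using self-adjointness of $\mathscr{Q}(\bold{U},D)$, I will obtain $\tfrac12\tfrac{d}{dt}\mathscr{E}_s = -\mathscr{I}+\mathscr{II}$, where $\mathscr{I} = (J^s\mathscr{M}\bold{U},\mathscr{Q}J^s\bold{U})_{L^2}$ and $\mathscr{II} = \tfrac12(J^s u,(\partial_t\eta)J^s u)_{L^2}$ (only $\eta$ is time-dependent in $\mathscr{Q}$). For $\mathscr{II}$, the first scalar equation gives $\partial_t\eta = -\partial_x u - \mathcal{T}_\mu(D)\partial_x(\eta u)$, and for $s>3/2$ one obtains $\|\partial_t\eta\|_{L^\infty}\lesssim \|(\eta,u)\|_{X^s_{\beta,\mu}}$ via the Sobolev embedding together with $|T_\mu(\xi)\xi|\lesssim |\xi|$. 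Hence $|\mathscr{II}|\lesssim \|(\eta,u)\|_{X^s_{\beta,\mu}}^3$.

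\textbf{Control of $\mathscr{I}$ and cancellations.} I split $\mathscr{I} = \mathscr{A}_{11}+\mathscr{A}_{12}+\mathscr{A}_{21}+\mathscr{A}_{22}$ along the entries of $\mathscr{M}$. For $\mathscr{A}_{11} = (J^s\mathcal{T}_\mu(u\partial_x\eta),(1+\beta\mu D^2)J^s\eta)_{L^2}$, I move $\mathcal{T}_\mu$ to the right factor by self-adjointness, getting $(\sqrt{\mathcal{K}_\mu}J^s(u\partial_x\eta),\sqrt{\mathcal{K}_\mu}J^s\eta)_{L^2}$; then the commutator estimate \eqref{Commutator low freq}, integration by parts, and the pointwise bound $\|\sqrt{\mathcal{K}_\mu}f\|_{H^s}^2 \leq \|f\|_{H^s}^2 + \beta\mu\|Df\|_{H^s}^2$ yield $|\mathscr{A}_{11}|\lesssim c_\beta^2\|(\eta,u)\|_{X^s_{\beta,\mu}}^3$. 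The diagonal $\mathscr{A}_{22}$ is treated analogously using the $\sqrt{\mathcal{T}_\mu^{-1}}$ factor from $\mathscr{Q}_{22}$, the commutator estimates of Lemma~\ref{T_mu and J_mu Commutator L2}, and \eqref{Inverse T_mu in Hs}. For the off-diagonal block I decompose $\mathscr{A}_{12}^a + \mathscr{A}_{12}^b$ with $\mathscr{A}_{12}^a$ collecting the linear $\partial_x u$ and $\mathscr{A}_{12}^b$ collecting $\mathcal{T}_\mu(\eta\partial_x u)$, and similarly $\mathscr{A}_{21} = \mathscr{A}_{21}^a + \mathscr{A}_{21}^b$ by splitting $\mathscr{Q}_{22} = \mathcal{T}_\mu^{-1} + \eta$. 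The linear pieces $\mathscr{A}_{12}^a + \mathscr{A}_{21}^a$ cancel exactly by integration by parts (both reduce to $\pm((1+\beta\mu D^2)J^s\partial_x\eta,J^s u)_{L^2}$). For the nonlinear pair, rewriting $\mathscr{A}_{12}^b = (J^s(\eta\partial_x u),\mathcal{K}_\mu J^s\eta)_{L^2}$ and splitting $J^s(\eta\partial_x u) = \eta J^s\partial_x u + [J^s,\eta]\partial_x u$ and integrating by parts cancels the principal part against $\mathscr{A}_{21}^b = (\mathcal{K}_\mu J^s\partial_x\eta,\eta J^s u)_{L^2}$, leaving only a commutator $([J^s,\eta]\partial_x u, \mathcal{K}_\mu J^s\eta)_{L^2}$ (controlled by \eqref{K-P} and the $\sqrt{\mathcal{K}_\mu}$ bound above) and a remainder of the form $((\partial_x\eta)J^s u,\mathcal{K}_\mu J^s\eta)_{L^2}$ (handled by Cauchy--Schwarz and the Sobolev embedding).

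\textbf{Main obstacle.} The delicate point is uniformity in $\beta$ as $\beta\to 0$, flagged in Remark~\ref{Remark beta thm 1.8}. The factor $\beta\mu D^2$ inside $\mathscr{Q}_{11}$ formally creates terms with two more derivatives on $\eta$ than $H^s$ allows; these must be absorbed into the $\beta\mu\|DJ^s\eta\|_{L^2}^2$ piece of the energy via careful integration by parts (transferring one $\partial_x$ onto each factor) and via $K_\mu(\xi)\leq 1+\beta\mu\xi^2$. Tracking the constants in Lemmas~\ref{Commutator K at level s} and~\ref{T_mu and J_mu Commutator L2}, one obtains $\tfrac{d}{dt}\mathscr{E}_s \lesssim c_\beta^2\,(\mathscr{E}_s)^{3/2}$ with $c_\beta^2$ as in \eqref{Constant beta}, which together with the uniform coercivity \eqref{2nd Equivalence Whitham Boussinesq} closes the estimate.
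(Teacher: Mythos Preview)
Your proposal is correct and follows essentially the same route as the paper: the same symmetrizer $\mathscr{Q}$, the same four-block decomposition $\mathscr{A}_{ij}$, the same exact cancellations in the off-diagonal block (linear pieces $\mathscr{A}_{12}^a+\mathscr{A}_{21}^a$ vanish, nonlinear pair reduces to a Kato--Ponce commutator plus a $(\partial_x\eta)$-remainder), and the same use of \eqref{Commutator low freq} for $\mathscr{A}_{11}$ and Lemma~\ref{T_mu and J_mu Commutator L2} for $\mathscr{A}_{22}$. The only cosmetic differences are that the paper cancels $\mathcal{T}_\mu\cdot\mathcal{T}_\mu^{-1}$ directly in $\mathscr{A}_{22}^1$ rather than passing through $\sqrt{\mathcal{T}_\mu^{-1}}$, and it controls the $\sqrt{\mathcal{K}_\mu}$-terms on $\eta$ via the interpolation inequality $\sqrt{\beta}\mu^{1/4}\|\eta\|_{H^{s+1/2}}\lesssim_\beta\|\eta\|_{H^s}+\beta\sqrt{\mu}\|D\eta\|_{H^s}$ rather than your direct pointwise bound $K_\mu(\xi)\le 1+\beta\mu\xi^2$; both variants land in the $X^s_{\beta,\mu}$-norm with the same $c_\beta^2$-dependence.
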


\begin{proof}[Proof of Proposition \ref{2nd A priori Whitham Boussinesq}] We will first provide the coercivity estimate \eqref{2nd Equivalence Whitham Boussinesq}. By Definition \ref{2nd Energy Whitham Boussinesq} for the energy, the non-cavitation condition \eqref{cond. sol. 3} and \eqref{Inverse T_mu in Hs}  we obtain the lower bound
	\begin{align*}
		\mathscr{E}_{s}(\bold{U}) 
		& = 
		\| \langle \sqrt{\beta \mu } D \rangle \eta  \|_{H^s}^2 
		+
		\big{(}
		J^s u, (\mathcal{T}^{-1}_{\mu}(D)+\eta) J^s u 
		\big{)}_{L^2} 
		\\
		& 
		\geq   \|  \eta  \|_{H^s}^2 +\beta \mu  \|D^1\eta\|_{H^{s}}^2
		+
		\frac{h_0}{2}\| u\|_{H^s}^2  + c\sqrt{\mu} \|D^{\frac{1}{2}}u\|_{H^s}^2,
	\end{align*}
	for some $c>0$. The reverse inequality follows by the upper bound in \eqref{Inverse T_mu in Hs},  the Sobolev embedding   and   \eqref{cond. sol. 3}: 
	\begin{align*}
		\mathscr{E}_{s}(\bold{U}) 
		\leq 
		\| \langle \sqrt{\beta \mu } D \rangle \eta  \|_{H^s}^2 
		+
		\|  \mathcal{T}^{-\frac{1}{2}}_{\mu}(D)J^s u\|^2  +\| \eta\|_{L^{\infty}}\| J^s u\|^2 
		\lesssim
		\| (\eta,u)\|_{X^s_{\beta,\mu}}^2.
	\end{align*}

	We may now prove \eqref{2nd Energy Est Whitham Boussinesq}. To do so, we use \eqref{2nd Eq M for T_mu} and the self-adjointness of $\mathscr{Q}(\bold{U},D)$ to write
	\begin{align*}
		\frac{1}{2}\frac{d}{dt} \mathscr{E}_s(\bold{U})
		& =
		-
		\big(J^{s}   \mathscr{M}(\bold{U},D)  \bold{U} ,  \mathscr{Q}(\bold{U},D)J^{s}  \bold{U}\big)_{L^2}
		+
		%\\
		%& \hspace{1cm}+
		\frac{1}{2}\big( J^{s}  \bold{U},  (\partial_t \mathscr{Q}(\bold{U},D)) J^{s}  \bold{U}\big)_{L^2}
		\\
		& = : \mathscr{I} + \mathscr{II}.
	\end{align*}
	\underline{Control of $\mathscr{I}$.} By definition of \eqref{2nd Energy Whitham Boussinesq Q} we must estimate the following terms:
	\begin{align*}
		\mathscr{I}
		& = 
		\big(J^s (u \partial_x \eta) , \mathcal{K}_{\mu}(D) J^s  \eta\big)_{L^2}
		+
		\big(J^s \partial_x u+J^s\mathcal{T}_{\mu}(D) (\eta \partial_x u) , \mathcal{T}_{\mu}^{-1}(D)\mathcal{K}_{\mu}(D) J^s  \eta\big)_{L^2}
		\\
		& \hspace{0.4cm}
		+
		\big(J^s \mathcal{K}_{\mu}(D) \partial_x \eta , (\mathcal{T}_{\mu}^{-1}(D) + \eta)J^s u\big)_{L^2}
		+
		\big(J^s \mathcal{T}_{\mu}(D) (u \partial_x u) , (\mathcal{T}_{\mu}^{-1}(D) + \eta)J^s u\big)_{L^2}
		\\
		& = : \mathscr{A}_{11} + \mathscr{A}_{12} + \mathscr{A}_{21} + \mathscr{A}_{22}.
	\end{align*}
	\textit{Control of $\mathscr{A}_{11}$.} We rewrite $\mathscr{A}_{11}$ as
	\begin{align*}
		\mathscr{A}_{11} 
		& =
		\big([J^s \sqrt{\mathcal{K}_{\mu}} (D), u] \partial_x \eta , J^s  \sqrt{\mathcal{K}_{\mu}}(D)  \eta\big)_{L^2}
	%	\\
	%	& 
	%	\hspace{0.5cm}
		+
		\big(uJ^s  \sqrt{\mathcal{K}_{\mu}} (D)\partial_x \eta , J^s  \sqrt{\mathcal{K}_{\mu}}(D)  \eta\big)_{L^2}
		\\
		& =: 	\mathscr{A}_{11}^1 + 	\mathscr{A}_{11}^2. 
	\end{align*} 
	For the first term, we first note by interpolation and Young's inequality that
	\begin{equation}\label{interpolation}
		\sqrt{\beta}\mu^{\frac{1}{4}} \|\eta\|_{H^{s+ \frac{1}{2}}} \leq \| \eta \|_{H^s}^{\frac{1}{2}} 
		\big{(}\beta
		\sqrt{\mu}\| \eta\|_{H^{s+1}}
		\big{)}^{\frac{1}{2}} 
		\lesssim_{\beta} \|\eta\|_{H^s} + \beta\sqrt{\mu} \| D^1 \eta\|_{H^{s}},
	\end{equation}
	and thus $\mathscr{A}_{11}^1 $ is treated by the Cauchy-Schwarz inequality,  the commutator estimate \eqref{Commutator low freq} with $s>\frac{3}{2}$, \eqref{estimate K in Hs}, and \eqref{interpolation}:
	\begin{equation*}
		|	\mathscr{A}_{11}^1 | \lesssim c_{\beta}^2(\|u\|_{H^s} + \mu^{\frac{1}{2}} \|u\|_{H^{s+\frac{1}{2}}})(\|\eta\|_{H^s} +  \sqrt{\beta}\mu^{\frac{1}{4}} \| \eta \|_{H^{s+\frac{1}{2}}})^2 \lesssim c_{\beta}^2 \|(\eta,u)\|_{X^s_{\beta,\mu}}^3.
	\end{equation*}
	On the other hand, for $\mathscr{A}_{11}^2$ we conclude by integration by parts,   \eqref{estimate K in Hs}, \eqref{interpolation},  and the Sobolev embedding with $s>\frac{3}{2}$ that
	\begin{equation*}
		|\mathscr{A}_{11}|
		\lesssim 
		|\mathscr{A}_{11}^1|
		+
		\|\partial_xu\|_{L^{\infty}} \|  \sqrt{\mathcal{K}_{\mu}} (D) \eta\|_{H^s}
		\| \sqrt{\mathcal{K}_{\mu}}(D)  \eta\|_{H^s} 
		\lesssim
		c_{\beta}^2 \|(\eta,u)\|_{X^s_{\beta,\mu}}^3.
	\end{equation*}
	\textit{Control of $\mathscr{A}_{12} + \mathscr{A}_{21}$.} By using integration by parts we  write, 
	\begin{align*}
		\mathscr{A}_{12} 
		& = 
			\big(J^s( \eta \partial_x u) , \mathcal{K}_{\mu}(D) J^s  \eta\big)_{L^2}
			-
			\big(J^s  u , \mathcal{T}_{\mu}^{-1}(D)\mathcal{K}_{\mu}(D) J^s \partial_x  \eta\big)_{L^2}
			\\ 
			& = 
			\mathscr{A}_{12}^1 + \mathscr{A}_{12}^2. 
	\end{align*}
	For $\mathscr{A}_{12}^1$, observe
	\begin{align*}
			\mathscr{A}_{12}^1 
			& =
			\big([J^s, \eta] \partial_x u , \mathcal{K}_{\mu}(D) J^s  \eta\big)_{L^2}
			-
			\big( (\partial_x\eta) J^s  u , \mathcal{K}_{\mu}(D) J^s  \eta\big)_{L^2}
			-
			\big( \eta J^s  u , \mathcal{K}_{\mu}(D) J^s  \partial_x \eta\big)_{L^2}
			\\
			& = 
			\mathscr{A}_{12}^{1,1} + \mathscr{A}_{12}^{1,2} + \mathscr{A}_{12}^{1,3}. 
	\end{align*}
	Then the Kato-Ponce commutator estimate \eqref{Kato ponce}, the Sobolev embedding, and the pointwise estimate \eqref{sqrt K} combined with Plancherel imply that
	\begin{align*}
		|\mathscr{A}_{12}^{1,1} + \mathscr{A}_{12}^{1,2}| 
		\lesssim c_{\beta}^2
		\| u\|_{H^s} \|\eta\|_{H^s} (\|\eta\|_{H^s} + \beta\sqrt{\mu} \| D^1 \eta \|_{H^{s}}) \lesssim c_{\beta}^2 \|(\eta,u) \|_{X^s_{\beta,\mu}}^3.
	\end{align*}
	While the contribution of remaining terms, $\mathscr{A}_{12}^{1,3} +  \mathscr{A}_{12}^2$, will be canceled by $\mathscr{A}_{21}$. Indeed, we observe that
	\begin{align*}
		\mathscr{A}_{21}
		= 
		\big(J^s \mathcal{K}_{\mu}(D) \partial_x \eta , \eta  J^s u\big)_{L^2}
		+
		\big(J^s \mathcal{K}_{\mu}(D) \partial_x \eta ,  \mathcal{T}^{-1}_{\mu}(D)J^s u\big)_{L^2}
		 = -\mathscr{A}_{12}^{1,3} -  \mathscr{A}_{12}^2.
	\end{align*}
	Hence, combining these identities and estimates gives the bound
	\begin{equation*}
		|\mathscr{A}_{12} + \mathscr{A}_{21}| \lesssim c_{\beta}^2 \|(\eta, u )\|_{X^s_{\beta,\mu}}^3.
	\end{equation*}
	\textit{Control of $\mathscr{A}_{22}$.} Consider the two terms:
	\begin{align*}
		\mathscr{A}_{22}
		 =
		\big(J^s(u \partial_x u) ,J^s u\big)_{L^2}
		+
		\big(J^s \mathcal{T}_{\mu}(D) (u \partial_x u) ,  \eta J^s u\big)_{L^2}
		= 
		\mathscr{A}_{22}^1 + 	\mathscr{A}_{22}^2.
	\end{align*}
	The control of $\mathscr{A}_{22}^1$ is a direct consequence of the Kato-Ponce commutator estimate \eqref{Kato ponce} and integration by parts. Since $s>\frac{3}{2}$, we have that
	\begin{equation*}
		|\mathscr{A}_{22}^1| \leq 
		\big{|}
		\big{(}
		[J^s,u]\partial_x u,J^su
		\big{)}_{L^2}
		\big{|}  
		+
		\frac{1}{2}
		\big{|}
		\big{(}
		(\partial_x u)J^s u,J^su
		\big{)}_{L^2}
		\big{|} 
		\lesssim \| u\|_{H^s}^3.
	\end{equation*}
	To deal with $\mathscr{A}_{22}^2$, we make the decomposition
	\begin{align*}
		\mathscr{A}_{22}^2
		& = 
		\big( [J^s \sqrt{\mathcal{T}_{\mu}}(D), u ]  \partial_x u ,\sqrt{\mathcal{T}_{\mu}}(D)   \eta J^s u\big)_{L^2}
		+
		\big( u J^s \sqrt{\mathcal{T}_{\mu}}(D)  \partial_x u ,[\sqrt{\mathcal{T}_{\mu}}(D),   \eta] J^s u\big)_{L^2} 
		\\
		&
		\hspace{0.5cm}+
		\big( u J^s \sqrt{\mathcal{T}_{\mu}}(D)  \partial_x u ,\eta \sqrt{\mathcal{T}_{\mu}}(D)    J^s u\big)_{L^2} 
		%\big(u J^s \sqrt{\mathcal{T}_{\mu}}(D)  \partial_x u ,\sqrt{\mathcal{T}_{\mu}}(D)   \eta J^s u\big)_{L^2}
		\\
		& %= \mathscr{A}_{22}^{2,1} + \mathscr{A}_{22}^{2,2}.
		=
		\mathscr{A}_{22}^{2,1} + \mathscr{A}_{22}^{2,2} + \mathscr{A}_{22}^{2,3} .
	\end{align*}
	Then for $ \mathscr{A}_{22}^{2,1} $ we employ the Cauchy-Schwarz inequality, \eqref{Bessel Commutator T_mu J}, \eqref{cond. sol. 3}, \eqref{boundedness of T}, and the Sobolev embedding to deduce 
	\begin{align*}
		|\mathscr{A}_{22}^{2,1}| \leq \| [J^s \sqrt{\mathcal{T}_{\mu}}(D), u ]  \partial_x u\|_{L^2} \|\sqrt{\mathcal{T}_{\mu}}(D)  ( \eta J^s u)\|_{L^2} \lesssim \|(\eta,u)\|_{X^s_{\beta,\mu}}^3.
	\end{align*}
	Before we treat   $\mathscr{A}_{22}^{2,2,1}$, we note that $\| [\sqrt{\mathcal{T}_{\mu}}(D),   \eta] J^s u \|_{L^2} \lesssim \|\eta\|_{H^s}\|u\|_{H^s}$. Indeed, using \eqref{boundedness of T} and the Sobolev embedding we find that
	\begin{align}\label{claim 3}
		\| [\sqrt{\mathcal{T}_{\mu}}(D),   \eta] J^s u \|_{L^2} \lesssim \|  \eta \|_{L^{\infty}}\|J^s u \|_{L^2}
		\lesssim \|\eta\|_{H^s}\|u\|_{H^s}.
	\end{align}
	Consequently, using integration by parts, the Cauchy-Schwarz inequality, \eqref{claim 3} and \eqref{Commutator dx T} we get 
	\begin{align*}
		|\mathscr{A}_{22}^{2,2}| & = \big{|}\big(( \partial_x u) J^s \sqrt{\mathcal{T}_{\mu}}(D)  u ,[\sqrt{\mathcal{T}_{\mu}}(D),   \eta] J^s u\big)_{L^2} \big{|}
		\\
		& 
		\hspace{0.5cm}+
		\big|
		\big(u J^s \sqrt{\mathcal{T}_{\mu}}(D)  u ,\partial_x [\sqrt{\mathcal{T}_{\mu}}(D),   \eta] J^s u\big)_{L^2} \big|
		\\
		& 
		\lesssim  \|\eta\|_{H^s} \|u\|_{H^s}^3,
	\end{align*}
	 then use \eqref{cond. sol. 3} on one term. Similarly, for $\mathscr{A}_{22}^{2,1}$ we use integration by parts, the Sobolev embedding, and \eqref{boundedness of T} to get the bound
	 \begin{equation*}
	 	|\mathscr{A}_{22}^{2,3}| \lesssim   \|\partial_x(\eta u)\|_{L^{\infty}} \|u \|_{H^s}^2. %\lesssim  \|(\eta,u)\|_{X^s_{\beta,\mu}}^3.
	 \end{equation*}
	 
	  Therefore, we conclude by  \eqref{cond. sol. 3} and  gathering all these estimates that
	 \begin{equation*}
	 		|\mathscr{A}_{22}| \lesssim\|(\eta, u)\|_{X^s_{\beta,\mu}}^3,% \|u\|^2_{H^s}\|\eta\|_{H^s}^2,
	 \end{equation*}
	and by extension, we have the bound
	$$|\mathscr{I}|\lesssim_{\beta}\|(\eta, u)\|_{X^s_{\beta,\mu}}^3.$$

	\noindent 
	\underline{Control of $\mathscr{II}$.} By defintion of \eqref{2nd Sym Q} and  \eqref{2nd Eq M for T_mu} we get that,
	\begin{align*}
		\mathscr{II}
		& = 
		\big( J^{s}u,  (\partial_t \eta) J^{s} u \big)_{L^2} 
		\\
		&
		=
		-
		\big( J^{s} u, (\partial_x u) J^{s} u \big)_{L^2} 
		-
		\big( J^{s} u, (\mathcal{T}_{\mu}(D) \partial_x(\eta u)) J^{s}u \big)_{L^2},
	\end{align*}
	Then the final estimate follows  by the Cauchy-Schwarz inequality, \eqref{cond. sol. 3} and the fact that $\mathcal{T}_{\mu}(D)$ is bounded on $L^2(\R)$, then apply Hölder's inequality, and the Sobolev embedding to deduce %for $s>\frac{3}{2}$. 
	\begin{equation*}
		|	\mathscr{II} |
		\leq 
		\|\partial_x u\|_{L^{\infty}} \|u\|_{H^s}^2 + \|\mathcal{T}_{\mu}(D)\partial_x(\eta u)\|_{L^{\infty}}\|u\|_{H^s}^2 \lesssim \|(\eta,u)\|_{X^s_{\beta,\mu}}^3.
	\end{equation*}

\end{proof}

\section{Estimates for the difference of two solutions}\label{Energy est diff}

\subsection{Estimates for system  \eqref{full dispersion}}

We will now estimate the difference between two solutions of \eqref{full dispersion} given by $\bold{U}_1= (\eta_1, u_1)^T = \ve (\zeta_1, v_1 )^T$ and $\bold{U}_2 = (\eta_2, u_2)^T = \ve (\zeta_2, v_2)^T$. For   convenience,  we define  $(\psi,w) = (\eta_1 - \eta_2, u_1-u_2)$. Then $\bold{W} = (\psi,w)^T$ solves 
\begin{equation}\label{lin W}
	\partial_t \bold{W} + M(\bold{U}_1,D)\bold{W} = \bold{F}, 
\end{equation}
with $M$ defined as  in \eqref{M 1} and $\bold{F} =- \big(M(\bold{U}_1,D) - M(\bold{U}_2,D)  \big) \bold{U}_2$. Specifically, the source term is given by
\begin{equation} \label{F: source term}
	\bold{F} 
	=
	-
	\begin{pmatrix}
		w \partial_x \eta_2 + \psi \partial_x u_2 \\
		w \partial_x u_2
	\end{pmatrix} .
\end{equation}
The energy associated to \eqref{lin W} is given in terms of the symmetrizer $Q(\bold{U}_1,D)$ defined in \eqref{S: symmetrizer} and reads
\begin{equation}\label{tilde Energy s}
	\tilde{E}_{s}(\bold{W}) 
	 : =
	\big(J^s \bold{W}, Q(\bold{U}_1,D)J^s \bold{W}\big)_{L^2}. 
\end{equation}

The main result of this section reads:

\begin{prop}\label{Compactness L2}
	Take $s>2$ and $\ve, \mu \in (0,1)$.  Let $(\eta_1,u_1),(\eta_2,u_2) \in C([0,T_0]: V^s_{\sqrt{\mu}} (\mathbb{R}))$ be two solutions of \eqref{full dispersion} on a time interval $[0,T_0]$ for some $T_0 >0$. Moreover,    assume there exist $h_0 \in (0,1)$ and $h_1>0$ such that %for $i\in\{1,2\}$
	\begin{equation}\label{cond. sol. diff}
		h_0 - 1 \leq \eta_1(x,t), \quad   \forall (x,t) \in \mathbb{R} \times [0,T_0] \quad  \text{and} \quad \sup\limits_{t\in[0,T_0]}\|(\eta_1, u_1)\|_{H^s\times H^s} \leq h_1,
	\end{equation}
	when $\beta  \geq  \frac{1}{3}$, and that
	\begin{equation}\label{cond. sol. beta diff}
		-\frac{\beta}{2} \leq \eta_1(x,t), \quad   \forall (x,t) \in \mathbb{R} \times [0,T_0] \quad  \text{and} \quad \sup\limits_{t \in [0,T_0]} \|(\eta_1, u_1)\|_{H^s\times H^s} \leq h_1,
	\end{equation}
	when $0<  \beta < \frac{1}{3}$. 
	
	 Define the difference to be $\bold{W}=(\psi,w) = (\eta_1 - \eta_2, u_1 - u_2)$. Then, for the energy defined by \eqref{tilde Energy s}, there holds
	\begin{equation}\label{Energy 1}
		\frac{d}{dt} \tilde{E}_0(\bold{W}) \lesssim_{\beta} \max \limits_{i = 1,2} \|(\eta_i,u_i)\|_{V^{s}_{\sqrt{\mu}}} \|(\psi,w)\|_{V^{0}_{\sqrt{\mu}}}^2,
	\end{equation}
	and
	\begin{equation}\label{equiv 1}
		\|(\psi,w)\|^2_{V^0_{\sqrt{\mu}}} \lesssim_{\beta} \tilde{E}_0(\bold{W}) \lesssim_{\beta} \|(\psi,w)\|^2_{V^0_{\sqrt{\mu}}}.
	\end{equation}
	
	Furthermore, we have the following estimate at the  $V^s_{\sqrt{\mu}}-$ level:
	\begin{equation}\label{Energy 2}
		\frac{d}{dt} \tilde{E}_s(\bold{W}) \lesssim_{\beta} |\big(J^s \bold{F}, Q(\bold{U}_1,D) J^s \bold{W}\big)_{L^2}| +  \max \limits_{i = 1,2} \|(\eta_i,u_i)\|_{V^{s}_{\sqrt{\mu}}}  \|(\psi,w)\|_{V^s_{\sqrt{\mu}}}^2,
	\end{equation}
	and
	\begin{equation}\label{equiv 2}
		\|(\psi,w)\|^2_{V^s_{\sqrt{\mu}}} \lesssim_{\beta} \tilde{E}_s(\bold{W}) \lesssim_{\beta} \|(\psi,w)\|^2_{V^s_{\sqrt{\mu}}}.
	\end{equation}
\end{prop}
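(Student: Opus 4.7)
\textbf{Proof plan for Proposition \ref{Compactness L2}.}

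The proof will follow the template of Proposition \ref{Energy fully disp Boussinesq}, with $\bold{U}_1$ playing the role of the ambient state and $\bold{W}=(\psi,w)$ the difference, but with one essential new ingredient: the source term $\bold{F}$ introduced by the linearization in \eqref{lin W}. First, the coercivity estimates \eqref{equiv 1} and \eqref{equiv 2} are immediate copies of the argument used for \eqref{equiv. full dispersion}, since the symmetrizer $Q(\bold{U}_1,D)$ has the same structure and $\eta_1$ satisfies the appropriate non-cavitation condition \eqref{cond. sol. diff} or \eqref{cond. sol. beta diff}; the pointwise bounds \eqref{K pos} and \eqref{K pos beta} together with \eqref{sqrt K} produce the equivalence with the $V^0_{\sqrt\mu}$ and $V^s_{\sqrt\mu}$ norms, uniformly in $\mu$ and with the advertised $\beta$-dependence.

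Next, differentiating in time and using \eqref{lin W} together with the self-adjointness of $Q(\bold{U}_1,D)$, I obtain
\begin{equation*}
\tfrac{1}{2}\tfrac{d}{dt}\tilde E_s(\bold{W}) = -\bigl(J^sM(\bold{U}_1,D)\bold{W},Q(\bold{U}_1,D)J^s\bold{W}\bigr)_{L^2} + \bigl(J^s\bold{F},Q(\bold{U}_1,D)J^s\bold{W}\bigr)_{L^2} + \tfrac{1}{2}\bigl(J^s\bold{W},(\partial_tQ(\bold{U}_1,D))J^s\bold{W}\bigr)_{L^2}.
\end{equation*}
The first and third contributions are handled exactly as in Proposition \ref{Energy fully disp Boussinesq}: decompose $Q = Q^{(1)}+Q^{(2)}$, split into pieces $A_{11},A_{12},A_{21},A_{22}$ and $B_{21},B_{22}$, and observe that the same cancellation $A_{12}+A_{21}+B_{21}$ occurs because $Q$ and $M$ have the same structure. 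The only nonessential change is that one factor is now drawn from $\bold{U}_1$ and the other from $\bold{W}$, so the Sobolev embedding and the commutator estimate \eqref{Commutator low freq} will yield bounds of the form $\|(\eta_1,u_1)\|_{V^s_{\sqrt\mu}}\,\|(\psi,w)\|^2_{V^s_{\sqrt\mu}}$. The time derivative of $Q$ is fed back through the equation for $\bold{U}_1$, reproducing the calculation of the term $II$ in Proposition \ref{Energy fully disp Boussinesq}. These steps already furnish \eqref{Energy 2}; for \eqref{Energy 1}, the analogous computation with $s=0$ goes through in exactly the same way since the cancellation is algebraic and independent of the index $s$.

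It remains to handle the source term $(\bold{F},Q(\bold{U}_1,D)\bold{W})_{L^2}$ at the $V^0_{\sqrt\mu}$ level for \eqref{Energy 1}. Writing out \eqref{F: source term} and \eqref{S: symmetrizer}, three of the four inner products $((w\partial_x\eta_2)\psi,\cdot)$, $((\psi\partial_x u_2)\psi,\cdot)$, $(w\partial_x u_2,\eta_1 w)$ are controlled directly by Hölder and the Sobolev embedding $H^s\hookrightarrow W^{1,\infty}$, producing $\max_i\|(\eta_i,u_i)\|_{V^s_{\sqrt\mu}}\,\|(\psi,w)\|^2_{V^0_{\sqrt\mu}}$. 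The genuine difficulty is the nonlocal term $(w\partial_x u_2,\mathcal{K}_\mu(D)w)_{L^2}$, which cannot be controlled by a crude Hölder estimate because $\mathcal{K}_\mu$ carries half a derivative at high frequencies beyond what $V^0_{\sqrt\mu}$ provides. To handle it I invoke the operator decomposition
\begin{equation*}
\mathcal{K}_\mu(D) = \bigl((\chi^{(1)}_\mu\sqrt{\mathcal{K}_\mu})(D)\bigr)^{2} + \bigl((\chi^{(2)}_\mu\sigma_{\mu,1/2})(D)\bigr)^{2} - \bigl((\chi^{(2)}_\mu\sigma_{\mu,0})(D)\bigr)^{2},
\end{equation*}
which follows from $(\chi^{(1)}_\mu)^2+(\chi^{(2)}_\mu)^2\equiv 1$ and the algebraic identity $\sigma_{\mu,1/2}^2-\sigma_{\mu,0}^2=K_\mu$ on the support of $\chi^{(2)}_\mu$. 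For each of the three resulting operators $\mathrm{op}$, I rewrite
\begin{equation*}
\bigl(\mathrm{op}(w\partial_x u_2),\mathrm{op}(w)\bigr)_{L^2} = \bigl((\partial_x u_2)\,\mathrm{op}(w),\mathrm{op}(w)\bigr)_{L^2} + \bigl([\mathrm{op},\partial_x u_2]w,\mathrm{op}(w)\bigr)_{L^2},
\end{equation*}
use the identity $[\mathrm{op},\partial_x u_2]w=\partial_x[\mathrm{op},u_2]w-[\mathrm{op},u_2]\partial_x w$, and then integrate by parts on the first summand. The resulting expressions involve only commutators of the form $[\mathrm{op},u_2]\partial_x w$, which are precisely what \eqref{Commutator chi K}, \eqref{Commutator D}, and \eqref{Commutator sigma} are designed to control. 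Combined with the operator bounds \eqref{Product chi K}, \eqref{Product sigma 1/2}, and \eqref{Product sigma}, this yields $|(w\partial_x u_2,\mathcal{K}_\mu(D)w)_{L^2}|\lesssim_\beta \|u_2\|_{H^s}\|w\|^2_{V^0_{\sqrt\mu}}$, where the high-frequency pieces contribute exactly the $\mu^{1/4}\|D^{1/2}w\|_{L^2}$ scaling that is absorbed by the $\sqrt\mu\|D^{1/2}w\|^2_{H^0}$ term inside the $V^0_{\sqrt\mu}$ norm.

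\textbf{Main obstacle.} The hardest step is clearly the control of $(w\partial_x u_2,\mathcal{K}_\mu(D)w)_{L^2}$ at the $V^0_{\sqrt\mu}$ level: a naive pass through Kato--Ponce (Lemma \ref{Kato ponce}) would cost half a derivative on $w$ that is not available in $V^0_{\sqrt\mu}$, and even the classical Calder\'on trick fails because we must retain explicit dependence on the small parameters $\mu$ and $\beta$. The decomposition of $\sqrt{\mathcal{K}_\mu}$ into its low and high frequency components via Definition \ref{multiplier} and the refined commutator estimates of Lemma \ref{Commutator L2} are exactly what allows the argument to close while keeping the correct $\beta$-dependent constants needed for the time scale \eqref{Time T} in Theorem \ref{Well-posedness long time full dispersion}.
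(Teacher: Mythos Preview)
Your overall architecture is right, and the coercivity and $s$-level estimate \eqref{Energy 2} indeed go through as you say. But at $s=0$ you have swapped the roles of the two nonlocal terms, and this creates a genuine gap.

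The term you dismiss as following ``exactly as in Proposition \ref{Energy fully disp Boussinesq}'' is the $B_{22}$-analogue from the $M$-contribution, namely $(u_1\partial_x w,\mathcal{K}_\mu(D)w)_{L^2}$. But the treatment of $B_{22}$ in Proposition \ref{Energy fully disp Boussinesq} rests on the commutator estimate \eqref{Commutator low freq}, which requires $s\ge 1$; at $s=0$ that lemma is unavailable. This is precisely the term to which the paper applies the frequency decomposition \eqref{Split multiplier K} and Lemma \ref{Commutator L2}: writing $(\mathrm{op}(u_1\partial_x w),\mathrm{op}(w))=([\mathrm{op},u_1]\partial_x w,\mathrm{op}(w))-\tfrac12((\partial_x u_1)\mathrm{op}(w),\mathrm{op}(w))$ after one integration by parts, the commutator is directly of the form covered by \eqref{Commutator chi K}, \eqref{Commutator D}, \eqref{Commutator sigma}. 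Note the derivative lands on the low-regularity function $w$ here, which is why the tailored $L^2$ commutators are indispensable.

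Conversely, the source term $(w\partial_x u_2,\mathcal{K}_\mu(D)w)_{L^2}$ that you single out as the main obstacle is actually the \emph{easier} of the two, because the derivative already sits on the smooth function $u_2$. Your sketch for it does not close: after the identity $[\mathrm{op},\partial_x u_2]w=\partial_x[\mathrm{op},u_2]w-[\mathrm{op},u_2]\partial_x w$ and integration by parts, you are left with $([\mathrm{op},u_2]w,\mathrm{op}(\partial_x w))$, and $\mathrm{op}(\partial_x w)$ costs a full derivative on $w$ that $V^0_{\sqrt\mu}$ cannot supply. The paper instead bounds this term by Cauchy--Schwarz, the comparison estimate \eqref{Comparison} (which reduces $\sqrt{\mathcal{K}_\mu}$ to $\sqrt\beta\mu^{1/4}D^{1/2}$ modulo a bounded remainder), and then the fractional Leibniz rule \eqref{FracLeibnizRule} to distribute the half-derivative on the product $w\partial_x u_2$; this yields exactly $\|u_2\|_{V^s_{\sqrt\mu}}\|w\|_{V^0_{\sqrt\mu}}$ with the correct $\mu$-scaling.
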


\begin{remark}\label{Remark tilde}

	The source term corresponding to $\bold{F}$ given by \eqref{Energy 2} will be treated  in the proof of Theorem \ref{Well-posedness long time full dispersion} by using  regularisation estimates and a classical Bona-Smith argument.

\end{remark}
\begin{proof}[Proof of Proposition \ref{Compactness L2}] First, the proofs of  \eqref{equiv 1} and \eqref{equiv 2} are similar to the one of \eqref{equiv. full dispersion}.
	
		Next, we only prove \eqref{Energy 1}, where \eqref{Energy 2} is more straightforward and follows the same line, utilizing   similar estimates to those applied for the proof of Proposition \ref{Energy fully disp Boussinesq}.

	To prove \eqref{Energy 1}, we use \eqref{lin W} and the self-adjointness of $Q(\bold{U}_1,D)$ to write 
	\begin{align*}
	\frac{1}{2}\frac{d}{dt} \tilde{E}_{0}(\bold{W})
	& =
	\frac{1}{2}\big(  \bold{W}, ( \partial_t Q(\bold{U}_1,D))  \bold{W}\big)_{L^2}
	 +
	\big( \bold{F}, Q(\bold{U}_1,D)  \bold{W}\big)_{L^2}
	\\
	&
	\hspace{0.4cm}
	-
	\big(  M(\bold{U}_1,D) \bold{W} , Q(\bold{U}_1,D)  \bold{W}\big)_{L^2}
	\\
	& = : I - II - III.
\end{align*} \\ 
\underline{Control of	$I$.} We estimate the first term for $s>2$ by arguing similarly to estimate \eqref{time der}. Indeed, we have that
\begin{align*}
I
	   = ( w , (\partial_{t} \eta_1)  w) 
	\lesssim 
	\|\partial_t \eta_1 \|_{L^{\infty}} \| w \|_{L^{2}}^2
	\lesssim_{\beta} 
	\| (u_1,\eta_1) \|_{V_{\sqrt{\mu}}^s} \|(\psi, w) \|_{V_{\sqrt{\mu}}^0}^2.
\end{align*}

\noindent
\underline{Control of $II$.} For $II$, we write
\begin{align*}
	II & = 
	\big(
	w \partial_x \eta_2, \psi
	\big)_{L^2}
	+
	\big(
	\psi \partial_x \eta_2, \psi
	\big)_{L^2}
%	\\
%	& \hspace{0.4cm} 
	+
	\big(w \partial_x u_2, \eta_1  w
	\big)_{L^2}
	+
	\big(w \partial_x u_2, \mathcal{K}_{\mu}(D) w
	\big)_{L^2}
	\\
	& = :
	II_1 + II_2 + II_3 + II_4 
	.
\end{align*}
The first three terms are treated by the Cauchy-Schwarz inequality and the Sobolev embedding. Take, for instance, $II_1$:
\begin{align*}
	|II_1|
	 \lesssim 
	\|w \partial_x \eta_2 \|_{L^2}  \|  \psi \|_{L^2} 
	 \lesssim \| \eta_2 \|_{H^s} \| (\psi, w) \|_{V^0_{\sqrt{\mu}}}^2,
\end{align*}
for $s>\frac{3}{2}$. Then estimating $II_2 + II_3$ similarly gives
\begin{equation*}
	|II_1 + II_2 + II_3|
	\lesssim \max \limits_{i = 1,2} \|(\eta_i,u_i)\|_{V^{s}_{\sqrt{\mu}}}
	\| (\psi, w) \|_{V^0_{\sqrt{\mu}}}^2.
\end{equation*} 
Regarding the term containing the multiplier $\mathcal{K}_{\mu}(D)$, we write 
\begin{align*}
	II_4   \leq 
	\|  \sqrt{\mathcal{K}_{\mu}}(D)
	(w \partial_x u_2)\|_{L^2}  \| \sqrt{\mathcal{K}_{\mu}}(D) w \|_{L^2}
	=:
	II_{4}^1 \cdot II_4^2,
\end{align*}
and make the observation
\begin{align*}
	II_{4}^1
	 &
	  \leq
	  \|(\sqrt{\mathcal{K}_{\mu}}(D) -  \sqrt{\beta}\mu^{\frac{1}{4}}D^{\frac{1}{2}})(w \partial_xu_2)\|_{L^2}
	 % \\ 
	 %&
	 %\hspace{1cm}
	 +
	  \sqrt{\beta}\mu^{\frac{1}{4}}  \| D^{\frac{1}{2}}(w \partial_x u_2)\|_{L^2}
	  \\
	  &=: II_4^{1,1}+ \sqrt{\beta} II_4^{1,2}.
\end{align*}
For the first term, we note that $(\sqrt{\mathcal{K}_{\mu}}(D) - \sqrt{\beta}\mu^{\frac{1}{4}}D^{\frac{1}{2}})$ is bounded on $L^2(\mathbb{R})$ by \eqref{Comparison}, and we can conclude by the Sobolev embedding that
\begin{align*}
	%\|(\sqrt{\mathcal{K}_{\mu}}(D) -  \sqrt{\beta}\mu^{\frac{1}{4}}D^{\frac{1}{2}})\partial_{x}(w \partial_xu_2)\|_{L^2} 
	 II_4^{1,1} \lesssim_{\beta} \| w \partial_x \eta_2\|_{L^2}  
	\lesssim_{\beta} \|\eta_2\|_{H^s}  \|(\psi,w) \|_{V^0_{\sqrt{\mu}}}. 
\end{align*}
For the remaining term, $ II_4^{1,2}$,   we first make an observation. Let $\nu = \frac{1}{2}^-$ and $(p_1,p_2) = (\frac{1}{\nu}, \frac{2}{1-2\nu})$ then by  \eqref{Sobolev embedding s small}  there holds
\begin{equation}\label{observation ...}
	\| D^{\frac{1}{2}} \partial_x u_2 \|_{L^{p_1}} \mu^{\frac{1}{4}} \|  w\|_{L^{p_2}} \lesssim \|u_2\|_{H^{2-\nu}} \mu^{\frac{1}{4}}\|D^{\frac{1}{2}}w\|_{L^2}.
\end{equation}
Moreover, by the fractional Leibniz rule \eqref{FracLeibnizRule}, the triangle inequality and Hölder's inequality yields the bound
\begin{align*}
	II^{1,2}_4 
	&
	\lesssim 
	\mu^{\frac{1}{4}} \| D^{\frac{1}{2}}(w\partial_x u_2) - w D^{\frac{1}{2}} \partial_x u_2 - (\partial_x u_2)D^{\frac{1}{2}}w \|_{L^2 } 
	+
	\mu^{\frac{1}{4}}\| w D^{\frac{1}{2}}\partial_x u_2 \|_{L^2} 
	\\
	&
	\hspace{0.5cm} 
	+
	\mu^{\frac{1}{4}}\| (\partial_x u_2) D^{\frac{1}{2}}w\|_{L^2} 
	\\
	&
	\lesssim \|D^{\frac{1}{2}} \partial_x u_2 \|_{L^{p_1}} \mu^{\frac{1}{4}} \|w\|_{L^{p_2}} + \|w\|_{L^2}\mu^{\frac{1}{4}}\|D^{\frac{1}{2}}\partial_x u_2\|_{L^{\infty}} + \|\partial_x u_2 \|_{L^{\infty}} \mu^{\frac{1}{4}}\|D^{\frac{1}{2}} w\|_{L^2}.
\end{align*}
Now, since $\frac{1}{p_1} + \frac{1}{p_2} = \nu + \frac{1-2\nu}{2} = \frac{1}{2}$, we may apply \eqref{observation ...} to deal with the first term, and combined with the Sobolev embedding $H^{\frac{1}{2}^+}(\R) \hookrightarrow L^{\infty}(\R)$ we deduce that
\begin{align*}
		II^{1,2}_4
		\lesssim 
		\|u_2\|_{H^s} \mu^{\frac{1}{4}}\|D^{\frac{1}{2}} w \|_{L^2} 
		+
		\|w\|_{L^2} \mu^{\frac{1}{4}} \|D^{\frac{1}{2}}u_2\|_{H^s},
\end{align*}
with $s>\frac{3}{2}$.  Consequently, the bound on  $II_4$ is given by
\begin{align*}
	II_4  %& \leq 
%	\|  \sqrt{\mathcal{K}_{\mu}}(D)
%	\partial_x(w \partial_x u_2)\|_{L^2}  \| \sqrt{\mathcal{K}_{\mu}}(D)\partial_x w \|_{L^2}
%	\\
%	&
	\lesssim_{\beta}
	\max \limits_{i = 1,2} \|(\eta_i,u_i)\|_{V^{s}_{\sqrt{\mu}}} \|(\psi,w)\|_{V^0_{\sqrt{\mu}}}^2,
\end{align*}
which allows us to conclude that
\begin{equation*}
	II \lesssim_{\beta}  \max \limits_{i = 1,2} \|(\eta_i,u_i)\|_{V^{s}_{\sqrt{\mu}}} \| (\psi,w)\|_{V^0_{\sqrt{\mu}}}^2.
\end{equation*}

\noindent
\underline{Control of $III$.} By definition, we must estimate:
\begin{align*}
	III 
	& =
	\big(
	u_1 \partial_x \psi, \psi
	\big)_{L^2}
	+
	\big((\mathcal{K}_{\mu}(D) + \eta_1)\partial_x w , \psi
	\big)_{L^2}
	\\
	& \hspace{0.4cm} 
	+
	\big(
	\partial_x \psi,(\mathcal{K}_{\mu}(D) + \eta_1) w
	\big)_{L^2}
	+
	\big( 
	u_1 \partial_x w, 
	(\mathcal{K}_{\mu}(D) + \eta_1)  w
	\big)_{L^2}
	\\
	& = 
	A_1 + A_2 + A_3 + A_4.   
\end{align*}
The first term is handled by integration by parts and the Sobolev embedding
\begin{align*}
	A_1 
	& \lesssim
	\|\partial_x u_1 \|_{L^{\infty}} \|  \psi \|_{L^2}^2 \lesssim \| u_1 \|_{H^s} \|  \psi \|_{L^2}^2. 
\end{align*}
Next, we observe a cancelation in the off-diagonal terms due to the symmetry. Indeed, we see after integrating by parts that
\begin{align*}
 	A_2  = 	-\big((\partial_x\eta_1) w , \psi
 	\big)_{L^2} - A_3.
\end{align*}
Consequently, we observe after using Hölder's inequality and the Sobolev embedding that
\begin{align*}
	|A_2 + A_3| \lesssim	\|\partial_x\eta_1\|_{L^{\infty}} \| w \|_{L^2}\| \psi\|_{L^2}.
\end{align*}
The only term remaining is $A_4$, which contains the multiplier that will need some more care. In particular, we write
\begin{align*}
	A_4
	& = 
	\big( 
	u_1 \partial_x w, 
	\eta_1  w
	\big)_{L^2}
	+ 
	\big(
u_1 \partial_x w, 
\mathcal{K}_{\mu}(D)  w
\big)_{L^2}
	\\
	& = : A_4^1 + A_4^2.
\end{align*}
The first term is again treated by integration by parts, and we obtain the bound
\begin{equation*}
	|A_4^1| \lesssim \|  u_1 \|_{H^s} \| \eta_1 \|_{H^s} \| w \|_{L^2}^2.
\end{equation*}
Lastly, to estimate $A_4^2$, we split the kernel $\mathcal{K}_{\mu}(D)$ into several pieces that are localized in low and high frequencies:
\begin{align}%\notag
	\mathcal{K}_{\mu}(D) 
%	& =
	% ((\chi^{(1)}_{\mu})^2\mathcal{K}_{\mu})(D)  + ((\chi_{\mu}^{(2)})^2 \sigma^2_{\mu,\frac{1}{2}})(D)- ((\chi^{(2)}_{\mu})^2(\sigma_{\mu,\frac{1}{2}}^2- \mathcal{K}_{\mu}))(D)
%	\\
	%&
	 = \label{Split multiplier K}
	((\chi^{(1)}_{\mu})^2\mathcal{K}_{\mu})(D) + ((\chi_{\mu}^{(2)})^2 (\sigma_{\mu,\frac{1}{2}})^2)(D) - ((\chi^{(2)}_{\mu})^2(\sigma_{\mu,0})^2)(D),
\end{align}
where $\sigma_{\mu,\frac{1}{2}}(D)$ is defined in \eqref{Sigma 1/2}, $\sigma_{\mu,0}(D)$ is defined in \eqref{sigmaDiff} and $\chi^{(i)}_{\mu}(D)$ with its porperties given by Definition \ref{multiplier}. Then,  we get that
\begin{align*}
	\big(
	 u_1 \partial_xw, \mathcal{K}_{\mu}(D) w
	\big)_{L^2} 
	& = 
	\big((\chi^{(1)}_{\mu}
\sqrt{\mathcal{K}_{\mu}})(D) 
(u_1 \partial_x w), (\chi^{(1)}_{\mu}\sqrt{\mathcal{K}_{\mu}})(D) w
\big)_{L^2} 
\\
&
\hspace{0.4cm}
+
\big((\chi^{(2)}_{\mu}\sigma_{\mu,\frac{1}{2}})(D) 
(u_1 \partial_x w), (\chi^{(2)}_{\mu}\sigma_{\mu,\frac{1}{2}})(D)  w
\big)_{L^2} 
\\
&
\hspace{0.4cm}
-
\big((\chi^{(2)}_{\mu}\sigma_{\mu,0})(D) 
(u_1 \partial_x w), (\chi^{(2)}_{\mu}\sigma_{\mu,0})(D)  w
\big)_{L^2} 
\\
& = : A^{2,1}_4 + A^{2,2}_4 - A^{2,3}_4.
\end{align*}
We treat each term individually using the commutator estimates in Lemma \ref{Commutator L2}, where the remaining part is symmetric and is treated by using integration by parts and the Sobolev embedding in the usual way.\\

\noindent
\textit{Control of $A^{2,1}_4$.} Proceeding as explained  above, we have that
\begin{align*}
	A^{2,1}_4 & = 	\big([(\chi^{(1)}_{\mu}
	\sqrt{\mathcal{K}_{\mu}})(D) ,
	u_1 ]\partial_x w, (\chi^{(1)}_{\mu}\sqrt{\mathcal{K}_{\mu}})(D) w
	\big)_{L^2}
	\\
	& 
	\hspace{0.4cm} +
	\big(u_1(\chi^{(1)}_{\mu}
	\sqrt{\mathcal{K}_{\mu}})(D) 
	\partial_xw, (\chi^{(1)}_{\mu}\sqrt{\mathcal{K}_{\mu}})(D) w
	\big)_{L^2}  
	\\
	& = 
	A^{2,1,1}_4 + A^{2,1,2}_4.
\end{align*}
For $A^{2,1,1}_4$ we use the Cauchy-Schwarz inequality, \eqref{Product chi K}, and  \eqref{Commutator chi K} to obtain the bound
\begin{align*}
	A^{2,1,1}_4 
	& \lesssim
	 \|[(\chi^{(1)}_{\mu}
	\sqrt{\mathcal{K}_{\mu}})(D), u_1 ]\partial_x w\|_{L^2} \| (\chi^{(1)}_{\mu}\sqrt{\mathcal{K}_{\mu}})(D) w \|_{L^2} 
	\\
	& \lesssim
	\| u_1 \|_{H^s} \|  w\|_{L^2}^2.
	%\\
%	& 
%	\lesssim \| u_1 \|_{H^s}  \| (\xi,w)\|_{V^1_{\sqrt{\mu}}}^2.
\end{align*}
For the remaining term, we deduce from \eqref{Product chi K} that
\begin{align*}
	A^{2,1,2}_4 
	& =
	 -\frac{1}{2}\big((\partial_xu_1)(\chi^{(1)}_{\mu}
	\sqrt{\mathcal{K}_{\mu}})(D) 
	 w, (\chi^{(1)}_{\mu}\sqrt{\mathcal{K}_{\mu}})(D) w
	\big)_{L^2}
	\\ 
	& \lesssim
	\| \partial_x u_1 \|_{L^{\infty}} \| w\|_{L^2}^2.
%	\\
%	& 
%	\lesssim \| u_1 \|_{H^s}  \| (\xi,w)\|_{V^1_{\sqrt{\mu}}}^2.  
\end{align*}

\noindent
\textit{Control of $A^{2,2}_4$.} Similarly we get from the estimates \eqref{Product sigma 1/2},   \eqref{Commutator D} and the Sobolev embedding that
\begin{align*}
	A^{2,2}_4 
	& = 
	\big([(\chi^{(2)}_{\mu}\sigma_{\mu,\frac{1}{2}})(D)  ,
	u_1 ]\partial_x w, (\chi^{(2)}_{\mu}\sigma_{\mu,\frac{1}{2}})(D)  w
	\big)_{L^2}
	\\
	& 
	\hspace{0.4cm}-
	\frac{1}{2}\big((\partial_xu_1)(\chi^{(2)}_{\mu}\sigma_{\mu,\frac{1}{2}})(D) 
	 w, (\chi^{(2)}_{\mu}\sigma_{\mu,\frac{1}{2}})(D)  w
	\big)_{L^2}
	\\ 
	& 
	\lesssim
	\| u_1\|_{H^s} 
	(\|w\|_{L^2} +  \mu^{\frac{1}{4}}\| D^{\frac{1}{2}}  w \|_{L^2})^2.
%	+
	% \|\partial_x u_1 \|_{L^{\infty}} \sqrt{\mu}\| \partial_x w\|_{H^{\frac{1}{2}}}^2 
	%\\
%	&
	%\lesssim \| u_1 \|_{H^s} 
	%\| (\xi,w)\|_{V^1_{\sqrt{\mu}}}^2.  
\end{align*}

\noindent
\textit{Control of $A^{2,3}_4$.} By the  same approach as above, combined with estimates  \eqref{Product sigma} and \eqref{Commutator sigma} leaves us with the bound
\begin{align*}
	A^{2,3}_4 
	& = 
	\big([(\chi^{(2)}_{\mu}\sigma_{\mu,0})(D)  ,
	u_1 ]\partial_x w, (\chi^{(2)}_{\mu}\sigma_{\mu,0})(D)  w
	\big)_{L^2}
	\\
	& 
	\hspace{0.4cm}-
	\frac{1}{2}\big((\partial_xu_1)(\chi^{(2)}_{\mu}\sigma_{\mu,0})(D) 
	  w , (\chi^{(2)}_{\mu}\sigma_{\mu,0})(D)  w
	\big)_{L^2}
	\\ 
	&
	\lesssim \| u_1 \|_{H^s} \|  w \|_{L^2}^2 + \|\partial_x u_1 \|_{L^{\infty}} \| w\|_{L^2}^2.
\end{align*}

Gathering all these estimates, we obtain the result
\begin{align*}
	A_4 = 
A_{4}^{1} +A_{4}^{2} +  A_{4}^{3}
	& \lesssim_{\beta} (\| u_1 \|_{H^s} + \|\eta_1\|_{H^s})
	 	\| (\psi,w)\|_{V^0_{\sqrt{\mu}}}^2.
\end{align*}
Adding $I + II + III$ concludes the proof. 
\end{proof}

\subsection{Estimates for system \eqref{Whitham Boussinesq}}

As in the prvious subsection, we let $\bold{U}_1= (\eta_1, u_1)^T = \ve (\zeta_1, v_1 )^T$ and $\bold{U}_2 = (\eta_2, u_2)^T = \ve (\zeta_2, v_2)^T$ be two solutions of \eqref{Whitham Boussinesq} and define the difference  $(\psi,w) = (\eta_1 - \eta_2, u_1-u_2)$. Then $\bold{W} = (\psi,w)^T$ solves 
\begin{equation}\label{W T_mu}
	\partial_t \bold{W} + \mathcal{M}(\bold{U}_1,D)\bold{W} = \bold{F}, 
\end{equation}
with $\mathcal{M}$ defined as  in \eqref{Math M} and $\bold{F}$ will remain the same as previously defined by \eqref{F: source term}.
Then the energy associated to \eqref{W T_mu} is given in terms of the symmetrizer \eqref{Sym Q}:
\begin{equation}\label{tilde Energy s 2}
	\tilde{\mathcal{E}}_{s}(\bold{W}) 
	: =
	\big(J_{\mu}^{\frac{1}{2}}J^s \bold{W}, \mathcal{Q}(\bold{U}_1,D)J_{\mu}^{\frac{1}{2}}J^s \bold{W}\big)_{L^2}. 
\end{equation}
\begin{prop}\label{prop diff 2}
	Take $s>\frac{3}{2}$ and $\ve, \mu \in (0,1)$.  Let $(\eta_1,u_1),(\eta_2,u_2) \in C([0,T_0]: V^s_{\sqrt{\mu}} (\mathbb{R}))$ be two solutions of \eqref{Whitham Boussinesq} on a time interval $[0,T_0]$ for some $T_0 >0$. Moreover, assume there exists $h_0 \in (0,1)$ and $h_1>0$ such that % for $i\in\{1,2\}$
	\begin{equation}\label{cond. sol. diff 2}
		h_0 - 1 \leq \eta_1(x,t), \quad   \forall (x,t) \in \mathbb{R} \times [0,T_0] \quad  \text{and} \quad  \sup\limits_{t \in [0,T_0] }\|(\eta_1, u_1)\|_{H^s\times H^s} \leq h_1.
	\end{equation}

	Define the difference to be $\bold{W}=(\psi,w) = (\eta_1 - \eta_2, u_1 - u_2)$. Then, for the energy defined by \eqref{tilde Energy s 2}, there holds
	
	\begin{equation}\label{Diff Energy 2 1}
		\frac{d}{dt} \tilde{\mathcal{E}}_0(\bold{W}) \lesssim \max \limits_{i = 1,2} \|(\eta_i,u_i)\|_{V^{s}_{\sqrt{\mu}}}   \|(\psi,w)\|_{V^{0}_{\sqrt{\mu}}}^2,
	\end{equation}
	and
	\begin{equation}\label{Diff Equiv 2 1}
		\|(\psi,w)\|^2_{V^0_{\sqrt{\mu}}} \lesssim \tilde{\mathcal{E}}_0(\bold{W}) \lesssim \|(\psi,w)\|^2_{V^0_{\sqrt{\mu}}}.
	\end{equation}
	
	Furthermore, we have the following estimate at the  $V^s_{\sqrt{\mu}}-$ level:
	\begin{equation}\label{Diff Energy 2 s}
		\frac{d}{dt} \mathcal{\tilde{E}}_s(\bold{W}) \lesssim  |\big(J^s \bold{F}, \mathcal{Q}(\bold{U}_1,D)  J^s \bold{W}\big)_{L^2}| +   \max \limits_{i = 1,2} \|(\eta_i,u_i)\|_{V^{s}_{\sqrt{\mu}}} \|(\psi,w)\|_{V^s_{\sqrt{\mu}}}^2,
	\end{equation}
	and
	\begin{equation}\label{Diff Equiv 2 s}
		\|(\psi,w)\|^2_{V^s_{\sqrt{\mu}}} \lesssim  \mathcal{\tilde{E}}_s(\bold{W}) \lesssim \|(\psi,w)\|^2_{V^s_{\sqrt{\mu}}}.
	\end{equation}
\end{prop}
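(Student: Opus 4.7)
The plan is to mimic the proofs of Proposition \ref{Compactness L2} and Proposition \ref{A priori Whitham Boussinesq} simultaneously, replacing the symmetrizer $Q(\bold{U}_1,D)$ by $\mathcal{Q}(\bold{U}_1,D)$ from \eqref{Sym Q} and carrying the operator $J^{\frac{1}{2}}_{\mu}$ throughout. The coercivity bounds \eqref{Diff Equiv 2 1} and \eqref{Diff Equiv 2 s} follow verbatim from the argument for \eqref{Equivalence Whitham Boussinesq}: the lower bound is a consequence of the non-cavitation condition \eqref{cond. sol. diff 2} on $\eta_1$, together with \eqref{Equiv sqrt T} and \eqref{J_mu}; the upper bound uses the boundedness of $\sqrt{\mathcal{T}_{\mu}}(D) J^{\frac{1}{2}}_{\mu}$ together with the Sobolev embedding $H^{s}(\mathbb{R})\hookrightarrow L^{\infty}(\mathbb{R})$ applied to $\eta_1$.

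For \eqref{Diff Energy 2 1}, I would differentiate $\tilde{\mathcal{E}}_0$ in time, use \eqref{W T_mu} together with the self-adjointness of $\mathcal{Q}(\bold{U}_1,D)$, and decompose
\begin{equation*}
\tfrac{1}{2}\tfrac{d}{dt}\tilde{\mathcal{E}}_0(\bold{W}) = \tfrac{1}{2}\bigl(J^{\frac{1}{2}}_{\mu}\bold{W},(\partial_t\mathcal{Q}(\bold{U}_1,D))J^{\frac{1}{2}}_{\mu}\bold{W}\bigr)_{L^2} + \bigl(J^{\frac{1}{2}}_{\mu}\bold{F},\mathcal{Q}(\bold{U}_1,D)J^{\frac{1}{2}}_{\mu}\bold{W}\bigr)_{L^2} - \bigl(J^{\frac{1}{2}}_{\mu}\mathcal{M}(\bold{U}_1,D)\bold{W},\mathcal{Q}(\bold{U}_1,D)J^{\frac{1}{2}}_{\mu}\bold{W}\bigr)_{L^2}.
\end{equation*}
The $\partial_t\mathcal{Q}$ piece reduces to $\tfrac{1}{2}(J^{\frac{1}{2}}_{\mu}w,(\partial_t\eta_1)J^{\frac{1}{2}}_{\mu}w)_{L^2}$, and using the equation for $\eta_1$ gives $\|\partial_t\eta_1\|_{L^{\infty}}\lesssim \|(\eta_1,u_1)\|_{V^s_{\sqrt{\mu}}}$ by Sobolev embedding and the $L^2$-boundedness of $\mathcal{T}_{\mu}(D)$ from \eqref{boundedness of T}. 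The source term is controlled by Cauchy--Schwarz, the Sobolev embedding with $s>\tfrac{3}{2}$, and \eqref{J_mu} applied to each component of $\bold{F}$ in \eqref{F: source term}.

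The main obstacle is the last term, which expands into four bilinear forms in $(\psi,w)$ whose off-diagonal contributions do not decouple. Writing the ``$12$'' piece as $(J^{\frac{1}{2}}_{\mu}((1+\eta_1)\partial_x w),\sqrt{\mathcal{T}_{\mu}}(D)\sqrt{\mathcal{T}_{\mu}}(D)J^{\frac{1}{2}}_{\mu}\psi)_{L^2}$ and the ``$21$'' piece as $(J^{\frac{1}{2}}_{\mu}\sqrt{\mathcal{T}_{\mu}}(D)\sqrt{\mathcal{T}_{\mu}}(D)\partial_x\psi,(1+\eta_1)J^{\frac{1}{2}}_{\mu}w)_{L^2}$, my plan is to pair them after isolating the good commutator using \eqref{L2 Commutator T_mu J_mu}, \eqref{Commutator dx T}, and \eqref{L2 commutator J_mu} to move $\sqrt{\mathcal{T}_{\mu}}(D)$ and $J^{\frac{1}{2}}_{\mu}$ past $(1+\eta_1)$; the principal terms then cancel after an integration by parts, and the commutator remainders are estimated by $\|\eta_1\|_{H^s}\|(\psi,w)\|_{V^0_{\sqrt{\mu}}}^2$. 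The two diagonal pieces involve $u_1\partial_x$ and are handled by the same commutator-plus-integration-by-parts trick used for $\mathcal{A}_{11}^2$ and $\mathcal{A}_{22}^2$ in Proposition \ref{A priori Whitham Boussinesq}, yielding an $L^{\infty}$-bound on $\partial_x u_1$ multiplying $\|J^{\frac{1}{2}}_{\mu}\bold{W}\|_{L^2}^2$.

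For the high-regularity estimate \eqref{Diff Energy 2 s}, the same decomposition is applied with $J^sJ^{\frac{1}{2}}_{\mu}$ in place of $J^{\frac{1}{2}}_{\mu}$; the commutators with $(1+\eta_1)$ and $u_1$ are now treated by \eqref{Commutator Js T} and \eqref{Commutator J^sJ_mu} (together with \eqref{Bessel Commutator T_mu J=1} where needed), producing the right-hand side $\max_{i}\|(\eta_i,u_i)\|_{V^s_{\sqrt{\mu}}}\|(\psi,w)\|_{V^s_{\sqrt{\mu}}}^2$. The term $(J^s\bold{F},\mathcal{Q}(\bold{U}_1,D)J^s\bold{W})_{L^2}$ is kept as in the statement, since it will be absorbed later by a Bona--Smith argument in the proof of Theorem \ref{W-P Whitham Boussinesq}, exactly as indicated in Remark \ref{Remark tilde} for the system \eqref{full dispersion}.
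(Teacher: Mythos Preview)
Your overall plan matches the paper's proof: the coercivity bounds, the three-term decomposition of $\tfrac{d}{dt}\tilde{\mathcal{E}}_0(\bold{W})$, the treatment of $\partial_t\mathcal{Q}$, the off-diagonal cancellation in the bilinear piece via \eqref{L2 Commutator T_mu J_mu}, \eqref{Commutator dx T}, and the diagonal terms via \eqref{L2 commutator J_mu} are all exactly what the paper does.

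There is, however, a genuine gap in your treatment of the source term $\bigl(J^{\frac{1}{2}}_{\mu}\bold{F},\mathcal{Q}(\bold{U}_1,D)J^{\frac{1}{2}}_{\mu}\bold{W}\bigr)_{L^2}$. For the second component this produces, after Cauchy--Schwarz, the factor $\|J^{\frac{1}{2}}_{\mu}(w\partial_x u_2)\|_{L^2}$, and by \eqref{J_mu} (or \eqref{Comparison J_mu D_mu}) you are left with $\mu^{\frac14}\|D^{\frac12}(w\partial_x u_2)\|_{L^2}$. Since $w$ lies only in $V^0_{\sqrt{\mu}}$ you cannot put $w$ in $L^\infty$, and Cauchy--Schwarz plus Sobolev embedding alone do not distribute the half-derivative; the paper closes this by invoking the fractional Leibniz rule \eqref{FracLeibnizRule} with the specific H\"older pair $(p_1,p_2)=(\tfrac{1}{\nu},\tfrac{2}{1-2\nu})$, $\nu=\tfrac12^-$, exactly as in the estimate of $II_4^{1,2}$ in the proof of Proposition~\ref{Compactness L2}. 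For the first-component source terms (those paired with $\mathcal{T}_\mu(D)J^{\frac12}_\mu\psi$) the paper sidesteps the issue altogether by writing $\mathcal{T}_\mu(D)=\sqrt{\mathcal{T}_\mu}(D)\sqrt{\mathcal{T}_\mu}(D)$ and applying \eqref{Equiv sqrt T}, so that no half-derivative lands on the product $w\partial_x\eta_2$ or $\psi\partial_x u_2$. Once you insert these two ingredients, your sketch is complete and coincides with the paper's argument.
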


\begin{proof}The proofs of \eqref{Diff Equiv 2 1} and \eqref{Diff Equiv 2 s} are similar to the proof of \eqref{Equivalence Whitham Boussinesq}.

	Also, we only prove \eqref{Diff Energy 2 1} since the control of \eqref{Diff Energy 2 s} follows by the proof of Proposition \ref{A priori Whitham Boussinesq}.

	To prove \eqref{Diff Energy 2 1}, we use \eqref{W T_mu} and the self-adjointness of $\mathcal{Q}(\bold{U}_1,D)$ to write
	\begin{align*}
		\frac{1}{2} \frac{d}{dt}  \tilde{\mathcal{E}}_{0}(\bold{W})  
		&= 
		\frac{1}{2}\big( J_{\mu}^{\frac{1}{2}} \bold{W}, ( \partial_t \mathcal{Q}(\bold{U}_1,D)) J_{\mu}^{\frac{1}{2}}  \bold{W}\big)_{L^2}
		%\\
		%& \hspace{0.4cm}
		+
		\big(J_{\mu}^{\frac{1}{2}} \bold{F}, \mathcal{Q}(\bold{U}_1,D) J_{\mu}^{\frac{1}{2}}  \bold{W}\big)_{L^2}
		\\
		&
		\hspace{0.4cm}
		-
		\big(J_{\mu}^{\frac{1}{2}}  \mathcal{M}(\bold{U}_1,D) \bold{W} , \mathcal{Q}(\bold{U}_1,D) J_{\mu}^{\frac{1}{2}}  \bold{W}\big)_{L^2}
		\\
		& = : \mathcal{I} - \mathcal{II} - \mathcal{III}.
	\end{align*}
	\underline{Control of $\mathcal{I}$}. Using \eqref{W T_mu}, \eqref{J_mu}, the Sobolev embedding and \eqref{cond. sol. diff 2} yields 
	\begin{align*}
		|\mathcal{I}|
		 = 
		\frac{1}{2} |\big( J_{\mu}^{\frac{1}{2}}  w, (\partial_t \eta_1) J_{\mu}^{\frac{1}{2}}  w \big)_{L^2}|
		\lesssim 
		\|(\eta_1,u_1)\|_{V^s_{\sqrt{\mu}}}\|J_{\mu}^{\frac{1}{2}}  w \|_{L^2}^2, %\lesssim 	\|(\psi,w)\|_{V^0_{\sqrt{\mu}}}^2,
	\end{align*}
	since $s >\frac{3}{2}$.\\
	
	\noindent
	\underline{Control of $\mathcal{II}$.} The contribution of the source term is given by
	\begin{align*}
		\mathcal{II} 
		& =
		\big(  J_{\mu}^{\frac{1}{2}}  (w\partial_x \eta_2), \mathcal{T}_{\mu}(D) J_{\mu}^{\frac{1}{2}}  \psi \big)_{L^2}  
		+
		\big( J_{\mu}^{\frac{1}{2}} (\psi \partial_x u_2),  \mathcal{T}_{\mu}(D) J_{\mu}^{\frac{1}{2}}   \psi \big)_{L^2}  
		\\
		&
		\hspace{0.4cm}
		+
		\big( J_{\mu}^{\frac{1}{2}}   (w\partial_x u_2),  J_{\mu}^{\frac{1}{2}}   w \big)_{L^2}  
		+
		\big(   J_{\mu}^{\frac{1}{2}}(w\partial_x u_2),  \eta_1 J_{\mu}^{\frac{1}{2}}  w \big)_{L^2}  
		\\ 
		& 
		=:
		\mathcal{II}_1 + \mathcal{II}_2 + \mathcal{II}_3 +  \mathcal{II}_4.
	\end{align*}
	\textit{Control of $\mathcal{II}_1 + \mathcal{II}_2$.} The estimate of $\mathcal{II}_1$ is a direct consequence of the Cauchy-Schwarz inequality, \eqref{Equiv sqrt T} and the Sobolev embedding. Indeed, since $s>\frac{3}{2}$, we get
	\begin{align*}
		|\mathcal{II}_1| 
		\leq
		\|\sqrt{\mathcal{T}_{\mu}}(D) J_{\mu}^{\frac{1}{2}} (w\partial_x \eta_2)\|_{L^2} \|\sqrt{\mathcal{T}_{\mu}}(D)J^{\frac{1}{2}}_{\mu}\psi\|_{L^2}
		\lesssim 
		\| \eta_2\|_{H^s} \|w\|_{L^2}\|\psi\|_{L^2}.
	\end{align*}
	Next, the control of $\mathcal{II}_2$ follows by the same estimates and gives
	\begin{align*}
		|\mathcal{II}_2| 
		 \lesssim 
		\|\eta_2\|_{H^s} \|\psi\|_{H^1}^2.
	\end{align*}

	\noindent
	\textit{Control of $\mathcal{II}_3+ \mathcal{II}_4$.} We first deduce from  \eqref{Comparison J_mu D_mu} that
	\begin{align*}
		\| J^{\frac{1}{2}}_{\mu} (w\partial_x u_2) \|_{L^2}
		&
		\leq
		\|  w\partial_x u_2 \|_{L^2} + \mu^{\frac{1}{4}}\| D^{\frac{1}{2}} (w\partial_x u_2) \|_{L^2} .
	%	\\
	%	&
	%	\lesssim
	%	\|(\psi,w)\|_{V^1_{\sqrt{\mu}}}
	\end{align*}
	The first term is estimated by the Sobolev embedding, while the second term is equal to the term $II^{1,2}_4$ in the proof of Proposition \ref{Compactness L2}. Since the terms $w$ and $u_2$ in $II^{1,2}_4$ belong to the same function space, we can apply the same estimates. Thus, there holds for $s>\frac{3}{2}$ that
	\begin{equation}\label{Claim Jmu}
		\| J^{\frac{1}{2}}_{\mu} (w\partial_x u_2) \|_{L^2} \lesssim  \max \limits_{i = 1,2} \|(\eta_i,u_i)\|_{V^{s}_{\sqrt{\mu}}}   \|(\psi,w)\|_{V^0_{\sqrt{\mu}}}^2.
	\end{equation}
	 Therefore, by using the Cauchy-Schwarz inequality, \eqref{Claim Jmu}, \eqref{J_mu}, \eqref{cond. sol. diff 2}, and the Sobolev embedding implies
	\begin{align*}
		 |\mathcal{II}_3|+|\mathcal{II}_4|  
		 \lesssim 
		(1+ \| \eta_1\|_{L^{\infty}})
		 \|   J_{\mu}^{\frac{1}{2}}(w\partial_x u_2)\|_{L^2} \|J_{\mu}^{\frac{1}{2}}  w \|_{L^2}  
		 \lesssim \max \limits_{i = 1,2} \|(\eta_i,u_i)\|_{V^{s}_{\sqrt{\mu}}} 
		 \|(\psi,w)\|_{V^0_{\sqrt{\mu}}}^2.
	\end{align*}

	\noindent
	\underline{Control of $\mathcal{III}$.} Lastly, the symmetrized term reads:
	\begin{align*}
		\mathcal{III} 
		& =
		\big( J_{\mu}^{\frac{1}{2}} (u_1 \partial_x \psi), \mathcal{T}_{\mu}(D)  J_{\mu}^{\frac{1}{2}}\psi \big)_{L^2} 
		+
		\big(  J_{\mu}^{\frac{1}{2}} \big{(}(1+\eta_1) \partial_x w\big{)}, \mathcal{T}_{\mu}(D) J_{\mu}^{\frac{1}{2}} \psi \big)_{L^2} 
		\\
		& \hspace{0.4cm}
		+
		\big( \mathcal{T}_{\mu} (D) J_{\mu}^{\frac{1}{2}} \partial_x \psi, (1+\eta_1)  J_{\mu}^{\frac{1}{2}}  w \big)_{L^2} 
		+
		\big(J_{\mu}^{\frac{1}{2}}  ( u_1 \partial_x w), (1+\eta_1)  J_{\mu}^{\frac{1}{2}}  w \big)_{L^2} 
		\\
		& =  \mathcal{A}_{11} + \mathcal{A}_{12} + \mathcal{A}_{21} + \mathcal{A}_{22}.
	\end{align*}
	Each term is treated by using integration by parts and suitable commutator estimates. \\

	\noindent
	\textit{Control of $\mathcal{A}_{11}$.}  For $\mathcal{A}_{11}$,  we use  integration by parts to find that
	\begin{align*}\label{Yolo}
		\mathcal{A}_{11}
		 = 
		\big(  [\sqrt{\mathcal{T}_{\mu}}(D)J_{\mu}^{\frac{1}{2}},u_1]  \partial_x \psi,  \sqrt{\mathcal{T}_{\mu}}(D)  J_{\mu}^{\frac{1}{2}}  \psi \big)_{L^2} 
		%	\\ 
		%	& \hspace{0.5cm}
		-\frac{1}{2}
		\big( (\partial_x u_1) \sqrt{\mathcal{T}_{\mu}}(D)J_{\mu}^{\frac{1}{2}} \psi,  \sqrt{\mathcal{T}_{\mu}}(D) J_{\mu}^{\frac{1}{2}}  \psi \big)_{L^2}.
	\end{align*} 
	Thus, it follows from  the commutator estimate \eqref{L2 Commutator T_mu J_mu} with $s>\frac{3}{2}$ and estimate \eqref{Equiv sqrt T} that
	\begin{equation*}
		|\mathcal{A}_{11}| \lesssim \max \limits_{i = 1,2} \|(\eta_i,u_i)\|_{V^{s}_{\sqrt{\mu}}} \|\psi\|^2_{L^2}.
	\end{equation*}

	\noindent
	\textit{Control of $\mathcal{A}_{12} + \mathcal{A}_{21}$.} Treating the off-diagonal terms we first observe, 
	\begin{align*}
		\mathcal{A}_{12} 
		& = 
		\big(  [\sqrt{\mathcal{T}_{\mu}} (D)J_{\mu}^{\frac{1}{2}},\eta_1 ]\partial_x w, \sqrt{\mathcal{T}_{\mu}}(D)  J_{\mu}^{\frac{1}{2}} \psi \big)_{L^2}   
		\\
		& 
		\hspace{0.5cm}
		+
		\big(  (1+\eta_1)  \sqrt{\mathcal{T}_{\mu}}(D)J_{\mu}^{\frac{1}{2}} \partial_x w, \sqrt{\mathcal{T}_{\mu}}(D)J_{\mu}^{\frac{1}{2}} \psi \big)_{L^2} 
		\\
		& = 
		\mathcal{A}_{12}^{1} + \mathcal{A}_{12}^{2}.
	\end{align*}
	The commutator estimate \eqref{L2 Commutator T_mu J_mu} and estimate \eqref{Equiv sqrt T} deals with the first term. Indeed, we get the bound
	\begin{align*}
		|\mathcal{A}_{12}^{1}|
		\leq 
		\| [\sqrt{\mathcal{T}_{\mu}} (D)J_{\mu}^{\frac{1}{2}},\eta_1 ]\partial_x w\|_{L^2}
		\|\sqrt{\mathcal{T}_{\mu}}(D)  J_{\mu}^{\frac{1}{2}}  \psi \|_{L^2}  \lesssim\max \limits_{i = 1,2} \|(\eta_i,u_i)\|_{V^{s}_{\sqrt{\mu}}}  \|(\psi,w)\|_{V^0_{\sqrt{\mu}}}^2.
	\end{align*}
	Next, we  integrate $\mathcal{A}_{12}^{2}$ by parts to obtain two new terms
	\begin{align*}
		\mathcal{A}_{12}^{2} 
		& = 
		-
		\big{(}(\partial_x\eta_1) J^{\frac{1}{2}}_{\mu}  \sqrt{\mathcal{T}_{\mu}} (D)  w,J^{\frac{1}{2}}_{\mu}  \sqrt{\mathcal{T}_{\mu}} (D) \psi \big{)}_{L^2}
		\\
		& \hspace{0.5cm} 
		-
		\big{(}(1+\eta_1) J^{\frac{1}{2}}_{\mu}  \sqrt{\mathcal{T}_{\mu}} (D)  w,  J^{\frac{1}{2}}_{\mu}  \sqrt{\mathcal{T}_{\mu}} (D)\partial_x\psi \big{)}_{L^2}
		\\ 
		& = 
		\mathcal{A}_{12}^{2,1} + \mathcal{A}_{12}^{2,2}.  
	\end{align*}
	Arguing as above, we find that
	\begin{align*}
		|\mathcal{A}_{12}^{2,1}|
		\leq
		\|\partial_x\eta_1\|_{L^{\infty}}\| J^{\frac{1}{2}}_{\mu}  \sqrt{\mathcal{T}_{\mu}} (D)  w\|_{L^2} 
		\|J^{\frac{1}{2}}_{\mu}  \sqrt{\mathcal{T}_{\mu}} (D) \psi \|_{L^2} \lesssim \|\eta_1\|_{H^s} \|(\psi,w)\|_{V^0_{\sqrt{\mu}}}^2,
	\end{align*}
	for $s>\frac{3}{2}$. On the other hand, the term $\mathcal{A}_{12}^{2,2}$,  is absorbed by  $\mathcal{A}_{21}$. Indeed,
	\begin{align*}
		\mathcal{A}_{21} 
		&=
		-\big( \sqrt{\mathcal{T}_{\mu}}(D) J^{\frac{1}{2}}_{\mu}  \psi ,\partial_x[\sqrt{\mathcal{T}_{\mu}}(D), \eta_1] J^{\frac{1}{2}}_{\mu}   w\big)_{L^2} 
		\\
		&
		\hspace{0.5cm}
		+
		\big( \sqrt{\mathcal{T}_{\mu}}(D)J^{\frac{1}{2}}_{\mu} \partial_x \psi ,  (1+ \eta_1)\sqrt{\mathcal{T}_{\mu}}(D)J^{\frac{1}{2}}_{\mu} w\big)_{L^2}
		\\
		& = \mathcal{A}_{21}^1 + \mathcal{A}_{21}^2,
	\end{align*}
	with $\mathcal{A}_{21}^2 = -  \mathcal{A}_{12}^{2,2}$.  We estimate $\mathcal{A}_{21}^1$ by using the Cauchy-Schwarz inequality, \eqref{Equiv sqrt T}, \eqref{Commutator dx T}, and \eqref{J_mu} to get
	\begin{align*}
		|\mathcal{A}_{21}^1|
		\leq 
		\| \sqrt{\mathcal{T}_{\mu}}(D) J^{\frac{1}{2}}_{\mu}   \psi\|_{L^2}
		\|\partial_x[\sqrt{\mathcal{T}_{\mu}}(D), \eta_1] J^{\frac{1}{2}}_{\mu}   w\|_{L^2} 
		\lesssim \|\eta_1\|_{H^s}
		\|(\psi,w)\|_{V^0_{\sqrt{\mu}}}^2.
	\end{align*}

	Thus, we deduce by gathering all these estimates that
	\begin{equation*}
	 	|\mathcal{A}_{12} + \mathcal{A}_{21}| \lesssim \max \limits_{i = 1,2} \|(\eta_i,u_i)\|_{V^{s}_{\sqrt{\mu}}}  \|(\psi,w)\|_{V^0_{\sqrt{\mu}}}^2.
	 \end{equation*}

	\noindent
	\textit{Control of $\mathcal{A}_{22}$.} Lastly, the term $\mathcal{A}_{22}$ is estimated by  \eqref{L2 commutator J_mu} for $s>\frac{3}{2}$,  \eqref{cond. sol. diff 2}, and integration by parts 
	\begin{align*}
		|\mathcal{A}_{22}^2| 
		& \leq 
		|\big(  [J_{\mu}^{\frac{1}{2}} ,u_1] \partial_x w, (1+\eta_1)  J_{\mu}^{\frac{1}{2}}  w \big)_{L^2} |
		+|
		\big(  u_1J_{\mu}^{\frac{1}{2}}  \partial_x w, (1+\eta_1)   J_{\mu}^{\frac{1}{2}}  w \big)_{L^2} |
		\\
		& \lesssim \max \limits_{i = 1,2} \|(\eta_i,u_i)\|_{V^{s}_{\sqrt{\mu}}} 
		\|(\psi,w)\|_{V^0_{\sqrt{\mu}}}^2.
	\end{align*}

	Therefore, we deduce that
	\begin{align*}
		\frac{d}{dt}  \tilde{\mathcal{E}}_{0}(\bold{W})  \lesssim |\mathcal{I}| + |\mathcal{II}| + |\mathcal{III}| \lesssim \max \limits_{i = 1,2} \|(\eta_i,u_i)\|_{V^{s}_{\sqrt{\mu}}} 	\|(\psi,w)\|_{V^0_{\sqrt{\mu}}}^2,
	\end{align*}
	which concludes the proof of Proposition \ref{prop diff 2}. 

\end{proof}

\subsection{Estimates for system  \eqref{2nd Whitham Boussinesq}}

Again, we let $\bold{U}_1= (\eta_1, u_1)^T = \ve (\zeta_1, v_1 )^T$ and $\bold{U}_2 = (\eta_2, u_2)^T = \ve (\zeta_2, v_2)^T$ be two solutions of \eqref{2nd Whitham Boussinesq} and define the difference  $(\psi,w) = (\eta_1 - \eta_2, u_1-u_2)$. Then $\bold{W} = (\psi,w)^T$ solves 
\begin{equation}\label{2nd W T_mu}
	\partial_t \bold{W} + \mathscr{M}(\bold{U}_1,D)\bold{W} = \bold{F}, 
\end{equation}
with $\mathscr{M}$ defined as  in \eqref{2nd Math M} and $\bold{F}$ is defined by
\begin{equation}\label{2nd F}
	\bold{F} = -
	\begin{pmatrix}
		\mathcal{T}_{\mu}(D)(w \partial_x \eta_2) + \mathcal{T}_{\mu}(D)(\psi \partial_x u_2)
		\\
		\mathcal{T}_{\mu}(D)(w \partial_x u_2)
	\end{pmatrix}.
\end{equation}
The energy associated to \eqref{2nd W T_mu} is given in terms of the symmetrizer \eqref{2nd Sym Q} by
\begin{equation}\label{tilde Energy s 3}
	\tilde{\mathscr{E}}_{s}(\bold{W}) 
	: =
	\big(J^s \bold{W}, \mathscr{Q}(\bold{U}_1,D)J^s \bold{W}\big)_{L^2}. 
\end{equation}
\begin{prop}\label{2nd diff prop}
	Take $s> \frac{3}{2}$, $\ve, \mu \in (0,1)$ and $\beta>0$.  Let $(\eta_1,u_1),(\eta_2,u_2) \in C([0,T_0]: X^s_{\beta,\mu} (\mathbb{R}))$ be two solutions of \eqref{2nd Whitham Boussinesq} on a time interval $[0,T_0]$ for some $T_0 >0$. Moreover,    assume there exist $h_0 \in (0,1)$ and $h_1>0$ such that % for $i\in\{1,2\}$
	\begin{equation}\label{cond. sol. 4}
		h_0 - 1 \leq \eta_1(x,t), \quad   \forall (x,t) \in \mathbb{R} \times [0,T_0] \quad  \text{and} \quad \sup\limits_{t \in [0,T_0]}\|(\eta_1, 	u_1)\|_{H^s\times H^s} \leq h_1.
	\end{equation}

	Define the difference to be $\bold{W}=(\psi,w) = (\eta_1 - \eta_2, u_1 - u_2)$. Then, for the energy defined by \eqref{tilde Energy s 3}, there holds
	
	\begin{equation}\label{Energy last one 0}
		\frac{d}{dt} \tilde{\mathscr{E}}_0(\bold{W}) \lesssim_{\beta} \max \limits_{i = 1,2} \|(\eta_i,u_i)\|_{V^{s}_{\sqrt{\mu}}}  \|(\psi,w)\|_{X^0_{\beta,\mu}}^2,
	\end{equation}
	and
	\begin{equation}\label{Diff Equiv 2 2}
		\|(\psi,w)\|^2_{X^0_{\beta,\mu}} \lesssim \tilde{\mathscr{E}}_0(\bold{W}) \lesssim \|(\psi,w)\|^2_{X^0_{\beta,\mu}}.
	\end{equation}
	
		Furthermore, we have the following estimate at the  $X^s_{\beta,\mu}-$ level:
	\begin{equation}\label{Diff Energy 2 2 s}
		\frac{d}{dt} \mathscr{\tilde{E}}_s(\bold{W}) \lesssim_{\beta}  |\big(J^s \bold{F}, \mathscr{Q}(\bold{U}_1,D)  J^s \bold{W}\big)_{L^2}| +  \max \limits_{i = 1,2} \|(\eta_i,u_i)\|_{V^{s}_{\sqrt{\mu}}}  \|(\psi,w)\|_{X^s_{\beta,\mu}}^2,
	\end{equation}
	and
	\begin{equation}\label{Diff Equiv 2 2s}
		\|(\psi,w)\|^2_{X^s_{\beta,\mu}} \lesssim \mathscr{\tilde{E}}_s(\bold{W}) \lesssim \|(\psi,w)\|^2_{X^s_{\beta,\mu}}.
	\end{equation}
\end{prop}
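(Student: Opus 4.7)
The plan is to follow the same three-step blueprint used for Propositions 4.1 and 4.2, adapting the bookkeeping to the $\mathcal{T}_\mu$-dressed nonlinearities in (2nd Eq M for T_mu) and the $\mathcal{T}_\mu^{-1}\mathcal{K}_\mu$ / $\mathcal{T}_\mu^{-1}+\eta_1$ structure of the symmetrizer $\mathscr{Q}(\bold{U}_1,D)$. First, the coercivity statements (Diff Equiv 2 2) and (Diff Equiv 2 2s) are proved exactly as (2nd Equivalence Whitham Boussinesq) in Proposition 3.3: by definition of $\mathcal{K}_\mu$ and $\mathcal{T}_\mu$ one has $\mathcal{T}_\mu^{-1}(\xi)\mathcal{K}_\mu(\xi)=1+\beta\mu\xi^2$, which converts the top entry of $\mathscr{Q}$ into the $\|\psi\|_{H^s}^2+\beta\mu\|D^1\psi\|_{H^s}^2$ part of the $X^s_{\beta,\mu}$-norm; the bottom entry is handled by (cond. sol. 4) together with the pointwise bound (Inverse T\_mu in Hs) for $\mathcal{T}_\mu^{-1}$, giving the $\|w\|_{H^s}^2+\sqrt{\mu}\|D^{1/2}w\|_{H^s}^2$ piece. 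The upper bounds come from the dual pointwise estimates and the Sobolev embedding applied to $\eta_1$.

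For (Energy last one 0), differentiate $\tilde{\mathscr{E}}_0(\bold{W})$, use (2nd W T\_mu), and exploit self-adjointness of $\mathscr{Q}(\bold{U}_1,D)$ to obtain $\frac12\frac{d}{dt}\tilde{\mathscr{E}}_0=\mathscr{I}-\mathscr{II}-\mathscr{III}$, where $\mathscr{I}$ collects $\partial_t\mathscr{Q}$ (nonzero only in the $\eta_1$ slot), $\mathscr{II}=(\bold{F},\mathscr{Q}\bold{W})_{L^2}$ is the source contribution, and $\mathscr{III}=(\mathscr{M}\bold{W},\mathscr{Q}\bold{W})_{L^2}$. The term $\mathscr{I}$ is bounded by replacing $\partial_t\eta_1$ via the first equation of (1.5), using (boundedness of T) and the Sobolev embedding to control $\|\partial_t\eta_1\|_{L^\infty}\lesssim \|(\eta_1,u_1)\|_{V^s_{\sqrt\mu}}$. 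In $\mathscr{II}$ the key cancellation is that $\bold{F}$ carries $\mathcal{T}_\mu$ in front while $\mathscr{Q}$ carries $\mathcal{T}_\mu^{-1}$: for instance the top row collapses to $(w\partial_x\eta_2,\mathcal{K}_\mu\psi)_{L^2}+(\psi\partial_x u_2,\mathcal{K}_\mu\psi)_{L^2}$, which are controlled by the Cauchy--Schwarz inequality and the splitting (Comparison) of $\sqrt{\mathcal{K}_\mu}$ into a bounded part plus $\sqrt{\beta}\mu^{1/4}D^{1/2}$, the latter being absorbed via the same fractional Leibniz manoeuvre used for $II_4^{1,2}$ in the proof of Proposition 4.1; the bottom row reduces by the same identity to $(w\partial_x u_2,w)_{L^2}+(\mathcal{T}_\mu(w\partial_x u_2),\eta_1 w)_{L^2}$ which are estimated by Sobolev embedding, (cond. sol. 4), and (boundedness of T).

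The core of the argument is the symmetrized piece $\mathscr{III}$. Expanding with (2nd Math M) and (2nd Sym Q) and collapsing the $\mathcal{T}_\mu\mathcal{T}_\mu^{-1}$ factors, the off-diagonal dispersive terms $(\partial_x w,\mathcal{T}_\mu^{-1}\mathcal{K}_\mu\psi)_{L^2}+(\mathcal{K}_\mu\partial_x\psi,\mathcal{T}_\mu^{-1}w)_{L^2}$ cancel directly after integration by parts, while the $\eta_1$ off-diagonal pair $(\eta_1\partial_x w,\mathcal{K}_\mu\psi)_{L^2}+(\mathcal{K}_\mu\partial_x\psi,\eta_1 w)_{L^2}$ leaves only the remainder $-((\partial_x\eta_1)w,\mathcal{K}_\mu\psi)_{L^2}$, bounded by $\|\partial_x\eta_1\|_{L^\infty}\|w\|_{L^2}(c_\beta^2\|\psi\|_{L^2}+\beta\sqrt{\mu}\|D^1\psi\|_{L^2})$ thanks to (sqrt K); the $\beta\sqrt{\mu}\|D^1\psi\|_{L^2}$ factor is rewritten as $\sqrt{\beta}\cdot\sqrt{\beta\mu}\|D^1\psi\|_{L^2}$ and absorbed into $\|(\psi,w)\|_{X^0_{\beta,\mu}}$. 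The diagonal terms are the genuinely delicate ones: the top--top $(u_1\partial_x\psi,\mathcal{K}_\mu\psi)_{L^2}$ is exactly the term $A_4^2$ in the proof of Proposition 4.1 and is treated by the frequency decomposition (Split multiplier K) together with the tailored commutator estimates (Product chi K)--(Commutator sigma); the bottom--bottom $(u_1\partial_x w,(\mathcal{T}_\mu^{-1}+\eta_1)w)_{L^2}$ is handled by integration by parts, (Commutator dx T), (cond. sol. 4), and the Sobolev embedding, exactly as in the treatment of $\mathcal{A}_{22}$ in Proposition 4.2.

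The $X^s$-level estimate (Diff Energy 2 2 s) is obtained by applying $J^s$ to $\bold{W}$ before inserting into the energy and tracking the extra commutators $[J^s,u_1]$, $[J^s,\eta_1]$, $[\sqrt{\mathcal{K}_\mu}J^s,u_1]$ through the same terms; these are bounded via the Kato--Ponce estimates (K-P) and (Commutator low freq), together with the interpolation trick (interpolation) used in the proof of Proposition 3.3 to balance $\beta\mu\|D^1\psi\|_{H^s}$ against $\sqrt{\beta}\mu^{1/4}\|D^{1/2}\psi\|_{H^s}$. The source contribution $(J^s\bold{F},\mathscr{Q}J^s\bold{W})_{L^2}$ is not bounded here but left on the right-hand side for the Bona--Smith argument, in agreement with Remark 4.2. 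The single conceptual obstacle is the diagonal term $(u_1\partial_x\psi,\mathcal{K}_\mu\psi)_{L^2}$ at the $L^2$-level: naively it loses one derivative on $\psi$, and the frequency splitting via $(\chi^{(1)}_\mu\sqrt{\mathcal{K}_\mu})(D)$, $(\chi^{(2)}_\mu\sigma_{\mu,1/2})(D)$, and $(\chi^{(2)}_\mu\sigma_{\mu,0})(D)$ is exactly what converts that lost derivative into a $\sqrt{\mu}|D|^{1/2}$-weight compatible with the $V^0_{\sqrt\mu}$-component of $X^0_{\beta,\mu}$; this is the only place where the small-parameter tracking is non-trivial, and it transfers verbatim from Proposition 4.1.
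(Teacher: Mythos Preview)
Your proposal is correct and follows essentially the same route as the paper's proof: coercivity via $\mathcal{T}_\mu^{-1}\mathcal{K}_\mu=1+\beta\mu D^2$ and (Inverse T\_mu in Hs), the three-way split $\mathscr{I}-\mathscr{II}-\mathscr{III}$, cancellation of the off-diagonal pair leaving only $-((\partial_x\eta_1)w,\mathcal{K}_\mu\psi)_{L^2}$, and the frequency decomposition (Split multiplier K) for the diagonal term $(u_1\partial_x\psi,\mathcal{K}_\mu\psi)_{L^2}$.

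Two small imprecisions are worth tightening. First, for $\mathscr{II}_1,\mathscr{II}_2$ the paper does not split $\sqrt{\mathcal{K}_\mu}$ or invoke the fractional Leibniz rule: it simply leaves all of $\mathcal{K}_\mu$ on $\psi$ and uses (sqrt K) to bound $\|\mathcal{K}_\mu(D)\psi\|_{L^2}\lesssim_\beta\|\psi\|_{L^2}+\beta\sqrt{\mu}\|D^1\psi\|_{L^2}$, which is directly absorbed into $\|(\psi,w)\|_{X^0_{\beta,\mu}}$. Your route via (Comparison) and $II_4^{1,2}$ also works but is heavier than needed here. Second, the bottom--bottom term is $(\mathcal{T}_\mu(u_1\partial_x w),(\mathcal{T}_\mu^{-1}+\eta_1)w)_{L^2}$, not $(u_1\partial_x w,(\mathcal{T}_\mu^{-1}+\eta_1)w)_{L^2}$: the $\mathcal{T}_\mu$ dressing survives, so after collapsing $\mathcal{T}_\mu\mathcal{T}_\mu^{-1}$ you are left with $(u_1\partial_x w,w)_{L^2}+(\mathcal{T}_\mu(u_1\partial_x w),\eta_1 w)_{L^2}$, and the second piece is handled via $\sqrt{\mathcal{T}_\mu}$-commutators (Bessel Commutator T\_mu J=1) and (Commutator dx T) as in $\mathscr{A}_{22}$ of Proposition~3.3, not Proposition~4.2.
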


\begin{proof} By previous arguments, we note that the proofs of \eqref{Diff Equiv 2 2} and \eqref{Diff Equiv 2 2s} are similar to the proof of \eqref{2nd Equivalence Whitham Boussinesq}.
	
	Moreover, we will only prove \eqref{Energy last one 0} since the control of \eqref{Diff Energy 2 2 s} follows by the proof of Proposition \ref{2nd A priori Whitham Boussinesq}.

	We will now prove \eqref{Energy last one 0}. Then we first use \eqref{2nd W T_mu} and the self-adjointness of $\mathscr{Q}(\bold{U}_1,D)$ to write
	\begin{align*}
		\frac{1}{2} \frac{d}{dt}  \tilde{\mathscr{E}}_{0}(\bold{W})  
		&= 
		\frac{1}{2}\big(  \bold{W}, ( \partial_t \mathscr{Q}(\bold{U}_1,D))    \bold{W}\big)_{L^2}
		%\\
		%& \hspace{0.4cm}
		+
		\big( \bold{F}, \mathscr{Q}(\bold{U}_1,D)    \bold{W}\big)_{L^2}
		\\
		&
		\hspace{0.4cm}
		-
		\big(  \mathscr{M}(\bold{U}_1,D) \bold{W} , \mathscr{Q}(\bold{U}_1,D)     \bold{W}\big)_{L^2}
		\\
		& = : \mathscr{I} - \mathscr{II} - \mathscr{III}.
	\end{align*}
	\underline{Control of $\mathscr{I}$.} By \eqref{2nd W T_mu}, Hölder's inequality,  the Sobolev embedding and \eqref{cond. sol. 4} we deduce
	\begin{align*}
		|\mathscr{I}|
		= 
		\frac{1}{2} |\big(    w, (\partial_t \eta_1)  w \big)_{L^2}|
		\lesssim 
		\|u_1\|_{H^s}(1+ \| \eta_1 \|_{H^s}) \| w \|_{L^2}^2 \lesssim \max \limits_{i = 1,2} \|(\eta_i,u_i)\|_{V^{s}_{\sqrt{\mu}}}  \| (\psi,w)\|_{X^0_{\beta,\mu}}^2.
	\end{align*}
	%
	%
	%
	%Then to conclude the first estimate we use  \eqref{J_mu}.\\
	
	\noindent
	\underline{Control of $\mathscr{II}$.} The contribution from the source term is given by,
	\begin{align*}
		\mathscr{II} 
		& =
		\big( w\partial_x \eta_2, \mathcal{K}_{\mu}(D)   \psi \big)_{L^2}  
		+
		\big(  \psi \partial_x \eta_2,  \mathcal{K}_{\mu}(D)    \psi \big)_{L^2}  
		\\
		&
		\hspace{0.4cm}
		+
		\big(   w\partial_x u_2,   w \big)_{L^2}  
		+
		\big(   \mathcal{T}_{\mu}(D)(w\partial_x u_2),  \eta_1 w \big)_{L^2}  
		\\ 
		& 
		=:
		\mathscr{II}_1 + \mathscr{II}_2 + \mathscr{II}_3 +  \mathscr{II}_4.
	\end{align*}
	\textit{Control of $\mathscr{II}_1 + \mathscr{II}_2$.} We first apply the Cauchy-Schwarz inequality, \eqref{sqrt K}, and the Sobolev embedding to deduce that for  $s>\frac{3}{2}$
	\begin{align*}
		|\mathscr{II}_1|+|\mathscr{II}_2| 
		&
		\lesssim
		(\| w\|_{L^2}  + \| \psi \|_{L^2} )\| \eta_2\|_{H^s}\|\mathcal{K}_{\mu}(D)\psi \|_{L^2}
		\\
		& 
		\lesssim \max \limits_{i = 1,2} \|(\eta_i,u_i)\|_{V^{s}_{\sqrt{\mu}}} 
		%|
		%\lesssim
		\|(\psi,w)\|_{X^0_{\beta,\mu}}^2.	
	\end{align*}
	
	\noindent
	\textit{Control of $\mathscr{II}_3 + \mathscr{II}_4$.} Both terms are treated with the Cauchy-Schwatz inequality, \eqref{boundedness of T} and the Sobolev embedding. Consequently, for $s>\frac{3}{2}$ there holds
	\begin{align*}
		|\mathscr{II}_3|  + |\mathscr{II}_4| 
		\lesssim
		(1+\|\eta_1\|_{H^s})\|u_2\|_{H^s} \| w\|_{L^2}^2
		 \lesssim 	
		 \max \limits_{i = 1,2} \|(\eta_i,u_i)\|_{V^{s}_{\sqrt{\mu}}}  \|(\psi,w)\|_{X^0_{\beta,\mu}}^2.
	\end{align*}

	Gathering all these estimates yields
	\begin{equation*}
		|\mathscr{II}|  \lesssim \max \limits_{i = 1,2} \|(\eta_i,u_i)\|_{V^{s}_{\sqrt{\mu}}}  \| (\psi, w)\|_{X^0_{\beta,\mu}}^2.
	\end{equation*}

	\noindent
	\underline{Control of $\mathscr{III}$.} The symmetrized term $\mathscr{III}$ reads:
	\begin{align*}
		 \mathscr{III} = &
		\big(u_1 \partial_x \psi, \mathcal{K}_{\mu}(D) \psi \big)_{L^2} 
	%	\\
	%	& 
	%	\hspace{0.5cm}
		+
		\big(  (1+\mathcal{T}_{\mu}(D)\eta_1) \partial_x w, \mathcal{T}^{-1}_{\mu}(D)\mathcal{K}_{\mu}(D)  \psi \big)_{L^2} 
		\\
		& \hspace{0.0cm}
		+
		\big( \mathcal{K}_{\mu} (D)  \partial_x \psi, (\mathcal{T}_{\mu}^{-1}(D) +\eta_1)   w \big)_{L^2} 
		+
		\big(\mathcal{T}_{\mu}(D) ( u_1 \partial_x w), (\mathcal{T}_{\mu}^{-1}(D) + \eta_1) w \big)_{L^2} 
		\\
		& =  \mathscr{A}_{11} + \mathscr{A}_{12} + \mathscr{A}_{21} + \mathscr{A}_{22}.
	\end{align*}

	\noindent
	\textit{Control of $\mathscr{A}_{11}$.} We decompose $\mathscr{A}_{11}$ as
	\begin{align*}
		\mathscr{A}_{11}
		& =
		\big(u_1 \partial_x \psi, ((\chi^{(1)}_{\mu})^2\mathcal{K}_{\mu})(D) \psi \big)_{L^2} 
		+
		\big(u_1 \partial_x \psi, ((\chi_{\mu}^{(2)})^2 (\sigma_{\mu,\frac{1}{2}})^2)(D)\psi \big)_{L^2}
		\\
		&
		\hspace{0.5cm}
		-
		\big(u_1 \partial_x \psi, ((\chi^{(2)}_{\mu})^2(\sigma_{\mu,0})^2)(D)\psi \big)_{L^2},
	\end{align*}
 	where we have divided the multiplier $\mathcal{K}_{\mu}(D)$ into three pieces in the same way as we did in \eqref{Split multiplier K}. We may therefore  apply the same estimates as for $A_4^3$ in the proof of Proposition \ref{Compactness L2}, where we change the role of $\psi$ and $w$ to obtain
	\begin{align*}
		|\mathscr{A}_{11}|
		%& 
		=
		\big(u_1 \partial_x \psi,\mathcal{K}_{\mu}(D)  \psi \big)_{L^2} 
		%+
		%\big(u_1 \sqrt{\mathcal{K}_{\mu}}(D)\partial^2_x \xi,\sqrt{\mathcal{K}_{\mu}}(D)\partial_x  \xi \big)_{L^2} 
%		\\
		 \lesssim_{\beta} \|u_1\|_{H^s}(
		\| \psi \|_{L^2}^2+\sqrt{\beta}\mu^{\frac{1}{4}} \| \psi\|_{H^{\frac{1}{2}}}^2).%\|(\xi,w)\|_{V^1_{\sqrt{\mu}}}^2.
	\end{align*}
	Then use inequality \eqref{interpolation} to conclude that
	\begin{equation*}
		|\mathscr{A}_{11}| \lesssim  \max \limits_{i = 1,2} \|(\eta_i,u_i)\|_{V^{s}_{\sqrt{\mu}}} \| (\psi, w)\|_{X^0_{\beta,\mu}}^2.
	\end{equation*}
	
	%obtain $\|(\xi,w)\|_{V^1_{\sqrt{\mu}}} \lesssim_{\beta} \|(\xi,w)\|_{X^1_{\beta,\mu}}$.\\

	\noindent
	\textit{Control of $\mathscr{A}_{12} + \mathscr{A}_{21}$.} Treating the off-diagonal terms we first observe by integrating by parts that
	\begin{align*}
		\mathscr{A}_{12} 
		& =
			%\big( (\partial_x \eta_1) \partial_x w , \mathcal{K}_{\mu}(D)\partial_x  \xi\big)_{L^2} 
			-		
			\big( (\partial_x\eta_1) w ,  
			\mathcal{K}_{\mu}(D) \psi \big)_{L^2} 
		%	\\
		%	&
		%	\hspace{0.5cm}
			 -\mathscr{A}_{21}. 
			%\big( (1+ \mathcal{T}_{\mu}(D) \eta_1 )w , \mathcal{T}^{-1}_{\mu}(D)\mathcal{K}_{\mu}(D)\partial_x  \psi \big)_{L^2}.
	\end{align*}
	Therefore, we may apply Hölder's inequality and the Sobolev embedding to deduce\\
	%
	%
	%@
	\begin{equation*}
		|\mathscr{A}_{12} + \mathscr{A}_{21}|\lesssim \|\eta_1\|_{H^s} \|(\psi,w)\|_{X^0_{\beta,\mu}}.
	\end{equation*}

	\noindent
	\textit{Control of $\mathscr{A}_{22}$.} We decompose $\mathscr{A}_{22}$ into two terms
	\begin{align*}
		\mathscr{A}_{22} 
		& = 
		\big(   u_1 \partial_x w,  w \big)_{L^2} 
		+
		\big( \mathcal{T}_{\mu}(D) ( u_1 \partial_x w),  \eta_1 w \big)_{L^2} 
		\\
		&=
		\mathscr{A}_{22}^1 
		+
		\mathscr{A}_{22}^2.
	\end{align*}
	We see that $\mathscr{A}_{22}^1 $ is easily treated by the Cauchy-Schwarz inequality, integration by parts, the Sobolev embedding, and \eqref{cond. sol. 4}. Indeed, there holds
	\begin{equation*}
		|\mathscr{A}_{22}^1 | \lesssim \|u_1\|_{H^s}  \|w\|_{L^2}^2.
	\end{equation*}
	Next,  we decompose $\mathscr{A}_{22}^2$  into three parts
	\begin{align*}
		\mathscr{A}_{22}^2
		& = 
	%	\big( \mathcal{T}_{\mu}(D) ( \partial_x u_1 \partial_x w),  \eta_1 \partial_xw \big)_{L^2} 
	%	+
		 \big(  [\sqrt{\mathcal{T}_{\mu}}(D), u_1] \partial_x w, \sqrt{\mathcal{T}_{\mu}}(D) (\eta_1 w) \big)_{L^2}
		+
		 \big( u_1 \sqrt{\mathcal{T}_{\mu}}(D)  \partial_x w, [ \sqrt{\mathcal{T}_{\mu}}(D),\eta_1] w \big)_{L^2}
		 \\
		 &
		 \hspace{0.5cm}
		+
		\big( u_1 \sqrt{\mathcal{T}_{\mu}}(D)  \partial_x w,  \eta_1 \sqrt{\mathcal{T}_{\mu}}(D) w \big)_{L^2}.
		\\
		& =
		\mathscr{A}_{22}^{2,1} + \mathscr{A}_{22}^{2,2} + \mathscr{A}_{22}^{2,3}.
	\end{align*}
	For $\mathscr{A}_{22}^{2,1}$, we simply apply Hölder's inequality, \eqref{Bessel Commutator T_mu J=1}, \eqref{boundedness of T}, the Sobolev embedding  to find that
%	Next, we treat similarly $ \mathscr{A}_{22}^{2,2}$, but also use  to find that
	%
	%
	%
	\begin{align*}
		|\mathscr{A}_{22}^{2,1}|
		\leq 
		\|[\sqrt{\mathcal{T}_{\mu}}(D), u_1] \partial_x w\|_{L^2} \|\sqrt{\mathcal{T}_{\mu}}(D) (\eta_1 w) \| _{L^2}
		\lesssim \|w\|_{L^2}^2.
	\end{align*}
	For $\mathscr{A}_{22}^{2,2}$, we first remark that 
	\begin{equation}\label{remark 100}
		 \| [ \sqrt{\mathcal{T}_{\mu}}(D),\eta_1] w \|_{L^2} \lesssim \|\eta_1\|_{L^{\infty}}\|w\|_{L^2},
	\end{equation}
	simply by using Hölder's inequality and \eqref{boundedness of T}. Then after integrating by parts, we use Hölder's inequality, the Sobolev embedding,  \eqref{Commutator dx T}, \eqref{cond. sol. 4}, and \eqref{remark 100}  to deduce that
	\begin{align*}
		|\mathscr{A}_{22}^{2,2}|
		& \leq 
		 \| \partial_x u_1 \|_{L^{\infty}} \|\sqrt{\mathcal{T}_{\mu}}(D)  w\|_{L^2} \| [ \sqrt{\mathcal{T}_{\mu}}(D),\eta_1] w \|_{L^2} 
		 \\
		 & 
		 \hspace{0.5cm}+ \|  u_1 \|_{L^{\infty}} \|\sqrt{\mathcal{T}_{\mu}}(D)   w\|_{L^2} \| \partial_x [ \sqrt{\mathcal{T}_{\mu}}(D),\eta_1] w \|_{L^2}
		 \\
		 & 
		 \lesssim \max \limits_{i = 1,2} \|(\eta_i,u_i)\|_{V^{s}_{\sqrt{\mu}}}  \|w\|_{L^2}^2.
	\end{align*}
	Lastly, we use integration by parts, then apply Hölder's inequality, \eqref{boundedness of T}, the Sobolev embedding, and \eqref{cond. sol. 4} to obtain that
	\begin{align*}
		|\mathscr{A}_{22}^{2,3}|
		\leq \frac{1}{2} \|\partial_x( u_1\eta_1)\|_{L^{\infty}}\| \sqrt{\mathcal{T}_{\mu}}(D)   w\|^2_{L^2}  \lesssim  \max \limits_{i = 1,2} \|(\eta_i,u_i)\|_{V^{s}_{\sqrt{\mu}}} \| w \|_{L^2}^2.%\sqrt{\mathcal{T}_{\mu}}(D) \partial_xw \big)_{L^2}
	\end{align*}

	We may now gather these estimates to conclude that
	\begin{equation*}
		|\mathscr{A}_{22} | \lesssim  \max \limits_{i = 1,2} \|(\eta_i,u_i)\|_{V^{s}_{\sqrt{\mu}}}  \| (\psi,w)\|_{X^0_{\beta,\mu}}^2,
	\end{equation*}
	and as a result the proof of Proposition \ref{2nd diff prop} is now complete.
\end{proof}

\section{Proof of Theorem \ref{Well-posedness long time full dispersion}}\label{Main proof of thm}

\begin{proof}
The proof is divided into eigth steps, utilizing the results above. \\ 

\noindent

\noindent
\underline{Step $1$:} \textit{Existence of solutions for a regularized system.} Let $s > \frac{1}{2}$, $\delta>0$, and $\varphi_{\delta}(D)$  be as given in Definition \ref{regularisation}. Then, for initial data on the form $\varphi_{\delta} (D)\bold{U}_{0} :=  (\varphi_{\delta}(D) \eta_0,\varphi_{\delta} (D)u_0)$, we claim that there exist $c_{\beta}>0$, and a time 
\begin{equation}\label{T delta}
	0
	<
	T_{\delta}
	: =
	T_{\delta}\big{(}\|(\eta_0,u_0)\|_{V^s_{\sqrt{\mu}}}\big{)}
	=
	 \frac{c_{\beta}\delta}{1+\|(\eta_0,u_0)\|_{V^s_{\sqrt{\mu}}}}
\end{equation}
such that $\bold{U}^{\delta}:=(\eta^{\delta}, u^{\delta})^T\in C([0,T_{\delta}]; V^s_{\sqrt{\mu}}(\mathbb{R}))%\cap C^{\infty}(\mathbb{R} \times (0,T_{\delta}))
$ is a unique solution of the regularised Cauchy problem:
\begin{equation}\label{regularised Cauchy problem}
	\begin{cases}
		\partial_t 	\eta^{\delta}  + 	u^{\delta} \partial_x \varphi_{\delta}(D) \eta^{\delta}
		+ (\mathcal{K}_{\mu}(D) + \eta^{\delta}) \partial_x	\varphi_{\delta}(D) u^{\delta}= 0
		\\
		\partial_t u^{\delta} +	\partial_x\varphi_{\delta}(D) \eta^{\delta} +  u^{\delta} \partial_x	\varphi_{\delta}(D) u^{\delta} = 0
	\end{cases}
\end{equation}

The proof of the existence of a unique solution is a consequence of the contraction mapping principle. First, define the notation $\varphi_{\delta} (D)\bold{U}^{\delta}(t) := (\varphi_{\delta} (D)\eta^{\delta},\varphi_{\delta} (D) u^{\delta})^T(t)$. Then we shall show for $M$ defined by \eqref{M 1} that 
\begin{align}\label{integral equation}
	\bold{U}^{\delta}(t)
  	  = 
    	\varphi_{\delta} (D)\bold{U}_{0}
    	-
    	\int_0^t   M( \bold{U}^{\delta}(s),D) \cdot
    	\varphi_{\delta} (D)\bold{U}^{\delta}(s) \: ds
	= :
	 \Phi_{\bold{U}_{0}}(\bold{U}^{\delta})(t)
\end{align}
defines a contraction  on the compete metric space
\begin{align*}
    Y^s( T, a) : = \big{\{} %\bold{f}: \mathbb{R} \times [0,T] \rightarrow \mathbb{R}^2 \: 
     (f^1,f^2) \in C([0,T]; V_{\sqrt{\mu}}^s(\mathbb{R})) \: : \:  \| (f^1,f^2) \|_{L^{\infty}_TV_{\sqrt{\mu}}^s}\leq a \big{\}}.
\end{align*}
\noindent
\underline{Step $1.1$:}   \textit{$\Phi$ is well-defined  on  $Y^s(T,a)$.} Indeed, let $\bold{U}^{\delta} = (\eta^{\delta}, u^{\delta}) \in Y^s(T, a)$ and let $\Phi := (\Phi^1, \Phi^2)^T$. Then for $\Phi^1$ we have by \eqref{sqrt K}, \eqref{Prod. Kato-Ponce}, the Sobolev embedding and \eqref{reg 3} that 
\begin{align*}
		\| \Phi^1(\bold{U}^{\delta})\|_{L^{\infty}_{T}H^s} 
		& 
		\leq \|\eta_{0}\|_{H^s}
		+
		T\big{(}
		\delta^{-1}\|u^{\delta}\|_{L^{\infty}_{T}H^s} \|\eta^{\delta}\|_{L^{\infty}_{T}H^s} 
		\\
		&
		\hspace{0.5cm}+ \delta^{-\frac{3}{2}} \| \mathcal{K}_{\mu}(D)u^{\delta}\|_{L^{\infty}_{T}H^{s-\frac{1}{2}}}+ \delta^{-1}\|\eta^{\delta}\|_{L^{\infty}_T H^s}\|u^{\delta}\|_{L^{\infty}_{T}H^s}
		\big{)}
		\\
		&
		\leq  \|\eta_{0}\|_{H^s}+ T(2a^2 \delta^{-1}+ c_{\beta}a\delta^{-\frac{3}{2}}).
\end{align*}
Similarly, for the second coordinate there holds
\begin{align*}
	\| \Phi^2(\bold{U}^{\delta})\|_{H^s}  + \mu^{\frac{1}{4}} \| D^{\frac{1}{2}}\Phi^2(\bold{U}^{\delta}) \|_{H^s}  
%	&
	\leq \|u_0\|_{H^s} + \mu^{\frac{1}{4}}\|D^{\frac{1}{4}}u_0\|_{H^s} 
	%\\
	%& \hspace{0.5cm}
	+
	T
	c_{\beta}(a+a^2)( \delta^{-1}+ \delta^{-\frac{3}{2}}).
\end{align*}
Gathering these estimates, with $\| (\eta_0, u_0)| |_{V_{\sqrt{\mu}}^s} \leq \frac{a}{2}$ and $T \leq \frac{\tilde{c}_{\beta}\delta}{2(a+1)(1+\delta^{-\frac{1}{2}})}$ implies
\begin{align*}
	\| (\Phi^1(\bold{U}^{\delta}),\Phi^2(\bold{U}^{\delta}) )\|_{V_{\sqrt{\mu}}^s} 
	\leq a.%
%	| |  (\varphi_{\delta} (D)\eta_0,\varphi_{\delta} (D) u_0)| |_{V_{\sqrt{\mu}}^s}
%	+
%	T( 3a^2 \delta^{-1} + a\delta^{-1} + a\delta^{-2}).
\end{align*}
% 
% 
% 
%Thus, for $\| (\eta^{\delta}_0, u^{\delta}_0)| |_{V_{\sqrt{\mu}}^s} \leq \frac{a}{2}$ and $T \leq \frac{\delta}{6a + 2+ 2\delta}$ implies $\Phi(\bold{U}^{\delta}) \in Y^s(T, a)$. 
\noindent 
\underline{Step $1.2$:} \textit{$ \Phi$ defines  a contraction map on $Y^s(T, a)$.} Let  $\bold{U}^{\delta}_1 = (\eta_1^{\delta},u_1^{\delta})^T,\bold{U}^{\delta}_2 = (\eta_2^{\delta},u_2^{\delta})^T \in Y^s(T,a)$. Then if we define $\bold{W}^{\delta} =(\psi^{\delta},w^{\delta})^T=\bold{U}^{\delta}_1 - \bold{U}^{\delta}_2 $ we have that
\begin{equation*}
	\Phi(\bold{U}^{\delta}_1) - \Phi(\bold{U}^{\delta}_2) = \int_0^t  \big{(}M(\bold{U}_1^{\delta},D)\cdot \varphi_{\delta}(D)\bold{W}^{\delta} + (M(\bold{U}_2^{\delta},D)-M(\bold{U}_1^{\delta},D))\cdot \varphi_{\delta}(D)\bold{U}^{\delta}_2 \big{)}\: ds.
\end{equation*}
Reapplying the same estimates as in the previous step, we conclude that
\begin{align*}
	%	& \hspace{-2.5cm}
		\|(\Phi^1(\bold{U}_1^{\delta})-\Phi^1(\bold{U}_2^{\delta}),\Phi^2(\bold{U}_1^{\delta})-\Phi^2(\bold{U}_2^{\delta}) )\|_{V_{\sqrt{\mu}}^s} 
	%	\\
	%	& \hspace{1cm}\lesssim_{\beta} (\delta^{-1} +\delta^{-\frac{3}{2}})T
	<	\|(\psi^{\delta},w^{\delta})\|_{V^s_{\sqrt{\mu}}}.
\end{align*}
by choosing $T\lesssim T_{\delta}(a)$.

We  conclude by the Fixed Point Theorem that there exist a unique solution of \eqref{regularised Cauchy problem} in $C([0,T_{\delta}]; V^s_{\sqrt{\mu}}(\mathbb{R}))$.  %a unique fixed point of the  operator $\Phi = \Phi_{\bold{U_0}}$. \\

\begin{remark}\label{long time flow map}
	A consequence of Step $1$, is the continuity of the flow map associated to \eqref{regularised Cauchy problem}. But this is  only for a time $T_{\delta} \lesssim_{\beta}\delta$, and is therefore not usefull for the limit equation.\\
\end{remark}

% $\boldmath{U}^{\delta} = (\eta^{\delta},u^{\delta})\in C([0,T_{\delta}]; V^s_{\sqrt{\mu}}(\R))$  by  Banach Fixed Point Theorem.\\ 

%\noindent
%\underline{Step $2.3$:} The flow map $F^{\delta}$ is continuous. Let 
%
%
%

\noindent
\underline{Step 2:} \textit{The blow-up alternative.} We define the maximal time of existence to be
 $$T^{\star}_{\delta} = \sup\Big{\{} T_{\delta}>0 \: : \: \exists ! \:  \bold{U}^{\delta} = (\eta^{\delta},u^{\delta})^T \: \text{solution of} \: \eqref{regularised Cauchy problem}\: \text{in} \:  C([0,T_{\delta}]; V^s_{\sqrt{\mu}}(\R))\Big{\}}.$$
Then we claim that the solution of \eqref{regularised Cauchy problem} satisfy the blow-up alternative:
\begin{equation}\label{blow-up alt Boussinesq}
	\text{If} \quad T_{\delta}^{\star}< \infty, \quad \text{then} \quad \lim\limits_{t \nearrow T_{\delta}^{\star}} \| (\eta^{\delta}, u^{\delta})(t)\|_{V^s_{\sqrt{\mu}}} = \infty.
\end{equation} 

%The proof is based on the proof of Theorem $5.14$ in \cite{Iorio2001}.
 First, we argue by contradiction that $T^{\star}_{\delta}<\infty $ and there exist $A\in \R^{+}$ such that
\begin{equation}\label{with M}
	\sup\limits_{t\in [0,T^{\star}_{\delta})} \|(\eta^{\delta}, u^{\delta})(t)\|_{V^s_{\sqrt{\mu}}} = A.
\end{equation}
We use \eqref{T delta} to define $\tau_{\delta,A} =T^{\star}_{\delta}- \frac{T_{\delta}(A)}{2}$. Then we have that 
$$a: = \|(\eta^{\delta}, u^{\delta})(\tau_{\delta,A})\|_{V^s_{\sqrt{\mu}}} \leq A.$$
Therefore, if we let  $\bold{V}^{\delta}_0 = (\eta^{\delta}, u^{\delta})^T(\tau_{\delta,A})$ serve as initial data, then \eqref{regularised Cauchy problem} has a unique solution given by 
\begin{align}\label{V delta}
	\bold{V}^{\delta}(t)
	= 
	\bold{V}^{\delta}_0
	-
	\int_0^t   M( \bold{V}^{\delta}(s),D) \cdot
	\varphi_{\delta} (D)\bold{V}^{\delta}(s) \: ds,
\end{align}
with  $\bold{V}^{\delta}\in C([0,T_{\delta}(a)];V^s_{\sqrt{\mu}}(\R)) $. Here, $T_{\delta}(a)$ is given by \eqref{T delta} due to Step $1$. Moreover, we observe that $T_{\delta}(a)\geq T_{\delta}(A)$ by definition \eqref{T delta}, and implies $\tau_{\delta,A}+ T_{\delta}(a) \geq T^{\star}_{\delta} + \frac{T_{\delta}(A)}{2}$. Thus, we define the extension of $\bold{U}^{\delta}=(\eta^{\delta}, u^{\delta})^T$ by the function
\begin{equation*}
	\bold{Z}^{\delta}(t) 
	=
	\begin{cases}
		\bold{U}^{\delta}(t), \quad& \text{if} \quad 0\leq t < \tau_{\delta,A}\\
		\bold{V}^{\delta}(t-\tau_{\delta,A}), \quad& \text{if} \quad \tau_{\delta,A} \leq t \leq \tau_{\delta,A}+T_{\delta}(a),
	\end{cases}
\end{equation*}
and one can verify that it is a solution of \eqref{regularised Cauchy problem} for all $t \in [0,T_{\delta}^{\star} + \frac{T_{\delta}(A)}{2}] \subset [ 0 , \tau_{\delta,A}+T_{\delta}(a)]$. This contradicts the definition of $T^{\star}_{\delta}$. Thus, we conclude that if $T^{\star}_{\delta}<\infty$, then necessarily $A = \infty$ in \eqref{with M}, and implies
\begin{equation}\label{limsup}
	\limsup\limits_{t\nearrow T^{\star}_{\delta}} \|(\eta^{\delta},u^{\delta}(t))\|_{V^s_{\sqrt{\mu}}}=\infty.
\end{equation}

To conlcude the proof of the claim, we use \eqref{limsup} to verify that for all $R>0$ there exists an open interval $(t_R,T^{\star}_{\delta})$ such that $\|(\eta^{\delta},u^{\delta}(t))\|_{V^s_{\sqrt{\mu}}}>R$, for all $t \in (t_R,T^{\star}_{\delta})$. Indeed, we argue by contradiction that there exists $R\in \R^+$ such that for all $0<t_R<T^{\star}$, we have
\begin{equation}\label{bound contradiction}
\|(\eta^{\delta},u^{\delta})(t)\|_{V^s_{\sqrt{\mu}}} \leq R,
\end{equation}
for some  $t\in (t_R,T^{\star}_{\delta})$. By \eqref{limsup} there is a time such that $\tau_{R,0}>T^{\star}_{\delta}-\frac{T_{\delta}(R)}{2}$
 and satisfying
\begin{equation*}%\label{time t0}
\|(\eta^{\delta},u^{\delta})(\tau_{R,0})\|_{V^s_{\sqrt{\mu}}} >R.
\end{equation*}
On the other hand, by assumption \eqref{bound contradiction} we can take $t_R = \tau_{R,0}$ and use the fact that there is a time $\tau_{R,1} \in (t_R,T^{\star}_{\delta})$ such that
\begin{equation*}%\label{time t1}
	\|(\eta^{\delta},u^{\delta})(\tau_{R,1})\|_{V^s_{\sqrt{\mu}}} \leq R.
\end{equation*}
Thus, by the same argument as above we can take  $(\eta^{\delta},u^{\delta})(\tau_{R,1})$ as initial data of \eqref{regularised Cauchy problem} to find an extended solution defined on $[0,T^{\star}_{\delta} + \frac{T_{\delta}(R)}{2}]\subset [0,\tau_{R,1} + T_{\delta}(R)]$. This contradicts the  definition of $T_{\delta}^{\star}$. As a result, we conclude that \eqref{blow-up alt Boussinesq} holds true.\\

\noindent
\underline{Step $3$:} \textit{The existence time is independent of $\delta>0$.} We claim that there exists
$$T = \frac{1}{k_{\beta}^1 \|(\zeta_0, v_0)\|_{V^s_{\sqrt{\mu}}}},$$
 as in \eqref{Time T}, such that the regularized solution $\ve(\zeta^{\delta},  v^{\delta}) = (\eta^{\delta},u^{\delta})$ exists on the interval $[0,\frac{T}{\ve}]$.

The proof relies on a bootstrap argument similar to the proof of Lemma $5.1$ in \cite{KalischPilod2019}.  In fact, the long time existence is a direct consequence of the following lemma. \\

\begin{lemma}\label{Lemma time of existence}
	Let $ s > 2$ and  $\ve$ be as in \eqref{eps beta}. Let $ (\eta^{\delta}, u^{\delta})=\ve(\zeta^{\delta}, v^{\delta}) \in C([0,T_{\delta}^{\star}); V^s_{\sqrt{\mu}}(\mathbb{R}))$ be a solution of \eqref{regularised Cauchy problem} with initial data $\ve(\zeta_0, v_0)= (\eta_0, u_0) \in V^s_{\sqrt{\mu}}(\mathbb{R})$,  defined on its maximal time of existence and satisfying the blow-up alternative \eqref{blow-up alt Boussinesq}. Moreover, let $\eta_0  = \ve\zeta_0$ satisfy either the non-cavitation condition \eqref{NonCav} or the $\beta-$dependent surface condition \eqref{Beta NonCav}, depending on whether $\beta \geq \frac{1}{3}$ or $0<\beta < \frac{1}{3}$, respectivly. Then there  exists a time %Moreover, let the solution be defined on its maximal time of existence and satisfying the blow-up alternative:
	% 
	% 
	% 
%	\begin{equation}\label{blow-up alt Boussinesq}
	%	\text{If} \quad T^{\star}< \infty, \quad \text{then} \quad \lim\limits_{t \nearrow T^{\star}} \| (\eta^{\delta}, u^{\delta})(t)\|_{V^s_{\sqrt{\ve}}} = \infty.
	%\end{equation}
	% 
	% 
	% 
	
	%
	%
	%
	\begin{equation}\label{time T0}
		T_0 = \frac{1}{k_{\beta}^{1}\| (\eta_0, u_0)\|_{V^s_{\sqrt{\mu}}}},
	\end{equation}
	such that $T^{\star}_{\delta}>T_0$ and
	\begin{equation}\label{Bound on solution - data}
		\sup\limits_{t \in [0,T_0]} \| (\eta^{\delta}, u^{\delta})(t)\|_{V^s_{\sqrt{\mu}}} \leq 4 k^{2}_{\beta}\|  (\eta_0^{\delta},  u_0^{\delta}) \|_{V^s_{\sqrt{\mu}}}.
	\end{equation}
	The constants are on the form
	$$k^{2}_{\beta} = \frac{c_{\beta}^{2}}{c_{\beta}^{1}} \quad \text{and} 
	\quad 
	k_{\beta}^{1} = 
	\begin{cases}
		\frac{C_1}{\beta}  \: \:  \quad \quad 
		\text{for} \: \:   0<\beta<\frac{1}{3} 
		\\
		C_2 \beta^2  
		\: \:  \: \quad\text{for} \: \:   \beta\geq \frac{1}{3}
	\end{cases}$$
	where $C_1$ and $C_2$ are two positive constants to be fixed in the proof. 
\end{lemma}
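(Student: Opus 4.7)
The plan is a standard continuity-bootstrap argument built on the \emph{a priori} estimate of Proposition~\ref{Energy fully disp Boussinesq} and the blow-up alternative \eqref{blow-up alt Boussinesq}. Writing $M_0 := \|(\eta_0, u_0)\|_{V^s_{\sqrt{\mu}}}$ and fixing the ceiling $M := 4 k_\beta^2 M_0$, I would introduce
\[
\tilde T := \sup\Bigl\{T \in [0, T_\delta^\star)\,:\, \sup_{t\in[0,T]}\|(\eta^\delta, u^\delta)(t)\|_{V^s_{\sqrt{\mu}}} \le M \ \text{and $\eta^\delta(\cdot,t)$ satisfies \eqref{cond. sol.} (resp.\ \eqref{cond. sol. beta}) on $[0,T]$}\Bigr\},
\]
according as $\beta \ge \tfrac13$ or $0<\beta<\tfrac13$. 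By the Step~1 local construction $\tilde T > 0$; the task is to prove $\tilde T \ge T_0$.

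On $[0,\tilde T]$ the hypotheses of Proposition~\ref{Energy fully disp Boussinesq} hold with $h_1 = M$. Setting $y(t) := \sqrt{E_s(\bold{U}^\delta(t))}$, the inequality \eqref{Energy full dispersion} becomes $y'(t) \le \tfrac12 c_\beta^2\, y(t)^2$, whose integration yields $y(t) \le y(0)/(1 - \tfrac12 c_\beta^2\, t\, y(0))$ as long as the denominator is positive. Using the coercivity \eqref{equiv. full dispersion} with $y(0) \le \sqrt{c_\beta^2}\, M_0$ and $\|\bold{U}^\delta(t)\|_{V^s_{\sqrt{\mu}}} \le y(t)/\sqrt{c_\beta^1}$, and choosing $T_0 = 1/(k_\beta^1 M_0)$ with $k_\beta^1$ large enough that $(c_\beta^2)^{3/2}/k_\beta^1 \le 1$, the denominator stays $\ge 1/2$ on $[0, T_0]$, whence
\[
\|\bold{U}^\delta(t)\|_{V^s_{\sqrt{\mu}}} \le 2\sqrt{k_\beta^2}\, M_0 \quad \text{on }[0,\tilde T\wedge T_0],
\]
and this is strictly smaller than $M = 4 k_\beta^2 M_0$ because $k_\beta^2 > 1/4$ in both $\beta$-regimes. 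Case-splitting on $\beta$ via \eqref{Constant beta} and absorbing universal constants into $C_1, C_2$ then produces the two explicit values $k_\beta^1 = C_1/\beta$ and $k_\beta^1 = C_2\beta^2$ announced in \eqref{time T0}.

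Next I would propagate the pointwise lower bound on $\eta^\delta$. Using the first equation of \eqref{regularised Cauchy problem}, the $L^\infty$-bound \eqref{Dependence in beta}, and the embedding $H^{s-1} \hookrightarrow L^\infty$ (valid since $s>2$), one obtains
\[
\|\partial_t \eta^\delta(\cdot,t)\|_{L^\infty} \lesssim_\beta \|\bold{U}^\delta(t)\|_{V^s_{\sqrt{\mu}}}\bigl(1 + \|\bold{U}^\delta(t)\|_{V^s_{\sqrt{\mu}}}\bigr).
\]
Integrating on $[0,\tilde T \wedge T_0]$ and using the bootstrap ceiling $M = 4 k_\beta^2 M_0$ together with $k_\beta^2 M_0 \lesssim 1$ (granted by the smallness \eqref{eps beta} of $\ve$) makes $\|\eta^\delta(\cdot,t) - \eta_0\|_{L^\infty}$ arbitrarily small by taking $C_1, C_2$ large. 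This strictly improves the admissible threshold for $\eta^\delta$ from $h_0 - 1$ to $\tfrac{h_0}{2} - 1$ (resp.\ from $-\beta/2$ to $-\beta/4$). The main obstacle here is that in the regime $0 < \beta < \tfrac13$ the size of $\ve$ must itself scale with $\beta$ so that the deviation of $\eta^\delta$ from $\eta_0$ is smaller than a fraction of $\beta/2$; this is precisely what the $\beta$-splitting in the definition of $k_\beta^2$ in \eqref{eps beta} encodes.

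Both defining conditions of $\tilde T$ having been strictly improved, the $V^s_{\sqrt{\mu}}$-continuity of $\bold{U}^\delta$ from Step~1 contradicts the maximality of $\tilde T$ unless $\tilde T \ge T_0$ or $\tilde T = T_\delta^\star$. In the latter case the strict bound on $\|\bold{U}^\delta(t)\|_{V^s_{\sqrt{\mu}}}$ would persist up to $T_\delta^\star \le T_0$, contradicting \eqref{blow-up alt Boussinesq}. Hence $T_\delta^\star > T_0$ and \eqref{Bound on solution - data} holds. I expect the main difficulty to lie in the careful bookkeeping of the $\beta$-powers coming from the energy ratio $(c_\beta^2)^{3/2}/c_\beta^1$ (which sets $k_\beta^1$) against those produced by the propagation of the (possibly $\beta$-dependent) surface condition, which is the reason for the case-split formula for $k_\beta^1$ in \eqref{time T0}.
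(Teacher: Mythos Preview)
Your bootstrap argument is essentially the same as the paper's: both define the maximal time on which the $V^s_{\sqrt{\mu}}$-norm stays below $4k_\beta^2 M_0$, verify the hypotheses of Proposition~\ref{Energy fully disp Boussinesq} on that interval (propagating the non-cavitation via a bound on $\|\partial_t\eta^\delta\|_{L^\infty}$ and the Fundamental Theorem of Calculus), integrate the differential inequality to strictly improve the ceiling, and close by continuity against the blow-up alternative. The only minor point the paper handles that you glossed over is that the initial datum for the regularized system is $\varphi_\delta(D)\eta_0$, not $\eta_0$, so one must also check (via \eqref{reg 4} and Sobolev embedding) that the regularized datum inherits the non-cavitation condition.
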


\begin{proof}
We define the set
\begin{equation}\label{tube}
	\tilde{T}_{\delta} = \sup \limits\Big\{  T_{\delta} \in (0, T_{\delta}^{\star}) \: : \: \sup\limits_{t \in [0,T_{\delta}]}\|(\eta^{\delta}, u^{\delta})(t)\|_{V^s_{\sqrt{\mu}}} \leq4k^{2}_{\beta}\|(\eta_0^{\delta},  u^{\delta}_0)\|_{V^s_{\sqrt{\mu}}}\Big\}.
\end{equation}
Then we first note that $\tilde{T} _{\delta}< T^{\star}_{\delta}$, or else it would contradict the blow-up alternative \eqref{blow-up alt Boussinesq}. For the proof we argue by contradiction that $\tilde{T}_{\delta} \leq T_0$.

The main idea is to improve the estimate given in \eqref{tube}.  First, we verify that the solution $(\eta^{\delta},u^{\delta})$ satisfy \eqref{cond. sol.}.  Indeed, recalling assumption \eqref{eps beta}:
$$0<\ve \leq \frac{1 }{k^{2}_{\beta} \|(\zeta_0, v_0 )\|_{V^s_{\sqrt{\mu}}}},$$
implies
\begin{align}\label{bound U in Hs}
	\|(\eta^{\delta},u^{\delta})\|_{H^s} \leq k^{2}_{\beta} \|(\eta_0^{\delta},u_0^{\delta})\|_{V_{\sqrt{\mu}}^s}  = 4\ve k^{2}_{\beta}  \|(\zeta_0, v_0)\|_{V^s_{\sqrt{\mu}}} \leq 4,
\end{align}
for all $t \in [0, \tilde{T}_{\delta}]$. Next, the solution $(\eta^{\delta},u^{\delta})$ satisfy the non-cavitation condition. We will prove this as a consequence of two claims.

\underline{Claim 1:} If $\eta_0$ satisfies the non-cavitation condition, then the same is true for $\eta^{\delta}_0:= \varphi_{\delta}(D)\eta_0$.  Indeed,  by the Sobolev embedding $H^{\frac{1}{2}^+}(\mathbb{R}) \hookrightarrow L^{\infty}(\mathbb{R})$ and \eqref{reg 4} to deduce that
$$\|\eta_0^{\delta} - \eta_0\|_{L^{\infty}}  \lesssim \|\varphi_{\delta}(D)\eta_0 - \eta_0\|_{H^{\frac{1}{2}^{+}}}  \underset{\delta \rightarrow 0}{=} o(\delta),$$
since $s-1>\frac{1}{2}$.

\underline{Claim 2:} We have the following bound on $\partial_t \eta^{\delta}$:
\begin{equation*}
	 \sup \limits_{\tau \in [0, \tilde{T}_{\delta}]} |\partial_t \eta^{\delta}(\tau) | \leq k_{\beta}^2\beta^2.
\end{equation*}
Indeed,  by  similar argument as for \eqref{Dependence in beta}, we use  \eqref{regularised Cauchy problem},  \eqref{bound U in Hs}, and $H^{\frac{1}{2}^+}(\mathbb{R}) \hookrightarrow L^{\infty}(\mathbb{R})$ to find
\begin{align*}
	\|\partial_{t} \eta^{\delta}\|_{L^{\infty}}
	\leq c_{\beta}^{2}
	\|( \eta^{\delta} , u^{\delta} )
	\|_{V^s_{\sqrt{\mu}}} + \|\eta^{\delta}\|_{H^s}\|u^{\delta}\|_{H^s}
	%\\
	%& \leq 
	%	(c_{\beta}^2+1)
	%	\|( \eta^{\delta} , u^{\delta} )
	%	\|_{V^s_{\sqrt{\mu}}} 
	%	\\
	%	&
	\leq  
	4k^{2}_{\beta}c_{\beta}^{2} \|(\eta_0,u_0)\|_{V^s_{\sqrt{\mu}}}.
\end{align*}

Now, let  $\delta$ small such that $0<o(\delta)< \frac{h_0}{4}$. Then, by Claim $1$ and Claim $2$, we  use the Fundamental Theorem of Calculus to obtain
\begin{align} \label{Fundamental thm}
	1 + \eta^{\delta}(x,t) 
	%&
	= 
	1 + \eta_0^{\delta} + \int^t_0 \partial_t \eta^{\delta}(x,s)\:ds
	%\\ 
	%&
	\geq 
	\frac{3}{4}h_0 -k_{\beta}^2\beta^2 \tilde{T}_{\delta},
\end{align}
for all $t \in [0, \tilde{T}_{\delta}]$. On the one hand, if $\beta \geq \frac{1}{3}$, then
\begin{equation*}
	k^{2}_{\beta} = \frac{c^{2}_{\beta}}{c^{1}_{\beta}} = c\beta.
\end{equation*}
Thus, for $C_2>0$ large enough, we get that $k^1_{\beta} \geq  C_2 \beta^2 \geq  \frac{ c \beta^2 }{h_0}$. Moreover, by the assumption $\tilde{T}_{\delta} \leq T_0$, we  conclude from \eqref{Fundamental thm} that
$$1 + \eta^{\delta}(x,t) \geq \frac{3}{4}h_0 -k_{\beta}^2\beta^2 T_0
\geq\frac{h_0}{2},$$ 
for all  $t\in [0,\tilde{T}_{\delta}]$. On the other hand, in the case when $\beta \in (0, \frac{1}{3})$ we need to verify \eqref{cond. sol. beta}. But this can be done the same way by choosing $k_{\beta}^1 \geq \frac{C_1}{\beta} \geq \frac{c}{ \beta h_\beta}$  for $C_1>0$ large enough.

The hypotheses of Proposition \ref{Energy fully disp Boussinesq} are now verified, leaving us \eqref{Energy full dispersion} and \eqref{equiv. full dispersion} at our disposal. With this at hand, we observe that
\begin{equation*}
	E_s(\bold{U}^{\delta})(t)
	\leq 
	E_s(\bold{U}^{\delta})(0)
	+
	c^2_{\beta}\int_{0}^t \big{(}E_s(\bold{U}^{\delta})( s') \big{)}^{\frac{3}{2}}\: ds'
	: = \psi(t).
\end{equation*}
By the above inequality, we then have $\psi'(t) \leq c^2_{\beta} \:  \big{(}E_s(\bold{U}^{\delta})(t)\big{)}^{\frac{3}{2}} \leq c^2_{\beta}\: (\psi(t))^{\frac{3}{2}}$. We solve the differential inequality and use \eqref{equiv. full dispersion} to relate the energy with the $V^s_{\sqrt{\mu}}-$norm of the solution and deduce that
\begin{align}\label{uniform energy}
	c_{\beta}^1\| (\eta^{\delta},u^{\delta}) (t) \|_{V^s_{\sqrt{\mu}}} %= E_s(\bold{U}^{\delta})^{1/2}(t) 
	\leq \frac{c_{\beta}^2\|(\eta_0^{\delta} ,u_0^{\delta} )\|_{V^s_{\sqrt{\mu}}}}{1 -  \frac{(c_{\beta}^2)^2}{2}t \| (\eta_0^{\delta}, u_0^{\delta})\|_{V^s_{\sqrt{\mu}}}},
\end{align}
for all $t \in [0,\tilde{T}_{\delta}]$. Finally, if $C_1,C_2>0$ is large enough then since $\tilde{T}_{\delta} \leq T_0$ we have that
\begin{align*}
	\| (\eta^{\delta},u^{\delta}) (t)\|_{V^s_{\sqrt{\mu}}} 
	& \leq 2k^2_{\beta}
	\| (\eta_0^{\delta} , u_0^{\delta} ) \|_{V^s_{\sqrt{\mu}}}.
\end{align*}
% 
% 
% 
%Though, having \eqref{uniform energy}
Though, by continuity of the solution in time $t \in [0,T_{\delta}^{\star})$, there exists $\tau>0$ such that $\|(\eta^{\delta},u^{\delta}) (\tau)\|_{V^s_{\sqrt{\mu}}} \leq 3 k^2_{\beta} \|(\eta_0,u_0) \|_{V^s_{\sqrt{\mu}}}$ for $\tilde{T}_{\delta} < \tau < T_{\delta}^{\star}$. This contradicts the definition of $\tilde{T}_{\delta}$. Thus, we may conclude $T_0 <  \tilde{T}_{\delta}$ for all $\delta >0$ and that $T_0$ is independent from $\delta$ by its definition in  \eqref{time T0}.\\

\end{proof}

\begin{remark}\label{Remark on beta after lemma}
	For $0<\beta<\frac{1}{3}$ we observe that $k^1_{\beta} \sim k^2_{\beta} \sim \frac{1}{\beta}$ and is due to the appearance of $c_{\beta}^1$ in the coercivity estimate \eqref{equiv. full dispersion}. This will impact the size of the time interval when $\beta$ is small (see  Remark \ref{Remark 1 on T - beta}). On the other hand, for system  \eqref{2nd Whitham Boussinesq} the coercivity estimate \eqref{2nd Equivalence Whitham Boussinesq} is independent from $\beta$ and therefore gives a longer time of existence, as noted in  Remark \ref{Remark beta thm 1.8}.\\
\end{remark}

\noindent
\underline{Step $4$:} \textit{Uniqueness.} Given a solution of \eqref{full dispersion}, then we claim that it must be unique.

We consider two solutions 
$$\ve(\zeta_1, v_1)= (\eta_1,u_1) \: \text{and} \: \ve(\zeta_1, v_1)= (\eta_1,u_1) \: \text{in}\: C([0,T_{0}];V^s_{\sqrt{\mu}}(\R)),$$
with the same initial data. Then define $\bold{W} = (\eta_1-\eta_2, u_1-u_2)^T$, which is associated to the initial datum $\bold{W}(0) = \bold{0}$. Since $(\eta_1,u_1) \in H^s(\R)$, there exist a number $h_1>0$ such that $\|(\eta_1,u_1)\|_{H^s\times H^s}\leq h_1$. Moreover, $\eta_1$ satisfy the non-cavitation condition by the Fundamental Theorem of Calculus and the argument made in the proof of Lemma \ref{Lemma time of existence}. Thus, we may use Proposition \ref{Compactness L2} to deduce
\begin{equation*}
	\frac{d}{dt} \tilde{E}_0(\bold{W}) \lesssim_{\beta} \max \limits_{i = 1,2} \|(\eta_i,u_i)\|_{V^{s}_{\sqrt{\mu}}}  \tilde{E}_0(\bold{W}).
\end{equation*}
Then Gr{\"o}nwall's lemma and \eqref{equiv 1} implies that $\|(\eta_1-\eta_2, u_1-u_2)(t)\|_{V_{\sqrt{\mu}}^s} = 0$ for all $t\in [0,T_0]$. We therefore  conclude the proof of the uniqueness. \\

\noindent
\underline{Step 5:} \textit{Existence of solutions.} We claim that for all $0\leq s'<s$ there exists a solution $(\zeta,v)=\ve^{-1}(\eta, u) \in C([0,T_0]; V_{\sqrt{\mu}}^{s'}(\mathbb{R})) \cap L^{\infty}([0,T_0]; V_{\sqrt{\mu}}^s(\mathbb{R}))$ of \eqref{full dispersion} with $T_0 = \mathcal{O}(\frac{1}{\ve})$ defined by  \eqref{time T0}. \\

Using the change of variable  $(\zeta,v)=\ve^{-1}(\eta, u)$, we see that the claim in Step $5$ is equivalent to proving that $(\eta^{\delta}, u^{\delta})$ solving \eqref{regularised Cauchy problem} will satisfy  system \eqref{Eq 1 M} in the limit $\delta \searrow 0$ on $[0,T_0]$. In fact, the main idea  is to prove the convergence of $\{ (\eta^{\delta}, u^{\delta})\}$ as $\delta \searrow 0$ by considering the difference between two solutions
$$\bold{W}^{\delta} = (\psi^{\delta}, w^{\delta}): = (\eta^{\delta_1} - \eta^{\delta_2}, u^{\delta_1}- u^{\delta_2}).$$ 
Here we let $0<\delta_1<\delta_2 < \delta \leq 1$ and $ (\eta^{\delta_i}, u^{\delta_i})$ be the solution of \eqref{regularised Cauchy problem}, obtained in Step $1$. Moreover, let $\varphi_{\delta}(D) = (\varphi_{\delta_1}-\varphi_{\delta_2})(D)$, then $(\psi^{\delta}, w^{\delta})$ satisfies a regularized version of \eqref{lin W}:
\begin{equation*}
	\partial_t \bold{W}^{\delta}+  M(\bold{U}_1^{\delta},D)\cdot \varphi_{\delta}(D)\bold{W}^{\delta} = \bold{F}^{\delta},
\end{equation*}
with
\begin{equation}
	 \bold{F}^{\delta} = 
	 -
	 \begin{pmatrix}
	 	w^{\delta}\partial_x \varphi_{\delta_2}(D) \eta^{\delta_2} + \psi^{\delta} \partial_x \varphi_{\delta_2}(D) u^{\delta_2}
	 	\\
	 	w^{\delta} \partial_x \varphi_{\delta_2}(D) u^{\delta_2}
	 \end{pmatrix},
\end{equation}
and initial data
\begin{equation}\label{def psi 0 w0}
	(\psi^{\delta}(0), w^{\delta}(0)) = (\varphi_{\delta}(D)\eta_0,\varphi_{\delta}(D)u_0).
\end{equation}
The system also satisfies the estimates of Propositions \ref{Energy full dispersion} and \ref{Compactness L2} by the arguments used in the proof of Lemma \ref{Lemma time of existence}. \\ %With this at hand, we will now demonstrate the proof in three steps through an application of the Bona-Smith argument \cite{BonaSmith1975}.\\

\noindent
\underline{Step 5.1:} \textit{Convergence in $C([0,T_0];V^{0}_{\sqrt{\mu}}(\mathbb{R}))$.}  Define the difference  $(\psi^{\delta}, w^{\delta})$ as above,  then use \eqref{Energy 1} and \eqref{equiv 1}, combined with Gr{\"o}nwall's inequality and \eqref{Bound on solution - data}  to find the estimate
$$\sup\limits_{t\in[0,T_0]}
	\|(\psi^{\delta},w^{\delta})(t)\|_{V^{0}_{\sqrt{\mu}}} \leq e^{c_{\beta}^2 \|(\eta_0, u_0)\|_{V^s_{\sqrt{\mu}}}T_0}\|(\psi^{\delta}(0),w^{\delta}(0))\|_{V^{0}_{\sqrt{\mu}}},$$
with $  \|(\eta_0, u_0)\|_{V^s_{\sqrt{\mu}}}  T_0 \lesssim_{\beta} 1$ by defintion \eqref{time T0}. As a result, we use \eqref{def psi 0 w0}, the triangle inequality and  \eqref{reg 2}  to deduce that
\begin{align}\label{Rate in X0}
	\sup\limits_{t\in[0,T_0]}\|(\psi^{\delta},w^{\delta})(t)\|_{V^{0}_{\sqrt{\mu}}} 
	 \lesssim_{\beta}  \delta^{s} \|(\eta_0,u_0)\|_{V^s_{\sqrt{\mu}}} \underset{\delta \rightarrow 0}{\longrightarrow} 0.
\end{align}
Consequently,  $\{(\eta^{\delta}, u^{\delta})\}_{0<\delta\leq 1}$ defines a Cauchy sequence  in $ C([0,T_0]; V^{0}_{\sqrt{\mu}}(\mathbb{R}))$ and we conclude that there exists a limit $( \eta,  u) \in C([0,T_0];V^{0}_{\sqrt{\mu}}(\mathbb{R}))$  by completeness.

\ 
\\ 
\ 

\noindent
\underline{Step $5.2$:} \textit{Solution in $C([0,T_0]; V^{s'}_{\sqrt{\mu}}(\mathbb{R}))\cap L^{\infty}([0,T_0]; V^s_{\sqrt{\mu}}(\mathbb{R}))$ for  $s'\in [0,s)$.} As a direct consequence of  \eqref{Bound on solution - data} and  \eqref{Rate in X0}, we deduce by interpolation
\begin{align}\notag
	\| (\psi^{\delta},w^{\delta}) \|_{L^{\infty}_{T_0} V^{s'}_{\sqrt{\mu}}} 
	& 
	\lesssim_{\beta} 
	\| (\psi^{\delta},w^{\delta}) \|_{L^{\infty}_{T_0} V^{s}_{\sqrt{\mu}}}^{\frac{s'}{s}}
	\| (\psi^{\delta},w^{\delta}) \|_{L^{\infty}_{T_0} V^{0}_{\sqrt{\mu}}}^{1-\frac{s'}{s}}
	\\
	&\label{interpolation s'}
	\lesssim_{\beta}	\delta^{s-s'} \|(\eta_0,u_0)\|_{V^s_{\sqrt{\mu}}}
  \underset{\delta \rightarrow 0}{\longrightarrow} 0.
\end{align}

\noindent
\underline{Step 6:} \textit{Presistence of the solution.} We claim that there exists a unique solution $(\zeta,v)=\ve^{-1}(\eta, u) \in  C([0,T_0]; V^{s}_{\sqrt{\mu}}(\mathbb{R}))$ of \eqref{full dispersion}.

We again use the scaled variables $(\eta^{\delta}, u^{\delta})$, where we will apply estimate \eqref{Energy 2}, following the Bona-Smith argument \cite{BonaSmith1975}. But  first we must  control $\bold{F}^{\delta}$ in $V_{\sqrt{\mu}}^s(\mathbb{R})$. The elements of $\bold{F}^{\delta}$ are given in \eqref{F: source term}, and we must therefore control the terms given by:
\begin{align*}
	\big(
	J^s \bold{F}^{\delta}, Q(\bold{U}^{\delta_1}, D) J^s \bold{W}^{\delta} 
	\big)_{L^2} 
	\\
	&\hspace{-3.5cm} = 
	\big(
	J^s (w^{\delta} \partial_x \varphi_{\delta_2}(D)\eta^{\delta_2}), J^s \psi^{\delta}
	\big)_{L^2} 
	+
	\big(
	J^s (\psi^{\delta} \partial_x\varphi_{\delta_2}(D) \eta^{\delta_2}), J^s \psi^{\delta}
	\big)_{L^2} \\
	& \hspace{-3.5cm} 
	\hspace{.5cm}+
	\big(
	J^s (w^{\delta} \partial_x\varphi_{\delta_2}(D) u^{\delta_2}), \eta^{\delta_1} J^s w^{\delta}
	\big)_{L^2} 
	+
	\big(
	J^s (w^{\delta} \partial_x \varphi_{\delta_2}(D) u^{\delta_2}), \mathcal{K}_{\mu}(D)J^s w^{\delta}
	\big)_{L^2} 
	\\
	& \hspace{-3.5cm} = :
	A_1 + A_2 + A_3 + A_4.
\end{align*}
The terms $A_1, A_2$ and $A_3$ are treated similarly. For instance, take $A_1$. Then we observe that
\begin{align*}
	A_1 \leq \|J^s(w^{\delta} \partial_x\varphi_{\delta_2}(D) \eta^{\delta_2}) \|_{L^2} \| J^s\psi^{\delta}\|_{L^2}.
\end{align*}
Furthermore, using \eqref{Prod. Kato-Ponce} and the Sobolev embedding  we obtain that
\begin{align}\label{diverging -1}
	\| J^s(w^{\delta} \partial_{x}\varphi_{\delta_2}(D)  \eta^{\delta_2}) \|_{L^2} 
	& \lesssim
	\|w^{\delta}\|_{L^{\infty}} \| J^s\partial_x \varphi_{\delta}(D)\eta^{\delta_2}\|_{L^2} + \|J^sw^{\delta}\|_{L^2} \| \eta^{\delta_2}\|_{H^s} 
	.
\end{align}
Using the triangle inequality, \eqref{reg 3}, and \eqref{Bound on solution - data}   we observe that 
\begin{equation}\label{one of the last 1}
	\| J^s \partial_x \varphi_{\delta}(D) \eta^{\delta_2} \|_{L^{2}}  \leq  \delta^{-1} \| \eta^{\delta_2} \|_{H^{s}},
\end{equation} 
which needs to be compensated to close the estimate. With this in mind, we use the Sobolev embedding $H^{\frac{1}{2}^+}(\R) \hookrightarrow L^{\infty}(\R)$ and \eqref{interpolation s'} to deduce
\begin{align}\label{one of the last 2}
	\| w^{\delta} \|_{L^{\infty}_{T_0}L^{\infty}}
	\lesssim
	\|(\psi^{\delta}, w^{\delta} )\|_{L^{\infty}_{T_0}V_{\sqrt{\mu}}^{\frac{1}{2}^+}}
	\lesssim
	 \delta^{s-\frac{1}{2}^+}
	  \|(\eta_0,u_0)\|_{V^s_{\sqrt{\mu}}}.
\end{align}
Thus, combining \eqref{diverging -1} with \eqref{one of the last 1} and \eqref{one of the last 2} we get that
\begin{align*}
    |A_1| \lesssim \sup\limits_{t\in[0,T_0]} \big{(} \|w^{\delta}\|_{H^s} \|  \eta^{\delta_2}\|_{H^s}  + \delta^{s-\frac{3}{2}^+}\|(\eta_0,u_0)\|_{V^s_{\sqrt{\mu}}} \big{)} \|\psi^{\delta}\|_{H^s},
\end{align*}
as $\delta \searrow 0$. Arguing similarly, and using estimate \eqref{Bound on solution - data}, we deduce that
\begin{align*}
	|A_1|  + |A_2| + |A_3| 
%	& \lesssim
	% \Big(\|(\eta^{\delta_2}, u^{\delta_2})\|_{V^s_{\sqrt{\mu}}} \|(\psi^{\delta}, w^{\delta})\|_{V^s_{\sqrt{\mu}}}+ \|(\psi^{\delta}, w^{\delta})\|_{V^s_{\sqrt{\mu}}} + \ve o(\delta^{s-\frac{3}{2}^+})\Big)\| (\psi^{\delta}, w^{\delta})\|_{V^s_{\sqrt{\mu}}}
	 %\\
	 %&
	  \lesssim\sup\limits_{t\in[0,T_0]}   \|(\eta_0,u_0)\|_{V^s_{\sqrt{\mu}}}\big{(}
	  \|(\psi^{\delta}, w^{\delta})\|_{V^s_{\sqrt{\mu}}}^2+ \delta^{s-\frac{3}{2}^+}\| (\psi^{\delta}, w^{\delta})\|_{V^s_{\sqrt{\mu}}}\big{)}.
\end{align*}
For $A_4$, we write
\begin{align*}
	A_4
	& = 
	\big(
	[J^s \sqrt{\mathcal{K}_{\mu}}(D) , w^{\delta}] \partial_x u^{\delta_2}, J^s \sqrt{\mathcal{K}_{\mu}}(D) w^{\delta} 
	\big)_{L^2}
	\\
	& \hspace{0.4cm} 
	+
	\big( w^{\delta}
	J^s \sqrt{\mathcal{K}_{\mu}}(D) \partial_x u^{\delta_2}, J^s \sqrt{\mathcal{K}_{\mu}}(D)w^{\delta} 
	\big)_{L^2}.
\end{align*}
The commutator is treated by \eqref{Commutator low freq}. While in the second term, we use \eqref{estimate K in Hs} and argue as  above, giving the estimate
\begin{align*}
	|A_4|  \lesssim_{\beta} \sup\limits_{t\in[0,T_0]}  \|(\eta_0,u_0)\|_{V^s_{\sqrt{\mu}}} \big{(}\|(\psi^{\delta}, w^{\delta}) \|_{V^s_{\sqrt{\mu}}}^2 + \delta^{s-\frac{3}{2}^+}  \|(\psi^{\delta}, w^{\delta}) \|_{V^s_{\sqrt{\mu}}}\big{)}.
\end{align*}
We may therefore conclude by \eqref{Energy 2}:
\begin{align*}
	\frac{d}{dt} \tilde E_s(\bold{W}^{\delta})
	& \lesssim_{\beta}
	 \|(\eta_0,u_0)\|_{V^s_{\sqrt{\mu}}} 
	 \big{(}
	\tilde E_s(\bold{W}^{\delta}) + \delta^{s-\frac{3}{2}^+} \tilde E_s(\bold{W}^{\delta})^{\frac{1}{2}}
	\big{)}.
\end{align*}
Then Gr{\"o}nwall's inequality and   \eqref{equiv 2} implies
$$\|(\psi^{\delta},w^{\delta})\|_{L^{\infty}_{T_0} V^s_{\sqrt{\mu}}}  \lesssim_{\beta} \delta^{s-\frac{3}{2}^+} \|(\eta_0,u_0)\|_{V^s_{\sqrt{\mu}}}  
\underset{\delta \rightarrow 0}{\longrightarrow} 0.$$ 
Thus, $(\eta^{\delta} , u^{\delta})$ is a Cauchy sequence in $C([0,T_0]; V^s_{\sqrt{\mu}}(\mathbb{R}))$ and we conclude by the uniqueness of the limit that the solution $(\eta, u)\in  C([0,T_0]; V^s_{\sqrt{\mu}}(\mathbb{R}))$ . \\ %to the uniqueness proved in Step $1$. \\ \ \\

\noindent
\underline{Step 7:} \textit{The solution is bounded by the initial data.} We claim that the solution obtained in Step $6$ satisfy \eqref{bound in terms of initial data}. 

Indeed, using the notation from previous step, we deduce by \eqref{Bound on solution - data} that 
$$\{(\eta^{\delta},u^{\delta})\}_{0<\delta\leq 1} \subset C([0,T_0];V^s_{\sqrt{\mu}}(\mathbb{R}))$$
is a bounded sequence in a reflexive Banach space. As a result, we have by Eberlein-\u{S}umulian's Theorem that $(\eta^{\delta},u^{\delta}) \underset{\delta \rightarrow 0}{\rightharpoonup}  (\eta,u) $ weakly in $V^s_{\sqrt{\mu}}(\mathbb{R})$ for all $t \in [0,T_0]$ and implies 
\begin{equation*}%\label{Bound sol - initial data}
	\sup\limits_{t \in [0,T_0]} \|(\eta,u)\|_{V^s_{\sqrt{\mu}}} \lesssim_{\beta} \| (\eta_0,u_0) \|_{V^s_{\sqrt{\mu}}}.
\end{equation*}
%
%
%
%Now, we recall that $(\eta,u) = (\ve\zeta, \ve v)$. As a consequence, we deduce that $(\zeta,v)$  is a solution of \eqref{full dispersion} on a time of interval $[0,T/\ve]$ as defined in \eqref{Time T}.\\

\noindent
\underline{Step 8:} \textit{Continuous dependence of the flow map data solution.} Consider two sets of initial data $(\zeta_1,v_1)(0),(\zeta_2,v_2)(0) \in V^s_{\sqrt{\mu}}(\R)$.  Then we claim that for all $\lambda >0$, there exists $\gamma>0$  such that having
$$\| (\zeta_1 - \zeta_2, v_1 - v_2)(0) \|_{V^s_{\sqrt{\mu}}} <   \gamma,$$
implies 
$$\| (\zeta_1- \zeta_2, v_1 - v_2) \|_{L^{\infty}_{\frac{T_0}{2}} V^s_{\sqrt{\mu}}} <  \lambda.$$

Equivalently, we will prove that  for $\ve(\zeta_1,\zeta_2,v_1,v_2) =(\eta_1,\eta_2,u_1,u_2)$ such that
$$\| (\eta_1 - \eta_2, u_1 - u_2)(0) \|_{V^s_{\sqrt{\mu}}} <  \ve \gamma,$$
implies 
$$\| (\eta_1- \eta_2, u_1 - u_2) \|_{L^{\infty}_{\frac{T_0}{2}} V^s_{\sqrt{\mu}}} < \ve \lambda.$$
Using the notation in Step $6$, we let  $0<\delta<1$ to be fixed, and $(\eta_1^{\delta},u_1^{\delta}),(\eta_2^{\delta},u_2^{\delta})  \in C([0,\frac{T_0}{2}]; V^s_{\sqrt{\mu}}(\mathbb{R}))$  be two solutions of \eqref{regularised Cauchy problem}
on large time  with corresponding initial data $(\varphi_{\delta}(D)\eta_1 ,\varphi_{\delta}(D)u_1)(0)$ and $(\varphi_{\delta}(D)\eta_2,\varphi_{\delta}(D)u_2)(0)$. Then observe
\begin{align}\label{Continuity of the flow}\notag
	\| (\eta_1- \eta_2, u_1 - u_2) \|_{V^s_{\sqrt{\mu}}}
	& \leq 
%	\sup\limits_{t\in [0,\frac{T_0}{2}]} 
%	\Big(
	\|(\eta_1- \eta_1^{\delta}, u_1 - u_1^{\delta}) \|_{V_{\sqrt{\mu}}^s} 
%	\\
%	&    
%	\hspace{0.4cm} \notag
	+
	\| (\eta_2- \eta_2^{\delta}, u_2 - u_2^{\delta})   \|_{V_{\sqrt{\mu}}^s}
	\\
	&
	\hspace{0.4cm} \notag
	+
	\| (\eta_1^{\delta}-\eta_2^{\delta}, u_1^{\delta} - u_2^{\delta}) \|_{V^s_{\sqrt{\mu}}}
	%\Big)
	\\
	& =:
	B_1 + B_2 + B_3.
\end{align}
For the first two terms we use that $\ve^{-1} (\eta^{\delta},u^{\delta}) = (\zeta^{\delta},v^{\delta}) \rightarrow   (\zeta,v) = \ve^{-1} (\eta,u)$ as $\delta \searrow 0$ by Step $6$. Therefore it follows that $B_1$ and $B_2$ must at least satisfy the estimate,
\begin{equation}\label{B1 B2}
	\sup\limits_{t \in [0 , \frac{T_0}{2}]}( B_1 + B_2)(t) \lesssim_{\beta}
	\ve o_{\delta}(1).%\delta^{s-\frac{3}{2}^+}
	%\max\limits_{i=1,2} \|(\eta_i,u_i)(0)\|_{V^s_{\sqrt{\mu}}} .
\end{equation}
While for $B_3$, we need the continuity of the flow map of the regularized system \eqref{regularised Cauchy problem} on  a long time (see Remark \ref{long time flow map}).

We let $\bold{\tilde W}^{\delta} = (\tilde \psi^{\delta} ,\tilde w^{\delta}) = ( \eta_1^{\delta}-\eta_2^{\delta}, u_1^{\delta}- u_2^{\delta})$. Then staying consistent with previous notation, we have that the difference of two regularized solutions will satisfy the equation:
\begin{equation}\label{reg W}
	\partial_t \bold{\tilde W}^{\delta} + M(\bold{U}_1^{\delta},D)\varphi_{\delta}(D)\bold{\tilde W}^{\delta} = \bold{\tilde F}^{\delta},
\end{equation}
with 
\begin{equation*}
	\bold{\tilde F}^{\delta} =
	-
	\begin{pmatrix}
		\tilde w^{\delta} \partial_x \varphi_{\delta}(D) \eta_2^{\delta} +\tilde  \psi^{\delta} \partial_x \varphi_{\delta}(D)u_2^{\delta}\\
		\tilde w^{\delta} \partial_x \varphi_{\delta}(D) u_2^{\delta}
	\end{pmatrix},
\end{equation*}
and initial data 
$$(\tilde \psi^{\delta}, \tilde w^{\delta})(0)=   (\varphi_{\delta}(D)\eta_1 -\varphi_{\delta}(D)\eta_2,\varphi_{\delta}(D)u_1-\varphi_{\delta}(D)u_2)(0).$$
We will use this information to estimate $B_3$ by suitable energy estimates at $V^0_{\sqrt{\mu}}(\R)$ and $V^s_{\sqrt{\mu}}(\R)-$level.

Similar to Step $5.1$, we first obtain the estimate in $V^0_{\sqrt{\mu}}(\R)$ by using \eqref{equiv 1} and \eqref{equiv 2}. Indeed, there holds
\begin{equation}\label{last energy 0}
	\frac{d}{dt}\tilde{E}_0(\bold{\tilde{W}}^{{\delta}}) \lesssim_{\beta}\max\limits_{i=1,2}
	\|(\eta_i^{\delta},u_{i}^{\delta})\|_{V^s_{\sqrt{\mu}}} \tilde{E}_0(\bold{\tilde{W}}^{{\delta}}).
\end{equation}
For simplicity we let $\|(\eta_1,u_1)(0)\|_{V^s_{\sqrt{\mu}}} = \ve K$. Moreover, we observe that if $ \ve \gamma< \frac{1}{2}\|(\eta_1, u_1)(0)\|_{V^s_{\sqrt{\mu}}}$, then we have by \eqref{Bound on solution - data} that
\begin{align}\notag
	\| (\eta_1^{\delta}, u_1^{\delta}) \|_{L^{\infty}_{\frac{T_0}{2}}V^s_{\sqrt{\mu}}} +  \|  (\eta_2^{\delta}, u_2^{\delta}) | |_{L^{\infty}_{\frac{T_0}{2}}V^s_{\sqrt{\mu}}} 
	& \lesssim_{\beta}
	\|  (\eta_1, u_1)(0)  \|_{V^s_{\sqrt{\mu}}} + \|  (\eta_2, u_2)(0)  \|_{V^s_{\sqrt{\mu}}}
	\\ 
	& \lesssim_{\beta}  \ve K. \label{diff w0}%\|  (\eta_1, u_1)(0)  \|_{V^s_{\sqrt{\mu}}}.
\end{align}
As a result, we have an estimate on the difference in $V^0_{\sqrt{\mu}}(\R)$. Indeed, by Gr{\"o}nwall's inequality, \eqref{last energy 0}, \eqref{diff w0}, the triangle inequality, and \eqref{reg 2} implies 
\begin{align}\label{V0 bound}
 \| (\tilde \psi^{\delta},\tilde  w^{\delta}) \|_{V^0_{\sqrt{\mu}}} 
%	& 
	\lesssim_{\beta}  \| (\tilde \psi^{\delta}, \tilde w^{\delta})(0) \|_{V^0_{\sqrt{\mu}}} 
%	\\
%	& \lesssim_{\beta} \notag
%	\|  (\varphi_{\delta_1}(D)\eta_1- \eta_1, \varphi_{\delta_1}(D) u_1 - u_1)(0)\|_{V^1_{\sqrt{\mu}}} 
%	\\
%	&\notag
%	\hspace{0.5cm}
%	+
%	\| (\eta_2- \varphi_{\delta_2}(D)\eta_2, u_2 - \varphi_{\delta_2}(D)u_2)(0)\|_{V^1_{\sqrt{\mu}}} 
%	+ \gamma_{\ve}
%	\\
	%&
	 \lesssim_{\beta} 
	  \ve K(\delta^{s}+ K^{-1}\gamma).
	    %o(\delta^{s}) \|  (\eta_1, u_1)(0)  \|_{V^s_{\sqrt{\mu}}} + \gamma_{\ve}.
\end{align}

We will now use this decay estimate to deal with  \eqref{Energy 2}, which is at the $V^s_{\sqrt{\mu}}(\R)-$level. Similar to Step $6$, we decompose  the source term \eqref{F: source term} in four pieces
\begin{align*}
	\tilde{A}	
	&
	 : = 
	 \big(J^s \bold{\tilde F}^{\delta}, Q(\bold{U}_1^{\delta},D) J^s \bold{\tilde W}^{\delta}\big)_{L^2}
	 \\
	 & = 
	 \big(
	 J^s (\tilde{w}^{\delta} \partial_x \varphi_{\delta}(D)\eta_2^{\delta}), J^s \tilde{\psi}^{\delta}
	 \big)_{L^2} 
	 +
	 \big(
	 J^s (\tilde{\psi}^{\delta} \partial_x\varphi_{\delta}(D) \eta_2^{\delta}), J^s \tilde{\psi}^{\delta}
	 \big)_{L^2} \\
	 &  
	 \hspace{.5cm}+
	 \big(
	 J^s (\tilde{w}^{\delta} \partial_x\varphi_{\delta}(D) u_2^{\delta}), \eta_2^{\delta} J^s \tilde{w}^{\delta}
	 \big)_{L^2} 
	 +
	 \big(
	 J^s (\tilde{w}^{\delta} \partial_x \varphi_{\delta}(D) u_2^{\delta}), \mathcal{K}_{\mu}(D)J^s \tilde{w}^{\delta}
	 \big)_{L^2} 
	 \\
	 & = :
	 \tilde{A}_1 + \tilde{A}_2 + \tilde{A}_3 + \tilde{A}_4.
\end{align*}
To estimate $\tilde{A}_1$, we  first obtain a bound similar to  \eqref{interpolation s'}. Indeed, using the Sobolev emebedding, interpolation, \eqref{diff w0}, and \eqref{V0 bound} yields
\begin{align*}
\sup\limits_{t\in[0,T_0]}	\| \tilde{w}^{\delta} \|_{L^{\infty}}
	&
 \lesssim\sup\limits_{t\in[0,T_0]}
	\|(\tilde{\psi}^{\delta}, \tilde{w}^{\delta} )\|_{V_{\sqrt{\mu}}^{\frac{1}{2}^+}}
%	\\
%	&\lesssim
%	\|(\eta_1,u_1)(0)\|_{V^s_{\sqrt{\mu}}}^{\frac{s'}{s}} (o(\delta^{s})\|(\eta_1,u_1)(0)\|_{V^s_{\sqrt{\mu}}}+ \gamma_{\ve})^{1-\frac{s'}{s}}
	\\
	&
	\lesssim 
	\| (\tilde \psi^{\delta},\tilde w^{\delta}) \|_{L^{\infty}_{T_0} V^{s}_{\sqrt{\mu}}}^{\frac{1}{2s}^+}
	\| (\tilde \psi^{\delta},\tilde w^{\delta}) \|_{L^{\infty}_{T_0} V^{0}_{\sqrt{\mu}}}^{1-\frac{1}{2s}^+}
	\\
	&
	\lesssim 
	 \ve K(\delta^{s-\frac{1}{2}^+}+ (K^{-1}\gamma)^{1-\frac{1}{2s}^+})
	%\|(\eta_1,u_1)(0)\|_{V^s_{\sqrt{\mu}}}o(\delta^{s-\frac{1}{2}^+})+ \|(\eta_1,u_1)(0)\|_{V^s_{\sqrt{\mu}}}^{\frac{s'}{s}}\gamma_{\ve}^{1-\frac{s'}{s}}
	,
\end{align*}
where $1-\frac{1}{2s}^+>0$ for $s>\frac{1}{2}^+$. Then arguing as we did for $A_1$ in Step $6$, we obtain that
\begin{align*}
	|\tilde{A}_1| 
	& 
	\lesssim \sup\limits_{t\in[0,T_0]}
	\big{(} \|\tilde{w}^{\delta}\|_{H^s} \|  \eta_2^{\delta}\|_{H^s}  + 	\| \tilde{w}^{\delta} \|_{L^{\infty}}\delta^{-1}\|\eta^{\delta}_2\|_{H^s}
	\big{)}
	 \|\tilde{\psi}^{\delta}\|_{H^s}
	 \\
	 & 
	 \lesssim
	 \ve K
	 \sup\limits_{t\in[0,T_0]}
	 \big{(} \|\tilde{w}^{\delta}\|_{H^s}  + \delta^{s-\frac{3}{2}^+} + \delta^{-1}(K^{-1}\gamma)^{1-\frac{1}{2s}^+}
	 \big{)}
	 \|\tilde{\psi}^{\delta}\|_{H^s}.
\end{align*}
Moreover, for the remaining terms, we can use similar estimates, recalling that for $\tilde{A}_4$ we also need to deal with the non-local operator $\mathcal{K}_{\mu}(D)$ (see step $6$ for details). Indeed,
\begin{align}\label{A tilde}
	\tilde{A}	&\lesssim_{\beta}
	\ve K
	\sup\limits_{t\in[0,T_0]}
	\big{(}
	 	\|(\tilde \psi^{\delta}, \tilde w^{\delta}) \|_{V_{\sqrt{\mu}}^s}
	 +
	  \delta^{s-\frac{3}{2}^+} + \delta^{-1}(K^{-1}\gamma)^{1-\frac{1}{2s}^+}
	\big{)}
	\|(\tilde \psi^{\delta}, \tilde w^{\delta}) \|_{V_{\sqrt{\mu}}^s}.
\end{align}
Consequently, combining estimates \eqref{Energy 2} and \eqref{equiv 2} with \eqref{A tilde} yields
\begin{align*}
	\frac{d}{dt} \tilde E_s(\tilde{\bold{W}}^{\delta}) 
	 \lesssim_{\beta}
	 \ve K
	 \big{(}
	\tilde E_s(\tilde{\bold{W}}^{\delta})  +( \delta^{s-\frac{3}{2}^+} + \delta^{-1}(K^{-1}\gamma)^{1-\frac{1}{2s}^+}) \tilde E_s(\tilde{\bold{W}}^{\delta}) ^{\frac{1}{2}}
	\big{)}.
\end{align*}
% 
% 
% \
Thus, we  have an estimate on $B_3$ by  the energy estimate  \eqref{equiv 2}, Gr{\"o}nwall's inequality and \eqref{reg 4}. Indeed,  %since 
%
%
%
%\begin{equation*}
	%	\|(\tilde \psi^{\delta}, \tilde w^{\delta})(0) \|_{V_{\sqrt{\mu}}^s} = 
%\end{equation*}
%
%
%
 there holds
\begin{align}\notag
	B_3 = \|(\tilde \psi^{\delta}, \tilde w^{\delta}) \|_{V_{\sqrt{\mu}}^s} 
	& \lesssim_{\beta}
	\|(\tilde \psi^{\delta}, \tilde w^{\delta})(0) \|_{V_{\sqrt{\mu}}^s} + \ve K  ( \delta^{s-\frac{3}{2}^+} + \delta^{-1}(K^{-1}\gamma)^{1-\frac{1}{2s}^+})  \notag
	\\ 
	%\notag
	%& \lesssim \ve \big(
	%\underset{ o(1)_{\delta_1}}{\underbrace{| \bold{V}_0 - \chi_{\delta_1}\bold{V}_0 |_{V^s_{\sqrt{\mu}}} }}
	%+
	%\underset{ o(1)_{\delta_2}}{\underbrace{|\bold{Z}_0 - \chi_{\delta_2} \bold{Z}_0|_{V^s_{\sqrt{\mu}}}}}
	%+
	%\underset{\leq R}{\underbrace{|\bold{V}_0 - \bold{Z}_0 |_{V^s_{\sqrt{\mu}}}}} + o(\delta) +  \mathcal{O}(\delta^{-1})R^{\frac{4}{5}^+}
	% \\ 
	& \lesssim_{\beta} \label{B3} 
	\ve o_{\delta}(1)+\ve\gamma
	+
	 \ve K  ( \delta^{s-\frac{3}{2}^+} + \delta^{-1}(K^{-1}\gamma)^{1-\frac{1}{2s}^+}).
\end{align}

Returning to \eqref{Continuity of the flow}, we may conclude the proof of the continuous dependence. We first fix $0<\delta<1$ to be small enough and satisfying
\begin{equation*}
	o_{\delta}(1)+K\delta^{s-\frac{3}{2}^+} < \frac{\lambda}{2c_{\beta}},
\end{equation*}
for some constant $c_{\beta}$ depending on $\beta$. Then let $ \gamma$ verify the restriction:
\begin{equation*}
	\ve \gamma < \frac{1}{2}\|(\eta_1, u_1)(0)\|_{V^s_{\sqrt{\mu}}},
\end{equation*}
such that $\gamma  + K\delta^{-1}(K^{-1}\gamma)^{1-\frac{1}{2s}^+} <\frac{\lambda}{2c_{\beta}}$. Consequently, we have by  \eqref{Continuity of the flow}, \eqref{B1 B2} and \eqref{B3}  that
\begin{align*}
	\sup\limits_{t \in [0,\frac{T_0}{2}]} \| (\eta_1-\eta_2, u_1 - u_2)(t)\|_{V^s_{\sqrt{\mu}}} 
	&
	\leq	\ve c_{\beta} (o_{\delta}(1)+ \gamma
	+
	K  ( \delta^{s-\frac{3}{2}^+} + \delta^{-1}(K^{-1}\gamma)^{1-\frac{1}{2s}^+}) )
	\\
	&  < \ve \lambda.
\end{align*}
As a result, we have demonstrated that the solution of \eqref{full dispersion} depends continuously on the initial data and thus completes the proof of Theorem \ref{Well-posedness long time full dispersion}.

\end{proof}

\section{Final comments}
\noindent
The proof of Theorem \ref{W-P Whitham Boussinesq} and Theorem \ref{W-P 2nd Whitham Boussinesq} are similar and is therefore omitted. %Although,  one should note that both \eqref{Whitham Boussinesq} and \eqref{2nd Whitham Boussinesq} are well-posed on a long time independent from the surface tension parameter (see  Remark \ref{Remark on beta after lemma}). 
%Moreover, we have the full  justification of each system as an asymptotic model to the water wave equations as a consequence of \cite{Emerald2021}. 

%--delete--In summary, we have considered a capillary Whitham-Boussinesq type system and %proved that it is well-posed on large time. 

%Add the proof of ... is presented in details while --- follows the same arguments. The main difficulty is finding suitble energy estimates....

%The system is fully dispersive  where  the proof of Theorem \ref{Well-posedness long time full dispersion} relied on a 'nonlinear technique' to verify the existence of solutions on time $\frac{1}{\ve}$ and the continuity of the flow map with respect to initial data. Consequently, we have the full justification of \eqref{full dispersion} as an asymptotic model to the water wave equations. 

\appendix

\section{}

\subsection{Pointwise estimates for $\sqrt{K_{\mu}(\xi)}$ and $\sqrt{T_{\mu}(\xi)}$}\label{A1}
Before turning to the proof of the pointwise estimates in Lemma \ref{Pointwise est.} and Lemma \ref{Pointwise est. on T}, we make an important observation. Let $\sqrt{\mathcal{T}_{\mu}}(D)$ be the Fourier multiplier associated with the symbol
\begin{equation*}
	\sqrt{T_{\mu}(\xi)} = \sqrt{\frac{tanh(\sqrt{\mu} |\xi|)}{\sqrt{\mu} |\xi|}}.
\end{equation*}
Then the operator  is regularizing for $\mu>0$ and acts similar to the Bessel potential  $J^{-\frac{1}{2}}$ in the $L^2-$norm. While $\sqrt{K_{\mu}(\xi)}$ has a  similar behaviour in low frequency for $\beta<\frac{1}{3}$, but acts like $J^{\frac{1}{2}}$ in high frequencies.\\
\begin{figure}[h!]
	\hspace{-0.9cm}
	\includegraphics[scale=0.4]{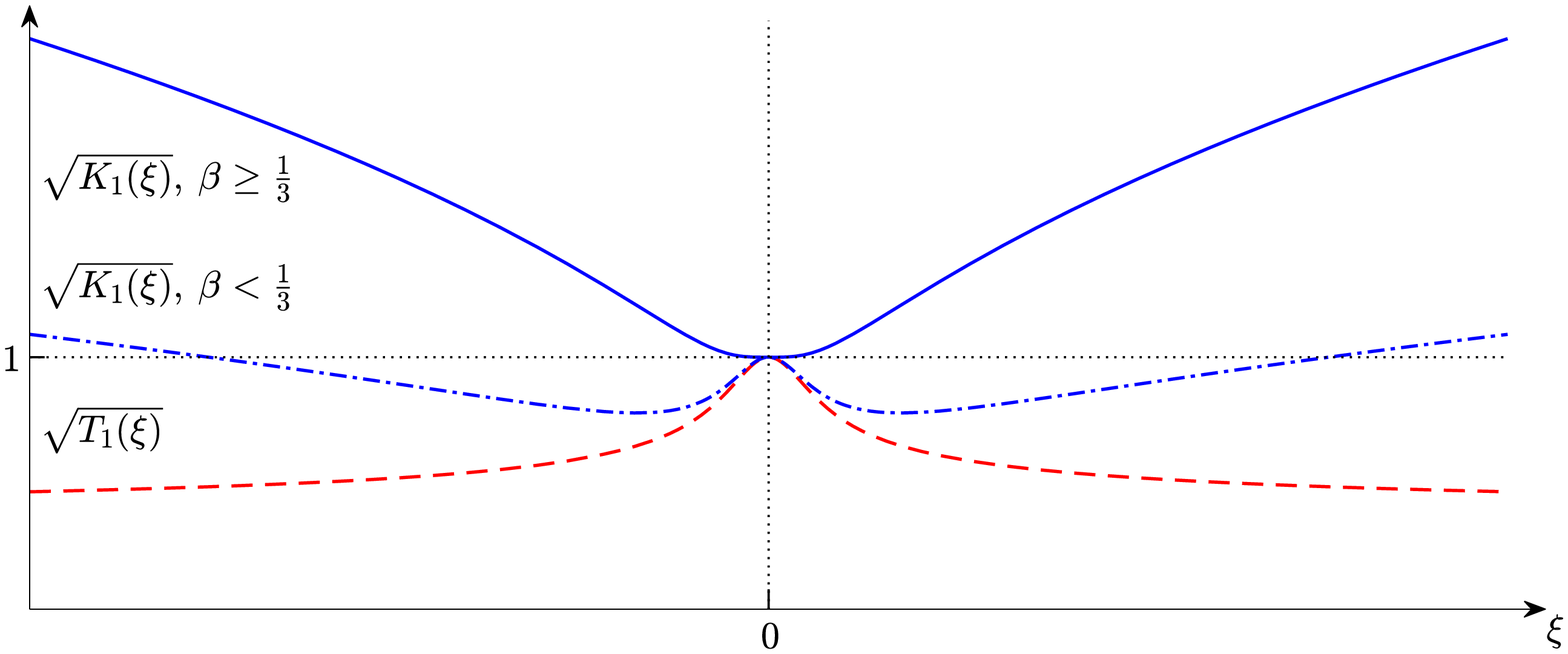}
	\centering
	\caption{\small The multiplier $\sqrt{K_1(\xi)}$ in the cases $\beta \geq \frac{1}{3}$ (line) and $\beta < \frac{1}{3}$ (dash-dots). The red curve is a plot of $\sqrt{T_1(\xi)}$ (dash).}
\end{figure}
\begin{lemma}\label{der T}
	Let $\mu >0$ and take any $n \in \N$.  
	\begin{itemize}
		\item  Then $T_{\mu}(\xi)$ satisfies
		\begin{equation}\label{derivative T}
			\Big{|} \frac{d^n}{d\xi^n}\sqrt{T_{\mu}(\xi)}\Big{|} \lesssim  \mu^{\frac{n}{2}} \langle \sqrt{\mu} \xi \rangle^{-\frac{1}{2}-n}.
		\end{equation}
		\item Similarly, $K_{\mu}(\xi)$ satisfies
		\begin{equation}\label{derivative sqrt K}
			\Big{|}    \frac{d^n}{d\xi^n}\sqrt{K_{\mu}(\xi)}\Big{|} \lesssim_{\beta}  \mu^{\frac{n}{2}} \langle \sqrt{\mu} \xi \rangle^{\frac{1}{2}-n}.
		\end{equation}
	\end{itemize}
\end{lemma}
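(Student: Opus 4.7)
The natural first move is to rescale and remove $\mu$ from the symbol. Setting $\zeta = \sqrt{\mu}\,\xi$, one has the identities $T_\mu(\xi) = T_1(\sqrt{\mu}\,\xi)$ and $K_\mu(\xi) = K_1(\sqrt{\mu}\,\xi)$, where $K_1(\zeta) := T_1(\zeta)(1+\beta\zeta^2)$. The chain rule then yields
\begin{equation*}
\frac{d^n}{d\xi^n}\sqrt{T_\mu(\xi)} = \mu^{n/2}\,\bigl(\sqrt{T_1}\bigr)^{(n)}(\sqrt{\mu}\,\xi), \qquad \frac{d^n}{d\xi^n}\sqrt{K_\mu(\xi)} = \mu^{n/2}\,\bigl(\sqrt{K_1}\bigr)^{(n)}(\sqrt{\mu}\,\xi),
\end{equation*}
so the task reduces to proving $|(\sqrt{T_1})^{(n)}(\zeta)| \lesssim \langle\zeta\rangle^{-1/2-n}$ and $|(\sqrt{K_1})^{(n)}(\zeta)| \lesssim_\beta \langle\zeta\rangle^{1/2-n}$ uniformly in $\zeta\in\R$. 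The claimed factor $\mu^{n/2}$ then drops out automatically from the rescaling.

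With this reduction in hand, I would split the frequency space into the smooth region $|\zeta|\leq 1$ and the asymptotic region $|\zeta|>1$. In the smooth region, the function $\zeta\mapsto \tanh(|\zeta|)/|\zeta|$ extends to an even real-analytic function with value $1$ at the origin (and strictly positive), so both $T_1$ and $K_1$ are smooth and bounded away from zero on $[-1,1]$. Hence their square roots are smooth there and all derivatives are uniformly bounded; since $\langle\zeta\rangle^{-1/2-n}$ and $\langle\zeta\rangle^{1/2-n}$ are both bounded above and below by absolute constants on this set, the estimate is immediate.

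For the asymptotic region $|\zeta|>1$, the key observation is that $\tanh(|\zeta|) = 1 + r(\zeta)$ with $r$ and all its derivatives $O(e^{-2|\zeta|})$. I would factor out the algebraic behaviour as follows:
\begin{equation*}
\sqrt{T_1(\zeta)} = |\zeta|^{-1/2}\,\Phi(\zeta), \qquad \sqrt{K_1(\zeta)} = |\zeta|^{1/2}\,\Psi(\zeta),
\end{equation*}
where $\Phi(\zeta) := \sqrt{\tanh(|\zeta|)}$ and $\Psi(\zeta) := \sqrt{\tanh(|\zeta|)(\beta + 1/\zeta^2)}$. On $|\zeta|>1$, both $\Phi$ and $\Psi$ are smooth and bounded away from $0$ (for $\Psi$ this uses $\beta>0$, which is where the $\beta$-dependence enters the constant), and repeated application of Faà di Bruno's formula to the outer function $\sqrt{\cdot}$—using only the exponential decay of all derivatives of $r$ and the boundedness of $\beta + 1/\zeta^2$ together with its derivatives $-2/\zeta^3, 6/\zeta^4,\ldots$—shows that every derivative of $\Phi$ and $\Psi$ is uniformly bounded on this set. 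Applying the Leibniz rule to $|\zeta|^{\mp 1/2}\Phi(\zeta)$ and $|\zeta|^{1/2}\Psi(\zeta)$, the dominant contribution comes from all derivatives falling on the power of $|\zeta|$, producing $c_n|\zeta|^{\mp 1/2 - n}$, while the remaining Leibniz terms involve factors of $|\zeta|^{-k}$ with $k>n+1/2$ (either from further differentiation of $|\zeta|^{\pm 1/2}$ or from derivatives of $\Psi$), hence are absorbed.

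The main technical obstacle, though routine, is keeping the Faà di Bruno bookkeeping for $\sqrt{\Phi\text{-stuff}}$ clean and verifying uniform bounds on derivatives of $\Psi$ that do not degenerate as $\beta\to 0^+$ in a problematic way; concretely one must observe that on $|\zeta|>1$ the argument of the square root, $\tanh(|\zeta|)(\beta+1/\zeta^2)$, is bounded below by $c\beta$, so its square root is smooth with derivatives controlled by $\beta^{-1/2}$ times a polynomial in the derivatives of $\tanh(|\zeta|)(\beta+1/\zeta^2)$, which are themselves uniformly bounded. This accounts for the notation $\lesssim_\beta$ in \eqref{derivative sqrt K}, as opposed to the $\beta$-free constant in \eqref{derivative T}.
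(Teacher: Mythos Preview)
Your approach is correct and parallels the paper's: rescale to $\mu=1$, split into $|\zeta|\le 1$ (smoothness from analyticity and positivity) and $|\zeta|>1$ (factor out the algebraic power), then conclude with Leibniz. One point to tighten: you first assert that all derivatives of $\Phi$ and $\Psi$ are merely ``uniformly bounded'', and then claim that the remaining Leibniz terms carry extra decay. Boundedness alone is not enough---the term $|\zeta|^{1/2}\Psi^{(n)}$ would only be $O(|\zeta|^{1/2})$. What you actually need, and what in fact follows from the ingredients you list (exponential decay of derivatives of $\tanh$, polynomial decay of derivatives of $\beta+\zeta^{-2}$, lower bound $c\beta$ on the radicand) via Fa\`a di Bruno, is that $\Phi^{(k)}$ decays exponentially and $\Psi^{(k)}=O_\beta(|\zeta|^{-k-2})$ for $k\ge 1$. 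Stating this explicitly makes the Leibniz step go through.

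The paper's only structural simplification concerns \eqref{derivative sqrt K}: rather than introducing $\Psi$, it writes $\sqrt{K_1}=\sqrt{T_1}\cdot\sqrt{1+\beta r^2}$, reuses the already-proved bound \eqref{derivative T} for the first factor, and uses the elementary estimate $|\partial_r^k\sqrt{1+\beta r^2}|\lesssim_\beta \langle r\rangle^{1-k}$ for the second. Leibniz then gives $\sum_k \langle r\rangle^{-\frac12-(n-k)}\langle r\rangle^{1-k}\lesssim_\beta \langle r\rangle^{\frac12-n}$ in one line, avoiding the separate Fa\`a di Bruno analysis of $\Psi$.
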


\begin{proof} The proof is a generalization of Lemma $8$ in \cite{Achenef2020well}. Following their arguments, we observe that since $T_{1} (\sqrt{\mu} r) =  T_{\mu}(r)$ for $r>0$, it is sufficient to show that
\begin{equation*}
	\Big{|}   \frac{d^n}{dr^n} \sqrt{T_{1}(r)} \Big{|} \lesssim \langle  r \rangle^{-\frac{1}{2}-n}.
\end{equation*}
We divide the proof into two steps.

 First, let $0<r<\frac{1}{2}$ and  prove that any derivative of $\sqrt{T_1(r)}$ is bounded. We have that $\sqrt{T_1(r)}$ is bounded in this region. By direct computation, we observe
\begin{align}\label{F' <0.5}
	\frac{d}{dr} \sqrt{T_{1}(r)}
	=
	\frac{sech^2(r)}{(2r)^2\sqrt{T_{1}(r)}}\bigg( 2r +  \frac{e^{-2r} -e^{2r} }{2}\bigg)
	=:  \frac{sech^2(r)}{\sqrt{T_{1}(r)}} G(r),
\end{align}
where $G(r)$ can be written as a series by expanding the exponentials. Indeed, we have that
\begin{align*}
	G(r)  = 
	\frac{1}{(2r)^2}\bigg(
	2r + \frac{e^{-2r} - e^{2r}}{2}
	\bigg)
	=-\sum \limits_{k = 0}^{\infty} \frac{(2r)^{2k+1}}{(2k+3)!}.
\end{align*}
The series is uniformly convergent for $r\geq 0$. Moreover, $G(r)$ and its derivatives are bounded for $0< r < \frac{1}{2}$. By extension, since for all $n\geq 0$ there holds $\frac{d^n}{dr^n}sech^2(r) \lesssim e^{-2r}$ we have that
\begin{equation*}
	\Big{|}   \frac{d^n}{dr^n}   \sqrt{T_{1}(r)} \Big{|} \lesssim 1.
\end{equation*}

Now, we let $r \geq \frac{1}{2}$ and prove the necessary decay estimate. We use the identity
 \begin{equation}\label{id th}
 	tanh(r) = 1- \frac{2}{e^{2r} + 1},
 \end{equation}
and  deduce by the chain rule that
\begin{align*}
	\Big{|}   \frac{d^n}{dr^n} \sqrt{T_{1}(r)} \Big{|} \lesssim
	\sum_{k=0}^n  \Big{(}\frac{1}{r}\big(1 - \frac{2}{e^{2r} + 1}\big)\Big{)}^{\frac{1}{2} -k} r^{-k-n}
	\lesssim \langle r \rangle^{-\frac{1}{2} - n}.
\end{align*}
%
%
%
%by the chain rule.

Lastly, we have that \eqref{derivative sqrt K} follows by the Leibniz rule. Indeed, we observe
\begin{align*}
		\Big{|}   \frac{d^n}{dr^n} \sqrt{K_{1}(r)} \Big{|} 
		&=
		\Big{|}   \frac{d^n}{dr^n} \sqrt{T_{1}(r)(1+ \beta r^2)} \Big{|} 
		\\
		&
		\lesssim
		\sum\limits_{k=0}^n
		 \Big{|}   \frac{d^{n-k}}{dr^{n-k}} \sqrt{T_{1}(r)} \Big{|} 
		 \Big{|}   \frac{d^k}{dr^k}
		 \sqrt{1 + \beta r^2}
		 \Big{|}
		 \\
		 & 
		 \lesssim_{\beta}
		 \langle r \rangle^{\frac{1}{2} -n},
\end{align*}
which concludes the proof of Lemma \ref{der T}.
\end{proof}

\subsection{Proof of Lemmas \ref{Pointwise est.} and \ref{Pointwise est. on T}} \label{A2}
\begin{proof}[Proof of Lemma \ref{Pointwise est.}]
	First, we again make the observation that $K_1(\sqrt{\mu} \xi)  = K_{\mu}(\xi)$. Therefore, we simply let $r>0$ and consider $K_1(r)$. To establish the upper bound given in \eqref{sqrt K}, we note that for $r<1$ we have
	\begin{align*}
		K_1(r) \leq  1 + \beta.
	\end{align*}
	This is because $\frac{tanh(r)}{r} \leq 1$. On the other hand, when $r\geq 1$ then $tanh(r) <1$  and it follows that  
	\begin{align*}
		K_1(r) \leq  1+ \beta r.
	\end{align*}
	Consequently, for all $r>0$ there holds $K_1(r) \leq c_{\beta}^2( 1 + r)$ with  $c_{\beta}^2$ as defined in \eqref{Constant beta}.% \eqref{sqrt K}.
	 \\

	Next, we prove the lower bound given by \eqref{K pos} with $\beta \geq \frac{1}{3}$. We will again split $r$ into two intervals, where we aim to prove
	\begin{equation}\label{want to show}
		K_1(r) =  \frac{tanh(r)}{r} (1 + \beta r^2) \geq (1 - \frac{h_0}{2})+ c  r,
	\end{equation}
	for some positive constant $c>0$ and any $h_{0}\in(0,1)$.  We prove \eqref{want to show} by considering two cases for $r$. When $0\leq r \leq \frac{h_0}{4}$ we use that
	\begin{equation}\label{Taylor tanh}
		tanh(r) = \int_0^r (1-tanh^2(x)) \: dx
		\geq
		r - \frac{r^3}{3},
	\end{equation}
	since $tanh^2(x)\leq x^2$ by the mean value theorem. Therefore, we have that
	\begin{align*}
			K_1(r)  \geq \Big{(}	1 - \frac{r^2}{3} \Big{)} \Big{(}	1 + \frac{r^2}{3} \Big{)} \geq \Big{(} 1 - \frac{h_0}{2} \Big{)} +  \Big{(} \frac{h_0}{4} - \frac{r^4}{9} \Big{)} + r,
	\end{align*}
	which implies \eqref{want to show} since $0\leq r \leq \frac{h_0}{4}$.

	For the remaining part, we use the identity \eqref{id th} and show that \eqref{want to show} holds for $r\geq \frac{h_0}{4}$ if:
	\begin{align*}
		&\hspace{1cm} tanh(r) \bigg(1 + \frac{r^2}{3}\bigg) -r(1 +cr) >0 
		\\
		& \iff
		\bigg(1 + \frac{r^2}{3} - r(1+cr)\bigg) -\frac{2}{e^{2r}+1}\bigg(1 + \frac{r^2}{3}\bigg)   >0
		\\
		& \iff
		\bigg(1 + \frac{r^2}{3} - r(1+cr)\bigg)\big(e^{2r}+1\big) - 2\bigg(1 + \frac{r^2}{3}\bigg) : = G(r)   >0.
	\end{align*}
	But this holds since 
	\begin{align*}
		G'''(r) 
		& 
		=
		\frac{4}{3} e^{2r} \big( 2r^2  -3c(2r^2+6r+3) \big)
		\\
		& 
		\geq 	\frac{4}{3} e^{2r} 
		\Big{(} 
		r^2
		(1-6c)	
		+
		r
		(\frac{h_0}{8} - 18c)
		+
		(\frac{h_0^2}{32} - 9c)
		\Big{)},
	\end{align*}
	and is positive for $0<c<10^{-3}h_0^2$ with $r\geq \frac{h_0}{4}$. Indeed, as a consequence we have the following chain of implications
	\begin{align*}
		%&\hspace{1cm}0<  F'''(r)  = \frac{8}{3}r^2e^{2r}
		%\\
		&% \implies
		0< G''\Big(\frac{h_0}{2}\Big) \leq G''(r) = 
		\frac{2}{3} \big(
		e^{2r}
		(1-2r+2r^2 -3c(2r^2+4r+1))
		-3c -1 
		\big) 
		% \quad \text{increasing}
		\\
		\implies& 0< G'\Big(\frac{h_0}{2}\Big) \leq G'(r) = \frac{1}{3} \big(
		e^{2r}(3-4r+2r^2 -6cr(r+1))
		-
		2r-3-6cr
		\big) 
		% \quad \text{increasing}
		\\
		\implies& 0< G\Big(\frac{h_0}{2}\Big) \leq G(r).
		%  \quad \text{increasing}.
	\end{align*}
	We have therefore verified \eqref{want to show} for all $r\geq 0$ and we conclude that \eqref{K pos} holds true.\\

	Similarly,  for $0<\beta<\frac{1}{3}$, we have that \eqref{K pos beta} is a consequence of the inequality
	\begin{equation*}
		\frac{tanh(r)}{r} (1 + \beta r^2) \geq \beta + c  r.
	\end{equation*}
	One should note that we do not require sharp estimates. In fact, we simply need to obtain the estimate
	\begin{equation*}
		\bigg(1 + \beta r^2- r(\beta +cr)\bigg)\big(e^{2r}+1\big) - 2\bigg(1 + \beta r^2\bigg) =:  H(r) \geq 0,
	\end{equation*}
	for $r\geq 0$. On the other hand, we observe that
	\begin{equation*}
			H'''(r) 
		= 4 e^{2 r} 
		\Big(
		2 + 2\beta r (2 + r)
		-3c+2cr(3+r)
		\Big) >0
	\end{equation*}
for all $r\geq0$ if
\begin{align*}
 (2-3c) + 2r(2\beta - 3c) + 2r^2(\beta-c)>0
\end{align*}
and is ensured for $0<c\leq 10^{-3}\beta $. Consequently, 
	\begin{align*}
		&0< H''(0) \leq 
		H''(r)  
		= 2 e^{2 r} \Big(2+   \beta( 2r^2+2r-1) -  c(2r^2+4r+1)\Big) -2(\beta+c)
		\\
		&0<H'(0) \leq 
		H'(r)  
		= e^{2r}
		\Big(2+
		\beta ( 2r^2-1) - 2c(r^2+2r) 
		\Big)
		-\beta(2r+1) - 2cr
		\\
		&
		0< H(0) \leq H(r),
	\end{align*}
	and we argue as above to conclude. \\

	The proof of estimate \eqref{derivative Symbol} is a direct consequence of Lemma \ref{der T} 	and \eqref{derivative sqrt K} with $n=1$ if we trace the dependence in $\beta$:
	\begin{align*}
		\big{|} \frac{d}{dr}  \sqrt{K_1(r)}\big{|} 
		\lesssim
		\langle r\rangle^{-\frac{1}{2}-1}(1+\beta r^2)^{\frac{1}{2}} + \langle r\rangle^{-\frac{1}{2}} \frac{\beta r}{(1+\beta r^2)^{\frac{1}{2}}} \lesssim \langle r \rangle^{-1} + \sqrt{\beta} \langle r\rangle^{-\frac{1}{2}}.
	\end{align*}

	Estimate \eqref{Comparison}   concerns the  following bound on the difference:
	\begin{align*}
		\Big{|}
		\sqrt{K_{\mu}(\xi)} - \sqrt{\beta} \mu^{\frac{1}{4}}|\xi|^{\frac{1}{2}}
		\Big{|}
		=
		\Big| \Big( \frac{tanh(\sqrt{\mu} |\xi|)}{\sqrt{\mu} |\xi|} (1+ \beta \mu \xi^2 )\Big)^{\frac{1}{2}} 
		-
		 \sqrt{\beta} \mu^{\frac{1}{4}}|\xi|^{\frac{1}{2}}
		\Big|.
	\end{align*}
	For $\beta\sqrt{\mu} |\xi| \leq 1$ there holds trivially by using the triangle inequality that
	\begin{equation*}
		\Big{|}
		\sqrt{K_{\mu}(\xi)} - \sqrt{\beta} \mu^{\frac{1}{4}}|\xi|^{\frac{1}{2}}
		\Big{|}
		\lesssim 1.
	\end{equation*}
	While for $\beta \sqrt{\mu} |\xi| > 1$ we observe by direct calculations that
	\begin{align*}
		\Big{|}
		\sqrt{K_{\mu}(\xi)} - \sqrt{\beta} \mu^{\frac{1}{4}} |\xi|^{\frac{1}{2}}
		\Big{|}
		&
		=
		 \sqrt{\beta} \mu^{\frac{1}{4}} |\xi|^{\frac{1}{2}} \Big|\Big( \frac{tanh(\sqrt{\mu}|\xi|)}{\beta  \mu\xi^2}+tanh(\sqrt{\mu}|\xi|)\Big)^{\frac{1}{2}} - 1\Big|  
		 \\
		 & 
		 \lesssim
		 \sqrt{\beta} \mu^{\frac{1}{4}} |\xi|^{\frac{1}{2}} \Big|\frac{tanh(\sqrt{\mu}|\xi|)}{\beta  \mu\xi^2} + (tanh(\sqrt{\mu}|\xi|)-1)\Big|
		 \\
		 & 
		 \lesssim \frac{1}{(\beta \sqrt{\mu}|\xi|)^{\frac{1}{2}}} \frac{1}{\sqrt{\mu}|\xi|} + \sqrt{\beta}(\sqrt{\mu}|\xi|)^{\frac{1}{2}}e^{-2\sqrt{\mu}|\xi|}  
		 \\
		 &
		 \lesssim
		 \beta + \sqrt{\beta},
	\end{align*}
	 where we used the triangle inequality and that $\beta\sqrt{\mu}|\xi|>1$. 
	
	Lastly, we prove  \eqref{K point. for commutator} by using \eqref{Comparison}:
	\begin{align*}
		\sqrt{K_{\mu}(\xi)} \langle \xi \rangle^{s-1} |\xi| 
		& = 
		\bigg( \sqrt{K_{\mu}(\xi)}
		- \sqrt{\beta}\mu^{\frac{1}{4}} |\xi|^{\frac{1}{2}}
		\bigg)
		 \langle \xi \rangle^{s-1} |\xi| +\sqrt{\beta}\mu^{\frac{1}{4}}
		 \langle \xi \rangle^{s-1} |\xi|^{\frac{3}{2}} 
		 \\
		 &\lesssim ( \beta + \sqrt{\beta}) \langle \xi \rangle^{s} + \sqrt{\beta} \mu^{\frac{1}{4}} \langle \xi \rangle^{s} |\xi|^{\frac{1}{2}}.
	\end{align*}

\end{proof}

\begin{proof}[Proof of Lemma \ref{Pointwise est. on T}]
	To prove \eqref{Inverse T_mu}, since $T_1(\sqrt{\mu} \xi) = T_{\mu}(\xi)$, we only need to establish the following inequality:
	\begin{equation}\label{proof of inverse T_mu}
		1-\frac{h_0}{2}+cr \leq \frac{r}{tanh(r)}\lesssim 1+ r,
	\end{equation}
	for all $r>0$ and some $c>0$. We also note that the upper bound is trivial, so we only prove the lower bound. Let $h_0\in(0,1)$.  By the mean value theorem we find that $tanh(r) \leq r$ and observe %$tanh(r) = r - \tilde{R}(r)$ by Taylors formula and observe
	%
	%
	%
	%\begin{align*}
	%	\frac{r}{tanh(r)} = 1+ \frac{\tilde{R}(r)}{r-\tilde{R}(r)}>1+cr
	%\end{align*}
	%
	%
	%
	%by...
	%\begin{align*}
	%	(1-\frac{h_0}{2}+r)tanh(r)
	%	\leq 
	%	(1-\frac{h_0}{2})r +r^2
	%	\leq 
	%	r.
%	\end{align*}
	%
%	On the other hand, when $r\geq\frac{h_0}{2}$ we have that
	%
	%
	%
	\begin{equation*}
		\frac{r}{tanh(r)} = 	(1-\frac{h_0}{2})\frac{r}{tanh(r)} + 	\frac{h_0}{2}\frac{r}{tanh(r)} 
		\geq% \frac{1-\epsilon}{1-\epsilon}r+ \frac{\epsilon}{1-\epsilon} r.
		1-\frac{h_0}{2} + \frac{h_0}{2} r.
	\end{equation*}

	Next, we consider \eqref{T_mu J_mu equiv}. For $\sqrt{\mu}|\xi| \leq 1$ we have that $T_{\mu}(\xi) \sim 1$ and $\langle \sqrt{\mu} \xi \rangle \sim 1$. On the other hand, when $\sqrt{\mu}|\xi| \geq 1$ then $T_{\mu}(\xi) \sim \frac{1}{\sqrt{\mu}|\xi|}$ and $	\langle \sqrt{\mu} \xi \rangle \sim  \sqrt{\mu}|\xi|$. Multiplying the two functions, we obtain the desired result. 

	We estimate the derivative \eqref{Derivative J^sJ_mu} directly and using that $\mu\in (0,1)$:
	\begin{align*}
		\Big{|}
		\frac{d}{d\xi} \langle\xi\rangle^{s} \langle \sqrt{\mu}\xi \rangle^{\frac{1}{2}}
		\Big{|}
		\lesssim
		\langle\xi\rangle^{s-1} \langle \sqrt{\mu}\xi \rangle^{\frac{1}{2}}
		+
		\langle\xi\rangle^{s}  \langle \sqrt{\mu}\xi \rangle^{\frac{1}{2}} \sqrt{\mu}\langle \sqrt{\mu}\xi \rangle^{-1}
		\lesssim 
		\langle\xi\rangle^{s-1} \langle \sqrt{\mu}\xi \rangle^{\frac{1}{2}},
	\end{align*}
since $\sqrt{\mu}\langle \sqrt{\mu}\xi \rangle^{-1} \leq \langle \xi \rangle^{-1}$.

	Similarly, we have that \eqref{Derivative sqrtT Js J_mu} follows by the same argument after using \eqref{derivative T} and \eqref{Derivative J^sJ_mu}:
		\begin{align*}
		\Big{|}
		\frac{d}{d\xi} \sqrt{T_{\mu}(\xi)} \langle\xi\rangle^{s} \langle \sqrt{\mu}\xi \rangle^{\frac{1}{2}}
		\Big{|}
		&
		\lesssim
		\sqrt{\mu}
		\langle \xi \rangle^{-\frac{3}{2}}
		\langle\xi\rangle^{s} \langle \sqrt{\mu}\xi \rangle^{\frac{1}{2}}
		+
     	\langle \sqrt{\mu}\xi \rangle^{-\frac{1}{2}}	\langle\xi\rangle^{s-1} \langle \sqrt{\mu}\xi \rangle^{\frac{1}{2}}
	    \\
		&
		\lesssim 
		\langle\xi\rangle^{s-1}.
	\end{align*}

For estimate \eqref{Comparison J_mu D_mu}, we observe that
\begin{align*}
	\langle\sqrt{\mu} \xi\rangle^{\frac{1}{2}}- \mu^{\frac{1}{4}}|\xi|^{\frac{1}{2}} 
	& 
	=
	  \mu^{\frac{1}{4}} |\xi |^{\frac{1}{2}} 
	  \Big{(}
	  (\frac{1}{\mu \xi^2} + 1)^{\frac{1}{4}} - 1 
	  \Big{)}
	  \lesssim 1.
\end{align*}

\end{proof}

\subsection{Proof of Lemmas \ref{Commutator L2} and \ref{T_mu and J_mu Commutator L2}}\label{A3}

For the proof of Lemma \ref{Commutator L2}  and Lemma \ref{T_mu and J_mu Commutator L2}, we need a \lq\lq generalized\rq\rq\:version of the Kato-Ponce commutator estimate which holds for symbols defined by: 
\begin{Def}[Symbol class \cite{Lannes_Book2013} Def. $B.7$]\label{Symbol class} We say that a symbol $\sigma(D)$ is a member of the symbol class $\mathcal{S}^s$ with $s\in \mathbb{R}$, if $\xi \mapsto \sigma(\xi) \in \mathbb{C}$ is smooth and satisfies
	\begin{equation*}
		\forall \alpha \in \mathbb{N}, \quad \sup \limits_{\xi \in  \mathbb{R}} \langle \xi \rangle^{\alpha - s} \Big{|} \frac{d^{\alpha}}{d\xi^{\alpha}} \sigma(\xi)\Big{|} <\infty.
	\end{equation*}
	One also associates the following seminorm on $\mathcal{S}^s:$
	\begin{equation}\label{seminorm}
		\mathcal{N}^s(\sigma) =  \sup\limits_{\alpha \in \mathbb{N}, \: \: \alpha \leq 4} \:\:\: \sup\limits_{\xi \in \mathbb{R}} \langle \xi \rangle^{\alpha - s} \Big{|} \frac{d^{\alpha}}{d\xi^{\alpha}} \sigma(\xi)\Big{|} .
	\end{equation}
\end{Def}
\noindent
The following result is found in Appendix B of \cite{Lannes_Book2013}.
\begin{lemma}\label{SymbolLannes} Let $t_0>1/2$, $s\geq 0$ and $\sigma \in \mathcal{S}^s$.
	%\begin{itemize}
	%\item If $0\leq s\leq t_0 +1$ and $f \in H^{t_0+1}$ then, for $g\in H^{s-1}$, one has
	%
	%
	%
	%\begin{equation}\label{symbol T}
	%	\| [\sigma, f]  g \|_{L^2}\lesssim \mathcal{N}^s(\sigma) \|f\|_{H^{t_0+1}} \|g\|_{H^{s-1}}.
	%\end{equation}
	%\item
	If $f \in H^s \cap H^{t_0+1}(\mathbb{R})$, then for all $g \in H^{s-1}(\mathbb{R})$,
	\begin{equation}\label{SymbolCommutator}
		\| [\sigma(D), f]  g \|_{L^2}\lesssim \mathcal{N}^s(\sigma)  \|f\|_{H^{\max\{t_0+1,s\}}} \|g\|_{H^{s-1}} 
	\end{equation}
	%	\end{itemize}
\end{lemma}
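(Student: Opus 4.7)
The plan is to reduce the commutator to a Fourier-side bilinear integral and then perform a three-region paradifferential decomposition. Writing
\[
\widehat{[\sigma(D),f]g}(\xi)=\int_{\R}\bigl(\sigma(\xi)-\sigma(\eta)\bigr)\hat f(\xi-\eta)\hat g(\eta)\,d\eta,
\]
I would split the $\eta$-integration by comparing $|\xi-\eta|$ (the frequency of $f$) with $|\eta|$ (the frequency of $g$): a low-high piece $|\xi-\eta|\le\tfrac12|\eta|$, a high-low piece $|\eta|\le\tfrac12|\xi-\eta|$, and a diagonal piece $|\xi-\eta|\sim|\eta|$. On each piece I would apply a symbol estimate tailored to the region, then take the $L^2_\xi$ norm via Young's convolution inequality, using the embedding $H^{t_0}(\R)\hookrightarrow\mathcal{F}L^1(\R)$ (valid because $t_0>\tfrac12$) to pass from $L^2$ to $L^1$ on the appropriate factor.

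On the low-high region, $\xi\sim\eta$ and the mean value theorem together with $\sigma\in\mathcal{S}^s$ gives
\[
|\sigma(\xi)-\sigma(\eta)|\lesssim \mathcal{N}^s(\sigma)\langle\eta\rangle^{s-1}|\xi-\eta|,
\]
so the contribution is bounded by a convolution that I would close by $\|\,|\cdot|\hat f\|_{L^1}\lesssim\|f\|_{H^{t_0+1}}$ and $\|\langle\cdot\rangle^{s-1}\hat g\|_{L^2}=\|g\|_{H^{s-1}}$. On the high-low region, $|\xi|\sim|\xi-\eta|$, and the direct bound $|\sigma(\xi)-\sigma(\eta)|\lesssim\mathcal{N}^s(\sigma)\langle\xi-\eta\rangle^s$ reduces matters to controlling $\int\langle\xi-\eta\rangle^s|\hat f(\xi-\eta)|\,|\hat g(\eta)|\,d\eta$; since $|\eta|\le|\xi-\eta|$ on this region, I can freely convert a $\langle\eta\rangle^{1-s}$ weight into a $\langle\xi-\eta\rangle^{1-s}$ weight (absorbing at most one extra derivative into the $f$-norm), which yields the combined bound $\mathcal{N}^s(\sigma)\|f\|_{H^{\max\{t_0+1,s\}}}\|g\|_{H^{s-1}}$ and explains the $\max$ appearing in the final estimate.

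The main obstacle is the diagonal piece, where neither frequency dominates and the cancellation in $\sigma(\xi)-\sigma(\eta)$ cannot be exploited at a single endpoint. I plan to follow the symmetric approach already used in the proof of Lemma \ref{Commutator K at level s}: set $a=\min\{|\xi-\eta|,|\eta|\}$ and $b=\max\{|\xi-\eta|,|\eta|\}$; the one-dimensional mean value theorem gives $|\sigma(\xi)-\sigma(\eta)|\le|\xi-\eta|\,\sup_{|\omega|\in[a,b]}|\sigma'(\omega)|$, and $\sigma\in\mathcal{S}^s$ with the seminorm \eqref{seminorm} controls the sup by $\mathcal{N}^s(\sigma)\langle b\rangle^{s-1}$ when $s\ge 1$ (or by $\mathcal{N}^s(\sigma)$ when $0\le s<1$). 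Since $a\sim b$ on this region, the weight can be routed to either factor at will, and the result is closed by a final convolution/$L^2$ estimate as in the other two regions. Summing the three regional contributions yields the claimed inequality with constant $\mathcal{N}^s(\sigma)$.
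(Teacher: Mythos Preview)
The paper does not prove this lemma; it is quoted directly from Appendix~B of Lannes' monograph \cite{Lannes_Book2013}, so there is no in-house argument to compare against. Your three-region paraproduct strategy is standard and can be made to work, but the sketch has a real gap on the diagonal piece when $0\le s<1$ --- a range that matters, since the lemma is invoked in the paper with $s\in\{0,\tfrac12\}$ in the proof of Lemma~\ref{Commutator L2}.

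The failure is in the symbol bound you use there. For $s<1$ the mean-value estimate gives only $|\sigma(\xi)-\sigma(\eta)|\lesssim\mathcal{N}^s(\sigma)\,|\xi-\eta|$ (note also that the segment $[\xi,\eta]$ may contain the origin when the signs differ, so your ``$\sup_{|\omega|\in[a,b]}$'' is not literally what MVT delivers). On the diagonal $|\xi-\eta|\sim\langle\eta\rangle$, so after inserting the weight $\langle\eta\rangle^{s-1}$ on $\hat g$ you are left with an uncompensated factor $\langle\eta\rangle^{1-s}\sim\langle\xi-\eta\rangle^{1-s}$ on $\hat f$; any Young-type closure then demands $f\in H^{t_0+2-s}$, strictly more than the allowed $H^{t_0+1}$. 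The repair is simple: on the diagonal drop the MVT and use the crude bound $|\sigma(\xi)-\sigma(\eta)|\le|\sigma(\xi)|+|\sigma(\eta)|\lesssim\mathcal{N}^s(\sigma)\langle\eta\rangle^s$ (legitimate since $|\xi|\lesssim|\eta|$ there), write $\langle\eta\rangle^s\sim\langle\xi-\eta\rangle\langle\eta\rangle^{s-1}$, and close with $\|\langle\cdot\rangle\hat f\|_{L^1}\|g\|_{H^{s-1}}\lesssim\|f\|_{H^{t_0+1}}\|g\|_{H^{s-1}}$. A second, smaller issue: in the high-low region your conversion $\langle\eta\rangle^{1-s}\lesssim\langle\xi-\eta\rangle^{1-s}$ reverses direction once $s>1$, so the intermediate range $1<s<t_0+1$ is not covered by your sketch; there one should instead transfer the negative-power weight $\langle\xi-\eta\rangle^{s-t_0-1}\lesssim\langle\eta\rangle^{s-t_0-1}$ (the inequality now goes the right way) to obtain $\langle\xi-\eta\rangle^{t_0+1}\langle\eta\rangle^{-t_0}\langle\eta\rangle^{s-1}$ and close against $\|f\|_{H^{t_0+1}}$.
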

\noindent
With this at hand, we may give the proof.

\begin{proof}[Proof of Lemma \ref{Commutator L2}]
	
	To prove \eqref{Product chi K} and \eqref{Commutator chi K}, it suffices to verify for all $n \in \N$ that
	\begin{equation}\label{Goal 1}
		\sup  \limits_{\xi \in  \mathbb{R}} \:  \langle \xi 	\rangle^{n } \: 
		\Big{|} \frac{d^{n}}{d\xi^{n}} 
		(\chi^{(1)}_{\mu} \sqrt{\mathcal{K}_{\mu}}) (\xi)\Big{|} 
		\lesssim_{\beta} 1,
	\end{equation}
	for any  $0<\mu<1$. Indeed, in agreement with Definition \ref{Symbol class}, then $(\chi^{(1)}_{\mu} \sqrt{\mathcal{K}_{\mu}}) \in \mathcal{S}^0$ and \eqref{Commutator chi K} holds true due to Lemma \ref{SymbolLannes}. Moreover,  using Plancherel and \eqref{Goal 1} with $n=0$ we have
	\begin{align*}
		\|(\chi^{(1)}_{\mu} \sqrt{\mathcal{K}_{\mu}})(D) f\|_{L^2} \lesssim_{\beta} \| f\|_{L^2},
	\end{align*}
	proving \eqref{Product chi K}. Now, let us prove \eqref{Goal 1}.  We observe that
	\begin{equation}\label{est on supp}
		 \mu^{\frac{k}{2}}\langle \xi \rangle^k
		 \Big{|}( \frac{d^{k}}{d\xi^{k}}   \chi^{(1)})(\sqrt{\mu}\xi)\Big{|} \lesssim 1, \quad k\geq 0,
	\end{equation}
	since $\sqrt{\mu} |\xi| \lesssim 1$ on the support of $\chi^{(1)}_{\mu}(\xi)$. Moreover, we observe by Lemma \ref{der T} and $\mu\in (0,1)$ that
	\begin{align*}
		\chi^{(1)}_{\mu}(\xi)\: \Big{|} \frac{d^{k}}{d\xi^{k}}  \sqrt{K_{\mu}(\xi)} \Big{|}
		\lesssim_{\beta}
		 \chi^{(1)}_{\mu}(\xi)\langle \sqrt{\mu} \xi \rangle^{\frac{1}{2} }\mu^{\frac{k}{2}}\langle \sqrt{\mu} \xi \rangle^{ -k}
		 \lesssim_{\beta}
		\langle \xi\rangle^{-k}.
	\end{align*}
	Combining these estimates with the Leibniz rule yields
	\begin{align*}
		\langle \xi \rangle^n \Big{|} \frac{d^{n}}{d\xi^{n}}  \big{(}	\chi^{(1)}_{\mu}(\xi)\sqrt{K_{\mu}(\xi)} \big{)} \Big{|}
		&
		\lesssim
		\langle \xi \rangle^n \sum_{k=0}^n \Big{|}
		 \frac{d^{n-k}}{d\xi^{n-k}} 
		 \big{(}\chi^{(1)}(\sqrt{\mu}\xi)\big{)} \: \frac{d^{k}}{d\xi^{k}} \big{(}\sqrt{K_{\mu}(\xi)}\big{)}\Big{|}
		\\
		& 
		\lesssim_{\beta} 
		\langle \xi \rangle^n
		\sum_{k=0}^n 
		\mu^{\frac{n-k}{2}} \big{|}(  \frac{d^{n-k}}{d\xi^{n-k}} 
		\chi^{(1)})(\sqrt{\mu}\xi)\big{|}
		\langle  \xi \rangle^{-k}
		\\
		&
		\lesssim_{\beta} 
		\sum_{k=0}^n 
		\mu^{\frac{n-k}{2}}  \langle \xi \rangle^{n-k}
		\big{|}
		( \frac{d^{n-k}}{d\xi^{n-k}} \chi^{(1)})(\sqrt{\mu}\xi)
		\big{|}
		\lesssim_{\beta} 1.
	\end{align*}
	Hence, $(\chi^{(1)}_{\mu} \sqrt{\mathcal{K}_{\mu}}) \in \mathcal{S}^0$ and $\mathcal{N}^0(\chi^{(1)}_{\mu} \sqrt{\mathcal{K}_{\mu}}) \lesssim_{\beta} 1$ independently from $\mu$, proves \eqref{Goal 1}.

	Next,  we consider estimates \eqref{Product sigma 1/2} and \eqref{Commutator D}. Recalling \eqref{Sigma 1/2} we define
	\begin{align}\label{Sigma 1/2 tilde}
		\tilde{\sigma}_{\mu, \frac{1}{2}}(\xi) 
		=
		 \mu^{-\frac{1}{4}} \sigma_{\mu, \frac{1}{2}}(\xi) = \frac{1}{\mu^{\frac{1}{4}}} \: \frac{1}{\mu^{\frac{1}{4}} |\xi|^{\frac{1}{2}}} \: (1+ \mu \beta \xi^2)^{\frac{1}{2}}. %=: \frac{1}{\mu^{\frac{1}{4}}} a_{\mu}(\xi) b_{\mu}(\xi).
	\end{align}
	Then, it suffices to prove that 
	\begin{equation}\label{Goal 2}
		\sup  \limits_{\xi \in  \mathbb{R}} \:	\langle \xi \rangle^{n-\frac{1}{2}} \Big{|}   \frac{d^{n}}{d\xi^{n}}  (	\chi^{(2)}_{\mu}\tilde\sigma_{\mu,\frac{1}{2}})(\xi)  \Big{|} \lesssim_{\beta} 1,
	\end{equation}
	for all $n \in \N$ and any $\mu \in (0,1)$. Indeed, if we assume \eqref{Goal 2} and take $n=0$ we deduce from Plancherel's identity that
	\begin{equation*}
		\|	(\chi^{(2)}_{\mu}\sigma_{\mu,\frac{1}{2}})(D) f\|_{L^2} \lesssim_{\beta} \mu^{\frac{1}{4}} \|f\|_{H^{\frac{1}{2}}}  \lesssim_{\beta} \|f\|_{L^2} + \mu^{\frac{1}{4}} \|D^{\frac{1}{2}}f\|_{L^2}.
	\end{equation*}
	which proves \eqref{Product sigma 1/2}. Moreover, \eqref{Goal 2} also implies that $(\chi^{(2)}_{\mu} \tilde{\sigma}_{\mu, \frac{1}{2}}) \in \mathcal{S}^{\frac{1}{2}}$ with $\mathcal{N}^{\frac{1}{2}}(\chi^{(2)}_{\mu} \tilde{\sigma}_{\mu, \frac{1}{2}}) \lesssim_{\beta} 1$ so that
	\begin{equation*}
		\| [(\chi^{(2)}_{\mu} {\sigma}_{\mu, \frac{1}{2}})(D) ,f] \partial_x g\|_{L^2} \lesssim_{\beta} \mu^{\frac{1}{4}}\|f\|_{H^s}  \|g\|_{H^{\frac{1}{2}}},
	\end{equation*}
	by Lemma \ref{SymbolLannes}. Now we prove \eqref{Goal 2}. First, we consider the functions,
	\begin{align*}
		a_{\mu}(\xi) = \frac{1}{\mu^{\frac{1}{4}} |\xi|^{\frac{1}{2}}} \quad \text{and} \quad b_{\mu} (\xi) =(1+ \mu \beta \xi^2)^{\frac{1}{2}}.
	\end{align*}
	Then, since $|\xi|> \sqrt{\mu} |\xi| >1$ on the support of $\chi^{(2)}_{\mu}$, we observe that 
	\begin{align}\label{est a}
		\chi^{(2)}_{\mu}(\xi)\Big{|}\frac{d^{k}}{d\xi^{k}} a_{\mu}(\xi) \Big{|} 
		%& \lesssim \chi^{(2)}_{\mu}(\xi)
	%	\frac{1}{(\mu|\xi|)^{\frac{1}{2}}} \frac{1}{|\xi|^k}
	%	\\ 
		\lesssim	\mu^{\frac{1}{4}} \chi^{(2)}_{\mu}(\xi)
		\frac{1}{\sqrt{\mu}|\xi|} \frac{1}{|\xi|^{k-\frac{1}{2}}}
		 \lesssim	\mu^{\frac{1}{4}}
		\langle \xi \rangle^{\frac{1}{2}-k} \langle \sqrt{\mu} \xi \rangle^{-1}.
	\end{align}
	While $b_{\mu}(\xi) \lesssim_{\beta} \langle \sqrt{\mu} \xi\rangle $ and its deriatives satisfy the bound,
	\begin{align}\label{est b}
		\chi^{(2)}_{\mu}(\xi)\Big{|}  \frac{d^{k}}{d\xi^{k}}  b_{\mu}(\xi) \Big{|} \lesssim_{\beta}\chi^{(2)}_{\mu}(\xi) \mu^{\frac{k}{2}} \langle\sqrt{ \mu }\xi \rangle^{1-k}
		\lesssim_{\beta} \langle\sqrt{ \mu }\xi \rangle \langle \xi \rangle^{-k}.
	\end{align}
	Thus, if all derivatives falls on $\tilde{\sigma}_{\mu, \frac{1}{2}}$, the Leibniz rule, \eqref{est a} and \eqref{est b} imply
	\begin{align*}
		\chi^{(2)}_{\mu}(\xi)\Big{|} \frac{d^{k}}{d\xi^{k}}  	\tilde \sigma_{\mu,\frac{1}{2}}(\xi) \Big{|}
		%&
		\lesssim
		\mu^{-\frac{1}{4}}\chi^{(2)}_{\mu}(\xi)
		\sum_{j=0}^k
		\Big{|}\frac{d^{k-j}}{d\xi^{k-j}} 
	a_{\mu}(\xi)\Big{|}
		 \: \Big{|}\frac{d^{j}}{d\xi^{j}} 	 b_{\mu}(\xi) \Big{|}
		%\\
		%& 
		%\lesssim_{\beta} 
		%\sum_{j=0}^k
		%\langle \xi \rangle^{\frac{1}{2}-(k-j)}
		%\langle \xi \rangle^{-j}
		%\\
		%&
		\lesssim_{\beta} \langle \xi \rangle^{\frac{1}{2} - k}.
	\end{align*}
	On the other hand, when derivatives fall the cut-off function, we observe 
	\begin{equation}\label{est on supp 2}
		\mu^{\frac{k}{2}}\langle \xi \rangle^k\Big{|}( \frac{d^{k}}{d\xi^{k}}  \chi^{(2)})(\sqrt{\mu}\xi) \Big{|} \lesssim 1, \quad k\geq 1,
	\end{equation}
	since the support of $ \frac{d^k}{d\xi^k}  \chi^{(2)}_{\mu}$ is contained in the support of $\chi^{(1)}_{\mu}$. As a result, there holds
	\begin{align*}
	\langle \xi \rangle^{n-\frac{1}{2}}  \Big{|} \frac{d^{n}}{d\xi^{n}}  \big{(}	\chi^{(2)}_{\mu}(\xi)\tilde\sigma_{\mu,\frac{1}{2}}(\xi) \big{)}  \Big{|}
	&
	\lesssim
	\langle \xi \rangle^{n - \frac{1}{2}} \sum_{k=0}^n 
	\Big{|}
	\frac{d^{n-k}}{d\xi^{n-k}} \big{(}\chi^{(2)}(\sqrt{\mu}\xi)\big{)}
	\Big{|}
	\:
	\Big{|}
	\frac{d^{k}}{d\xi^{k}} \big{(} \tilde\sigma_{\mu,\frac{1}{2}}(\xi)  \big{)}
	\Big{|}
	\\
	& 
	\lesssim_{\beta} 
	\langle \xi \rangle^{n-\frac{1}{2}}
	\sum_{k=0}^n 
	\mu^{\frac{n-k}{2}} \big{|} (\frac{d^{n-k}}{d\xi^{n-k}} \chi^{(2)})(\sqrt{\mu}\xi) \big{|}
	\: \langle \xi \rangle^{\frac{1}{2}-k}
	\\
	&
	\lesssim_{\beta} 
	\sum_{k=0}^n 
	\mu^{\frac{n-k}{2}}  \langle \xi  \rangle^{n-k} \big{|}(\frac{d^{n-k}}{d\xi^{n-k}} \chi^{(2)})(\sqrt{\mu}\xi) \big{|}
	\\
	&\lesssim_{\beta} 1.
\end{align*}
The estimate is uniform in $\mu \in (0,1)$, and \eqref{Goal 2} is proved, which provides the desired result. 
	
	Lastly, we prove \eqref{Product sigma} and \eqref{Commutator sigma} arguing in the same vein. First, we write:
	\begin{align*}
		\chi^{(2)}_{\mu}(\xi) \sigma_{\mu,0}(\xi)
	%	&
	%	=
		%\chi^{(2)}_{\mu}(\xi) 
	%	\bigg( 
	%	\frac{1}{\sqrt{\mu}|\xi|} + \sqrt{\mu} \beta |\xi| 
	%	-
	%	\frac{tanh(\sqrt{\mu}|\xi|)}{\sqrt{\mu} |\xi|} (1+ \beta \mu \xi^2)			
	%	\bigg)^{\frac{1}{2}}
	%	\\
		& =
		\chi^{(2)}_{\mu}(\xi) \cdot  \: \frac{1}{\mu^{\frac{1}{4}} |\xi|^{\frac{1}{2}}} \: 
		\cdot	\Big(
		1+\mu \beta \xi^2
		\Big)^{\frac{1}{2}}\cdot	\Big( \frac{2}{e^{2\sqrt{\mu}|\xi|} + 1}	\Big)^{\frac{1}{2}}
		\\
		& = : \chi^{(2)}_{\mu}(\xi) \: a_{\mu}(\xi) \:  b_{ \mu}(\xi)\:  c_{\mu}(\xi),
	\end{align*}
	making use of the identity \eqref{id th}. Then we observe for all $N \in \N$ that
	\begin{align}\label{est c}
		\langle \xi \rangle^k \langle \sqrt{\mu} \xi \rangle^N\Big{|} \frac{d^{k}}{d\xi^{k}} c_{\mu}(\xi) \Big{|} 
		& \lesssim
		\mu^{\frac{k}{2}}\langle \xi \rangle^k  \langle \sqrt{\mu} \xi \rangle^N e^{-\sqrt{\mu} |\xi|} \lesssim 1.
	\end{align}
	As a result, we deduce by \eqref{est b} and \eqref{est c} with $N=1$ that
	\begin{align*}
		\chi^{(2)}_{\mu}(\xi)
		\Big{|} \frac{d^{k}}{d\xi^{k}}  (b_{\mu}(\xi)\: c_{\mu}(\xi) ) \Big{|}
		& \lesssim
		\chi^{(2)}_{\mu}(\xi)
		\sum_{j=0}^k\big{|} \frac{d^{k-j}}{d\xi^{k-j}}  \big{(} b_{\mu}(\xi) \big{)}\big{|} \: \big{|}\frac{d^{j}}{d\xi^{j}} \big{(}c_{\mu}(\xi)\big{)}\big{|}
		\\
		& \lesssim_{\beta}
		\sum_{j=0}^k 
		\langle \xi \rangle^{-(k-j)} \langle \sqrt{\mu}\xi \rangle 
		\: \langle \xi \rangle^{-j} \langle \sqrt{\mu}\xi \rangle^{-1}
		\\
		&	
		\lesssim_{\beta}\langle\xi\rangle^{-k}.
	\end{align*}
	Moreover, we use  \eqref{est a} to deduce
	\begin{align*}
		\chi^{(2)}_{\mu}(\xi)	\Big{|} \frac{d^{k}}{d\xi^{k}}  \sigma_{\mu,0}(\xi) \Big{|}
		\lesssim \chi^{(2)}_{\mu}(\xi)
		\sum_{j=0}^k\Big{|} \frac{d^{k-j}}{d\xi^{k-j}} \big{(} a_{\mu}(\xi) \big{)} \Big{|}\: \Big{|}\frac{d^{j}}{d\xi^{j}}  \big{(}b_{\mu,\beta}(\xi) \: c_{\mu}(\xi)\big{)} \Big{|}\lesssim_{\beta} \langle\xi\rangle^{-k},
	\end{align*}
	from which we find
	\begin{align*}
		\langle \xi \rangle^n 	\Big{|}\frac{d^{n}}{d\xi^{n}} \big{(}	\chi^{(2)}_{\mu}(\xi)\sigma_{\mu,0}(\xi) \big{)}	\Big{|}
		\lesssim 
		\langle \xi \rangle^n \sum_{k=0}^n 
		\Big{|}\frac{d^{n-k}}{d\xi^{n-k}} \big{(}\chi^{(2)}_{\mu}(\xi)\big{)}	\Big{|} \: 	\Big{|}\frac{d^{k}}{d\xi^{k}} \big{(} \sigma_{\mu,0}(\xi)  \big{)}	\Big{|}
		\lesssim_{\beta} 
		1,
	\end{align*}
	by  \eqref{est on supp 2}. Arguing as above, we may conclude that the estimates \eqref{Product sigma} and \eqref{Commutator sigma} holds. 
\end{proof}

\begin{proof}[Proof of Lemma \ref{T_mu and J_mu Commutator L2}] In order to prove
	\eqref{L2 Commutator T_mu J_mu}, we simply verify that $\sqrt{\mathcal{T}_{\mu}}J^{\frac{1}{2}}_{\mu} \in \mathcal{S}^0$ and $\mathcal{N}^0(\sqrt{\mathcal{T}_{\mu}}J^{\frac{1}{2}}_{\mu} ) \lesssim 1$ uniformly in $\mu\in (0,1)$. But this is a direct consequence of Lemma \ref{der T} and the Leibniz rule:
	\begin{align*}
		\langle \xi \rangle^n 
		\Big{|}
			\frac{d^n}{d\xi^n} \sqrt{T_{\mu}(\xi)}J^{\frac{1}{2}}_{\mu}
		\Big{|}
		& \lesssim 
		\langle \xi \rangle^n \sum_{k=0}^n
		 \Big{|}\frac{d^{n-k}}{d\xi^{n-k}}\sqrt{T_{\mu}(\xi)}
		 \Big{|}
		 \: 
		 \Big{|}
		 \frac{d^{k}}{d\xi^{k}} \langle \sqrt{\mu} \xi \rangle^{\frac{1}{2}}
		 \Big{|}
		 \\
		 & \lesssim 
		 \langle \xi \rangle^n \sum_{k=0}^n
		 \mu^{\frac{n-k}{2}} \langle \sqrt{\mu} \xi \rangle^{-\frac{1}{2}-(n-k)} \mu^{\frac{k}{2}}\langle \sqrt{\mu} \xi \rangle^{\frac{1}{2} - k }
		 \\
		 & \lesssim 
		 \langle \xi \rangle^n \mu^{\frac{n}{2}} \langle \sqrt{\mu}\xi \rangle^{-n},
	\end{align*}
	and is bounded by a constant independent from $\mu \in(0,1)$. Hence, we may conclude by Lemma \ref{SymbolLannes} that 	\eqref{L2 Commutator T_mu J_mu} holds true.

	A similar approach is used for the proof of \eqref{Bessel Commutator T_mu J}. Indeed, we observe that
	\begin{align*}%\label{above}
		\langle \xi \rangle^{n-s}
		\Big{|}
		\frac{d^n}{d\xi^n} \sqrt{T_{\mu}(\xi)}J^{s}
		\Big{|}
	    \lesssim 
		\langle \xi \rangle^{n-s}\sum_{k=0}^n
		\mu^{\frac{n-k}{2}} \langle \sqrt{\mu} \xi \rangle^{-\frac{1}{2}-(n-k)}  \langle   \xi \rangle^{s - k }
		\lesssim 
		1.
	\end{align*}
	Hence, $\sqrt{\mathcal{T}_{\mu}}J^s \in \mathcal{S}^s$ and $\mathcal{N}^s(\sqrt{\mathcal{T}_{\mu}}J^s ) \lesssim 1$ uniformly in $\mu\in (0,1)$, allowing us to conclude by Lemma \ref{SymbolLannes}.
	
	The proof of \eqref{Bessel Commutator T_mu J=1} is the same, by a direct application of \eqref{derivative T} we deduce that $\sqrt{\mathcal{T}_{\mu}} \in \mathcal{S}^0$ uniformly in $\mu\in (0,1)$. 
	
	Next, we consider \eqref{Commutator dx T}. Define the bilinear form: $a_1(D)(f,g) = \partial_x[\sqrt{\mathcal{T}_{\mu}}(D), f]g$. Then we may use Plancherel to write
	\begin{align*}
		|\hat{a}_1(\xi)(f,g)|
		& \leq
		\int_{\R}
		|\xi|
		\Big{|}
		\sqrt{T_{\mu}(\xi)}  - \sqrt{T_{\mu}(\rho)} 
		\Big{|}
		|\hat{f}(\xi-\rho)| \: | \hat{ g}(\rho)| \: d\rho.
	\end{align*}
	Clearly, if we can prove that 
	\begin{equation}\label{need to show: est on b}
		b_1(\xi,\rho) : =  |\xi|
		\Big{|}
		\sqrt{T_{\mu}(\xi)}  - \sqrt{T_{\mu}(\rho)} 
		\Big{|} \lesssim 1 + |\xi - \rho|,
	\end{equation}
	then we can conclude as we did for the proof of Lemma \ref{Commutator K at level s}. Indeed, assuming the claim \eqref{need to show: est on b}, then there holds
	\begin{align*}
		\|\partial_x[\sqrt{\mathcal{T}_{\mu}}(D), f]g \|_{L^2}  =
		\|	\hat{a}_1(\xi) (f,g)\|_{L^2} \lesssim (\|f\|_{H^{t_0}}+\|\partial_xf\|_{H^{t_0}})\|g\|_{L^2}.
	\end{align*}
	Now, in order to estimate $b_1(\xi,\rho)$ we consider three cases. First, if $|\rho| \leq 1$ then we have by the triangle inequality,
	\begin{equation*}
		b_1(\xi,\rho) \leq  (1+|\xi-\rho|)
		\Big{(}
		\sqrt{T_{\mu}(\xi)}  + \sqrt{T_{\mu}(\rho)} 
		\Big{)} \lesssim 1 + |\xi - \rho|,
	\end{equation*}
	since $\xi \mapsto \sqrt{T_{\mu}(\xi)}$ is bounded by one. Secondly, consider the region where $|\rho|>1$ and $|\xi| \geq |\rho|$. Then since $\xi \mapsto tanh(\sqrt{\mu}|\xi|)$ is increasing  and $\xi \mapsto T_{\mu}(\xi)$ is decreasing, we have that
	\begin{equation*}
		\frac{|\rho|}{|\xi|} 
		\leq
		\bigg{(} \frac{|\rho|}{|\xi|} \bigg{)}^{\frac{1}{2}} 
		\leq 
		% \frac{T_{\mu}(\xi)}{T_{\mu}(\eta)}\leq
		\bigg{(} \frac{T_{\mu}(\xi)}{T_{\mu}(\rho)}\bigg{)}^{\frac{1}{2}}\leq 1.
	\end{equation*}
	%
	%
	%
	%also using that $\xi \mapsto \sqrt{T_{\mu}(\xi)}$ is decreasing. 
	Thus, there holds
	\begin{align*}
		b_1(\xi,\rho) = |\xi|  	\Big{(}
		1-\bigg{(} \frac{T_{\mu}(\xi)}{T_{\mu}(\rho)}\bigg{)}^{\frac{1}{2}}
		\Big{)} \sqrt{T_{\mu}(\rho)} \leq |\xi| - |\rho| \leq|\xi- \rho|.
	\end{align*}
	For $|\rho|>1$ and $|\xi|<|\rho|$ we use a similar argument to find,
	\begin{align*}
		b_1(\xi,\rho)
		=
		|\xi|  	\Big{(}
		1-\bigg{(} \frac{T_{\mu}(\rho)}{T_{\mu}(\xi)}\bigg{)}^{\frac{1}{2}}
		\Big{)} \sqrt{T_{\mu}(\xi)} \leq \frac{|\xi|}{|\rho|}(|\rho|- |\xi|) \leq |\xi - \rho|.
	\end{align*}

Finally, we estimate \eqref{L2 commutator J_mu} using a similar approach. We define the bilinear form $a_2(D)(f,g) = [J_{\mu}^{\frac{1}{2}},f]\partial_x g$ and look in frequency:
\begin{equation*}
	|\hat{a}_2(\xi)(f,g)|
	\leq  
	\int_{\R} 
	\Big{|}
	\langle\sqrt{\mu} \xi \rangle^{\frac{1}{2}} - \langle \sqrt{\mu} \rho \rangle^{\frac{1}{2}}
	\Big{|}
	|\hat{f}(\xi-\rho)| \: | \widehat{\partial_x g}(\rho) |\: d\rho.
\end{equation*}
Then by the same argument as above, we only need to prove that
\begin{equation}\label{Estimate b(xi,eta)}
	b_2(\xi,\eta) = 	\Big{|}
	\langle\sqrt{\mu} \xi \rangle^{\frac{1}{2}} - \langle \sqrt{\mu} \rho \rangle^{\frac{1}{2}}
	\Big{|} \frac{|\rho|}{\langle \sqrt{\mu} \rho \rangle^{\frac{1}{2}}} \lesssim 1+ |\xi-\rho|.
\end{equation}
We consider three cases. If $|\rho| \leq 1$ then since $\mu \in (0,1)$, there holds by the triangle inequality:
\begin{align*}
	b_2(\xi,\rho) \lesssim 1+ \langle \xi-\rho \rangle^{\frac{1}{2}}.
\end{align*}
In the case $|\xi| \geq |\rho|>1$,  observe that
\begin{equation}\label{Obersvation 1}
	\frac{1 + \mu \rho^2}{1+\mu \xi^2} \leq
	 \frac{(1+ \mu \rho^2)^{\frac{1}{4}}}{(1+\mu\xi^2)^{\frac{1}{4}}} \leq 1,
\end{equation}
and we have
\begin{equation}\label{observation 2}
 \frac{\xi^2 - \rho^2}{|\xi-\rho|\:|\xi|} \leq 	\frac{|\xi|+|\rho|}{|\xi|} \lesssim 1.
\end{equation}
As a consequence, recalling $\mu\in(0,1)$, we have that
\begin{align*}
	b_2(\xi,\rho) 
	& \leq
	\Big{(}
	1- \frac{1+\mu\rho^2}{1+\mu \xi^2}
	\Big{)} 
	\frac{\langle \sqrt{\mu} \xi \rangle^{\frac{1}{2}}}{\langle \sqrt{\mu} \rho \rangle^{\frac{1}{2}}}|\rho|
	\\ 
	&
	\leq  \frac{\mu(\xi^2 - \rho^2)}{\mu^{\frac{1}{4}}(1+\mu\xi^2)^{\frac{3}{4}+\frac{1}{4}}}  	\frac{(1+  \mu\xi^2 )^{\frac{1}{4}}}{\langle  \rho \rangle^{\frac{1}{2}}}|\rho|
	\\
	& 
	\leq \frac{\xi^2 - \rho^2}{ |\xi|}   \frac{|\rho|}{|\xi|^{\frac{1}{2}}\langle  \rho \rangle^{\frac{1}{2}}} 
	\\
	& 
	\lesssim |\xi - \rho|.
\end{align*}
Lastly, in the case $|\rho| >1$ and $|\xi| < |\rho|$,  we can simply change the role of $\xi$ and $\rho$  in \eqref{Obersvation 1} and \eqref{observation 2}. As result, we get
\begin{align*}
	b_2(\xi,\rho) & \leq
	 \Big{(}
	 1- \frac{1+\mu\xi^2}{1+\mu \rho^2}
	 \Big{)}|\rho|
	 \leq  
	 \frac{\rho^2 - \xi^2}{|\rho|}  
	 \lesssim
	|\xi-\rho|.
\end{align*}
We may therefore conclude that \eqref{Estimate b(xi,eta)} holds and the estimate \eqref{L2 commutator J_mu} follows.
\end{proof}

\section*{Acknowledgements}
\noindent
This research was supported by a Trond Mohn Foundation grant. I also thank my advisor, Didier Pilod, for many long and helpful mathematical discussions, Henrik Kalisch for providing references and Vincent Duch\^ene for some important comments on the introduction.

	%
	%
	%
	%\chapter*{References}


\begin{thebibliography}{99}
	\small
	\renewcommand{\baselinestretch}{0.95}
	\setlength{\itemsep}{1.5mm}
	
	
	
	
	\bibitem{aceves2013numerical} P. Aceves-Sanchez, A. Minzoni, and P. Panayotaros, {\em Numerical study of a nonlocal model for water-waves with variable depth}, Wave Motion, \textbf{50}, (2013), no. 1,  80--93.
	
	\bibitem{alvarez2008large} Alvarez-Samaniego and D. Lannes,
	{\em Large time existence for 3D water-waves and asymptotics}, 
	Inventiones mathematicae, \textbf{171}, (2008), no. 3, 485--541.
	
	
	
	
	
	
	\bibitem{ambrose2019global} A. Ambrose, J. Bona, and T. Milgrom, 
	{\em Global solutions and ill-posedness for the Kaup system and related Boussinesq systems},  {Indiana Univ. Math. J}, \textbf{68}, (2019), no. 4, 1173--1198.
	
	
	\bibitem{arnesen2015existence} M. Arnesen,
	{\em Existence of solitary-wave solutions to nonlocal equations}, 
	Discrete Contin. Dyn. Syst., \textbf{36}, (2016), no. 7, 3483--3510.
	
	
	 
	
	\bibitem{BonaChenSaut2002} J. Bona, M. Chen, J-C. Saut, Boussinesq {\em Boussinesq equations and other systems for small-amplitude long waves in nonlinear dispersive media. I. Derivation and linear theory}, J. Nonlinear Sci. \textbf{12}, (2002), no. 4, 283--318. 
	
	
	
	\bibitem{BonaColinLannes2005} J. Bona, T. Colin, and D. Lannes, 
	{\em Long wave approximations for water waves}, 
	Arch. Ration. Mech. Anal.,  \textbf{178}, (2005), no. 3, 373--410.
	
	
	
	
	
	
	
	%\bibitem{BonaLannesSaut2008} J. Bona, J-C. Saut, D. Lannes, Asymptotic models for internal waves.
	%             Journal de math{\'e}matiques pures et appliqu{\'e}es, vol. {$\bold {89}$}
	%          Elsiver, (2008).
	
	
	%\bibitem{BonaLannesSaut2008} J. Bona, D. Lannes and J.-C. Saut, 
	%\emph{Asymptotic models for internal waves,}
	%J. Math. Pures Appl. \textbf{89} (2008), 538-566. 
	
	
	
	\bibitem{BonaSmith1975} J. Bona and R. Smith,  
	\emph{The initial-value problem for the Korteweg-de Vries equation}, 
	Philos. Trans. Roy. Soc. London Ser. A,  \textbf{278}, (1975), no. 1287,  555--601.
	
	\bibitem{boussinesq1877} J. Boussinesq,  
	\emph{Théorie des ondes et des remous qui se propagent le long d'un canal rectangulaire horizontal, en communiquant au liquide contenu dans ce canal des vitesses sensiblement pareilles de la surface au fond}, 
	J. Math. Pures Appl. (2), \textbf{17}, (1872), 55-108.
	
	
	
	%\bibitem{ConstantinLannes} A. Constantin and D. Lannes,
	%{\em The hydrodynamical relevance of the Camassa-Holm and Degasperis-Procesi equations}, 
	%Arch. Rat. Mech. Anal. {$\bold 192$} (2009),  165--186.
	%(1), 
	
	%\bibitem{DawsonMcGahaganPonce2008} L. Dawson, H. McGahagan and G. Ponce,  
	%\emph{On the decay properties of solutions to a class
	%	of Schrödinger equations},  Proc. Amer. Math. Soc., 136 (2008), 2081-2090.
	
	\bibitem{Craig1985} W. Craig, 
	\emph{An existence theory for water waves and the Boussinesq and Korteweg-de Vries scaling limits},
	Comm. Partial Differential Equations \textbf{10}, (1985), no. 8, 787–1003.
	
	 
	
	
	\bibitem{Deconinck2017} B. Deconinck and  O. Trichtchenko,
	\emph{High-frequency instabilities of small-amplitude solutions of Hamiltonian PDEs},
	Discrete Contin. Dyn. Syst. \textbf{37}, (2017), no. 3, 1323–1358
	
	
	\bibitem{dinvay2020SurfaceTension} E. Dinvay,
	\emph{Well-Posedness for a Whitham--Boussinesq System with Surface Tension},
	Math. Phys. Anal. Geom.,  \textbf{23}, (2020), no. 2, 1-27.
	
	
	
	\bibitem{dinvay2019well} \underline{\hspace{1cm}},
	\emph{On well-posedness of a dispersive system of the Whitham--Boussinesq type},
	Appl. Math. Lett., \textbf{88}, (2019), 13--20.
	
	
%	\bibitem{dinvay2020fully} \underline{\hspace{1cm}},
%	\emph{Fully dispersive water wave equations}, PhD thesis,
%	University of Bergen, (2020).
	
	
	
	\bibitem{dinvay2017whitham} E. Dinvay, D. Moldabayev, D. Dutykh, and H.  Kalisch, 
	\emph{The Whitham equation with surface tension},
	Nonlinear Dynam.,  \textbf{88},  (2017), no. 2, 1125--1138.
	
	
	
	\bibitem{dinvay2021solitary} E. Dinvay and D. Nilsson,
	\emph{Solitary wave solutions of a Whitham--Boussinesq system}, 
	Nonlinear Anal. Real World Appl., \textbf{60}, (2021), no. 103280,  1--24.
	
	
	
	
		
	\bibitem{Duschene} V. Duchêne, 
	\emph{Many Models for Water Waves}, 
	arXiv preprint 	arXiv:2203.11340, (2022).
	
	
	\bibitem{Achenef2020well} E. Dinvay, S. Selberg, and A. Tesfahun,
	\emph{Well-Posedness for a Dispersive System of the Whitham--Boussinesq Type}, 
	SIAM J. Math. Anal., \textbf{52}, (2020), no. 3, 2353--2382.
	
	
	
	
	
	
	
	
	
	\bibitem{ehrnstrom2012existence} M. Ehrnstr{\"o}m, M. Groves,  and E. Wahl{\'e}n,
	\emph{On the existence and stability of solitary-wave solutions to a class of evolution equations of Whitham type},
	Nonlinearity, \textbf{25}, (2012), no. 10,  2903--2936.
	
	
	
	
	\bibitem{ehrnstrom2009traveling} M. Ehrnstr{\"o}m  and H. Kalisch,
	\emph{Traveling waves for the Whitham equation},
	Differential Integral Equations, \textbf{22}, (2009), no. 11-12, 1193--1210.
	
	
	
	\bibitem{ehrnstrom2019existence} M. Ehrnstr{\"o}m, A. Mathew, and K.M. Claassen,
	\emph{Existence of a highest wave in a fully dispersive two-way shallow water model},
	Arch. Ration. Mech. Anal., \textbf{231}, (2019), no. 3, 1635--1673.
	
	
	
	
	\bibitem{ehrnstrom2019whitham} M. Ehrnstr{\"o}m,  and E. Wahl{\'e}n,
	\emph{On Whitham's conjecture of a highest cusped wave for a nonlocal dispersive equation},
	Ann. Inst. H. Poincaré Anal. Non Linéaire, \textbf{36}, (2019), no. 6, 1603--1637.
	
	
	
	
	\bibitem{emerald2021rigorous} L. Emerald, 
	\emph{Rigorous derivation from the water waves equations of some full dispersion shallow water models},
	SIAM J. Math. Anal., \textbf{53}, (2021), no. 4, 3772--3800.
	
	
	
	
	\bibitem{Emerald2021} \underline{\hspace{1cm}}, 
	\emph{Rigorous derivation of the Whitham equations from the water waves equations in the shallow water regime},
	Nonlinearity, \textbf{34}, (2021), no. 11, 7470--7509.
	 
	 
	 \bibitem{Emerald thesis} \underline{\hspace{1cm}},
	 \emph{Modèles de type dispersion complète en océanographie côtière}, Thèse de Doctorat,
	 Université de Rennes 1, (2021). 
	 
	
	\bibitem{Hur17WBreaking} V.M. Hur,
	\emph{Wave breaking in the Whitham equation},
	Adv. Math., \textbf{317}, (2017), 410--437. 


	
	\bibitem{HurJohnson} V.M. Hur, M. Johnson
	\emph{Modulational instability in the Whitham equation for water waves}, Stud. Appl. Math. \textbf{134}, (2015), no. 1, 120--143.


	 
	
	
	\bibitem{HurPandey2019} V.M. Hur and A. K. Pandey,
	\emph{Modulational instability in a full-dispersion shallow water model},
	Stud. Appl. Math. \textbf{142}, (2019), no. 1, 3--47.
	
	
	
	
%	\bibitem{Iorio2001} V.M. Iorio and R. Iorio,
%	\emph{Fourier analysis and partial differential equations},Cambridge Studies in Advanced Mathematics, 70. Cambridge University Press, Cambridge, 2001, ISBN: 0-521-62116-X
	
	
	\bibitem{johnson2020generalized} M. Johnson and D. Wright,  
	\emph{Generalized solitary waves in the gravity-capillary Whitham equation},
	Stud. Appl. Math., \textbf{144}, (2020), no. 1, 102--130.
	
	
	
	\bibitem{KalischPilod2019} H. Kalisch and D. Pilod,
	\emph{On the local well-posedness for a full-dispersion Boussinesq system with surface tension},
	Proc. Amer. Math. Soc., \textbf{147}, (2019), no. 6, 2545--2559.
	
	
	\bibitem{KatoPonce1988} T. Kato and G. Ponce, 
	\emph{Commutator estimates and the Euler and Navier-Stokes equations},  
	Comm. Pure Appl. Math., \textbf{41}, (1988), no. 7, 891–907.
	
	
	
	
	
	
%	\bibitem{kenig2016local} C. Kenig and D. Pilod,
%	\emph{Local well-posedness for the KdV hierarchy at high regularity}, 
%	Adv. Differential Equations, \textbf{21}, (2016), no. 9-10,  801--836.
	
	
	
	
	\bibitem{KenigPonceVega1993}  C. Kenig, G. Ponce, and L. Vega, 
	\emph{Well-posedness and scattering results for the generalized Korteweg-de Vries equation via the contraction principle},  Comm. Pure Appl. Math., \textbf{46}, no. 4, (1993), 527--620.
	
	
	
	
	
	
	
	
	
	
	%\bibitem{LannesBOOK} D. Lannes, The Water Waves Problem.
	%               Mathematical Surveys and Monographs, vol. {$\bold 188$}
	%           (Amer. Math. Soc., Providence, 2013).
	
	
	
	%\bibitem{lannes2006sharp} D. Lannes, 
	%                Sharp estimates for pseudo-differential operators with symbols of limited smoothness and commutators {$\bold 232$}
	%               (Journal of Functional Analysis, 2006).
	
	
	
	
	
	
	
	
	
	
	
	
	
	
	
	
	
	
	
	
	
	
	
	
	
	%-------------------- New --------------
	
	
	%\bibitem{AbrSteSHi} L. A. Abramyan, Yu. A. Stepanyants, V. I. Shrira,
	%\emph{Multidimensional solitons in shear flows of the boundary-layer type}, Sov. Phys. Dokl., \textbf{37} (12) (1992) 575--578.
	
	%\bibitem{Benj} T. J. Benjamin, 
	%\emph{Internal waves of permanent forms in fluid of great depth,}
	%J. Fluid Mech., \textbf{29} (1967), 559-592. 
	
	
	
	%\bibitem{BBM trick} T. J. Benjamin, T. Brooke, J. L. Bona, 
	%\emph{Model equations for long waves in nonlinear dispersive systems,}
	%Philosophical Transactions of the Royal Society of London. Series A, Mathematical and Physical Sciences, \textbf{1220} (1972), 47-78. 
	
	
	
	
	
	
	
	
	
	%\bibitem{Evans} L.C. Evans, 
	{%\em Partial differential equations}, Graduate Studies in Mathematics, American Mathematical Society, Providence, RI, {$\bold19$}
		%(1998).
		
		
		%\bibitem{FrLe} R. Frank, E. Lenzmann and L. Silvestre,
		%\emph{Uniqueness of radial solutions for the fractional Laplacian,}
		%Comm. Pure Appl. Math., \textbf{69} (2016), no. 9, 1671--1726.
		
		
		
		
		
		
		
		
		
		
		
		\bibitem{KleinLinaresPilodSaut2018} C. Klein, F. Linares, D. Pilod, and J-C. Saut, 
		\emph{On Whitham and related equations},
		Stud. Appl. Math., \textbf{140}, (2018), no. 2, 133--177. 
		
		
		
		
		
		%\bibitem{HLRRW} J. Hickman, F. Linares, O. G. Riano, Oscar, K. Rogers and J. Wright, 
		%\emph{On a higher dimensional version of the Benjamin-Ono equation,}
		%SIAM J. Math. Anal., \textbf{51} (2019), no. 6, 4544--4569.
		
		%\bibitem{IoKe} A. Ionescu and C. E. Kenig, 
		%\emph{Global well-posedness of the Benjamin-Ono equation in low-regularity spaces,}
		% J. Amer. Math. Soc., \textbf{20} (2007), no. 3, 753--798.
		
		
		
		%\bibitem{KeMa} C. E. Kenig and Y. Martel, 
		%\emph{Asymptotic stability of solitons for the Benjamin-Ono equation,}
		%Rev. Mat. Iberoamericana, \textbf{25} (2009), no. 3, 909--970.
		
		
		
		
		%\bibitem{Notes_Lannes} D. Lannes, 
		%\emph{Modélisation des ondes de surface et justification mathématique}, 
		%Notes, ???.
		
		
		\bibitem{KdV1895}  D. Korteweg and  G. De Vries, 
		\emph{On the change of form of long waves advancing in a rectangular canal, and on a new type of long stationary waves}, Phil. Mag., \textbf{39}, (1895), no. 240, 422--443.
		
		
		%\bibitem{Lannes2005} D. Lannes, 
		%{\em Well-posedness of the water-waves equations}, 
		%J. Amer. Math. Soc. {$\bold 18$} (2005), 605--654.
		
		
		\bibitem{Lannes_Book2013} D. Lannes, 
		\emph{The water waves problem: Mathematical analysis and asymptotics}, 
		Mathematical Surveys and Monographs, \textbf{188}, American Mathematical Society, Providence, RI, 2013, ISBN: 978-0-8218-9470-5.
		
		%\bibitem{Lannes2013} D. Lannes, 
		%\emph{A stability criterion for two-fluid interfaces and applications,}
		%Arch. Rational Mech. Anal., \textbf{208} (2013), 481-567. 
		
		
		
		\bibitem{LinaresPonce2014} F. Linares and G. Ponce, 
		\emph{Introduction to nonlinear dispersive equations}, Second edition, Universitext. Springer, New York, 2015, ISBN: 978-1-4939-2180-5; 978-1-4939-2181-2.
		
		
		
		\bibitem{LinaresPilodSaut} F. Linares, D. Pilod, J-C. Saut, 
		\emph{Well-posedness of strongly dispersive two-dimensional surface wave Boussinesq systems}, 
		 SIAM J. Math. Anal. \textbf{44}, (2012), no. 6, 4195–4221.
		
		
		
		 
		
		%\bibitem{remonato2017numerical} F. Remonato, H. Kalisch, 
		%\emph{Numerical bifurcation for the capillary Whitham equation}, 
		%Phys D: Nonlinear Phenomena \textbf{343} (2017), 51--62.
		
		
		
		
		
		
		%\bibitem{Maris} M. Maris, 
		%\emph{On the existence, regularity and decay of solitary waves to a generalized Benjamin-Ono equation,}
		%Nonlinear Anal., \textbf{51} (2002), 1073--1085. 
		
		
		
		%\bibitem{Martel} Y. Martel, 
		%\emph{Asymptotic $N$-soliton-like solutions of the subcritical and critical generalized %Korteweg-de Vries equations,}
		%Amer. J. Math., \textbf{127} (2005), 1103-1140. 
		
		
		%\bibitem{Guy: Nonlinear systems} G. Metivier, 
		%\emph{Para-differential calculus and applications to the Cauchy problem for nonlinear %systems}, 
		%Notes; Université Bordeaux (2008).
		
		
		
		
		%\bibitem{MoPi} L. Molinet and D. Pilod, 
		%\emph{The Cauchy problem for the Benjamin-Ono equation in $L^2$ revisited,}
		%Anal. PDE, \textbf{5} (2012), no. 2, 365--395.
		
		
		%\bibitem{PelShi} D. E. Pelinovsky, V. I. Shrira, 
		%\emph{Collapse transformation for self-focusing solitary waves in boundary-layer type %shear flows,}
		%Phys. Lett. A, \textbf{206} (1995), 195--202.
		
		
		%\bibitem{Tao} T. Tao, 
		%\emph{Global well-posedness of the Benjamin-Ono equation in $H^1(\mathbb R)$}, 
		%J. Hyperbolic Differ. Equ., \textbf{1} (2004), no. 1, 27--49. 
		
		
		%\bibitem{M. E. Taylor PDE: III} M. E. Taylor, 
		%\emph{Partial differential equations III}, 
		%Applied mathematical sciences, Springer \textbf{117} (2011).
		
		
		
		\bibitem{MacKay1986}  R. MacKay and   P. Saffman,
		\emph{Stability of water waves}, 
		Proc. Roy. Soc. London Ser. A \textbf{406}, (1986), no. 1830, 115–125.
		
		
		
		
		\bibitem{ming2012long} M. Ming, P. Zhang, and Z. Zhang, 
		\emph{Long-wave approximation to the 3-D capillary-gravity waves}, 
		SIAM J. Math. Anal., \textbf{44},  (2012), no. 4, 2920--2948.
		
		
		
		
		\bibitem{KalischMoldabayev} D. Moldabayev, H. Kalisch, D. Dutykh, Denys,
		\emph{The Whitham equation as a model for surface water waves}, 
		Phys. D \textbf{309}, (2015), 99–107.
		
		
		
		\bibitem{nilsson2019solitary} D. Nilsson and Y. Wang, 
		{\em Solitary wave solutions to a class of Whitham--Boussinesq systems}, Z. Angew. Math. Phys., \textbf{70}, (2019), no. 3, Paper No. 70, 1--13.
		
		
		\bibitem{PeiWang2019} L. Pei and Y. Wang,  
		\emph{A note on well-posedness of bidirectional Whitham equation},   
		Appl. Math. Lett., \textbf{98}, (2019), 215--223.
		
		
		
		
		%\bibitem{Kalischremonato2017numerical} F. Remonato and H. Kalisch, 
		%{\em Numerical bifurcation for the capillary Whitham equation}, Phys. D, \textbf{343}, (2017), 51--62.
		
		
		\bibitem{Personal comm} J-C. Saut,
		{\em Personal communication}.
		
		
		
		\bibitem{SautXu2012} J-C. Saut and L. Xu,
		{\em The Cauchy problem on large time for surface waves Boussinesq systems}, 
		J. Math. Pures Appl. (9), \textbf{97}, (2012), no. 6, 635--662.
		%(1), 
		
		\bibitem{SautXu2020}  \underline{\hspace{1cm}},
		{\em Long time existence for the Boussinesq-Full dispersion systems}, 
		J. Differential Equations, \textbf{269}, (2020), no. 3,  2627--2663.
		
		
		
		\bibitem{SautXuCPDE2020}  \underline{\hspace{1cm}},
		{\em Long time existence for a strongly dispersive Boussinesq system}, 
		SIAM J. Math. Anal. \textbf{52}, (2020), no. 3, 2803–2848.
		
		
		
		
		
		
		\bibitem{saut2020wave} J-C. Saut and Y. Wang,
		{\em The wave breaking for Whitham-type equations revisited}, 
		arXiv preprint arXiv:2006.03803, (2020), to appear in SIAM J. Math. Anal.
		
		
		
		\bibitem{saut2017cauchy} J-C. Saut, C. Wang, and L. Xu,
		{\em The Cauchy problem on large time for surface-waves-type Boussinesq systems II}, 
		SIAM J. Math. Anal., \textbf{49}, (2017),  no. 4, 2321--2386.
		
		
		%(1), 
		
		
		\bibitem{HenrikCarter} 
		N. Sanford, K. Kodama, J. D.  Carter, and H. Kalisch,
		{\em Stability of traveling wave solutions to the Whitham equation}, 
		Phys. Lett. A \textbf{378}, (2014), no. 30-31, 2100–2107.
		
		
		
		
		
		
		\bibitem{stefanov2020small} A. Stefanov and J.D.  Wright, 
		\emph{Small amplitude traveling waves in the full-dispersion Whitham equation}, 
		J. Dynam. Differential Equations, \textbf{32}, (2020), no. 1, 85--99.
		
		
		
		
		
		
		
		
		
		
	%	\bibitem{Taylor1991} M. E. Taylor, 
	%	\emph{Pseudodifferential operators and nonlinear PDE}, 
	%	Progress in Mathematics, 100. Birkhäuser Boston, Inc., Boston, MA, 1991, ISBN: 0-8176-3595-5.
		
		
		
		\bibitem{tesfahun2022long} A. Tesfahun, 
		\emph{Long-time existence for a Whitham--Boussinesq system in two dimensions}, 
		arXiv preprint arXiv:2201.03628, (2022).
		
		
		
		
		\bibitem{Truong T 2020} T. Truong , E. Wahl\'en, and M. Wheeler 
		\emph{ Global bifurcation of solitary waves for the
			Whitham equation },   (arXiv:2009.04713), (2020).
		
		
		
		
		
		
		
		
		
		
		\bibitem{Wang2020} Y. Wang, 
		\emph{Well-Posedness to the Cauchy Problem of a Fully Dispersive Boussinesq System}, 
		J. Dynam. Differential Equations, \textbf{33}, (2021), no. 2, 805–816.
		
		\bibitem{Whitham1967} G. B. Whitham, 
		\emph{Variational methods and applications to water waves}, 
		Proc. R. Soc. A, \textbf{299}, (1967), 6--25.
		
	%	\bibitem{Whitham1974}  \underline{\hspace{1cm}},
	%	\emph{Linear and Nonlinear Waves}, 
	%	Pure and Applied Mathematics. Wiley-Interscience, New York-London-Sydney,  1974, ISBN: 81-265-4492-9.
		
		
		%\bibitem{HyperbolicTangent} W. Weisstein, "Hyperbolic Tangent." From MathWorld--A Wolfram Web Resource. https://mathworld.wolfram.com/HyperbolicTangent.html
		
		
		
		
		
		
		%\bibitem{Weinstein} M. I. Weinstein, 
		%\emph{Modulational stability of ground states of nonlinear Schrodinger equations}, 
		%SIAM J. Math. Anal., \textbf{16} (1985), no. 3, 472--491.
		
		%\bibitem{Xu2012} L. Xu, 
		%\emph{Intermediate long wave systems for internal waves}, 
		%IOP Publishing, Nonlinearity \textbf{25} (2012), no. 3, 597--640.
		
		
	}
\end{thebibliography}
\end{document}